\documentclass[11pt,reqno]{amsart}

\usepackage[T1]{fontenc}
\usepackage[utf8]{inputenc}
\usepackage{lmodern}
\usepackage{microtype}
\usepackage{mathtools,amssymb,mathrsfs}
\usepackage{enumitem,booktabs,array}
\usepackage{xcolor,tikz-cd}
\usepackage{hyperref}
\usepackage[capitalise,noabbrev]{cleveref}

\hypersetup{
  colorlinks=true,
  linkcolor=blue!55!black,
  citecolor=green!45!black,
  urlcolor=blue!60!black,
  pdftitle={Torus-enriched motivic Bruhat complexes and maximal compact groups},
  pdfauthor={Tianle Liu}
}
\setlist[itemize]{leftmargin=2em}
\setlist[enumerate]{leftmargin=2.2em}
\allowdisplaybreaks

\usepackage{aliascnt}
\newtheorem{theorem}{Theorem}[section]
\newcommand{\newaliasthm}[2]{%
  \newaliascnt{#1}{theorem}%
  \newtheorem{#1}[#1]{#2}%
  \aliascntresetthe{#1}%
  \expandafter\def\csname cref@#1@alias\endcsname{#1}%
  \crefname{#1}{#2}{#2s}%
}
\newaliasthm{proposition}{Proposition}
\newaliasthm{lemma}{Lemma}
\newaliasthm{corollary}{Corollary}
\theoremstyle{definition}
\newaliasthm{definition}{Definition}
\newaliasthm{construction}{Construction}
\newaliasthm{example}{Example}
\theoremstyle{remark}
\newaliasthm{remark}{Remark}

\DeclareMathOperator{\ReR}{Re_{\mathbb R}}
\DeclareMathOperator{\sgn}{sgn}

\DeclareMathOperator{\Th}{Th}
\DeclareMathOperator{\gr}{gr}
\newcommand{\Aone}{\mathbb A^1}
\newcommand{\Ga}{\mathbb G_a}
\newcommand{\Gm}{\mathbb G_m}
\newcommand{\Z}{\mathbb Z}
\newcommand{\R}{\mathbb R}
\newcommand{\KMW}[1]{\mathbf K_{#1}^{\mathrm{MW}}}

\newcommand{\ZA}{\mathbf Z_{\Aone}}
\newcommand{\HT}{\mathbf H_T}
\newcommand{\CellC}{\mathbf C^{\mathrm{cell}}}
\newcommand{\CellH}{\mathbf H^{\mathrm{cell}}}
\newcommand{\GW}{\mathbf{GW}}
\newcommand{\eps}{\varepsilon}
\newcommand{\epsPS}{\eps^{\mathrm{PS}}}
\newcommand{\ol}[1]{\overline{#1}}
\newcommand{\ang}[1]{\langle #1\rangle}
\newcommand{\Dop}{\mathscr D}

\makeatletter
\@ifundefined{subjclassname@2020}{%
  \@namedef{subjclassname@2020}{%
    \textup{2020} Mathematics Subject Classification}%
}{}
\makeatother

\title[Torus-enriched motivic Bruhat complexes]
  {Torus-enriched motivic Bruhat complexes\\
   and maximal compact groups}
\author{Haoyang Liu and Tianle Liu}
\date{}
\address{University of California, Santa Barbara}
\email{haoyangliu@ucsb.edu}
\address{University of Southern California}
\email{tianleli@usc.edu}
\subjclass[2020]{14F42, 14M15, 20G15, 57T10}
\keywords{cellular $\mathbb A^1$-homology, Bruhat decomposition,
Milnor--Witt $K$-theory, basic affine space, maximal compact subgroup}

\begin{document}

\begin{abstract}
Bruhat decompositions give cellular models for split algebraic groups,
flag varieties, and maximal compact groups, but motivic boundaries
retain orientation and torus-translation data lost in the flag
quotient.  Over a perfect field of characteristic zero, let the group
be connected, split, semisimple, and simply connected.  Fixing a Borel
subgroup with split maximal torus and unipotent radical, we construct a
torus-enriched motivic cellular complex for the basic affine space and
compute its boundary in every degree.  Each cover in Bruhat order
contributes a two-face operator determined by a transported coroot, a
tail determinant weight, and an explicit Milnor--Witt frame degree.
Bott--Samelson purity proves the formula, while the unipotent torsor
identifies the complex with that of the group.  Over the real numbers,
realization identifies it at chain level with the extended-Weyl complex
of a maximal compact subgroup, while torus augmentation gives the flag
complex.  A single motivic complex therefore interpolates between the
two incidence theories.  A finite torus-support filtration makes this
explicit; after inversion of two it splits by the characters of the
component group of the real split torus, and the support spectral
sequence degenerates.  Calculations in the rank-three special linear
and exceptional rank-two cases exhibit the first higher differentials
beyond the previously known range.
\end{abstract}

\maketitle
\tableofcontents

\section{Introduction}

Bruhat decompositions provide a common combinatorial skeleton for split
algebraic groups, flag varieties, and maximal compact groups.  Their
cellular incidence maps nevertheless appear in different forms:
motivic boundaries carry quadratic orientation data, compact-group
boundaries retain discrete torus labels, and passage to the flag
quotient combines those labels into the classical coefficients
\(0,\pm2\).  The purpose of this paper is to construct a single
torus-enriched motivic complex that accounts for all three phenomena
and whose boundary is explicit in every cellular degree.

Let \(k\) be a perfect field of characteristic zero and let \(G\) be a
connected, split, semisimple, and simply connected linear algebraic
\(k\)-group.  Fix a Borel \(k\)-subgroup \(B\subset G\) and a split
maximal \(k\)-torus \(T\subset B\), write \(U=R_u(B)\), so that
\(B=T\ltimes U\), and choose a split pinning
\((T,B,\{x_\alpha:\mathbb G_a\xrightarrow{\sim}U_\alpha\}_{\alpha\in\Delta})\).
Thus \(\Delta\) is the set of simple roots determined by \(B\), and the
Weyl group is \(W=N_G(T)/T\).  Our basic geometric object is the
cellular \(\Aone\)-chain complex of the basic affine space
\[
  X=G/U,
\]
which also models the group \(G\) through the \(U\)-torsor
\(G\to G/U\).  The Bruhat strata of \(X\) have the form
\[
  BwU/U\cong\mathbb A^{\ell(w)}\times T,
\]
so the motivic cell over \(w\) carries the Hopf algebra
\[
  \HT:=\ZA[T]
\]
of the split torus.  This coefficient algebra separates the two faces
of a cell deletion.  After real realization it becomes the group ring
\(\Z[\pi_0(T(\R))]\) that labels the cells of a maximal compact
subgroup; torus augmentation instead combines the two faces and
produces the flag boundary.

Morel and Sawant introduced cellular \(\Aone\)-homology and constructed
the Bruhat cellular complex of a split reductive group in
\cite{MorelSawant2023}.  They computed its rank-one differential and its
differential in degrees one and two, using the answer to prove a motivic
version of Matsumoto's theorem.  The present paper extends their
explicit low-degree calculation to a closed formula in every degree.

The all-degree formula has two specializations, and it was designed
with both in mind.  We describe the real one first.  Let \(k=\R\), put
\(A=T(\R)^0\), and let \(K\subset G(\R)\) be the pinning-compatible
maximal compact subgroup.  (For split simply connected semisimple
\(G\), the group \(G(\R)\) is automatically connected; see
\Cref{rem:GR-connected}.)  Patr\~ao and Sandoval constructed a Bruhat
CW structure on \(K\) in \cite{PatraoSandoval2026}.
Its cells are indexed by an extension of the Weyl group by
\[
  M=Z_K(A)\cong\pi_0(T(\R)),
\]
and its boundary is more refined than the familiar boundary of the real
flag manifold \(K/M\).  A deletion in a reduced word has two faces.
Before passing to \(K/M\), these faces usually land in two different
\(M\)-labelled cells and have individual coefficients \(\pm1\); only
after forgetting the label do they combine to \(0\) or \(\pm2\).

The second specialization is the flag variety.  For split flag
varieties, the companion preprint \cite{LiuLiu2026} gives, \emph{under
its boundary-data hypothesis} \cite[Definition~2.1]{LiuLiu2026} and in
the range of the signed boundary data constructed there,
\[
  \partial_i^{\Aone}=\eta E_i\qquad(i\geq2),
\]
where \(E_i\) is a length-reindexed half of the integral boundary
matrix \(\partial^{\mathrm{top}}\) of the real flag CW complex.  (Here
and below, \(\partial^{\mathrm{top}}\) denotes the integral cellular
boundary matrix of the real points with their Bruhat CW structure.)
This range restriction is inherited by every statement of the present
paper that refers to the matrices \(E_i\); the torus-enriched formula
itself is proved in all degrees and does not depend on it.

The prescription \(\eta\,\partial^{\mathrm{top}}/2\) cannot be applied
to \(K\).  Already for \(SL_2(\R)\), the maximal compact is a circle
divided into two vertices and two edges, and the group boundary has
entries \(1\) and \(-1\).  There is no integral half-boundary.  The
missing structure is the split torus: Iwasawa decomposition gives
\(X(\R)\cong K\times A\) with \(A\) contractible, and the real
realization of \(\HT\) is \(\Z[M]\), the coefficient ring needed to
remember the two faces separately.

\newpage
\subsection*{Main results and unifying structure}

The central point is not merely that motivic, compact-group, and
flag-variety calculations give compatible numerical answers.  They are
connecting morphisms of one torus-enriched filtered construction.
Over \(\R\), the resulting chain complexes are organized by the
following square:
\begin{equation}\label{eq:intro-unifying-square}
\begin{tikzcd}[column sep=large,row sep=large]
 \CellC_*(G)\cong\CellC_*(X)
   \arrow[r,"-\otimes_{\HT}\Z"]
   \arrow[d,"\operatorname{Re}_{\R}^{\mathrm{fil}}"']
 &
 \CellC_*(G/B)
   \arrow[d,"\operatorname{Re}_{\R}^{\mathrm{fil}}"]
 \\
 C_*^{\mathrm{CW}}(K;\Z)
   \arrow[r,"-\otimes_{\Z[M]}\Z"']
 &
 C_*^{\mathrm{CW}}(K/M;\Z).
\end{tikzcd}
\end{equation}
The vertical arrows in \eqref{eq:intro-unifying-square} mean realization
of the filtered motivic spaces and their purity cofibers, followed by
singular chains and passage to the exact couple; they are not termwise
realization functors on complexes of strictly \(\Aone\)-invariant
sheaves.  The upper horizontal arrow is torus augmentation, while the
lower one identifies the two \(M\)-labelled faces.  Thus the same
geometric connecting morphism retains both faces on the left and
combines them on the right.

The paper has three main structural results, all reflected in
\eqref{eq:intro-unifying-square}.  First,
\Cref{thm:G-X} identifies the Morel--Sawant group complex with the
basic-affine complex, and \Cref{thm:all-degree} computes its boundary in
all degrees by combining a global two-stratum purity model with a local
rank-one clutching calculation.  This yields
\eqref{eq:intro-boundary}, including the torus translation, determinant
weight, and frame degree attached to every Bruhat cover.

Second, \Cref{thm:real-comparison} identifies the real realization of
this complex with the extended-Weyl complex of a maximal compact
subgroup, whereas \Cref{thm:flag-augmentation} identifies torus
augmentation with the flag complex.  The comparison square of
\Cref{thm:comparison-square} places the two incidence theories under
one chain-level construction.  It also produces the compact
fundamental cycle of \Cref{cor:compact-fundamental-cycle}; within the
boundary-data range of \cite{LiuLiu2026}, \Cref{cor:eta} recovers the
flag \(\eta\)-boundary.

Third, the finite torus-support filtration of
\Cref{prop:support-ss,cor:real-support-graded} has associated graded
given explicitly by parity-twisted flag complexes.  After real
realization and inversion of \(2\), \Cref{thm:half-splitting} splits
the filtration along the characters of \(M\) and proves degeneration
of the support spectral sequence at \(E^1\).

The examples test each part of the structure beyond the previously
known range.  The complete \(SL_4\) degree-three differential in
\Cref{prop:SL4-degree-three} is the first new differential after the
degrees computed by Morel--Sawant.  The rank-two frame calculation
\Cref{prop:rank-two-degrees} includes the six-term \(G_2\) braid, and
\Cref{sec:G2} gives the complete exceptional complex.  The resulting
integer homology agrees with the corresponding maximal compact and
flag manifolds.

We work over a perfect field of characteristic zero in order to use a
uniform pinned-coordinate and Milnor--Witt orientation framework.  The
Chevalley constants \(2\) and \(3\) occur in strict triangular terms,
but treating bad characteristic would require separate geometric and
orientation arguments and is not pursued here.

\subsection*{The local operator}

We now give the explicit cover operator underlying the preceding
structure.  Let \(a=s_{i_1}\cdots s_{i_p}\) be a reduced word and suppose
deleting position \(q\) leaves a reduced word for \(b\).  Write
\[
  t_q=s_{i_{q+1}}\cdots s_{i_p},\qquad
  \lambda_{a,q}=t_q^{-1}(\alpha_{i_q}^{\vee}),
\]
and
\[
  \sigma_{a,q}=
  \sum_{\beta\in\Phi^+\cap t_q\Phi^-}
  \langle\beta,\alpha_{i_q}^{\vee}\rangle.
\]
The coroot \(\lambda_{a,q}\) records the second torus-labelled face.  The
integer \(\sigma_{a,q}\) is the determinant weight obtained when the
deleted rank-one torus is moved through the tail root groups.  We define
\(\Dop_{\lambda,\sigma}\) by
\begin{equation}\label{eq:intro-operator}
  \Dop_{\lambda,\sigma}
  \big((u)\otimes x\otimes[t]\big)
  =\ang{u^\sigma}x\otimes[\lambda(u)t]-x\otimes[t].
\end{equation}
This is the twisted reduced class obtained by combining torus
translation with the determinant character
\(u\mapsto\ang{u^\sigma}\).  Only for \(\sigma=0\) is it ordinary
multiplication by \([\lambda(u)]-[1]\).  Its geometric construction on
Thom spaces, given in \Cref{constr:D}, defines a morphism of strictly
\(\Aone\)-invariant sheaves.

If \(\vartheta_{a,q}\in\GW(k)^\times\) is the Milnor--Witt degree of the
change from the deleted word to the chosen normal word for \(b\), the
cover component is
\begin{equation}\label{eq:intro-boundary}
  \partial_{a,b}=(-1)^{q-1}\vartheta_{a,q}
  \Dop_{\lambda_{a,q},\sigma_{a,q}}.
\end{equation}
Here the symmetry that separates the deleted Thom factor is understood;
its precise convention is fixed in \Cref{constr:cut}.
Formula \eqref{eq:intro-boundary} is valid in every cellular degree.  In
degrees one and two it expands to the formulas of Morel--Sawant.
If several deletion positions \(q\) produce the same target \(b\), the
matrix entry \(\partial_{a,b}\) is the sum of the displayed
contributions over those positions.

There are two natural simplifications.  Over \(\R\), put
\(c_{a,q}=\lambda_{a,q}(-1)\in M\).  Formula
\eqref{eq:intro-operator} becomes right multiplication by
\[
  (-1)^{\sigma_{a,q}}c_{a,q}-1\in\Z[M].
\]
After taking the signature of \(\vartheta_{a,q}\), this is precisely the
Patr\~ao--Sandoval coefficient.  On the other hand, after the torus
augmentation \(\HT\to\Z\), it becomes
\[
  (u)\otimes x\longmapsto(\ang{u^\sigma}-1)x,
\]
the Milnor--Witt power-map boundary.  The flag \(\eta\)-formula is
therefore an augmentation of the more primitive group formula, not a
recipe from which the latter can be recovered.  These are the two
specializations already organized by
\eqref{eq:intro-unifying-square}.

\subsection*{Relation to previous work}

The existence of the complex for \(G\), its bimodule structure, and its
low-degree calculation are due to Morel--Sawant
\cite{MorelSawant2023}.  Patr\~ao--Sandoval \cite{PatraoSandoval2026}
constructed the extended-Weyl CW structure on the classical compact
group \(K\) and computed its full integral incidence formula.  The
present comparison identifies that topological complex with the real
realization of the torus-enriched motivic construction.  The companion
preprint \cite{LiuLiu2026} computes flag-variety
\(\Aone\)-differentials within its boundary-data range.  Its use here
is confined to \Cref{prop:flag-orientation-dictionary,cor:eta}, which
retain that hypothesis, and to numerical comparisons; the all-degree
boundary formula, compact comparison, and rank-two frame degrees are
proved independently.  The basic-affine model supplies the common
geometric object: it retains the torus labels needed for the compact
complex and admits augmentation to the flag complex.

Liu--Peng compute cellular \(\Aone\)-homology for smooth toric
varieties and, more specifically, the action of toric group sections
on oriented cubical cells
  \cite[Lemma~2.22, Proposition~2.23, and Proposition~3.1]{LiuPeng2025}.
  Their formulas
provide the toric analogue of the determinant, character-translation,
and lower-face terms in our universal operator.  We use their
rank-one transition calculation as an independent check in
\Cref{rem:Liu-Peng}; the Bruhat two-stratum localization and the
filtered Iwasawa comparison proved here are not consequences of the
toric calculation.

On the real side, the parity phenomenon itself is classical.  The
\(0,\pm2\) boundary coefficients of real flag manifolds and their
control by root heights go back to Kocherlakota's Morse-theoretic
computation of the integral homology of real flag manifolds
\cite{Kocherlakota1995}; cellular proofs and refinements were given by
Rabelo--San Martin \cite{RabeloSanMartin2019}, the root-height
correspondence was made explicit by Lambert--Rabelo
\cite{LambertRabelo2022}, and the cohomological structure, including
the absence of higher \(2\)-power torsion in the even-flag range, was
studied by Matszangosz \cite{Matszangosz2021}; the corresponding
type-\(A\) statement and its motivic explanation were established by
Hudson--Matszangosz--Wendt \cite{HudsonMatszangoszWendt2024}.  This
classical parity rule is the flag-level shadow of the lift constructed
here: the motivic formula retains the two torus-labelled faces whose
difference produces the rule, works over an arbitrary
characteristic-zero base, and interpolates between the compact and flag
complexes.  Rational motives of reductive groups and
several Milnor--Witt motivic cohomology calculations are known by other
methods \cite{Biglari2012,Peng2024}; they do not determine this integral
torus-enriched boundary.

\section{Cellular chains and torus coefficients}
\label{sec:prelim}

Throughout, \(k\) is perfect of characteristic zero and \(G/k\) is split
semisimple and simply connected.  We fix a pinning
\[
  \bigl(T,B,
  \{x_\alpha:\Ga\xrightarrow{\sim}U_\alpha\}_{\alpha\in\Delta}\bigr),
  \qquad B=TU.
\]
The pinning also fixes the opposite root parameters and the rank-one
homomorphisms \(SL_2\to G\).  We order the simple roots.  Write
\(W=N_G(T)/T\), let
\(w_0\) be the longest element, and put \(N=\ell(w_0)=|\Phi^+|\).

We work in the abelian category of strictly \(\Aone\)-invariant
Nisnevich sheaves.  The free strictly \(\Aone\)-invariant Nisnevich
sheaf on a smooth scheme \(Y\) is denoted by
\(\ZA[Y]\).  For a pointed scheme \((Y,y)\), parentheses denote the
reduced sheaf
\(\ZA(Y)=\ker(\ZA[Y]\to\Z)\), where the displayed map is the
augmentation and \(y\) supplies its splitting; in particular
\(\ZA(\Gm)=\KMW1\), with base point \(1\).  Our Milnor--Witt conventions
are
\[
  (uv)=(u)+(v)+\eta(u)(v),\qquad
  \ang u=1+\eta(u),\qquad
  \ang{-1}\eta=-\eta.
\]
See \cite{Morel2012,MorelSawant2023}.  For \(n\in\Z\), the
\emph{signed power degree} is
\[
 n_\eps=\sum_{j=1}^{n}\ang{(-1)^{j-1}}\ \ (n\geq0),\qquad
 n_\eps=-\ang{-1}(-n)_\eps\ \ (n<0);
\]
thus \(n_\eps\in\GW(k)\) is the Milnor--Witt degree of the \(n\)-th
power map on \(\Gm\); see \cite[Section~3.1]{Morel2012}.  Concretely,
\(n_\eps=\tfrac n2h\) for even \(n\) and
\(n_\eps=\tfrac{n-1}2h+1\) for odd \(n\), where
\(h=1+\ang{-1}\); in particular \(n_\eps\eta=0\) for \(n\) even and
\(n_\eps\eta=\eta\) for \(n\) odd.

All tensor products of strictly \(\Aone\)-invariant sheaves below are
the \(\Aone\)-localized tensor products in this abelian category.  We
suppress the localization symbol.

\begin{lemma}\label{lem:MW-kunneth}
For \(r\geq1\), consider the canonical epimorphisms
\[
 \ZA(\Gm)\longrightarrow\KMW1,
 \qquad
 \ZA(\Gm^{\wedge r})\longrightarrow\KMW r.
\]
Their \(\Aone\)-localized tensor product induces an isomorphism
\begin{equation}\label{eq:MW-tensor}
 \KMW1\otimes\KMW r\xrightarrow{\ \sim\ }\KMW{r+1},
\end{equation}
which on sections over a field \(F\) is symbol concatenation
\((u)\otimes(v_1)\cdots(v_r)\mapsto(u)(v_1)\cdots(v_r)\).  Under the
symmetry isomorphism of the localized tensor product, interchanging the
two factors of \(\KMW1\otimes\KMW1\) corresponds under
\eqref{eq:MW-tensor} to multiplication by
\(\eps=-\ang{-1}\) on \(\KMW2\).
\end{lemma}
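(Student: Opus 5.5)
The plan is to reduce the lemma to Morel's computation of free strictly \(\Aone\)-invariant sheaves on smash powers of \(\Gm\), together with the compatibility of the localized tensor product with free sheaves. First, for pointed smooth schemes \(Y,Z\), the localized tensor product satisfies \(\ZA(Y)\otimes\ZA(Z)\cong\ZA(Y\wedge Z)\). This holds because \(\ZA[-]\) is the left adjoint of the forgetful functor from strictly \(\Aone\)-invariant sheaves, so it is symmetric monoidal from pointed sheaves with smash product to the localized tensor structure. Second, I would invoke Morel's theorem \cite{Morel2012} that \(\KMW r\) is the free strictly \(\Aone\)-invariant sheaf on \(\Gm^{\wedge r}\) in the appropriate sense. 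Concretely, the canonical epimorphism \(\ZA(\Gm^{\wedge r})\to\KMW r\) is the universal map to \(\KMW r\): any morphism \(\ZA(\Gm^{\wedge r})\to M\) to a strictly \(\Aone\)-invariant sheaf factors through \(\KMW r\) exactly when it respects the Steinberg-type relations. Morel's statement gives \(\KMW r=\underline{\mathrm{Hom}}\)-corepresenting object in the sense that \(\mathrm{Hom}(\KMW r,M)\cong M_{-r}(k)\) via contractions. I will use this universal property.

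With these two inputs, the map \eqref{eq:MW-tensor} is obtained by tensoring the two epimorphisms. This gives
\[
\ZA(\Gm^{\wedge(r+1)})\cong\ZA(\Gm)\otimes\ZA(\Gm^{\wedge r})\twoheadrightarrow\KMW1\otimes\KMW r ,
\]
where right exactness of the localized tensor product shows the arrow is an epimorphism. The canonical map \(\ZA(\Gm^{\wedge(r+1)})\to\KMW{r+1}\) then factors through \(\KMW1\otimes\KMW r\), because it kills the kernel on each factor: a morphism \(\KMW1\otimes\KMW r\to M\) is the same as \(\KMW1\to\underline{\mathrm{Hom}}(\KMW r,M)=M_{-r}\), and Morel's universal property applied twice identifies both \(\mathrm{Hom}(\KMW1\otimes\KMW r,M)\) and \(\mathrm{Hom}(\KMW{r+1},M)\) with \(M_{-r-1}(k)\), using \((M_{-r})_{-1}=M_{-r-1}\). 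By Yoneda, the induced map is therefore an isomorphism. Evaluating on fields and tracking the generator \((u)\otimes(v_1)\cdots(v_r)\) through these identifications gives symbol concatenation, since every map is the identity on the tautological symbol \([u,v_1,\dots,v_r]\).

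The step I expect to be the main obstacle is making the internal Hom identification \(\underline{\mathrm{Hom}}(\KMW r,M)\cong M_{-r}\) precise at the sheaf level rather than only on sections over \(k\). For this I would cite Morel's contraction theory, \cite[Theorem 3.37 and Section 2]{Morel2012}.

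For the symmetry, the swap on \(\KMW1\otimes\KMW1\) corresponds under the isomorphism to the map induced by the swap \(\tau\) of \(\Gm\wedge\Gm\). The sheaf \(\KMW2\) is generated as a sheaf by symbols \([u][v]\), so it suffices to compute \(\tau\) on symbols over fields. There \(\tau\) sends \([u][v]\) to \([v][u]\), and the graded \(\eps\)-commutativity of \(K^{\mathrm{MW}}_*(F)\) \cite[Lemma 3.7]{Morel2012} gives \([v][u]=\eps[u][v]\) with \(\eps=-\ang{-1}\). Under the paper's notation \((u)=[u]\), this is the claimed multiplication by \(\eps\). Since both sides are morphisms of unramified sheaves that agree on fields, they agree as sheaf maps.
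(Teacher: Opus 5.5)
Your argument is correct and uses the same ingredients as the paper: Morel's Theorem~3.37, the monoidality \(\ZA(Y)\otimes\ZA(Z)\cong\ZA(Y\wedge Z)\), evaluation on the tautological symbol, and \(\eps\)-commutativity together with unramifiedness for the swap. The symmetry part is exactly the paper's argument.

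The difference is in how you obtain the isomorphism. You go through a Yoneda argument with contractions, using \(\mathrm{Hom}(\KMW r,M)\cong M_{-r}(k)\) and \(\underline{\mathrm{Hom}}(\KMW r,M)\cong M_{-r}\). The paper instead reads Theorem~3.37 directly as saying that the canonical maps \(\ZA(\Gm^{\wedge r})\to\KMW r\) are isomorphisms. Combined with your own first input, this gives \(\KMW1\otimes\KMW r\cong\ZA(\Gm)\otimes\ZA(\Gm^{\wedge r})\cong\ZA(\Gm^{\wedge(r+1)})\cong\KMW{r+1}\) at once. So the step you single out as the main obstacle, the sheaf-level identification \(\underline{\mathrm{Hom}}(\KMW r,M)\cong M_{-r}\), can simply be dropped.

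Your description of the epimorphism as the quotient imposing ``Steinberg-type relations'' is inaccurate, and it conflicts with the universal property you then invoke. Since \(\mathrm{Hom}(\ZA(\Gm^{\wedge r}),M)=M_{-r}(k)\) as well, Morel's theorem says every morphism \(\ZA(\Gm^{\wedge r})\to M\) factors through \(\KMW r\). In other words, the kernel is zero and the Steinberg relation already holds in \(\ZA(\Gm\wedge\Gm)\). This is also why your ``kills the kernel on each factor'' step is automatic rather than something requiring proof. Your Yoneda argument still goes through, since it effectively proves these isomorphisms, but it is a detour around what Theorem~3.37 already states.
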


\begin{proof}
By \cite[Theorem~3.37]{Morel2012}, the free strictly
\(\Aone\)-invariant sheaf on the pointed smash power
\(\Gm^{\wedge r}\) is \(\KMW r\); equivalently, \(\KMW r\) is the
\(r\)-fold contracted free sheaf and represents
\(\ZA(\Gm)^{\otimes r}\) after \(\Aone\)-localization, cf.\
\cite[Section~2.2]{MorelSawant2023}.  The universal
property of the localized tensor product then gives
\eqref{eq:MW-tensor}, and evaluation on field points is symbol
concatenation because both epimorphisms send a unit \(u\) to its symbol
\((u)\).  The \(\eps\)-commutativity of symbol multiplication,
\((u)(v)=\eps(v)(u)\), is proved in
\cite[Section~3.1]{Morel2012}; since the
symmetry of the tensor product is computed on the representing smash
factors, it acts by \(\eps\) as asserted.
\end{proof}

The inverses of \eqref{eq:MW-tensor}, with the symmetry isomorphisms in
the indicated factor order, are what is used below to cut a Thom symbol
into ordered \(\KMW1\)-factors; the precise convention is fixed in
\Cref{constr:cut}.

We also record the Thom-determinant fact used throughout.

\begin{lemma}
\label{lem:thom-determinant}
Let \(Y\) be a smooth scheme, \(V\) a trivialized rank-\(c\) vector
bundle on \(Y\), and \(g:V\to V\) a bundle automorphism given in the
trivialization by a morphism \(g:Y\to GL_c\).  Then the induced pointed
endomorphism of \(\Th(V)=V/(V-0)\) acts on
\(\widetilde{\mathbf H}^{\Aone}_*\bigl(\Th(V)\bigr)
\cong\KMW c\otimes\ZA[Y]\)
by
\[
 x\otimes[y]\longmapsto\ang{\det g(y)}\,x\otimes[y].
\]
In particular the action depends only on the determinant composite
\(\det g:Y\to\Gm\).
\end{lemma}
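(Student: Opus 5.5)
The plan is to reduce to a pointwise statement over $Y$, then to the universal case of $GL_c$ acting on $\mathbb A^c$, and finally to diagonal matrices. The identification $\widetilde{\mathbf H}^{\Aone}_*(\Th(V))\cong\KMW c\otimes\ZA[Y]$ comes from the trivialization $\Th(V)\cong(\mathbb A^c/(\mathbb A^c-0))\wedge Y_+\simeq\Gm^{\wedge c}\wedge S^c_s\wedge Y_+$. It is natural in $Y$, and the composite of $g$ with the structure maps is a morphism over $Y$. So we may factor the endomorphism as the composite
\[
\Th(V)\xrightarrow{\ \mathrm{id}\times\Delta\ }\Th(\mathbb A^c)\wedge GL_{c,+}\wedge Y_+\xrightarrow{\ \mathrm{act}\wedge\mathrm{id}\ }\Th(\mathbb A^c)\wedge Y_+ .
\]
Here the first map is $(v,y)\mapsto(v,g(y),y)$. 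This reduces everything to computing the universal action map $\Th(\mathbb A^c)\wedge GL_{c,+}\to\Th(\mathbb A^c)$ on $\Aone$-homology. On homology this is a map $\KMW c\otimes\ZA[GL_c]\to\KMW c$, that is, a coaction-type pairing, and we must show it is $x\otimes[h]\mapsto\ang{\det h}x$. The formula in the statement then follows by precomposing with $[y]\mapsto[g(y)]\otimes[y]$.

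For the universal computation, I would use that $\KMW c$ is $\Aone$-invariant. The pairing $x\otimes[h]\mapsto h_*x$ is determined by a morphism of sheaves $\ZA[GL_c]\to\underline{\mathrm{End}}(\KMW c)$. Since $\underline{\mathrm{End}}(\KMW c)\cong\KMW0=\GW$ (Morel), this is the same as a morphism of sheaves of sets $GL_c\to\GW$. Such a morphism is multiplicative, because composition of automorphisms goes to the product. It therefore suffices to evaluate it on field points and on generators. I would use the factorization $GL_c=SL_c\rtimes\Gm$, with $\Gm$ embedded as $\operatorname{diag}(u,1,\dots,1)$. The group $SL_c$ over a field is generated by elementary matrices, and each elementary matrix is $\Aone$-homotopic to the identity through $t\mapsto e_{ij}(t\lambda)$. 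Hence $SL_c$ acts trivially, and more precisely the map factors through $\det$, since $\GW$ is $\Aone$-invariant and the homotopy lives in families. For $\operatorname{diag}(u,1,\dots,1)$, the Thom space splits as $\Th(\mathbb A^1)\wedge\Th(\mathbb A^{c-1})$. Using \Cref{lem:MW-kunneth}, the action is multiplication by $u$ on the first $\Gm$-smash factor, tensored with the identity. Multiplication by $u$ on $\mathbb A^1/(\mathbb A^1-0)\simeq S^1\wedge\Gm$ acts on $\KMW1$ by $(v)\mapsto(uv)-(u)=\ang u(v)$, by the convention $(uv)=(u)+(v)+\eta(u)(v)$. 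This gives $\ang{\det}$.

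The main obstacle is passing from field points to the sheaf-level statement on $Y$, i.e.\ showing that the morphism $GL_c\to\GW$ genuinely factors through $\det$ as sheaves. I would handle this by the unramifiedness of $\GW$: a morphism between strictly $\Aone$-invariant sheaves (or from a smooth scheme to one) is determined by its values at generic points. Knowing the formula over the function field of $GL_c$, applied to the generic matrix, therefore forces it everywhere. A secondary point is the compatibility of the splitting in the diagonal step with the ordering convention of \Cref{lem:MW-kunneth}. This causes no difficulty here, because the scalar $\ang u$ is central.
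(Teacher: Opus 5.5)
Your proposal is correct and follows essentially the paper's route. Unramifiedness reduces the sheaf-level identity to field-valued points. Over the field, Gaussian elimination writes the matrix as a product of elementary matrices (each \(\Aone\)-homotopic to the identity through \(e_{ij}(s\lambda)\)) and \(\operatorname{diag}(\det,1,\ldots,1)\). The diagonal unit then acts by \(\ang{u}\), because \((uv)-(u)=\ang{u}(v)\).

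The difference is only packaging. You first pull back from the universal \(GL_c\)-action and use \(\underline{\mathrm{End}}(\KMW c)\cong\GW\), so the family becomes the single section \(\ang{\det}\in\GW(GL_c)\), checked at the generic matrix. This makes explicit the reduction to generators \(x\otimes[y]\) that the paper attributes to the universal property of \(\ZA[Y]\).
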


\begin{proof}
Both the endomorphism induced by \(g\) and the endomorphism displayed
in the statement are morphisms of strictly \(\Aone\)-invariant
sheaves.  Such morphisms are determined by their values on finitely
generated separable fields: strictly \(\Aone\)-invariant sheaves are
unramified, and their sections on a smooth integral scheme inject into
the sections over its function field; see
\cite[Chapter~2]{Morel2012}.  It is therefore enough to evaluate after
a field extension \(F/k\) and on the generators
\(x\otimes[y]\), \(y\in Y(F)\), supplied by the universal property of
the free strictly \(\Aone\)-invariant sheaf \(\ZA[Y]\).

Now \(g(y)\in GL_c(F)\).  Gaussian elimination over the field \(F\)
writes it as a product of elementary matrices and
\(\operatorname{diag}(\det g(y),1,\ldots,1)\).  An elementary matrix is
\(\Aone\)-homotopic to the identity through
\(E_{ij}(sa)\), \(s\in\mathbb A^1\), so it acts trivially on the Thom
homology of the fiber.  A diagonal unit \(d\) acts on the first
\(\mathbb P^1\)-factor by the degree
\(\ang d\in\GW(F)\); see \cite[Section~3.1]{Morel2012}.  Under the
ordered Thom isomorphism this is multiplication by \(\ang d\) on
\(\KMW c(F)\).  Taking \(d=\det g(y)\) proves the formula on every
field-valued point and hence proves the asserted equality of sheaf
morphisms.  Notice that this argument does not assert that an arbitrary
family \(g:Y\to GL_c\) is globally elementary or globally
\(\Aone\)-homotopic over \(Y\) to its determinant diagonal.
\end{proof}

\begin{lemma}\label{lem:purity-naturality}
Consider a Cartesian square of smooth schemes
\[
\begin{tikzcd}[column sep=large]
 Z'\arrow[r,hook]\arrow[d,"f_Z"']
 &Y'\arrow[d,"f"]\\
 Z\arrow[r,hook]&Y
\end{tikzcd}
\]
in which the horizontal arrows are regular closed immersions.  Suppose
that \(f\) is smooth; in particular the square is transverse and the
canonical map
\(N_{Z'/Y'}\to f_Z^*N_{Z/Y}\) is an isomorphism.  Then homotopy purity
gives a commutative square in the pointed motivic homotopy category
\[
\begin{tikzcd}[column sep=large]
 Y'/(Y'-Z')\arrow[r,"\simeq"]\arrow[d]
 &\Th(N_{Z'/Y'})\arrow[d]\\
 Y/(Y-Z)\arrow[r,"\simeq"]
 &\Th(N_{Z/Y}).
\end{tikzcd}
\]
For a Cartesian map of nested triples of such pairs, these squares
assemble into a map of the canonical quotient cofiber sequences and
therefore commute with their connecting morphisms.
\end{lemma}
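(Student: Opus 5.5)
The plan is to reduce to the functoriality of the deformation to the normal bundle, which is the standard construction behind the homotopy purity equivalence of Morel--Voevodsky. For a closed immersion \(Z\hookrightarrow Y\), let \(D(Y,Z)\) denote the deformation space. It is the complement in \(\mathrm{Bl}_{Z\times 0}(Y\times\Aone)\) of the strict transform of \(Y\times 0\). It contains \(Z\times\Aone\) as a closed subscheme, with fiber over \(1\) equal to \((Y,Z)\) and fiber over \(0\) equal to \((N_{Z/Y},Z)\). The purity equivalence is the zigzag of quotient maps
\[
 Y/(Y-Z)\to D(Y,Z)/(D(Y,Z)-Z\times\Aone)\leftarrow \Th(N_{Z/Y}),
\]
both of which are \(\Aone\)-equivalences. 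I would first check that the Cartesian square induces a map of deformation spaces \(D(Y',Z')\to D(Y,Z)\) over \(\Aone\) that restricts to \(f\) on the fiber over \(1\) and to the bundle map \(N_{Z'/Y'}\to N_{Z/Y}\) on the fiber over \(0\). Since \(f\) is flat, blowing up commutes with flat base change, so \(\mathrm{Bl}_{Z'\times0}(Y'\times\Aone)\cong \mathrm{Bl}_{Z\times0}(Y\times\Aone)\times_Y Y'\). Because \(Z'=f^{-1}(Z)\), the strict transforms correspond as well, so \(D(Y',Z')=D(Y,Z)\times_YY'\). This map sends \(Z'\times\Aone\) into \(Z\times\Aone\), and the preimage of \(D(Y,Z)-Z\times\Aone\) is \(D(Y',Z')-Z'\times\Aone\). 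Hence it induces maps on all three quotients compatible with the two quotient arrows of the zigzag. This gives a strictly commutative diagram of zigzags of pointed sheaves, and inverting the equivalences yields the asserted commutative square in the pointed motivic homotopy category. The identification \(N_{Z'/Y'}\cong f_Z^*N_{Z/Y}\) is the fiber-over-\(0\) statement of the base change. It is exactly where transversality, guaranteed by smoothness of \(f\), is used.

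For the nested statement, take triples \(Z_2\subset Z_1\subset Y\) and their pullbacks along \(f\). The quotient cofiber sequence
\[
 (Y-Z_2)/(Y-Z_1)\to Y/(Y-Z_1)\to Y/(Y-Z_2)
\]
is built from open inclusions and quotients of sheaves. The map \(f\) preserves all of these opens, because they are preimages of the corresponding opens. Hence \(f\) induces a strict map of cofiber sequences, in particular a map of the homotopy cofiber diagrams, and therefore commutes with the connecting maps to the suspension. The purity squares from the first part, applied to each of the pairs \((Y-Z_2,Z_1-Z_2)\), \((Y,Z_1)\) and \((Y,Z_2)\), are natural in exactly this way. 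Here I would use that the deformation construction is functorial for open restriction as well as for smooth base change, so the identifications with Thom spaces are compatible with the maps of sequences.

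The main obstacle is making sure the square commutes canonically rather than merely up to an unspecified homotopy. This is why I would work with the explicit deformation zigzag, where everything commutes on the nose as maps of sheaves, instead of a black-box purity isomorphism. A secondary point is that the two strict-transform complements really correspond under base change. I would verify this locally, where \(Z\) is cut out by a regular sequence \(g_1,\dots,g_c\) and \(Z'\) by the pulled-back sequence \(f^*g_1,\dots,f^*g_c\), which remains regular by flatness. In these coordinates the deformation space is the explicit affine scheme \(\operatorname{Spec}\mathcal O_Y[t,v_1,\dots,v_c]/(tv_i-g_i)\), and it visibly commutes with flat base change.
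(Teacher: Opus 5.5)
Your proposal is correct and follows essentially the paper's own proof. The paper likewise uses the Morel--Voevodsky deformation to the normal cone and notes that both the blow-up and the removal of the strict transform of \(Y\times\{0\}\) commute with the smooth (hence flat) base change, so the generic-fiber map is \(f\) and the special-fiber map is the normal-bundle map; it then gets the nested statement from strict functoriality of the quotient sequence \(U_1/U_0\to U_2/U_0\to U_2/U_1\). Your version only makes explicit what the paper leaves implicit, namely the strictly commuting zigzag and the local chart \(\operatorname{Spec}\mathcal O_Y[t,v_1,\dots,v_c]/(tv_i-g_i)\).
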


\begin{proof}
Use the deformation-to-the-normal-cone construction of the purity
equivalence \cite[Section~3, Theorem~2.23]{MorelVoevodsky1999}.  The
blow-up defining the deformation space commutes with the displayed
transverse base change, and removing the strict transform of
\(Y\times\{0\}\) commutes with it as well.  On the generic fiber the
resulting map is \(f\), while on the special fiber it is the displayed
normal-bundle map.  The two specialization equivalences therefore give
the commutative square.  For nested triples, all maps are induced before
specialization by inclusions of the corresponding open complements.
The elementary quotient sequence
\(U_1/U_0\to U_2/U_0\to U_2/U_1\) is functorial in a nested triple;
hence specialization produces a map of cofiber sequences, including
the connecting maps.
\end{proof}

\subsection{Cellular structures}

For a cellular filtration
\[
  \varnothing=\Omega_{-1}\subset\Omega_0\subset\cdots\subset\Omega_d=Y,
\]
homotopy purity \cite[Section~3, Theorem~2.23]{MorelVoevodsky1999}
identifies each successive cofiber with the Thom space of
the normal bundle of \(S_p=\Omega_p-\Omega_{p-1}\).  After choosing an orientation,
the corresponding chain group is
\[
  \widetilde{\mathbf H}^{\Aone}_p(\Th(\nu_p))
  \cong\KMW p\otimes\ZA[S_p];
\]
compare \cite[Section~2.3]{MorelSawant2023}.
The connecting morphisms of the associated exact couple form
\(\CellC_*(Y)\).  In particular,
\(\partial^2=0\) follows from the exact couple.  A change of normal
coordinates with Jacobian determinant \(a\) multiplies the Thom generator
by \(\ang a\in\GW(k)\), by \Cref{lem:thom-determinant}.  When a chain
group is written as a direct sum over an index set, \(e_a\) denotes the
image of the canonical inclusion of the summand indexed by \(a\); thus
\(x\,e_a\) is the element \(x\) placed in the \(a\)-summand.

\subsection{The torus algebra}

Set \(\HT=\ZA[T]\).  Since \(G\) is simply connected, the simple
coroots identify \(T\cong\Gm^r\), whence
\begin{equation}\label{eq:HT-decomp}
  \HT\cong\bigotimes_{i=1}^r(\Z\oplus\KMW1)
  \cong
  \Z\oplus
  \bigoplus_{\varnothing\ne S\subseteq\{1,\ldots,r\}}\KMW{|S|}
\end{equation}
as a sheaf.  Multiplication in one torus coordinate uses
\[
  \KMW1\otimes\KMW1=\KMW2\xrightarrow{\eta}\KMW1.
\]
For a cocharacter \(\lambda:\Gm\to T\), write
\begin{equation}\label{eq:reduced-coroot}
  \ol\lambda:\KMW1=\ZA(\Gm)\longrightarrow\HT,\qquad
  (u)\longmapsto[\lambda(u)]-[1].
\end{equation}
This is the motivic analogue of \(c-1\) in a group ring.

\subsection{Normal words}

Let \(\varphi_i:SL_2\to G\) be the pinned rank-one homomorphism and set
\begin{equation}\label{eq:tits-representative}
 n_i=\varphi_i\!\left(\begin{smallmatrix}0&-1\\1&0\end{smallmatrix}\right)
 =x_{\alpha_i}(-1)x_{-\alpha_i}(1)x_{\alpha_i}(-1).
\end{equation}
These Tits representatives satisfy the braid relations and
\[
  n_i^2=\alpha_i^\vee(-1).
\]
See \cite[Expos\'e~XXIII, Section~3]{SGA3}; reduced-word independence is
also the representative-level form of Matsumoto's theorem
\cite{Matsumoto1964}.
For every \(a\in W\), fix a reduced normal word
\(\mathbf a=(i_1,\ldots,i_{\ell(a)})\).  It orders the inversion roots
\[
  \beta_j=s_{i_1}\cdots s_{i_{j-1}}(\alpha_{i_j})
\]
and hence the normal root lines.  The pinning orients each line.

Fix a reduced word for \(w_0\) and its Tits lift \(n_{w_0}\).  If
\(n_a=n_{i_1}\cdots n_{i_{\ell(a)}}\), the braid relations make
\(n_a\) independent of the chosen reduced word for \(a\).  We use the
\emph{opposition gauge}
\begin{equation}\label{eq:opposition-gauge}
 \dot w_a=n_{w_0}n_a\in N_G(T)
\end{equation}
as the base point of the stratum indexed by \(w_0a\).  It represents
\(w_0a\), and left translation by \(n_{w_0}^{-1}\) sends
\(\dot w_aT\) exactly to \(n_aT\).  If instead the lift in the
\(a\)-summand is changed from \(\dot w_a\) to
\(\dot w_a\tau_a^{-1}\), for \(\tau_a\in T(k)\), the entry
\(\partial_{a,b}\) below is replaced by
\(R_{\tau_b}\partial_{a,b}R_{\tau_a}^{-1}\), where \(R_\tau\) denotes
right translation.  Thus the un-conjugated operator formula is literal
in the opposition gauge.

We now make the coordinate notation explicit.  If
\(\mathbf c=(j_1,\ldots,j_d)\) is any reduced word for \(c\), put
\[
  \beta_\nu(\mathbf c)=
  s_{j_1}\cdots s_{j_{\nu-1}}(\alpha_{j_\nu}),\qquad
  n_{<\nu}=n_{j_1}\cdots n_{j_{\nu-1}},
\]
and put \(\delta_\nu(\mathbf c)=w_0\beta_\nu(\mathbf c)\in\Phi^-\).
Define the oriented actual normal-root parameter
\[
  x_{\delta_\nu(\mathbf c)}^{\mathbf c}(z)=
  n_{w_0}n_{<\nu}\,x_{\alpha_{j_\nu}}(z)\,
  n_{<\nu}^{-1}n_{w_0}^{-1}.
\]
The ordered multiplication map
\begin{equation}\label{eq:root-chart}
  \Phi_{\mathbf c}:\mathbb A^d\xrightarrow{\sim}V_{w_0c},
  \qquad
  (z_1,\ldots,z_d)\longmapsto
  \prod_{\nu=1}^d
  x_{\delta_\nu(\mathbf c)}^{\mathbf c}(z_\nu)
\end{equation}
is an isomorphism onto the actual normal unipotent group: indeed,
\[
 \{\gamma\in\Phi^-:(w_0c)^{-1}\gamma\in\Phi^-\}
 =w_0\{\beta\in\Phi^+:c^{-1}\beta\in\Phi^-\}.
\]
The product is taken from left to right.  In the opposition gauge it is
the usual positive Bott--Samelson product, since
\begin{equation}\label{eq:opposition-bott-samelson}
 n_{w_0}^{-1}\Phi_{\mathbf c}(\mathbf z)\dot w_c
 =x_{\alpha_{j_1}}(z_1)n_{j_1}\cdots
  x_{\alpha_{j_d}}(z_d)n_{j_d}.
\end{equation}
Thus \eqref{eq:root-chart}, followed by multiplication with the
Bruhat stratum, is the actual normal tubular chart used to orient its
normal bundle.  Notice that its roots are \(w_0\beta_\nu\), not
\(-\beta_\nu\); this opposition transport is essential already in
type \(A_2\).

If deleting position \(q\) gives a reduced word
\(\widehat{\mathbf a}_q\) for \(b\), compare its root-subgroup chart with
the chosen chart for \(\mathbf b\).  Define
\begin{equation}\label{eq:theta-word}
  \vartheta_{a,q}=
  \deg^{\Aone}(\Phi_{\mathbf b}^{-1}
                 \Phi_{\widehat{\mathbf a}_q})
  \in\GW(k)^\times.
\end{equation}
The composite in \eqref{eq:theta-word} is an algebraic automorphism of
\(\mathbb A^{\ell(b)}\).  Its Jacobian is a unit in a polynomial ring
over \(k\), hence a constant \(a_{a,q}\in k^\times\), and
\(\vartheta_{a,q}=\ang{a_{a,q}}\).  Thus it is the determinant class
comparing the two actual normal frames, not a path-assigned sign.  The
normalized values of the elementary moves in every rank-two type are
computed in \Cref{prop:rank-two-degrees} below: commutations have
degree \(\ang{-1}\), \(A_2\) braids have degree \(\ang1\), and the long
\(B_2\) and six-term \(G_2\) braids both have degree \(\ang{-1}\).

\begin{proposition}\label{prop:frame-algorithm}
The factor \(\vartheta_{a,q}\) is effectively computable from the pinned
root datum in every root type.  More precisely, choose a sequence of
braid moves from \(\widehat{\mathbf a}_q\) to \(\mathbf b\).  For each
move, form the rank-two root-coordinate transition using the pinned
Chevalley commutator relations.  If \(J_\nu\in k^\times\) is its
Jacobian determinant at the origin, then
\[
 \vartheta_{a,q}=\left\langle\prod_\nu J_\nu\right\rangle.
\]
This includes the six-term \(G_2\) braid and is independent of the
chosen braid path.
\end{proposition}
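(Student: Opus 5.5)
The plan is to factor the composite \(\Phi_{\mathbf b}^{-1}\Phi_{\widehat{\mathbf a}_q}\) along a chain of reduced words and use multiplicativity of \(\Aone\)-degree (equivalently, of Jacobians) under composition. By Matsumoto's theorem \cite{Matsumoto1964}, any two reduced words for \(b\) are joined by a finite sequence of braid moves \(\widehat{\mathbf a}_q=\mathbf c^{(0)},\mathbf c^{(1)},\dots,\mathbf c^{(m)}=\mathbf b\). Each \(\Phi_{\mathbf c^{(\nu)}}\) is an isomorphism \(\mathbb A^{\ell(b)}\xrightarrow{\sim}V_{w_0b}\) onto the same group, so
\[
 \Phi_{\mathbf b}^{-1}\Phi_{\widehat{\mathbf a}_q}
 =\prod_{\nu=1}^{m}\bigl(\Phi_{\mathbf c^{(\nu)}}^{-1}\Phi_{\mathbf c^{(\nu-1)}}\bigr).
\]
As noted after \eqref{eq:theta-word}, each factor is a polynomial automorphism of affine space, so its Jacobian is a constant in \(k^\times\). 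The chain rule then gives that the total Jacobian is the product of the factor Jacobians. Since the Jacobian is constant, it can be evaluated at the origin, and \(\vartheta_{a,q}=\ang{\prod_\nu J_\nu}\). This also establishes path independence at once: the left-hand side does not depend on the path, so neither does the product.

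The remaining step is to identify each factor with a rank-two computation, which is what makes the formula effective. A braid move replaces a consecutive block \((i,j,i,\dots)\) by \((j,i,j,\dots)\) of length \(m_{ij}\), occupying positions \(r+1,\dots,r+m_{ij}\). By \eqref{eq:opposition-bott-samelson}, after left translation by \(n_{w_0}^{-1}\) and right translation by \(\dot w_b^{-1}\), both charts are Bott--Samelson products. These products agree outside the block, because the Tits representatives satisfy the braid relations, so the prefix \(n_{<r+1}\) and the suffix are unchanged. The transition therefore fixes the coordinates before the block. On the block, it is the map expressing
\[
 x_{\alpha_i}(z_1)n_i x_{\alpha_j}(z_2)n_j\cdots
\]
as
\[
 x_{\alpha_j}(z_1')n_j\cdots
\]
inside the rank-two Levi subgroup \(\langle U_{\pm\alpha_i},U_{\pm\alpha_j}\rangle\). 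This is computed by the pinned Chevalley commutator relations.

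One complication is that the coordinates after the block may be transformed too. A priori the new block coordinates and the old suffix coordinates mix, because the block product is rewritten as \(u\,n_{\text{block}}\) and \(u\) must be absorbed into the later factors. I would handle this by ordering the variables as (prefix, block, suffix) and showing that the transition matrix is block triangular. The prefix is fixed. The block coordinates depend only on the block. The suffix coordinates are modified only by translations that depend on earlier variables, namely conjugation through \(n\)'s by unipotent elements, which preserve the root-subgroup product structure up to lower-order terms. Then the Jacobian is the rank-two block Jacobian, which is the \(J_\nu\) of the statement.

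The main obstacle is the effectivity in the six-term \(G_2\) braid, together with the triangularity argument just described. For \(G_2\), the rank-two transition involves the structure constants \(\pm1,\pm2,\pm3\) and is not linear. However, only its Jacobian at the origin is needed, and at the origin only the linear parts of the commutator relations contribute. I would therefore compute the linearization: to first order the transition is a signed permutation of the root lines, where the root line at position \(\nu\) is sent to the position carrying the same root \(\beta\) in the new word, and the signs come from \(n_i x_\alpha(z) n_i^{-1}=x_{s_i\alpha}(\pm z)\). This reduces \(J_\nu\) to the sign of a permutation times the product of the Tits signs, which is computable from the pinned root datum in every type, including \(G_2\).
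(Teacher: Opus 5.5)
Your proposal follows the paper's proof. Both take a Matsumoto braid path and factor \(\Phi_{\mathbf b}^{-1}\Phi_{\widehat{\mathbf a}_q}\) into elementary transitions. Both then use constancy of the Jacobian of a polynomial automorphism, the chain rule, and path independence, since every path composes to the same automorphism. Your first-order computation of \(J_\nu\) as a signed permutation of root lines is what the paper records immediately afterwards in \Cref{lem:frame-gauge}.

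The step to repair is the suffix ``complication'': it does not actually occur.

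\begin{itemize}
\item In the Bott--Samelson form \eqref{eq:opposition-bott-samelson}, the block \(x_{\alpha_i}(z_{r+1})\,n_i\,x_{\alpha_j}(z_{r+2})\,n_j\cdots\) equals \(u\,n_{ij}\).
\item Here \(n_{ij}=n_in_jn_i\cdots=n_jn_in_j\cdots\) by the braid relation for Tits representatives.
\item The factor \(u\) is the ordered product, in the inversion order of the block word, of root-subgroup elements for all positive roots of the rank-two subsystem.
\item For either alternating word, this product map is an isomorphism onto the same unipotent group \(U^+_{ij}\).
\end{itemize}

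Hence there are unique new block coordinates for which the two block products are equal as elements of \(G\). Nothing is left over to absorb, so the elementary transition is literally the identity on both prefix and suffix coordinates, exactly as the paper asserts.

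You should replace your block-triangularity argument with this observation. As phrased, it is not justified. Absorbing a leftover unipotent element into later factors would require passing it through Tits representatives. That generally produces negative-root factors, not a mere translation of the suffix coordinates by functions of earlier variables.
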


\begin{proof}
Matsumoto's theorem supplies a finite braid path.  An elementary move
changes only the root coordinates in its rank-two subsystem and is the
identity on all remaining coordinates.  Repeated use of the pinned
Chevalley commutator relations gives the corresponding polynomial
coordinate automorphism and hence its constant nonzero Jacobian
\(J_\nu\).  The composite of these elementary transitions is exactly
\(\Phi_{\mathbf b}^{-1}\Phi_{\widehat{\mathbf a}_q}\), so the chain rule
and \eqref{eq:theta-word} give the displayed product.  A second braid
path has the same composite polynomial automorphism; consequently its
Jacobian product is the same.
\end{proof}

We now carry out the algorithm in every rank-two type.  This gives a
uniform local calculation for all braid lengths, including the
exceptional six-term braid.  For a rank-two Weyl group with simple reflections
\(s_1,s_2\) and longest element of length \(d\), the two reduced words
of the longest element are the alternating words
\(\mathbf c_1=(1,2,1,\ldots)\) and \(\mathbf c_2=(2,1,2,\ldots)\) of
length \(d\), and the corresponding elementary braid move is the
transition \(\Phi_{\mathbf c_2}^{-1}\Phi_{\mathbf c_1}\).

\begin{lemma}
\label{lem:frame-gauge}
Let \(\mathbf c=(j_1,\ldots,j_d)\) be a reduced word for \(c\).  The
derivative of \(\Phi_{\mathbf c}\) at the origin sends the \(\nu\)-th
coordinate vector to
\[
 v_\nu(\mathbf c)
 =\operatorname{Ad}(n_{w_0})\operatorname{Ad}(n_{<\nu})\,
   e_{\alpha_{j_\nu}},
\]
and \(v_\nu(\mathbf c)=\kappa_\nu(\mathbf c)\,
e_{w_0\beta_\nu(\mathbf c)}\) for a sign
\(\kappa_\nu(\mathbf c)\in\{\pm1\}\), where
\(\beta_\nu(\mathbf c)\) are the inversion roots of \(\mathbf c\) and
\(e_\gamma\) are the pinned Chevalley root vectors.  Consequently, for
two reduced words \(\mathbf c,\mathbf c'\) of the same element, the
constant Jacobian of \(\Phi_{\mathbf c'}^{-1}\Phi_{\mathbf c}\) equals
\[
\begin{aligned}
 J(\mathbf c,\mathbf c')
 &=
 \frac{\det\bigl(v_1(\mathbf c),\ldots,v_d(\mathbf c)\bigr)}
      {\det\bigl(v_1(\mathbf c'),\ldots,v_d(\mathbf c')\bigr)}\\
 &=\sgn(\pi)\prod_\nu
   \kappa_\nu(\mathbf c)\kappa_\nu(\mathbf c')
 \in\{\pm1\}.
\end{aligned}
\]
where \(\pi\) is the permutation matching the two inversion orders and
the determinants are taken in any fixed ordered basis of root vectors.
This value is unchanged if each pinned root vector \(e_\gamma\) is
replaced by \(\zeta_\gamma e_\gamma\) with
\(\zeta_\gamma\in\{\pm1\}\): both determinants are multiplied by the
same constant \(\prod_\gamma\zeta_\gamma\).
\end{lemma}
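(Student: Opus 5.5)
The plan is to compute the derivative of the ordered product map \(\Phi_{\mathbf c}\) at the origin directly, identify each column with a root vector up to sign, and then get the Jacobian of the transition map by the chain rule.

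\emph{Step 1: the derivative.} Since \(\Phi_{\mathbf c}(0)=1\) and \(\Phi_{\mathbf c}\) is an ordered product of one-parameter subgroups, the derivative at the origin of a product of maps that are each the identity at \(0\) is the sum of the individual derivatives. Thus \(d\Phi_{\mathbf c}(0)\) sends the \(\nu\)-th coordinate vector to the derivative at \(z=0\) of \(x^{\mathbf c}_{\delta_\nu(\mathbf c)}(z)\). This map is conjugation by \(n_{w_0}n_{<\nu}\) applied to \(x_{\alpha_{j_\nu}}(z)\), so its derivative is \(\operatorname{Ad}(n_{w_0})\operatorname{Ad}(n_{<\nu})e_{\alpha_{j_\nu}}\), using \(dx_\alpha(0)=e_\alpha\) from the pinning.

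\emph{Step 2: the sign \(\kappa_\nu\).} I would invoke the standard fact for Tits representatives in a Chevalley basis: \(\operatorname{Ad}(n_i)e_\gamma=\pm e_{s_i\gamma}\), which follows from the explicit formula \eqref{eq:tits-representative} and the Chevalley commutator structure constants (SGA3 XXIII, or Steinberg's lectures). Iterating along \(n_{<\nu}\) and then \(n_{w_0}\) gives \(v_\nu=\pm e_{\gamma}\) with \(\gamma=w_0s_{j_1}\cdots s_{j_{\nu-1}}\alpha_{j_\nu}=w_0\beta_\nu(\mathbf c)\).

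\emph{Step 3: the transition Jacobian.} From \S\ref{sec:prelim}, \(\Phi_{\mathbf c'}^{-1}\Phi_{\mathbf c}\) is an automorphism of \(\mathbb A^d\) with constant Jacobian. By the chain rule this Jacobian is \(\det(d\Phi_{\mathbf c'}(0))^{-1}\det(d\Phi_{\mathbf c}(0))\), where both determinants are computed in a common basis of \(\operatorname{Lie}V_{w_0c}\); this is the displayed ratio. Both words have the same inversion set, and the two inversion orders differ by a permutation \(\pi\). Hence \(\det(v(\mathbf c))=\prod_\nu\kappa_\nu(\mathbf c)\cdot\det(e_{w_0\beta_\nu(\mathbf c)})_\nu\), and reordering the second family of root vectors to match the first introduces \(\sgn(\pi)\). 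Since \(\kappa^{-1}=\kappa\), the ratio is \(\sgn(\pi)\prod_\nu\kappa_\nu(\mathbf c)\kappa_\nu(\mathbf c')\).

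\emph{Step 4: independence of the basis signs.} The gauge invariance is immediate. Rescaling \(e_\gamma\mapsto\zeta_\gamma e_\gamma\) changes the coordinate expression of each determinant by the same product of the \(\zeta_\gamma\) over the common root set, so the ratio is unchanged.

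The only substantive point is the sign claim in Step 2, namely that \(\operatorname{Ad}(n_i)\) permutes the Chevalley root vectors up to sign. I would cite it from the structure theory of Chevalley groups rather than rederive it.
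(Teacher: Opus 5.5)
Your proposal is correct and follows the paper's proof essentially step for step: you differentiate the ordered product at the origin, use that each \(\operatorname{Ad}(n_i)\) permutes the Chevalley root lines up to sign, evaluate the constant Jacobian at the origin by the chain rule, and note that a sign gauge multiplies both frame determinants by the same product. The only differences are cosmetic: you take constancy of the Jacobian from the polynomial-unit remark in Section~2 rather than from \Cref{prop:frame-algorithm}, and you write out the \(\sgn(\pi)\) bookkeeping that the paper leaves implicit.
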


\begin{proof}
Differentiating the ordered product \eqref{eq:root-chart} at
\(\mathbf z=0\) gives exactly the displayed conjugated root vectors.
Each \(\operatorname{Ad}(n_i)\) permutes the root lines according to
\(s_i\) and acts on each by \(\pm1\), because the pinned Tits
representative normalizes \(T\) and preserves the Chevalley lattice
\cite[Expos\'e~XXIII, Section~6]{SGA3}; hence
\(v_\nu=\pm e_{w_0\beta_\nu}\).  The Jacobian of the transition is
constant by \Cref{prop:frame-algorithm}, so it equals its value at the
origin, which is the change of basis between the two frames.  The
common factor \(\operatorname{Ad}(n_{w_0})\) cancels in the ratio.  The
last claim is immediate since every positive root occurs exactly once
in each frame.
\end{proof}

\begin{proposition}
\label{prop:rank-two-degrees}
In pinned Chevalley coordinates, the elementary braid transitions
\(\Phi_{\mathbf c_2}^{-1}\Phi_{\mathbf c_1}\) have the constant
Jacobians
\[
\begin{array}{lcccc}
\toprule
\text{type} & A_1\times A_1 & A_2 & B_2=C_2 & G_2\\
\midrule
\text{degree }\vartheta & \ang{-1} & \ang{1} & \ang{-1} & \ang{-1}\\
\bottomrule
\end{array}
\]
In particular the frame factor \(\vartheta_{a,q}\) of
\eqref{eq:theta-word} lies in \(\{\ang1,\ang{-1}\}\) for every root
type, every pair of reduced words, and every reduced deletion.
\end{proposition}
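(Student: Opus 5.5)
The plan is to reduce every elementary braid move to its rank-two Levi subgroup and then evaluate the sign formula of \Cref{lem:frame-gauge} type by type.

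\emph{Reduction to rank two.} Suppose a braid move replaces a consecutive alternating block \(\mathbf c_1\) by \(\mathbf c_2\) inside a longer reduced word \(\mathbf p\,\mathbf c_1\,\mathbf s\).
\begin{itemize}
\item The frame vectors at positions in the prefix \(\mathbf p\) are literally unchanged.
\item The frame vectors at positions in the suffix \(\mathbf s\) are also unchanged, because \(n_{\mathbf c_1}=n_{\mathbf c_2}\) by the braid relations of the Tits representatives.
\item The block vectors are \(\operatorname{Ad}(n_{w_0})\operatorname{Ad}(n_{\mathbf p})\) applied to the rank-two frames \(\operatorname{Ad}(n_{<\nu})e_{\alpha_{j_\nu}}\) computed inside the Levi subgroup of \(\{\alpha_1,\alpha_2\}\).
\end{itemize}
The common invertible operator \(\operatorname{Ad}(n_{w_0})\operatorname{Ad}(n_{\mathbf p})\) cancels in the determinant ratio. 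Hence the Jacobian of the move equals \(J(\mathbf c_1,\mathbf c_2)\) computed in the pinned rank-two group. By \Cref{lem:frame-gauge} we may also rescale root vectors by signs \(\zeta_\gamma\) freely, so any convenient Chevalley basis of the rank-two algebra may be used.

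\emph{The permutation sign.} For the longest element of a dihedral group, the two alternating words induce mutually reversed orders on the positive roots: the reflection order of \(\mathbf c_2\) is the reverse of that of \(\mathbf c_1\). So \(\pi\) is the reversal on \(d\) letters and \(\sgn(\pi)=(-1)^{d(d-1)/2}\). This gives \(-1\) for \(d=2\), \(-1\) for \(d=3\), \(+1\) for \(d=4\), and \(-1\) for \(d=6\).

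\emph{The \(\kappa\)-signs.} It remains to compute \(\prod_\nu\kappa_\nu(\mathbf c_1)\kappa_\nu(\mathbf c_2)\). I will use the standard formula
\[
\operatorname{Ad}(n_i)e_\beta=\eta_{i,\beta}\,e_{s_i\beta},
\]
where \(\eta_{i,\beta}=\pm1\) is read off from the \(\alpha_i\)-string through \(\beta\) and the structure constants. The targets are as follows.
\begin{itemize}
\item \(A_1\times A_1\): \(\operatorname{Ad}(n_1)\) fixes \(e_{\alpha_2}\), so all \(\kappa_\nu=+1\) and \(J=-1\).
\item \(A_2\): the two middle vectors are \(\operatorname{Ad}(n_1)e_{\alpha_2}\) and \(\operatorname{Ad}(n_2)e_{\alpha_1}\). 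They are proportional to \(e_{\alpha_1+\alpha_2}\) with opposite signs, since \(N_{\alpha_1,\alpha_2}=-N_{\alpha_2,\alpha_1}\). The last vectors are \(\operatorname{Ad}(n_1n_2)e_{\alpha_1}=\pm e_{\alpha_2}\) and \(\operatorname{Ad}(n_2n_1)e_{\alpha_2}=\pm e_{\alpha_1}\), which I expect to contribute matching signs. The \(\kappa\)-product should then be \(-1\), giving \(J=+1\).
\item \(B_2\): since \(\sgn(\pi)=+1\), one must show the \(\kappa\)-product is \(-1\).
\item \(G_2\): since \(\sgn(\pi)=-1\), one must show the \(\kappa\)-product is \(+1\).
\end{itemize}
Thus the claimed degrees are \(\ang{-1},\ang1,\ang{-1},\ang{-1}\). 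As a cross-check in \(B_2\) and \(G_2\), I will also compute the linear part of \(\Phi_{\mathbf c_2}^{-1}\Phi_{\mathbf c_1}\) directly from the pinned Chevalley commutator relations. Only the linear term at the origin matters, so the constants \(2\) and \(3\) in the commutator formulas drop out.

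\emph{Main obstacle.} The hard step is the sign bookkeeping of \(\eta_{i,\beta}\) for the long strings in \(B_2\) and especially the six-term \(G_2\) braid. There one must track \(\operatorname{Ad}\) of products of up to five Tits representatives on each root vector. I would first fix an explicit Chevalley basis of \(\mathfrak g_2\), using the tables of structure constants together with the sign-rescaling freedom to normalize as many \(N_{\alpha,\beta}\) as possible to be positive. I would then iterate the \(\eta\)-formula mechanically. If this becomes unwieldy, I would instead compute the rank-two calculation in a faithful representation: \(Sp_4\) for \(B_2\), and the seven-dimensional representation for \(G_2\). There the matrices \(n_i\) and \(x_\alpha(z)\) are explicit, and the frame determinants can be read off directly.

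\emph{The "in particular" claim.} By \Cref{prop:frame-algorithm}, \(\vartheta_{a,q}=\ang{\prod_\nu J_\nu}\), and the reduction above shows that each \(J_\nu\) is one of the four rank-two values. Each of these lies in \(\{\pm1\}\), so the product is \(\pm1\) and \(\vartheta_{a,q}\in\{\ang1,\ang{-1}\}\).
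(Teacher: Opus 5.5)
Your framework is the paper's. \Cref{lem:frame-gauge} reduces everything to $\sgn(\pi)$ and the $\kappa$-signs, and your reversal observation correctly gives $\sgn(\pi)=(-1)^{d(d-1)/2}$, which agrees with the product of the two axis-permutation signs in the paper's table. The Levi reduction and the ``in particular'' clause are also fine; the latter is actually immediate from \Cref{lem:frame-gauge}, which already gives $J\in\{\pm1\}$ for any two reduced words.

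The gap is that the $\kappa$-products are never computed, and they are the entire content of the $B_2$ and $G_2$ entries. You only record the value they ``must'' have in order to reproduce $\ang{-1}$, which reads the answer off the statement, and then describe how you would compute them. As written, nothing in the proposal rules out $J=+1$ in either type. The same holds in milder form for $A_2$, where the last pair of signs is only ``expected''. Your expectation is correct: with the standard $SL_3$ pinning, $\operatorname{Ad}(n_1n_2)E_{12}=E_{23}$ and $\operatorname{Ad}(n_2n_1)E_{23}=E_{12}$.

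The paper closes this by iterating $\operatorname{Ad}(n_i)e_\gamma=\pm e_{s_i\gamma}$ and tabulating the sign strings. For $B_2$ these are $(+,-,+,+)$ on axes $1432$ and $(+,-,-,+)$ on $2341$. For $G_2$ they are $(+,+,-,-,+,+)$ on $154632$ and $(+,-,-,-,-,+)$ on $236451$. The resulting $\kappa$-products are $-1$ and $+1$ respectively.

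If you want to avoid the $G_2$ bookkeeping, there is a shortcut in the even-length types (those with $w_0=-1$, where $\mathbf c_2$ is the reverse of $\mathbf c_1$). Put $u_\nu(\mathbf c)=\operatorname{Ad}(n_{<\nu})e_{\alpha_{j_\nu}}$. Use three facts:
\begin{itemize}
\item $n_{<\nu}=n_{w_0}(n_{j_\nu}\cdots n_{j_d})^{-1}$;
\item $n_j^{-1}=n_j\,\alpha_j^\vee(-1)$;
\item the pinned involution $\theta$ fixes every $n_j$ and sends $e_{\alpha_j}$ to $-e_{-\alpha_j}$.
\end{itemize}
These give $u_\nu(\mathbf c_1)=(-1)^{\operatorname{ht}\beta_\nu(\mathbf c_1)-1}\,L\,u_{d+1-\nu}(\mathbf c_2)$ with $L=\operatorname{Ad}(n_{w_0})\theta$. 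Since $w_0=-1$, $L$ is trivial on $\mathfrak t$ and acts on each $\mathfrak g_\gamma$ by a sign $\chi(\gamma)$ that is multiplicative in $\gamma$. Hence $J=(-1)^{d(d-1)/2}(-1)^{\sum_{\gamma>0}(\operatorname{ht}\gamma-1)}\chi(2\rho)$.
\begin{itemize}
\item In $G_2$, $2\rho=10\alpha_1+6\alpha_2$ forces $\chi(2\rho)=1$, so $J=(-1)^{15}(-1)^{10}=-1$.
\item In $B_2$, $2\rho=4\alpha_1+3\alpha_2$. The single check $\operatorname{Ad}(n_{w_0})e_{-\alpha_2}=-e_{\alpha_2}$ in $Sp_4$ gives $\chi(\alpha_2)=1$, so $J=(-1)^{6}(-1)^{3}=-1$.
\end{itemize}
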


\begin{proof}
By \Cref{lem:frame-gauge} it suffices to compute the two ordered sign
vectors \(\kappa_\nu(\mathbf c_i)\) and the inversion-matching
permutation; the result is independent of the residual sign gauge of
the Chevalley basis.  For each type order the positive-root axes as
\[
 \begin{array}{c|l}
 A_1\times A_1&
   \alpha_1,\alpha_2\\
 A_2&
   \alpha_1,\alpha_2,\alpha_1+\alpha_2\\
 B_2&
   \alpha_1,\alpha_2,\alpha_1+\alpha_2,2\alpha_1+\alpha_2\\
 G_2&
   \alpha_1,\alpha_2,\alpha_1+\alpha_2,2\alpha_1+\alpha_2,
   3\alpha_1+\alpha_2,3\alpha_1+2\alpha_2 .
 \end{array}
\]
Here \(\alpha_1\) is short in the last two rows.  In the following
finite table we use one Chevalley sign gauge extending the pinning;
changing that gauge multiplies the two frame determinants by the same
factor, by \Cref{lem:frame-gauge}.  The entry \(154632\), for example,
means that the frame
axes occur in the order
\((\gamma_1,\gamma_5,\gamma_4,\gamma_6,\gamma_3,\gamma_2)\);
the adjacent sign string records their coefficients.  Iterating the
pinned relation
\(\operatorname{Ad}(n_i)e_\gamma=\pm e_{s_i\gamma}\) along the two
alternating words.  The resulting data are
\[
{\setlength{\arraycolsep}{3pt}
\begin{array}{c|ccc|ccc|c}
\toprule
 &\multicolumn{3}{c|}{\mathbf c_1}
 &\multicolumn{3}{c|}{\mathbf c_2}&
 J(\mathbf c_1,\mathbf c_2)\\
\text{type}&\text{axes}&\text{signs}&\det&
             \text{axes}&\text{signs}&\det&\\
\midrule
A_1\times A_1&12&++&+1&21&++&-1&-1\\
A_2&132&+++&-1&231&+-+&-1&+1\\
B_2&1432&+-++&+1&2341&+--+&-1&-1\\
G_2&154632&++--++&+1&236451&+----+&-1&-1\\
\bottomrule
\end{array}
}
\]
before the common transport by \(\operatorname{Ad}(n_{w_0})\), which
cancels.  Each determinant in the table is the sign of its displayed
axis permutation times the product of its displayed coordinate signs.
For example, the two \(G_2\) permutations have respectively eight and
seven inversions, while both sign products are \(+1\); hence their
determinants are \(+1\) and \(-1\).  The final column is the quotient
of the two determinants and proves the asserted four values.  The
\(B_2\) row is equivalently the linear transition displayed in
\Cref{ex:sp4}.  Finally, in every type the transition matrix is a
signed permutation by \Cref{lem:frame-gauge}, so
\(\vartheta_{a,q}\) is a product of classes \(\ang{\pm1}\) by
\Cref{prop:frame-algorithm}.
\end{proof}

\begin{remark}\label{rem:no-exotic-units}
For pinned root frames, every frame factor lies in
\(\{\ang{1},\ang{-1}\}\): by \Cref{lem:frame-gauge} the constant
Jacobian is always \(\pm1\).  Rescaling a pinned root parameter
\(x_\gamma(z)\mapsto x_\gamma(cz)\) multiplies both frames' occurrences
of \(e_\gamma\) by the same \(c\), so even non-normalized pinnings
change \(\vartheta_{a,q}\) at most by squares, which are trivial in
\(\GW(k)\).  Nontrivial units enter the complex only through the
choice-change conjugations of \Cref{prop:choices}, never through the
braid transitions themselves.
\end{remark}

\section{The basic affine model}
\label{sec:base-affine}

Put \(X=G/U\).  It is a smooth quasi-affine scheme with a free right
\(T=B/U\)-action.  For \(v\in W\), set
\[
 \Phi_v=\{\alpha\in\Phi^+:v(\alpha)\in\Phi^-\},\qquad
 U_v=\prod_{\alpha\in\Phi_v}U_\alpha,
\]
where the product is taken in a convex root order.  Bruhat decomposition
descends to
\begin{equation}\label{eq:X-bruhat}
  X=\coprod_{w\in W}X_w,\qquad
  X_w=Bn_wU/U\cong U_{w^{-1}}\times T
  \cong\mathbb A^{\ell(w)}\times T.
\end{equation}
Let
\[
  \Omega_p^X=\bigcup_{\ell(w)\geq N-p}X_w.
\]
The closure relation makes this an increasing filtration by open
subschemes.

\begin{proposition}\label{prop:X-cellular}
The filtration \(\Omega_\bullet^X\) is cellular.  With the normal
orientations above,
\begin{equation}\label{eq:X-chains-w}
  \CellC_p(X)\cong
  \bigoplus_{\ell(w)=N-p}\KMW p\otimes\ZA[n_wT].
\end{equation}
After the length reversal \(w=w_0a\),
\begin{equation}\label{eq:X-chains-a}
  \CellC_p(X)\cong
  \bigoplus_{\ell(a)=p}\KMW p\otimes\HT\,e_a.
\end{equation}
More precisely, for
\[
  V_w=\prod_{\substack{\gamma\in\Phi^-\\
                        w^{-1}\gamma\in\Phi^-}}U_\gamma
\]
in a fixed convex order, multiplication induces an isomorphism
\begin{equation}\label{eq:tubular-chart}
 V_w\times U_{w^{-1}}\times T
 \xrightarrow{\ \sim\ }\mathcal O_w\subset X,\qquad
 (v,u,t)\longmapsto vu n_wtU,
\end{equation}
onto an open neighbourhood of \(X_w\).  The first factor is the normal
factor.
\end{proposition}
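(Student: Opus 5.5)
We start from the tubular chart \eqref{eq:tubular-chart}, since everything else follows from it. Fix \(w\in W\). The big cell \(U^-n_{w_0}B\) translated by \(n_w n_{w_0}^{-1}\), or more directly the open subset \(n_w U^- B\subset G\), is open in \(G\) and stable under right \(B\). We will show that \(n_wU^-B/U\) is isomorphic to \(V_w\times U_{w^{-1}}\times T\) via \((v,u,t)\mapsto vun_wtU\). Write \(n_wU^-n_w^{-1}=V_w\cdot(U\cap n_wU^-n_w^{-1})\); the second factor is \(U_{w}'\), the positive roots \(\gamma\) with \(w^{-1}\gamma<0\). Then \(n_wU^-B/U=(n_wU^-n_w^{-1})n_wTU/U\). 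The product decomposition of \(U^-\times T\times U\to U^-B\) and the standard factorization of unipotent groups along convex root orders then identify this with \(V_w\times U'_w\times n_wT\). Conjugating \(U'_w\) past \(n_w\), or equivalently using \(U'_w n_w=n_wU_{w^{-1}}\), gives exactly the stated form \(vun_wt\). Here \(U_{w^{-1}}\) is the root subgroups \(\alpha>0\) with \(w^{-1}\)... one reconciles this with the paper's indexing by checking that \(Bn_wU/U=Un_wT U/U\cong U_{w^{-1}}'\times T\). Under this chart, \(X_w\) corresponds to \(\{1\}\times U_{w^{-1}}\times T\), since \(V_w\subset U^-\)-type roots meet \(B n_w B\) trivially. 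This is the uniqueness of the Bruhat normal form \(U^-\cap n_wBn_w^{-1}\)-component.

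Cellularity comes next. The filtration \(\Omega^X_p\) consists of opens, because \(\overline{X_w}=\bigcup_{w'\le w}X_{w'}\) descends from \(G\) through the smooth quotient \(G\to G/U\). Each difference \(S_p=\Omega_p-\Omega_{p-1}=\coprod_{\ell(w)=N-p}X_w\) is a disjoint union: distinct cells of equal length are incomparable, so each is closed in \(\Omega_p\). Each \(X_w\cong\mathbb A^{\ell(w)}\times T\) is smooth. The chart shows that the normal bundle of \(X_w\) in \(\mathcal O_w\), hence in \(\Omega_p\), is trivialized by \(V_w\cong\mathbb A^{N-\ell(w)}=\mathbb A^p\), with coordinates oriented by the pinning (for instance through \(\Phi_{\mathbf c}\) of \eqref{eq:root-chart}). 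Homotopy purity, via \Cref{lem:purity-naturality} applied to the open \(\mathcal O_w\cap\Omega_p\), identifies the cofiber with \(\bigvee_w\Th(\mathcal O^p_{X_w})\). The \(\Aone\)-homology of \(\Th\) of a trivial rank \(p\) bundle over \(\mathbb A^{\ell}\times T\) is \(\KMW p\otimes\ZA[\mathbb A^\ell\times T]\cong\KMW p\otimes\ZA[T]\) by \(\Aone\)-invariance; it is concentrated in degree \(p\), as required for cellularity in the Morel--Sawant sense. Identifying \(T\) with \(n_wT\) gives \eqref{eq:X-chains-w}. The reindexing \(w=w_0a\) with base point \(\dot w_a\) from \eqref{eq:opposition-gauge}, together with right translation \(T\cong \dot w_aT\), gives \eqref{eq:X-chains-a}.

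The main obstacle is the chart isomorphism, specifically its compatibility with the paper's exact normal root ordering. One needs that the multiplication \(V_w\times U_{w^{-1}}\to n_wU^-n_w^{-1}\cdot n_w\)... is an isomorphism of schemes and not merely a bijection on points. We will handle this by invoking the SGA3 product decomposition of \(U^-\) and its subgroups along any convex (closed) root subset in any order, which is a scheme isomorphism. We will also check that the image is open, being a translate of the big cell. The orientation question, that the Thom class is independent of the convex order, is addressed by \Cref{lem:thom-determinant}: the order change has Jacobian \(\pm1\) as in \Cref{lem:frame-gauge}, so it only rescales by \(\ang{\pm1}\), and we fix the order once.
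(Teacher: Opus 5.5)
Your argument is essentially the paper's: translate the big cell to the open set \(n_wU^-B\), split \(n_wU^-n_w^{-1}\) (root set \(w\Phi^-\)) as \(V_w\) times \(U\cap n_wU^-n_w^{-1}\), pass to the quotient by \(U\), and use the \(V_w\)-factor as the normal frame for purity. Two clean-ups are needed. First, \(U\cap n_wU^-n_w^{-1}\) has root set \(\{\alpha\in\Phi^+:w^{-1}\alpha\in\Phi^-\}\), so it already is \(U_{w^{-1}}\) and no conjugation past \(n_w\) is required; your identity \(U'_wn_w=n_wU_{w^{-1}}\) is false, since conjugating by \(n_w^{-1}\) lands in \(U^-\). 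Second, cellularity in the Morel--Sawant sense requires smooth, affine, cohomologically trivial strata of pure codimension, which holds for \(\mathbb A^{\ell(w)}\times T\); it does not require higher \(\Aone\)-homology of the Thom spaces to vanish, a claim you neither need nor prove.
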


\begin{proof}
Every stratum in \eqref{eq:X-bruhat} is smooth, affine, and
cohomologically trivial.  Its normal bundle is the constant sum
\[
  V_w=\bigoplus_{\gamma\in\Phi^-:\,w^{-1}\gamma\in\Phi^-}
       \mathfrak u_\gamma
\]
of rank \(N-\ell(w)\).  Purity and the ordered root frames give
\eqref{eq:X-chains-w}.  Left multiplication by \(n_w\) identifies
\(\ZA[n_wT]\) with \(\HT\), and the substitution \(a=w_0w\) reverses
length.

We prove \eqref{eq:tubular-chart} in detail.  Write
\(U^-=n_{w_0}Un_{w_0}^{-1}\) for the opposite unipotent group.  The
  root sets of \(V_w\) and of \(n_wU^-n_w^{-1}\cap U^-\) coincide: both are
\(\{\gamma\in\Phi^-:w^{-1}\gamma\in\Phi^-\}\).  Similarly
\(U_{w^{-1}}\) has root set
\(\{\alpha\in\Phi^+:w^{-1}\alpha\in\Phi^-\}\), so
\(V_w\cdot U_{w^{-1}}\) exhausts, without repetition, the root set
\(\{\delta\in\Phi:w^{-1}\delta\in\Phi^-\}=w\Phi^-\), which is closed
and contains no opposite pair.  Consequently multiplication
\[
 V_w\times U_{w^{-1}}\xrightarrow{\ \sim\ }
  U_w':=\prod_{\delta\in w\Phi^-}U_\delta=n_wU^-n_w^{-1}
\]
is an isomorphism of schemes onto the unipotent group
\(U_w'=n_wU^-n_w^{-1}\), where \(U^-=n_{w_0}Un_{w_0}^{-1}\) is the
opposite unipotent group, for any convex order; this is
the standard directly spanned decomposition of a unipotent group
normalized by \(T\) \cite[Proposition~8.2.1]{Springer1998}.  The orbit
map of \(U_w'\times T\) through the base point \(n_wU\in X\) is
\[
 (g,t)\longmapsto gn_wtU.
\]
Its restriction to \(\{1\}\times T\) is a closed embedding onto
\(n_wT\subset X_w\), and its restriction to
\(\{1\}\times U_{w^{-1}}\times T\) is the stratum isomorphism in
\eqref{eq:X-bruhat}.  The map is an open immersion: multiplication
\[
 U^-\times B\longrightarrow G,\qquad (g,b)\longmapsto gb,
\]
is an open immersion onto the big cell \(U^-B\)
\cite[Lemma~8.3.6]{Springer1998}, so left translation by \(n_w\) makes
\[
 U_w'\times B\longrightarrow G,\qquad (g,b)\longmapsto gn_wb,
\]
an open immersion onto \(n_wU^-B\).  Passing to the quotient by the
right \(U\)-action, and using \(B=TU\), gives an open immersion
\(U_w'\times T\cong V_w\times U_{w^{-1}}\times T\hookrightarrow X\)
with image the open
set \(\mathcal O_w:=n_wU^-B/U\).  It contains \(X_w\) because
\(U_{w^{-1}}\subset U_w'\).
Its differential along the zero section \(\{1\}\times
U_{w^{-1}}\times T\) sends the \(V_w\)-directions to the displayed
normal-bundle decomposition, because the tangent space of the stratum
at \(un_wtU\) is spanned by the \(U_{w^{-1}}\)- and \(T\)-directions.
Thus the first factor is a global frame of the normal bundle, as
asserted.
\end{proof}

\begin{theorem}\label{thm:G-X}
The quotient \(q:G\to G/U\), with its pulled-back Bruhat filtration and
normal orientations, induces an isomorphism of right-\(\HT\) cellular
chain complexes
\[
  q_*:\CellC_*(G)\xrightarrow{\;\cong\;}\CellC_*(G/U).
\]
After the diagonal lift and orientation identification of
\Cref{prop:choices}, \eqref{eq:X-chains-a} is the Morel--Sawant Bruhat
complex of \(G\).
\end{theorem}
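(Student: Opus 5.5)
The plan is to compare the two filtered spaces stratum by stratum, then pass to the exact couples. Set \(\Omega_p^G=q^{-1}(\Omega_p^X)\). The Bruhat strata of \(G\) are \(Bn_wB=q^{-1}(X_w)\), and the open sets \(\Omega_p^G\) are exactly the Morel--Sawant cellular filtration of \(G\) after length reversal. The map \(q\) is a Zariski-locally trivial \(U\)-torsor, so it is smooth. The squares
\[
 q^{-1}(X_w)\hookrightarrow\Omega^G_p,\qquad X_w\hookrightarrow\Omega^X_p
\]
are therefore Cartesian with smooth vertical maps. By \Cref{lem:purity-naturality}, \(q\) induces a map of the quotient cofiber sequences compatible with the purity identifications, including their connecting morphisms. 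Consequently \(q_*\) is a morphism of exact couples, hence a chain map \(\CellC_*(G)\to\CellC_*(X)\).

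The next step is to show that \(q_*\) is an isomorphism termwise.

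\begin{itemize}
\item \emph{Normal bundles.} Pull the tubular chart \eqref{eq:tubular-chart} back along \(q\). This gives \(V_w\times U_{w^{-1}}\times T\times U\cong n_wU^-B\), an open neighbourhood of \(Bn_wB\) whose first factor frames the normal bundle. Hence \(N_{q^{-1}X_w/G}=q^*N_{X_w/X}\), with the same ordered root frame, and the orientations match by construction.
\item \emph{Thom homology.} The stratum \(q^{-1}(X_w)\cong U_{w^{-1}}\times n_wT\times U\) maps to \(X_w\cong U_{w^{-1}}\times n_wT\) by projection, which is an \(\Aone\)-equivalence because \(U\cong\mathbb A^{N}\). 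On \(\widetilde{\mathbf H}^{\Aone}_p(\Th)\cong\KMW p\otimes\ZA[n_wT]\), the map \(q_*\) is therefore the identity in these coordinates.
\end{itemize}

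The identifications \eqref{eq:X-chains-w} and \eqref{eq:X-chains-a} then give a termwise isomorphism, so \(q_*\) is an isomorphism of complexes.

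Right \(\HT\)-linearity comes from the right \(T\)-actions. On \(G\), \(T\) acts by right multiplication; on \(X=G/U\), it acts through \(T=B/U\) via \(gU\cdot t=gtU\), which is well defined because \(T\) normalizes \(U\). The map \(q\) is \(T\)-equivariant and preserves both filtrations, and the tubular charts are \(T\)-equivariant in the \(T\)-factor. So the purity maps commute with right translation, and the induced \(\HT\)-module structures agree termwise and on connecting maps.

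Finally, identifying \eqref{eq:X-chains-a} with the Morel--Sawant complex requires matching conventions:
\begin{itemize}
\item their base point in each cell with the opposition gauge \(\dot w_a\) (\emph{the diagonal lift});
\item their normal orientation with our ordered root frame \(\Phi_{\mathbf c}\).
\end{itemize}
By \Cref{prop:choices} (stated later), these changes act by right translations \(R_{\tau_a}\) and by units \(\ang{u}\) from \Cref{lem:thom-determinant}, and they conjugate the differential compatibly. I expect this convention bookkeeping to be the main obstacle: one has to check that the Morel--Sawant orientations are the ordered pinned root frames, up to the units absorbed by \Cref{prop:choices}. I would first check it on \(SL_2\), then propagate it through the \(T\)-equivariant charts.
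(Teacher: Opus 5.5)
Your proposal is correct and follows the paper's proof essentially step for step. You pull back the filtration, apply \Cref{lem:purity-naturality} to the smooth Cartesian squares to get a morphism of exact couples, obtain the termwise isomorphism from \(\Aone\)-contractibility of \(U\) (your explicit pullback of the tubular chart to \(n_wU^-B\) is a more concrete form of the paper's normal-bundle identification), deduce \(\HT\)-linearity from right \(T\)-equivariance of \(q\), and leave the Morel--Sawant identification to the diagonal choice-change of \Cref{prop:choices}, which is also all the paper does for that part.
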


\begin{proof}
Write \(\Omega_p^G=q^{-1}(\Omega_p^X)\); this is the Bruhat filtration
of \(G\) by unions of strata \(Bn_wU\), and \(q\) is a filtered map.
For every \(p\) there is a commutative diagram of cofiber sequences
\[
\begin{tikzcd}[column sep=small]
 (\Omega_{p-1}^G)_+\arrow[r]\arrow[d]
 &(\Omega_p^G)_+\arrow[r]\arrow[d]
 &\Th\bigl(\nu_p^G\bigr)\arrow[d,"\Th(dq)"]\\
 (\Omega_{p-1}^X)_+\arrow[r]
 &(\Omega_p^X)_+\arrow[r]
 &\Th\bigl(\nu_p^X\bigr)
\end{tikzcd}
\]
in which the right vertical map is induced by the differential of
\(q\): on the \(w\)-stratum, \(q\) is the projection
\[
  U_{w^{-1}}\times T\times U\longrightarrow U_{w^{-1}}\times T,
\]
a smooth morphism whose differential identifies the source normal
bundle \(\nu_p^G\) with the pullback \(q^*\nu_p^X\), matching the
pinned root frames on both sides; the extra \(U\)-directions are
tangent to the stratum.  The rows are the homotopy purity cofiber
sequences, and the diagram commutes because purity is natural for the
transverse smooth pullback square
\[
\begin{tikzcd}[column sep=small]
 Bn_wU\arrow[r,hook]\arrow[d]
 &\Omega_p^G\arrow[d,"q"]\\
 X_w\arrow[r,hook]&\Omega_p^X
\end{tikzcd}
\]
by \Cref{lem:purity-naturality}; see also the cellular naturality
discussion in \cite[Section~2.3]{MorelSawant2023}.  Applying
\(\widetilde{\mathbf H}^{\Aone}_*\) to the diagram of cofiber
sequences gives a morphism of exact couples, hence a chain map
\(q_*\) commuting with every connecting morphism.

On chain groups, \(q_*\) is the map
\[
 \KMW p\otimes\ZA[U_{w^{-1}}\times T\times U]
 \longrightarrow
 \KMW p\otimes\ZA[U_{w^{-1}}\times T]
\]
induced by the projection.  The split unipotent groups
\(U_{w^{-1}}\) and \(U\) admit filtrations by normal subgroups with
successive quotients \(\Ga\), so they are \(\Aone\)-contractible and
\(\mathbf H_0^{\Aone}(U)\cong\Z\),
\(\ZA[U_{w^{-1}}\times T\times U]\cong\ZA[T]\cong
\ZA[U_{w^{-1}}\times T]\).  Hence \(q_*\) is an isomorphism on every
chain group, and therefore an isomorphism of chain complexes.  The
right \(\HT\)-structures agree because \(q\) is right
\(T\)-equivariant for the translation actions used on both sides.
\end{proof}

\begin{remark}
The anisotropic real algebraic group with real points \(K\) is not a
replacement for \(X\).  For example, the anisotropic torus \(SO(2)\) is
\(\Aone\)-rigid, whereas
\(SL_2/U\cong\mathbb A^2-\{0\}\) has the required motivic cell structure.
\end{remark}

\section{The universal torus operator}
\label{sec:operator}

Let \(\lambda:\Gm\to T\) be a cocharacter, \(m\in\Z\), and \(p\geq1\).
On the oriented Thom module \(\KMW{p-1}\otimes\HT\), consider the
twisted \(\Gm\)-action
\begin{equation}\label{eq:star-action}
  z\star(x\otimes[t])=\ang{z^m}x\otimes[\lambda(z)t].
\end{equation}
It is the diagonal of the determinant action on the oriented normal
directions and translation on \(T\).

\begin{construction}\label{constr:D}
For \(p>1\), let \(A_m(z)\) be the automorphism
\[
  \operatorname{diag}(z^m,1,\ldots,1)
\]
of the trivial oriented rank-\((p-1)\) bundle
\(V=\mathcal O^{p-1}\) over \(\Gm\times T\).
The geometric maps of pointed motivic spaces
\[
 \begin{aligned}
  \Th(g_{\lambda,m})&:\Th(V)\longrightarrow\Th(\mathcal O^{p-1}_T),
  &g_{\lambda,m}(z,v,t)&=(A_m(z)v,\lambda(z)t),\\
  \Th(g_0)&:\Th(V)\longrightarrow\Th(\mathcal O^{p-1}_T),
  &g_0(z,v,t)&=(v,t),
 \end{aligned}
\]
where \(\mathcal O_T^{p-1}\) is the trivial bundle over \(T\) and both
maps are bundle maps over the displayed base maps
\(\Gm\times T\to T\), agree over the pointed section \(z=1\).  Both are
morphisms in the pointed \(\Aone\)-homotopy category
\(\mathcal H_\bullet(k)\).  Applying reduced \(\Aone\)-homology
\(\widetilde{\mathbf H}^{\Aone}_*\) and the identifications
\[
 \widetilde{\mathbf H}^{\Aone}_{*}\bigl(\Th(V)\bigr)
 \cong\KMW{p-1}\otimes\ZA[\Gm]\otimes\HT,\qquad
 \widetilde{\mathbf H}^{\Aone}_{*}\bigl(\Th(\mathcal O_T^{p-1})\bigr)
 \cong\KMW{p-1}\otimes\HT
\]
of \Cref{lem:thom-determinant} yields two morphisms of strictly
\(\Aone\)-invariant sheaves.  Their difference
is taken only at this stage, in the abelian category of strictly
\(\Aone\)-invariant sheaves.  Because the difference vanishes on the
\(z=1\) summand in
\[
 \ZA[\Gm]\cong\Z\oplus\ZA(\Gm)
              =\Z\oplus\KMW1,
\]
it factors uniquely through the reduced summand \(\KMW1\).  Using the
reduced K\"unneth isomorphism of \Cref{lem:MW-kunneth} to move that
factor to the front gives a morphism
\[
  \Dop_{\lambda,m}^{(p)}:
  \KMW1\otimes\KMW{p-1}\otimes\HT
  \longrightarrow\KMW{p-1}\otimes\HT.
\]
For \(p=1\) the normal rank is zero and no determinant twist can
occur; the only case used below has \(m=0\).  Here the construction is
made separately: the two pointed maps
\((\Gm\times T)_+\to T_+\), \((z,t)\mapsto\lambda(z)t\) and
\((z,t)\mapsto t\), agree at \(z=1\), and the same reduced-summand
argument gives
\(\Dop_{\lambda,0}^{(1)}:\KMW1\otimes\HT\to\HT\),
which is the map \(\ol\lambda\) of \eqref{eq:reduced-coroot} followed
by right multiplication.
\end{construction}

\begin{lemma}\label{lem:D-symbol}
For every finitely generated separable field extension \(F/k\), every
\(u\in F^\times\), \(x\in\KMW{p-1}(F)\), and \(t\in T(F)\),
\begin{equation}\label{eq:D-symbol}
  \Dop_{\lambda,m}^{(p)}
  ((u)\otimes x\otimes[t])
  =\ang{u^m}x\otimes[\lambda(u)t]-x\otimes[t].
\end{equation}
The operator is right \(\HT\)-linear.  It is equivalently the action of
the twisted reduced class determined jointly by the determinant
character \(u\mapsto\ang{u^m}\) and the torus class
\([\lambda(u)]-[1]\).  Only when \(m=0\) is it ordinary multiplication
by that reduced torus class.
\end{lemma}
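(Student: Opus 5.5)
Since both sides of \eqref{eq:D-symbol} are morphisms of strictly \(\Aone\)-invariant sheaves, the plan is to reduce to field-valued generators exactly as in the proof of \Cref{lem:thom-determinant}. Strictly \(\Aone\)-invariant sheaves are unramified, so such a morphism is determined by its values on finitely generated separable fields \(F/k\). The source \(\KMW1\otimes\KMW{p-1}\otimes\HT\) is generated over such fields by the elements \((u)\otimes x\otimes[t]\), and by \Cref{lem:MW-kunneth} these correspond to the symbols \((u)x\otimes[t]\). It therefore suffices to evaluate \(\Dop^{(p)}_{\lambda,m}\) on \((u)\otimes x\otimes[t]\). Before the Künneth reordering, this element is the class \(x\otimes([u]-[1])\otimes[t]\) in the reduced summand of \(\KMW{p-1}\otimes\ZA[\Gm]\otimes\HT\).

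Next I compute the two geometric maps on the generators \(x\otimes[z]\otimes[t]\) with \(z\in F^\times\). The map \(\Th(g_0)\) forgets \(z\) and sends the generator to \(x\otimes[t]\). The map \(\Th(g_{\lambda,m})\) is the composite of the fibrewise bundle automorphism \(A_m(z)\), which covers the identity of \(\Gm\times T\), with the base map \((z,t)\mapsto\lambda(z)t\). The bundle automorphism has determinant \(z^m\), so by \Cref{lem:thom-determinant} it multiplies by \(\ang{z^m}\). The base map is then pushed forward on \(\ZA[-]\) by functoriality of the free sheaf. Together these give \(x\otimes[z]\otimes[t]\mapsto\ang{z^m}x\otimes[\lambda(z)t]\). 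Subtracting, the difference applied to \(x\otimes([u]-[1])\otimes[t]\) is
\[
\bigl(\ang{u^m}x\otimes[\lambda(u)t]-x\otimes[t]\bigr)-\bigl(x\otimes[t]-x\otimes[t]\bigr),
\]
which is \eqref{eq:D-symbol}. The case \(p=1\), \(m=0\) is the same computation without a Thom factor, and it recovers \(\ol\lambda\) followed by multiplication.

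The main obstacle is the bookkeeping of the identification \(\ZA(\Gm)\cong\KMW1\) combined with the Künneth move of the \(\KMW1\) factor to the front. The Künneth identification sends \([u]-[1]\) to \((u)\). Moving the factor across \(\KMW{p-1}\) introduces a symmetry isomorphism, which by \Cref{lem:MW-kunneth} acts by a power of \(\eps\). I would handle this by taking the Künneth convention in \Cref{constr:D} as the definition of the source, so that the generator \((u)\otimes x\otimes[t]\) is by construction the image of \(x\otimes([u]-[1])\otimes[t]\). With that convention no sign appears in the formula.

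It remains to check right \(\HT\)-linearity and the final interpretive claims. Both geometric maps commute with right translation of \(T\) on the \(T\)-coordinate, since \(\lambda(z)ts=\lambda(z)(ts)\), so their difference is right \(\HT\)-linear; this can also be read off directly from \eqref{eq:D-symbol}. For \(m=0\) we have \(\ang{u^0}=1\), so the formula becomes \(x\otimes([\lambda(u)]-[1])[t]\), which is multiplication by \(\ol\lambda(u)\). For \(m\neq0\), taking \(\lambda\) trivial leaves \((\ang{u^m}-1)x\otimes[t]\), which is not zero in general. So for \(m\neq0\) the operator is not multiplication by the torus class alone.
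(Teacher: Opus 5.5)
Your derivation of \eqref{eq:D-symbol} and of right \(\HT\)-linearity follows the paper's proof. You use \Cref{lem:thom-determinant} to get \(\ang{u^m}\) from \(A_m(u)\) and note that \(g_0\) has degree \(1\). You subtract only after applying \(\widetilde{\mathbf H}^{\Aone}_*\), and you observe that both geometric maps commute with right translation on \(T\). Treating the K\"unneth reordering as part of the definition of the source is also how \Cref{constr:D} is set up, so no sign appears.

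The step that fails is your final check of the ``only when \(m=0\)'' sentence. You claim that for \(m\neq0\), trivial \(\lambda\) leaves \((\ang{u^m}-1)x\otimes[t]\), which is ``not zero in general.'' This holds only for odd \(m\), where \(\ang{u^m}=\ang u\) and your test gives \(\eta(u)x\otimes[t]\).

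For even \(m\), \(u^m\) is a square, so \(\ang{u^m}=1\) in \(\GW(F)\). Equivalently, \(\ang{u^m}-1=\eta(u^m)=m_\eps\eta(u)=0\), using \(m_\eps\eta=0\) for even \(m\) as recorded in \Cref{sec:prelim}. Hence \(\Dop_{\lambda,m}^{(p)}=\Dop_{\lambda,0}^{(p)}\) as sheaf morphisms for every even \(m\): it is ordinary multiplication by \([\lambda(u)]-[1]\).

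So the last sentence of the lemma, read as a claim about the resulting morphism, is false, and no argument can establish it. The correct dichotomy is the parity of \(m\), not \(m=0\). The paper itself uses the operator this way in \Cref{ex:sp4} (``the even Cartan edge has trivial variable Grothendieck--Witt class'') and in \Cref{cor:eta}. The paper's own proof does not argue that sentence at all; it can only be read as describing the construction, namely that no determinant twist is built in exactly when \(m=0\). You should either restrict your claim to odd \(m\) or drop it.
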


\begin{proof}
The Thom automorphism \(A_m(u)\) acts on the oriented Thom generator by
the class of its determinant, \(\ang{u^m}\), by
\Cref{lem:thom-determinant}; the other geometric map has degree
\(1\).  Their difference is \eqref{eq:D-symbol}.  Because the
two summands come from geometric Thom maps and the subtraction is made
after applying reduced \(\Aone\)-homology, the result automatically
respects the Milnor--Witt relations and defines a sheaf morphism.  Give
both source and target the right-\(\HT\) structure on their last tensor
factor.  Both geometric maps commute with right translation on \(T\),
so their difference is right \(\HT\)-linear.
\end{proof}

Let \(\epsilon_T:\HT\to\Z\) be the torus augmentation.

\begin{proposition}\label{prop:D-augmentation}
After base change along \(\epsilon_T\),
\begin{equation}\label{eq:D-aug}
  \Dop_{\lambda,m}^{(p)}\otimes_{\HT}\Z:
  (u)\otimes x\longmapsto(\ang{u^m}-1)x.
\end{equation}
Equivalently, this is multiplication by \(\eta\) after the endomorphism
of \(\KMW1\) induced by \(z\mapsto z^m\).  In the signed
Milnor--Witt power-degree notation of \Cref{sec:prelim} it is \(m_\eps\eta\).
\end{proposition}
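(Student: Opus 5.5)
The plan is to evaluate on field-valued generators and then upgrade to an equality of sheaf morphisms. Both sides of \eqref{eq:D-aug} are morphisms of strictly \(\Aone\)-invariant sheaves \(\KMW1\otimes\KMW{p-1}\to\KMW{p-1}\). As in the proof of \Cref{lem:thom-determinant}, such morphisms are determined by their values on symbols \((u)\otimes x\) over finitely generated separable extensions \(F/k\), since strictly \(\Aone\)-invariant sheaves are unramified. So it suffices to check the formula on these generators.

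The base change along \(\epsilon_T\) sends \([t]\mapsto1\), and it is compatible with \Cref{lem:D-symbol} because \(\Dop_{\lambda,m}^{(p)}\) is right \(\HT\)-linear. Applying it to \eqref{eq:D-symbol} gives
\[
(u)\otimes x\mapsto \ang{u^m}x-x=(\ang{u^m}-1)x.
\]
This proves the first assertion, and it shows that \(\lambda\) drops out entirely.

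For the reformulations, use the convention \(\ang v=1+\eta(v)\), which gives \(\ang{u^m}-1=\eta(u^m)\). The power map \(z\mapsto z^m\) on \(\Gm\) induces on \(\ZA(\Gm)=\KMW1\) the endomorphism \((u)\mapsto(u^m)\). So the augmented operator is \((u)\otimes x\mapsto\eta\,(u^m)\,x\): first apply the power map, then multiply by \(\eta\), using the product \(\KMW1\otimes\KMW{p-1}\to\KMW p\) of \Cref{lem:MW-kunneth} followed by \(\eta\). This is the second description.

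For the last claim, I would cite \cite[Section~3.1]{Morel2012}: the power-map endomorphism of \(\KMW1\) is multiplication by \(m_\eps\), i.e.\ \((u^m)=m_\eps(u)\). Substituting gives \(\eta(u^m)=m_\eps\eta(u)\), so the operator is \(m_\eps\eta\). As a consistency check, \(n_\eps\eta=0\) or \(\eta\) according to the parity of \(n\), and \(\ang{u^m}\) equals \(1\) or \(\ang u\) by the same parity, since \(\ang{u^2}=1\). The two descriptions therefore agree.

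The only delicate point is commutativity of the \(\eta\)-multiplication with the ordering of \(\KMW1\) in front. Since \(\eta\) is central in \(\KMW*\), this is harmless. For \(p=1\), only \(m=0\) occurs, and both sides vanish.
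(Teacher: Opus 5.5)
Your proposal is correct and takes the same route as the paper. You apply the augmentation \(\epsilon_T\) to the symbol formula of \Cref{lem:D-symbol}, which sends both torus classes to \(1\), and then use \(\ang{v}-1=\eta(v)\); your extra details (reduction to field-valued symbols, centrality of \(\eta\), and Morel's identity \((u^m)=m_\eps(u)\) for the \(m_\eps\eta\) reformulation) fill in what the paper's one-line proof leaves implicit.
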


\begin{proof}
The augmentation sends both torus classes in \eqref{eq:D-symbol} to
\(1\), and \(\ang v-1=\eta(v)\).
\end{proof}

\begin{remark}\label{rem:Liu-Peng}
Liu--Peng's toric group-section formula
  \cite[Lemma~2.22 and Proposition~2.23]{LiuPeng2025} computes, in an oriented
cubical basis, the simultaneous character action on torus coordinates
and the Milnor--Witt lower-face terms.  Applied to one moving
\(\Gm\)-coordinate with normal character \(z\mapsto z^m\), its reduced
face component has the same two constituents as
\eqref{eq:D-symbol}: translation by the relevant cocharacter and the
Thom degree \(\ang{z^m}\).  After torus augmentation, their explicit
rank-one transition matrix
\cite[Lemma~4.5]{LiuPeng2025} has off-diagonal entry
\(m_\eps\eta\), exactly \eqref{eq:D-aug}.  The construction above keeps
the full \(\HT\)-label and derives the operator from the quotient
triple; thus the Liu--Peng calculation is a genuine compatibility
check, not a replacement for the incidence-localization argument.
\end{remark}

Now let \(k=\R\) and \(M=\pi_0(T(\R))\).

\begin{proposition}\label{prop:D-real}
With compatible orientations, real realization of
\(\Dop_{\lambda,m}^{(p)}\) is right multiplication by
\begin{equation}\label{eq:D-real}
  (-1)^m[\lambda(-1)]-[1]\in\Z[M].
\end{equation}
\end{proposition}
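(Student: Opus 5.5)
The plan is to compute the real realization of the two geometric Thom maps in \Cref{constr:D} separately and subtract only at the end, just as the construction does. Over \(\R\), \(\Gm(\R)=\R^\times\) has two components, so the real realization of the reduced summand \(\KMW1=\ZA(\Gm)\) is \(\widetilde H_0(\R^\times;\Z)\cong\Z\), generated by \([-1]-[1]\). Realization sends \(\Th(\mathcal O^{p-1}_T)\) to the Thom space of the trivial real rank-\((p-1)\) bundle over \(T(\R)\simeq M\). Its reduced homology is \(\Z[M]\) in degree \(p-1\), generated by the oriented Thom class placed over each component. The reduced Künneth factor of \Cref{lem:MW-kunneth} realizes to the cross product with \([-1]-[1]\), and the orientations are taken compatibly so that this introduces no extra sign. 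After these identifications, the realized operator is determined by its value on the class \(([-1]-[1])\otimes\text{Thom class}\otimes[t]\).

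Next, evaluate each geometric map on the component \(z=-1\). The map \(g_0\) sends the \(z=-1\) copy identically to the copy over \([t]\). The map \(g_{\lambda,m}\) sends the fibre over \((-1,t)\) to the fibre over \(\lambda(-1)t\), through the linear map \(A_m(-1)=\operatorname{diag}((-1)^m,1,\dots,1)\). On the real Thom sphere this linear map has degree \(\sgn\det=(-1)^m\); this is the real shadow of \(\ang{(-1)^m}\) in \Cref{lem:thom-determinant}, since the signature of \(\ang{a}\) is \(\sgn a\). On the \(z=1\) component both maps are the identity, which is why the difference factors through the reduced class. Subtracting then gives \([-1]-[1]\mapsto(-1)^m[\lambda(-1)t]-[t]\). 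This is right multiplication by \((-1)^m[\lambda(-1)]-[1]\) in \(\Z[M]\), where \(\lambda(-1)\) is read in \(M=\pi_0(T(\R))\); right \(\HT\)-linearity, and hence right \(\Z[M]\)-linearity, comes from \Cref{lem:D-symbol}. For \(p=1\) the same argument applies without any determinant factor, which is consistent with \(m=0\).

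As a cross-check, I will also derive the result from the symbol formula \eqref{eq:D-symbol} at \(F=\R\) and \(u=-1\). The realization of \(\KMW{*}\) sends \(\ang{-1}\mapsto-1\) and \((-1)\mapsto\) the generator \([-1]-[1]\). Because \(\ang{u^m}\) becomes \((-1)^m\), this reproduces \eqref{eq:D-real}.

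The main obstacle is the phrase ``with compatible orientations''. I have to check that the real realization of the \(\KMW1\)-factor, together with the ordered Thom orientation, matches the cellular orientation convention on the real side, so that the cross product introduces neither an extra \((-1)^{p-1}\) nor an \(\eps\)-sign. I will handle this by fixing the realization of the Künneth isomorphism to put the \(\R^\times\) factor first, in the same order as \Cref{lem:MW-kunneth}. With that convention any residual sign is a global convention, and it is absorbed into the meaning of ``compatible''.
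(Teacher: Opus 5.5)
Your proof is correct and is essentially the paper's argument, written out in more detail. In both, you realize the two Thom maps of \Cref{constr:D} separately. On the \(z=-1\) component of \(\R^\times\) one map translates by \(\lambda(-1)\) and has normal degree \((-1)^m\), while the other is the identity; subtracting the basepoint (\(z=1\)) component then gives \((-1)^m[\lambda(-1)]-[1]\). Your symbol-level cross-check is only a heuristic, since the paper applies real realization to the geometric spaces and maps, not termwise to the sheaves \(\KMW{*}\); the geometric computation is what proves the statement.
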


\begin{proof}
The two components of \(\R^\times\) are represented by \(1\) and
\(-1\).  The second translates the torus component by \(\lambda(-1)\)
and acts on the remaining normal determinant with degree \((-1)^m\).
Subtracting the basepoint component gives \eqref{eq:D-real}.
\end{proof}

\begin{remark}
The identity term in \eqref{eq:D-symbol} is essential: it records the
first endpoint.  A scalar multiple of \(\eta\) cannot retain this
information.
\end{remark}

\section{The all-degree Bruhat boundary}
\label{sec:boundary}

Index \(\CellC_*(X)\) by \(a=w_0w\), as in
\eqref{eq:X-chains-a}.  Fix
\[
  \mathbf a=(i_1,\ldots,i_p),\qquad
  a=s_{i_1}\cdots s_{i_p},
\]
and suppose deletion of position \(q\) gives a reduced word for \(b\).
Set
\begin{equation}\label{eq:tail-data}
  t_{a,q}=s_{i_{q+1}}\cdots s_{i_p},\qquad
  \lambda_{a,q}=t_{a,q}^{-1}(\alpha_{i_q}^{\vee}),
\end{equation}
and
\begin{equation}\label{eq:sigma-data}
  \Pi_{a,q}=\Phi^+\cap t_{a,q}\Phi^-,
  \qquad
  \sigma_{a,q}=
  \sum_{\beta\in\Pi_{a,q}}
  \langle\beta,\alpha_{i_q}^{\vee}\rangle.
\end{equation}

\begin{construction}
\label{constr:cut}
We fix, once and for all, the convention by which a deleted Thom factor
is separated from an ordered Thom product.  For the ordered normal-line
decomposition \(\nu_a=L_1\oplus\cdots\oplus L_p\) given by the chart
\(\Phi_{\mathbf a}\), the Thom space splits as an ordered smash product
\[
 \Th(\nu_a)\simeq\Th(L_1)\wedge\cdots\wedge\Th(L_p)
\]
over the base, and each ordered factor contributes one \(\KMW1\)-factor
of \(\KMW p\) under the identification of \Cref{lem:MW-kunneth},
applied iteratively in the displayed left-to-right order.  Define the
cut map as the composite
\begin{equation}\label{eq:cut-composite}
\begin{aligned}
 \operatorname{cut}_{a,q}:
 \KMW p
 &\xrightarrow{\ \mu^{-1}\ }
 {\KMW1}^{(1)}\otimes\cdots\otimes{\KMW1}^{(p)}\\
 &\xrightarrow{\ \tau_q\ }
 {\KMW1}^{(q)}\otimes
 \Bigl({\KMW1}^{(1)}\otimes\cdots
 \widehat{{\KMW1}^{(q)}}\cdots\otimes{\KMW1}^{(p)}\Bigr)\\
 &
 \xrightarrow{\ \mathrm{id}\otimes\mu\ }
 \KMW1\otimes\KMW{p-1}.
\end{aligned}
\end{equation}
where \(\mu\) denotes the iterated K\"unneth isomorphism of
\Cref{lem:MW-kunneth} and \(\tau_q\) is the symmetry isomorphism of
the \(\Aone\)-localized tensor product moving the \(q\)-th factor to
the front, that is, the composite of the \(q-1\) adjacent
transpositions \((q,q-1),\ldots,(2,1)\), leaving the surviving factors
in their original order.  By \Cref{lem:MW-kunneth}, each adjacent
transposition acts on symbols by \(\eps=-\ang{-1}\), so on sections
over a field
\begin{equation}\label{eq:cut-symbol}
 \operatorname{cut}_{a,q}\bigl((u_1)\cdots(u_p)\bigr)
 =\eps^{\,q-1}\,(u_q)\otimes(u_1)\cdots\widehat{(u_q)}\cdots(u_p).
\end{equation}
Two sign conventions must not be conflated.  First, the
\(\eps\)-graded factor \(\eps^{q-1}\) in \eqref{eq:cut-symbol} is
internal to Milnor--Witt symbols and is part of
\(\operatorname{cut}_{a,q}\).  Second, the \emph{ordinary} face sign
\((-1)^{q-1}\) arises at the level of the cellular exact couple, from
moving the deleted simplicial suspension coordinate past the \(q-1\)
preceding suspensions when the connecting morphism of the two-stratum
model is compared with the top boundary of the full filtration; it is
\emph{not} contained in \(\operatorname{cut}_{a,q}\) and is displayed
  explicitly in the main formula.  Finally, after the deletion the
  surviving factors are frames in deleted-word order; comparing them
  with the chosen frame for \(\mathbf b\) contributes the constant class
  \(\vartheta_{a,q}\) of \eqref{eq:theta-word}.  Thus, in scalar Thom
  bases, the frame change composed with
  \(\operatorname{cut}_{a,q}\) acts by
  \(\eps^{\,q-1}\vartheta_{a,q}\) times the symbol shuffle of
  \eqref{eq:cut-symbol}.  We always reserve
  \(\operatorname{cut}_{a,q}\) itself for the map
  \eqref{eq:cut-composite}; the separate frame factor
  \(\vartheta_{a,q}\) is displayed explicitly.
\end{construction}

\begin{lemma}\label{lem:face-sign}
Let an oriented codimension-\(p\) cell have ordered normal-line frame
\(L_1,\ldots,L_p\), and suppose that a multiplicity-one cover uses
\(L_q\) as its divisor-normal line.  Express the two-stratum
connecting morphism with \(L_q\) first and the surviving lines in
their original order.  Its comparison with the differential of the
full cellular exact couple is
\[
 (-1)^{q-1}\operatorname{cut}_{a,q}.
\]
Thus the ordinary sign is \((-1)^{q-1}\), while the motivic symmetry
of the Thom factors is the \(\eps^{q-1}\) already contained in
\(\operatorname{cut}_{a,q}\).
\end{lemma}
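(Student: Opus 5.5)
The plan is to reduce the comparison to a statement about suspension coordinates in a local model, where the two sign sources can be separated. Near a generic point of the codimension-one stratum $b$ inside the closure of the $a$-stratum, the tubular chart of \Cref{prop:X-cellular} splits $\nu_a$ as $L_1\oplus\cdots\oplus L_p$. The cell $a$ then sits in a product of $\Th(L_1)\wedge\cdots\wedge\widehat{\Th(L_q)}\wedge\cdots\wedge\Th(L_p)$ with a two-stratum model in the $L_q$-direction. By \Cref{lem:purity-naturality}, the connecting morphism of the full filtration restricted to this $(a,b)$ component agrees with the connecting morphism of the smaller nested triple obtained by removing all other strata. This reduction is legitimate because, for a multiplicity-one cover, the other strata lie outside a neighbourhood of the generic point of $b$. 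It therefore suffices to compare two connecting morphisms of a single cofiber sequence: one written with $L_q$ in its original slot, the other with $L_q$ moved to the front.

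Next I would make the two suspension structures explicit. Each $\Th(L_j)\simeq S^1\wedge\Gm$ contributes a simplicial circle and a $\Gm$ factor. The connecting morphism of the two-stratum model in the $L_q$ direction is a desuspension $\Sigma^{1}$ acting on the simplicial circle of $L_q$. In the full exact couple, the boundary map is computed with the simplicial circles in the order $1,\dots,p$. Moving the $q$-th circle past the preceding $q-1$ circles is a permutation of $S^1$-smash factors, and each transposition of $S^1\wedge S^1$ has degree $-1$. This accounts for $(-1)^{q-1}$.

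The $\Gm$-factors must be moved separately, and the motivic symmetry of $\Gm\wedge\Gm$ acts by $\eps$ by \Cref{lem:MW-kunneth}. I would write the shuffle of $(S^1\wedge\Gm)^{\wedge p}$ as the product of the $S^1$-shuffle and the $\Gm$-shuffle, using that the symmetry of a smash product of products decomposes factorwise. On homology the $\Gm$-shuffle is exactly $\tau_q$ composed with $\mu^{-1}$ and $\mathrm{id}\otimes\mu$, which is $\operatorname{cut}_{a,q}$ by \eqref{eq:cut-composite}. Combining the two shuffles gives $(-1)^{q-1}\operatorname{cut}_{a,q}$. The surviving lines keep their original order, so no further frame correction enters at this stage.

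The main obstacle is this factorwise splitting. One must check that the purity identification of $\Th(\nu_a)$ with the ordered smash product respects the decomposition into simplicial and $\Gm$ parts. One must also check that the exact-couple boundary sees only the simplicial circle of the deleted line, so that the $\Gm$ part of $L_q$ survives as the $\KMW1$ input of $\Dop$ rather than being absorbed. I would first verify this in rank one ($p=1$, where the claim is trivial). I would then treat the case $q=2$, $p=2$ directly via $\Th(L_1\oplus L_2)$, checking that the swap of $\Th(L_1)\wedge\Th(L_2)$ has class $-\eps$ split as $(-1)\cdot\eps$. Induction on $q$ by adjacent transpositions then finishes the argument.
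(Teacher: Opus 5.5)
Your proposal is correct and is essentially the paper's argument. Both proofs pass to the ordered coordinate model and take the ordinary sign \((-1)^{q-1}\) from carrying the deleted simplicial suspension coordinate past the \(q-1\) preceding ones. Both then identify the remaining reordering of \(\KMW1\)-factors with \(\tau_q\) inside \(\operatorname{cut}_{a,q}\). The paper obtains the first step as the Koszul sign in the totalization of the ordered tensor product of the \(p\) one-line two-term complexes. That makes the separation automatic: Koszul signs live in the simplicial degree, and the \(\KMW1\)-factors carry only the tensor symmetry. So it avoids the factorwise splitting of the symmetry of \((S^1\wedge\Gm)^{\wedge p}\) that you single out as the main obstacle.

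One correction to your reduction step concerns where the local model lives. The model with frame \(L_1,\ldots,L_p\) sits near points of the codimension-\(p\) stratum \(Y_a\), and \(Y_a\) is a divisor in \(\overline{Y_b}\), not the other way round. Every neighbourhood of such a point meets \(Y_c\) for all \(c<a\), in particular the other faces \(Y_{b'}\) with \(b'\lessdot a\). So the \((a,b)\) component is isolated by the wedge projection together with excision, as in \Cref{lem:incidence-localization}, not because the other strata are absent.
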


\begin{proof}
By Nisnevich excision, the sign can be computed in the ordered
coordinate model.  For one normal line, use the pointed cofiber
sequence
\[
 (\mathbb A^1-\{0\})_+\longrightarrow
 \mathbb A^1_+\longrightarrow
 \mathbb A^1/(\mathbb A^1-\{0\}).
\]
The ordered \(p\)-line Thom model is the \(p\)-fold smash product of
these cofiber sequences.  Its exact couple is the total complex of the
ordered tensor product of the corresponding two-term homological
complexes.  The differential in the \(q\)-th tensor factor is
therefore
\[
 (-1)^{\deg L_1+\cdots+\deg L_{q-1}}
 =(-1)^{q-1}
\]
times the one-line connecting morphism.  This is the ordinary Koszul
sign in the totalization and is present before applying any
Milnor--Witt Thom isomorphism.

To write the target Thom factor in the convention of
\Cref{constr:cut}, move the \(q\)-th pointed Thom line past the
preceding \(q-1\) factors.  Under the ordered Thom isomorphism, each
adjacent symmetry acts on
\(\KMW1\otimes\KMW1\) by
\(\eps=-\ang{-1}\), by \Cref{lem:MW-kunneth}.  Their composite is
precisely the symmetry \(\tau_q\) in
\eqref{eq:cut-composite}.  Hence the motivic symmetry contributes
\(\eps^{q-1}\) through \(\operatorname{cut}_{a,q}\), independently of
the ordinary total-complex sign.  Naturality of quotient triples
transports this calculation from the coordinate model to the
two-stratum purity sequence of
\Cref{lem:incidence-localization}.
\end{proof}

\subsection{A cover neighbourhood}

Put
\[
  Y_a=X_{w_0a},\qquad Y_b=X_{w_0b}.
\]
Write \(w_c=w_0c\).  The actual root-subgroup tubular charts of
\Cref{prop:X-cellular}, taken with
\(n_{w_c}=\dot w_c\) from \eqref{eq:opposition-gauge}, are
\begin{align*}
 \chi_a:V_{w_a}\times U_{w_a^{-1}}\times T&\xrightarrow{\sim}
 \mathcal O_{w_a},\\
 \chi_b:V_{w_b}\times U_{w_b^{-1}}\times T&\xrightarrow{\sim}
 \mathcal O_{w_b}.
\end{align*}
Their dimensions split as
\[
 (p)+(N-p)+\operatorname{rk}T
 =(p-1)+(N-p+1)+\operatorname{rk}T=\dim X.
\]
The root-set identity following \eqref{eq:root-chart} identifies
\(V_{w_0a}\) with the \(w_0\)-transports of the inversion roots of
\(a\); hence \(\Phi_{\mathbf a}\) orders exactly these \(p\) actual
normal coordinates.  The analogous statement holds for \(b\).
After left translation by \(n_{w_0}^{-1}\),
\eqref{eq:opposition-bott-samelson} turns them into the usual positive
Bott--Samelson coordinates, without changing the right \(T\)-coordinate.
The global two-stratum scheme needed for purity is constructed from the
actual horizontal elements \(w_a,w_b\), while this opposition chart is
used only to compute its clutching map.

\begin{lemma}
\label{lem:incidence-localization}
Let
\(\varnothing=\Omega_{-1}\subset\Omega_0\subset\cdots\subset\Omega_d=Z\)
be a cellular filtration, and use purity to write its successive
cofibers as wedges of the Thom spaces of the strata.  Let \(S\) be a
codimension-\(p\) stratum and \(R\) a codimension-\((p-1)\) stratum.
Denote by
\[
 \iota_S:\Th(\nu_S)\longrightarrow
   \Omega_p/\Omega_{p-1},
 \qquad
 \operatorname{pr}_R:\Omega_{p-1}/\Omega_{p-2}
   \longrightarrow\Th(\nu_R)
\]
the corresponding wedge inclusion and projection.

Suppose that an open subscheme \(\mathcal W\subset Z\) contains
\(R\cup S\), and that
\(\overline Y=R\cup S\) is smooth and closed in \(\mathcal W\), with
\(S\) a multiplicity-one effective Cartier divisor and \(R\) its open
complement.  Put
\[
 E=N_{\overline Y/\mathcal W},
 \qquad L=N_{S/\overline Y}.
\]
Then the incidence component
\(\operatorname{pr}_R\partial_p\iota_S\) is the connecting morphism of
the canonical
purity cofiber sequence
\begin{equation}\label{eq:incidence-localization-cofiber}
 \Th(E|_R)\longrightarrow\Th(E)
 \longrightarrow\Th(N_{S/\mathcal W}).
\end{equation}
The Thom orientation on the last term is the one induced by the
canonical determinant-line isomorphism
\[
 \det N_{S/\mathcal W}
 \cong L\otimes\det(E|_S),
\]
with \(L\) ordered first.  
\end{lemma}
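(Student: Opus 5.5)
We reduce the incidence component to a statement about the open subscheme \(\mathcal W\) and then identify it with a purity connecting morphism.

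\textbf{Step 1: localize to \(\mathcal W\).} Restrict the filtration to \(\mathcal W\) by \(\Omega_j\cap\mathcal W\). Open restriction is compatible with the quotient cofiber sequences \(\Omega_{p-1}/\Omega_{p-2}\to\Omega_p/\Omega_{p-2}\to\Omega_p/\Omega_{p-1}\). It is also compatible with the purity identifications, by \Cref{lem:purity-naturality} applied to the smooth open immersion \(\mathcal W\hookrightarrow Z\). Since \(R\cup S\subset\mathcal W\), the summands \(\Th(\nu_S)\) and \(\Th(\nu_R)\) are unchanged by this restriction. Nisnevich excision then shows that \(\operatorname{pr}_R\partial_p\iota_S\) may be computed inside \(\mathcal W\).

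\textbf{Step 2: discard the other strata.} Inside \(\mathcal W\), shrink further to the open set
\[
\mathcal W'=\mathcal W-\bigl(\text{closures of the codimension-}\geq p \text{ strata other than } S\bigr),
\]
and remove the codimension-\((p-1)\) strata other than \(R\). Removing closed sets disjoint from \(R\cup S\) is again an open restriction preserving both summands. After it, the relevant two-step piece of the filtration becomes a nested triple
\[
\mathcal W'-\overline Y\subset\mathcal W'-S\subset\mathcal W'.
\]
Its quotient triple is
\[
(\mathcal W'-S)/(\mathcal W'-\overline Y)\to\mathcal W'/(\mathcal W'-\overline Y)\to\mathcal W'/(\mathcal W'-S),
\]
and its connecting morphism is the desired incidence component.

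\textbf{Step 3: apply purity to each term.} The three terms are handled as follows:
\begin{itemize}
\item the pair \((\mathcal W'-S,R)\) gives \(\Th(E|_R)\);
\item the pair \((\mathcal W',\overline Y)\) gives \(\Th(E)\);
\item the pair \((\mathcal W',S)\) gives \(\Th(N_{S/\mathcal W})\).
\end{itemize}
For the maps to assemble into a map of cofiber sequences, the three purity equivalences must be chosen compatibly. This is the main obstacle. I would first try deformation to the normal cone of \(\overline Y\) in \(\mathcal W'\), performed simultaneously for the nested pair \(S\subset\overline Y\), following the proof of \Cref{lem:purity-naturality}. This turns the triple into
\[
E-E|_S\subset E-0_S\subset E.
\]
The first two terms are then exactly the purity identifications, and the third is \(E/(E-0_S)\). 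Since \(S\subset\overline Y\) is a divisor with normal line \(L\), the space \(E/(E-0_S)\) is identified, via the tubular neighbourhood of \(S\) in \(\overline Y\), with \(\Th(L\oplus E|_S)=\Th(N_{S/\mathcal W})\). This is a second purity step, now inside the total space of \(E\). If the simultaneous deformation proves awkward, a fallback is to use the local charts (in the application, \(\chi_a,\chi_b\)) to trivialize \(E\) Nisnevich-locally and reduce to the product model treated in the proof of \Cref{lem:face-sign}.

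\textbf{Step 4: fix the orientation.} The identification of \(N_{S/\mathcal W}\) comes from the short exact sequence
\[
0\to L\to N_{S/\mathcal W}\to E|_S\to 0.
\]
This is split by the chosen frames, and the connecting direction is the \(L\)-coordinate. Splitting the sequence yields the canonical isomorphism \(\det N_{S/\mathcal W}\cong L\otimes\det(E|_S)\), with \(L\) written first, as claimed. A different splitting changes the frame by a unipotent matrix, which has determinant \(1\). By \Cref{lem:thom-determinant} such a change acts trivially on the Thom generator, so the orientation does not depend on the choice.
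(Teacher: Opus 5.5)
Your architecture is the same as the paper's: localize to \(\mathcal W\), pass to the triple \(\mathcal W-\overline Y\subset\mathcal W-S\subset\mathcal W\), apply purity to its three terms, and read the orientation off \(0\to L\to N_{S/\mathcal W}\to E|_S\to0\). However, Step~2 makes a claim that fails in general. You assert that after shrinking, the restricted cellular triple \(\Omega_{p-2}\cap\mathcal W'\subset\Omega_{p-1}\cap\mathcal W'\subset\Omega_p\cap\mathcal W'\) literally equals \(\mathcal W'-\overline Y\subset\mathcal W'-S\subset\mathcal W'\). That needs \(\mathcal W'\) to meet no codimension-\((p-1)\) stratum \(R'\ne R\). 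If \(S\subset\overline{R'}\), the closed set \(\overline{R'}\) is not disjoint from \(R\cup S\). Indeed every open neighbourhood of \(S\) then meets \(R'\), so no admissible \(\mathcal W'\) exists. This is the typical case, and it occurs in the paper's own application. Since \(Z_{v,w}\subset\Sigma_v\), the open set \(\mathcal W_{v,w}\) contains every \(X_{v'}\) with \(\ell(v')=\ell(v)\), \(v'\ne v\), and \(X_w\subset\overline{X_{v'}}\) whenever also \(w\lessdot v'\). For \(SL_3\), the stratum \(X_{s_1}=X_{w_0a}\) with \(a=s_1s_2\) lies in the closures of both \(X_{s_1s_2}\) and \(X_{s_2s_1}\). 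This is why the column of \(\partial_2\) indexed by \(a=s_1s_2\) has two nonzero entries.

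The missing idea is to compare by a morphism of triples rather than by an equality. Because \(\Omega_{p-2}\cap\overline Y=\varnothing\) and \(\Omega_{p-1}\cap S=\varnothing\), inclusion defines a map \((\Omega_{p-2}\cap\mathcal W,\ \Omega_{p-1}\cap\mathcal W,\ \Omega_p\cap\mathcal W)\to(\mathcal W-\overline Y,\ \mathcal W-S,\ \mathcal W)\). Naturality of cofiber sequences then makes the two connecting morphisms commute with the induced maps. On the first quotient, the induced map collapses every summand \(\Th(\nu_{R'})\) with \(R'\ne R\), since those strata lie in \(\mathcal W-\overline Y\). On the \(R\)-summand it is an excision equivalence, because \(\Omega_{p-1}\cap\mathcal W\) is an open neighbourhood of the closed set \(R\subset\mathcal W-S\); thus this map is \(\operatorname{pr}_R\). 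On the last quotient it is an excision equivalence on the \(S\)-summand, so it inverts \(\iota_S\). This is the ``functoriality for a chain of supports'' in the paper's proof. It is also why the lemma is stated for the single component \(\operatorname{pr}_R\partial_p\iota_S\) rather than for a shrunken filtration.

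Two smaller corrections. In Step~3 the deformed triple is \(E-0_{\overline Y}\subset E-0_S\subset E\), not one starting with \(E-E|_S\); note that \(E-E|_S=E|_R\), which gives the wrong successive quotient. In Step~4 no splitting is needed, since the determinant isomorphism attached to a short exact sequence is already canonical.
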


\begin{proof}
Put
\[
 U_0=\mathcal W-\overline Y,\qquad
 U_1=\mathcal W-S.
\]
Then \(U_0\subset U_1\subset\mathcal W\), and the elementary quotient
identity for a triple of pointed motivic spaces gives a canonical
cofiber sequence
\begin{equation}\label{eq:quotient-triple}
 U_1/U_0\longrightarrow
 \mathcal W/U_0\longrightarrow
 \mathcal W/U_1.
\end{equation}
The three supports are, respectively,
\[
 R=U_1-U_0,\qquad
 \overline Y=\mathcal W-U_0,\qquad
 S=\mathcal W-U_1.
\]
All three are smooth, and their normal bundles in the indicated
ambient schemes are
\[
 N_{R/U_1}=E|_R,\qquad
 N_{\overline Y/\mathcal W}=E,\qquad
 N_{S/\mathcal W}.
\]
Applying homotopy purity to the three terms of
\eqref{eq:quotient-triple} therefore identifies that sequence,
without choosing a splitting of any normal bundle, with
\eqref{eq:incidence-localization-cofiber}.

We now compare its connecting morphism with the selected component of
the cellular exact couple.  Write \(Q_j=\Omega_j/\Omega_{j-1}\).
The cellular differential is the boundary of the quotient triple
\[
 \Omega_{p-2}\subset\Omega_{p-1}\subset\Omega_p,
\]
followed by the canonical projection to \(Q_{p-1}\).  Under purity,
let \(Q_p^{(S)}=\Th(\nu_S)\) and
\(Q_{p-1}^{(R)}=\Th(\nu_R)\) denote the indicated wedge summands.

Localization triangles with support are excisive and are functorial
for a chain of supports.  Apply this first to the cellular chain
\(\Omega_{p-2}\subset\Omega_{p-1}\subset\Omega_p\), and then excise the
complement of \(\mathcal W\) and all support components except
\(R\subset\overline Y\).  The \(3\times3\) diagram of quotient cofiber
sequences gives the commutative square
\[
\begin{tikzcd}[column sep=large]
 \mathcal W/U_1\arrow[r,"\partial_{\mathcal W}"]
   \arrow[d,"\simeq"']
 &\Sigma(U_1/U_0)\arrow[d,"\simeq"]\\
 Q_p^{(S)}\arrow[r,"\operatorname{pr}_R\partial_p\iota_S"']
 &\Sigma Q_{p-1}^{(R)} .
\end{tikzcd}
\]
Here the left vertical equivalence is purity and excision at the
\(S\)-support, and the right one is purity and excision at the
\(R\)-support.  Thus the square is a statement about localization
triangles in the pointed motivic homotopy category, not a purported
retraction of schemes onto \(\mathcal W\).  By
\eqref{eq:quotient-triple}, its top arrow is exactly the connecting
morphism of \eqref{eq:incidence-localization-cofiber}; its bottom arrow
is, by construction of the two wedge projectors, the selected
incidence component.  This proves the required identification.

Finally, transitivity of conormal sheaves for the two regular
immersions gives
\[
 0\longrightarrow L\longrightarrow N_{S/\mathcal W}
 \longrightarrow E|_S\longrightarrow0.
\]
Taking determinants gives the orientation in the statement, with
\(L\) first.  This determinant convention is applied only to the Thom
isomorphism on homology; the cofiber sequence itself used the actual
bundle \(N_{S/\mathcal W}\) throughout.
\end{proof}

\begin{lemma}
\label{lem:two-stratum}
Let \(b\lessdot a\), put \(w=w_0a\) and \(v=w_0b\), so
\(w\lessdot v\).  There are an open subscheme
\(\mathcal W_{v,w}\subset X\) and a smooth closed subscheme
\[
 \overline Y_{v,w}=X_v\cup X_w\lhook\joinrel\longrightarrow
 \mathcal W_{v,w}
\]
of codimension \(p-1\), in which \(X_w\) is a multiplicity-one
effective Cartier divisor and \(X_v\) is its open complement.  If
\[
 E=N_{\overline Y_{v,w}/\mathcal W_{v,w}},\qquad
 L=N_{X_w/\overline Y_{v,w}},
\]
then transitivity of normal bundles gives
\begin{equation}\label{eq:normal-exact-sequence}
 0\longrightarrow L\longrightarrow N_{X_w/\mathcal W_{v,w}}
 \longrightarrow E|_{X_w}\longrightarrow0.
\end{equation}
The \((a,b)\)-component of the cellular boundary is the connecting map
of the purity cofiber sequence
\begin{equation}\label{eq:two-stratum-thom-cofiber}
 \Th(E|_{X_v})\longrightarrow\Th(E)
 \longrightarrow\Th(N_{X_w/\mathcal W_{v,w}}),
\end{equation}
with \eqref{eq:normal-exact-sequence} ordered as \(L\) followed by
\(E|_{X_w}\).
\end{lemma}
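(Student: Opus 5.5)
The plan is to construct \(\mathcal W_{v,w}\) and \(\overline Y_{v,w}\) explicitly from the tubular charts of \Cref{prop:X-cellular}, and then to deduce the boundary statement directly from \Cref{lem:incidence-localization}.

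\emph{Step 1: the neighbourhood.} Take
\[
 \mathcal W_{v,w}=\mathcal O_w\cap\Omega^X_{p}-\bigl(\text{closures of strata of codimension }\ge p-1\text{ other than }X_v,\,X_w\bigr).
\]
This set is open, it contains \(X_w\), and it meets \(X_v\) in a dense open subset. To get all of \(X_v\) inside, I would instead take the union of this set with \(\mathcal O_v\cap\Omega^X_{p-1}\), after removing the remaining strata. Since \(X_w\) lies in the closure of \(X_v\) and is open in \(\overline{X_v}\cap\mathcal W\), the closure of \(X_v\) in \(\mathcal W\) is exactly \(X_v\cup X_w\).

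\emph{Step 2: smoothness.} This is the main obstacle. Inside \(\mathcal O_w\cong V_w\times U_{w^{-1}}\times T\), I would use the opposition chart \eqref{eq:opposition-bott-samelson}. Since \(b\lessdot a\) comes from deleting position \(q\) in a reduced word for \(a\), the Bott--Samelson product in which \(x_{\alpha_{i_q}}(z_q)n_{i_q}\) is allowed to vary, with the other normal coordinates set to zero, parametrizes a neighbourhood of \(X_w\) in \(\overline{X_v}\). The rank-one identity
\[
 x_\alpha(z)n_\alpha=x_{-\alpha}(z^{-1})\,\alpha^\vee(z)\,x_\alpha(-z^{-1})\cdots
\]
in \(SL_2\) shows that for \(z\neq0\) one lands in \(X_v\), and that the locus is a graph over \(\mathbb A^1_{z}\times X_w\). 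This gives smoothness of \(\overline Y_{v,w}\) near \(X_w\), with \(X_w=\{z_q=0\}\) reduced. Hence \(X_w\) is a multiplicity-one Cartier divisor. Smoothness along \(X_v\) is automatic because \(X_v\) is itself a stratum. The codimension is \((N-\ell(v))=p-1\).

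\emph{Step 3: the boundary.} Sequence \eqref{eq:normal-exact-sequence} is the conormal transitivity already used at the end of the proof of \Cref{lem:incidence-localization}. The final assertion is then exactly \Cref{lem:incidence-localization} applied with \(Z=X\), \(S=X_w\), \(R=X_v\), \(\mathcal W=\mathcal W_{v,w}\), using the \(\Omega^X_\bullet\) filtration of \Cref{prop:X-cellular}. That lemma also supplies the orientation convention with \(L\) first.

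The delicate point is Step 2: one must check that no other Bruhat stratum meets the closure of \(X_v\) inside \(\mathcal W\). Since \(w\lessdot v\), every other stratum in that closure has codimension at least \(p\) and is different from \(X_w\), so it is removed in Step 1. One must also check that the \(z_q\) coordinate restricts to a uniformizer, with the divisor reduced rather than a non-reduced multiple of \(X_w\). For the latter I would rely on the explicit \(SL_2\) computation, transported by \(n_{<q}\) and the tail \(t_{a,q}\).
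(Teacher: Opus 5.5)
\textbf{Gaps in Steps 1 and 2.} Step~3 is exactly what the paper does, but Steps~1 and~2 each have a real problem.

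\textbf{Step 1 removes \(X_w\).} You delete the closures of \emph{all} strata of codimension \(p-1\) other than \(X_v\). If \(X_{v'}\) is such a stratum with \(w<v'\), then \(X_w\subset\overline{X_{v'}}\), so your \(\mathcal W_{v,w}\) no longer contains \(X_w\). This is the generic situation, not an edge case. In \(SL_3\) with \(a=s_1s_2\) and \(b=s_2\), one gets \(w=s_1\), \(v=s_2s_1\), and \(w<v'=s_1s_2\). The same failure occurs for every cover in degrees two and three.

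The other codimension-\((p-1)\) strata do not meet \(\overline{X_v}\) and need not be removed. Only the strata \(X_u\) with \(u<v\), \(u\neq w\), must go. For instance, \(\mathcal W=\Omega^X_{p-1}\cup X_w\) works. It is open, being \(\Omega^X_p\) minus the other codimension-\(p\) strata, which are closed in \(\Omega^X_p\). It meets \(\overline{X_v}\) exactly in \(X_v\cup X_w\). The paper instead uses \(X-\pi^{-1}(Z_{v,w})\) with \(Z_{v,w}=\bigcup_{u\lessdot v,\,u\ne w}\Sigma_u\cup(\Sigma_w-C_w)\).

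\textbf{Step 2 does not yet prove smoothness.} This step is the real content of the lemma. Knowing that your one-parameter locus lies in \(\overline{X_v}\) and is smooth says nothing about \(\overline{X_v}\) unless the locus is open in it. You only assert this (``parametrizes a neighbourhood''). Two ingredients are missing.

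First, the chart must have the tangent factor on the left: \((z,u,t)\mapsto u\,x_{\delta_q}(z)\,n_wtU\) with \(u\in U_{w^{-1}}\subset B\). Then left \(B\)-stability of \(X_v\) and \(X_w\) propagates the rank-one computation along all of \(X_w\). In the chart \eqref{eq:tubular-chart}, \((v,u,t)\mapsto vun_wtU\), the coordinate line through \(un_wtU\) with \(u\neq 1\) actually leaves \(\overline{X_v}\). For example, in \(SL_3\) take \(w=s_1\), \(v=s_1s_2\), \(\delta_q=-\alpha_1-\alpha_2\). Conjugating by \(x_{\alpha_1}(c)\) produces an extra factor \(x_{-\alpha_2}(\pm cz)\), and the point lands in the big cell.

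Second, you need an exhaustion argument. The locus where all normal coordinates except \(z\) vanish is closed in \(\mathcal O_w\). It lies in \(X_v\cup X_w\subset\mathcal W\), and it is irreducible of dimension \(\ell(w)+1+\operatorname{rk}T=\dim X_v\). On the other side, \(\overline{X_v}\cap\mathcal O_w\cap\mathcal W\) is a nonempty open subset of an irreducible scheme of that same dimension. So the two reduced schemes coincide.

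\textbf{How the repaired route compares.} With these repairs your argument is valid, and it is more elementary than the paper's. It gives smoothness and multiplicity one at once, as \(X_w=\{z=0\}\subset\mathbb A^1\times U_{w^{-1}}\times T\). The paper instead gets smoothness along \(C_w\) from normality of Schubert varieties plus \(B\)-homogeneity. It gets the uniformizer from a Deodhar/finite-birational Bott--Samelson argument, and then pulls back along \(\pi\). The exhaustion step you are missing is the one the paper itself uses later, in \Cref{lem:left-straightening}.
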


\begin{proof}
Let \(C_x=B n_xB/B\subset G/B\) and put
\(\Sigma_x=\overline{C_x}\).  Define the closed subset
\[
 Z_{v,w}=
 \left(\bigcup_{\substack{u\lessdot v\\u\ne w}}\Sigma_u\right)
 \cup(\Sigma_w-C_w)\subset G/B.
\]
Every element strictly below \(v\) lies below some cover of \(v\):
extend a chain in the Bruhat interval to a saturated chain ending at
\(v\).  It follows that
\[
 \Sigma_v-Z_{v,w}=C_v\amalg C_w=:\Sigma_{v,w},
\]
and \(C_w\) is closed in this two-stratum scheme.

Schubert varieties are normal
\cite[Theorem~2.2.3]{BrionKumar2005}; hence \(\Sigma_v\) is regular at
the generic point of every codimension-one subvariety.  In particular
its smooth locus meets \(C_w\).  The smooth locus is \(B\)-stable and
\(B\) acts transitively on \(C_w\), so \(\Sigma_v\) is smooth along
all of \(C_w\); it is already smooth along the open cell \(C_v\).
Thus \(\Sigma_{v,w}=C_v\amalg C_w\) is smooth.  The reduced closed
subscheme \(C_w\subset\Sigma_{v,w}\) has pure codimension one in this
smooth scheme, so it is an effective Cartier divisor of multiplicity
one.

For later coordinates, choose a reduced horizontal word for \(v\).
Strong exchange gives the unique deleted position producing \(w\).
The corresponding Bott--Samelson parameter vanishes on the boundary
cell and is nonzero on \(C_v\).  At the generic point of \(C_w\), the
Bott--Samelson morphism is an isomorphism.  Indeed, its standard
subexpression stratification \cite{Deodhar1985} and the uniqueness in strong exchange
show that the fiber over \(\eta_{C_w}\) consists of the generic point
of that one deleted-coordinate divisor.  The morphism is therefore
quasi-finite near this point; properness makes it finite after shrinking
around \(\eta_{C_w}\), and a finite birational morphism to the normal
scheme \(\Sigma_v\) is an isomorphism there.  Hence the deleted
parameter has valuation one and is a uniformizer.  By \(B\)-equivariance
the same multiplicity-one statement holds along the whole orbit
\(C_w\).

Let \(\pi:G/U\to G/B\) be the smooth \(T\)-torsor and take explicitly
\[
 \mathcal W_{v,w}=X-\pi^{-1}(Z_{v,w}).
\]
The pullback
\(\overline Y_{v,w}=\pi^{-1}(\Sigma_{v,w})=X_v\cup X_w\)
is smooth, closed in \(\mathcal W_{v,w}\), and has codimension
\(p-1\) in \(X\), with the same divisor and multiplicity statements.
Both immersions in
\(X_w\subset\overline Y_{v,w}\subset\mathcal W_{v,w}\) are regular, so
transitivity gives \eqref{eq:normal-exact-sequence}.  The pinned root
frames give its ordered orientation and identify the last normal bundle
with the cellular normal bundle of \(X_w\).

Apply \Cref{lem:incidence-localization} to
\[
 S=X_w,\qquad R=X_v,\qquad
 \overline Y=\overline Y_{v,w},\qquad
 \mathcal W=\mathcal W_{v,w}.
\]
The construction of \(Z_{v,w}\) has removed every codimension-one
boundary stratum of \(\Sigma_v\) other than \(C_w\); equivalently,
\(\Sigma_v-Z_{v,w}=C_v\amalg C_w\).  Thus the support of the selected
incidence component is exactly the displayed two-stratum scheme, and
all hypotheses of the localization lemma hold.  It follows that the
\((a,b)\)-component of the global cellular differential is the
connecting morphism of
\eqref{eq:two-stratum-thom-cofiber}.

The orientation is supplied by the canonical determinant isomorphism
associated with \eqref{eq:normal-exact-sequence}, with \(L\) placed
\emph{first} and \(E|_{X_w}\) second.  This is the same order in which
\Cref{constr:cut} separates the deleted Thom factor.  No global
splitting of \eqref{eq:normal-exact-sequence} is used.  This proves the
last assertion.
\end{proof}

\begin{lemma}\label{lem:unipotent-triangular}
Let \(\Psi\subset\Phi\) be a closed root set containing no opposite
pair, and suppose it is equipped with an additive positive height
\(h\), so that
\[
 h(i\rho+j\delta)=ih(\rho)+jh(\delta)
\]
whenever the root on the left belongs to \(\Psi\).
Fix root-labelled coordinates on the corresponding unipotent group.
Reordering a product of root subgroups, or conjugating it by a
root-subgroup element inside that group, is a polynomial transformation
whose nonlinear part is triangular for \(h\).  After the source and
target axes are identified by their root labels, its variable part has
\(1\) on every diagonal entry.  Thus all root-coordinate permutations
and pinned root-vector signs contribute a constant Jacobian, while the
unipotent shear has determinant \(1\).
\end{lemma}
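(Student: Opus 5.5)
The plan is to reduce everything to the Chevalley commutator formula together with induction on height. Let \(U_\Psi\) be the unipotent group with root-labelled coordinates \(z_\rho\), \(\rho\in\Psi\). Because \(\Psi\) is closed and contains no opposite pair, the commutator formula
\[
 x_\rho(s)x_\delta(t)=x_\delta(t)x_\rho(s)\prod_{i,j\geq1}x_{i\rho+j\delta}\bigl(c_{ij}^{\rho\delta}s^it^j\bigr)
\]
has all correction roots \(i\rho+j\delta\) inside \(\Psi\). By additivity of \(h\), each such root has height \(ih(\rho)+jh(\delta)>\max(h(\rho),h(\delta))\). I would define a weight on polynomials by giving \(z_\rho\) weight \(h(\rho)\). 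The commutator formula then shows that each correction monomial \(s^it^j\) has weight exactly \(h(i\rho+j\delta)\). In other words, it is a polynomial in the coordinates of strictly lower height placed in a coordinate of strictly higher height.

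First step (reordering). Any two orderings of \(\Psi\) are connected by adjacent transpositions. I would handle one transposition at a time, then straighten the correction factors back into the target order. This is done by induction on height, descending from the top, or equivalently by induction on \(|\Psi|\) after quotienting by a maximal-height root subgroup, which is central in \(U_\Psi\) because sums with it leave \(\Psi\) by height. The induction would show that the transformation has the form \(z'_\gamma=\pm z_{\gamma}+P_\gamma(z_\rho:h(\rho)<h(\gamma))\). Here the sign comes only from the pinned root-vector conventions when the order is permuted, and the permutation of axes enters only through the identification of labels. So after matching axes by root labels, the variable part is lower triangular with respect to the order by \(h\), and its diagonal entries are constants.

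Second step (conjugation by a fixed \(x_\delta(c)\)). Conjugation \(x_\rho(s)\mapsto x_\delta(c)x_\rho(s)x_\delta(c)^{-1}\) equals \(x_\rho(s)\) times factors in roots \(i\delta+j\rho\) with \(j\geq1\). Each factor \(x_{i\delta+j\rho}\) with \(i\geq 1\) has a coefficient \(c^is^j\). When \(j=1\), this is linear in \(s\) but lands in a root of strictly larger height. Applying this to a whole product and reordering via the first step again gives \(z'_\gamma=z_\gamma+P_\gamma(\text{lower heights})\), now with diagonal coefficient exactly \(1\), since no permutation is involved.

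Conclusion. The Jacobian matrix is block triangular with respect to the height filtration. Its diagonal consists of the constant signed permutation coming from the axis relabelling, and the identity on each height level coming from the shear. So its determinant is the constant \(\pm1\) of the relabelling times \(1\). The main obstacle is the bookkeeping in the straightening induction: one has to check that moving a correction factor past further factors creates only terms of yet higher height, and that the process terminates. I would resolve this with the central-quotient induction on \(|\Psi|\). Working in \(U_\Psi/U_{\gamma_{\max}}\) gives the statement for all other coordinates. The remaining coordinate \(z_{\gamma_{\max}}\) is central, so it changes only by its sign and an additive polynomial in lower-height variables.
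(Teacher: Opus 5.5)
Your proposal is correct and follows essentially the paper's route. The Chevalley commutator formula together with additivity of $h$ places every correction term in a coordinate of strictly higher height, while the original coordinates keep coefficient $1$. The axis permutation is split off as a constant, and conjugation is handled by the same calculation. Your weight grading and central-quotient induction just make explicit the straightening bookkeeping that the paper compresses into ``successive interchanges''. One formulation point: that induction should be stated for quotients $U_\Psi/U_I$ with $I\subset\Psi$ an ideal (closed under adding elements of $\Psi$), since $\Psi\setminus\{\gamma_{\max}\}$ need not itself be closed. Also, with root-labelled coordinates held fixed, reordering never rescales $z_\gamma$, so the $\pm$ in your $z'_\gamma=\pm z_\gamma+P_\gamma$ is always $+$; this gives the lemma's unit-diagonal claim directly.
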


\begin{proof}
It is enough to interchange two adjacent root factors.  The pinned
Chevalley commutator formula
\cite[Expos\'e~XXIII, Section~6]{SGA3} has the form
\[
  x_\rho(r)x_\delta(s)=
  x_\delta(s)x_\rho(r)
  \prod_{\substack{i,j>0\\i\rho+j\delta\in\Phi}}
  x_{i\rho+j\delta}(C_{\rho,\delta;i,j}r^is^j),
\]
where the last product contains only roots strictly between \(\rho\)
and \(\delta\) in a compatible convex order.  Every new root has height
\(ih(\rho)+jh(\delta)>\max\{h(\rho),h(\delta)\}\).  The original
\(r\)- and \(s\)-coordinates occur with coefficient \(1\); every new
term therefore lies in a strictly higher filtered coordinate.  An
interchange also permutes the two
root-labelled axes; that constant permutation is kept separate.
Successive interchanges therefore factor as a constant signed
root-coordinate permutation followed by a triangular polynomial shear
with unit diagonal.  The constants \(2\) and \(3\) which occur in types
\(B_2\) and \(G_2\) multiply only the off-diagonal monomials displayed
in the product, so they do not change the variable determinant.  The
conjugation assertion follows by applying the same calculation to
\(x_\rho(r)x_\delta(s)x_\rho(r)^{-1}\).
\end{proof}

\begin{lemma}
\label{lem:left-straightening}
For every \(r\in W\), reordering the two unipotent factors gives a
polynomial automorphism
\[
 \rho_r:V_r\times U_{r^{-1}}\xrightarrow{\sim}
        V_r\times U_{r^{-1}}
\]
such that
\begin{equation}\label{eq:straightened-chart}
 \widetilde\chi_r(v,u,t):=u v n_rtU
   =\chi_r\bigl(\rho_r(v,u),t\bigr).
\end{equation}
It fixes the zero section \(\{1\}\times U_{r^{-1}}\times T\).
On the normal quotient along that section, its derivative preserves
the filtration
\[
 h_r(\delta)=\operatorname{ht}(-r^{-1}\delta),
 \qquad \delta\in r\Phi^-,
\]
and induces the identity on the associated graded; in particular its
determinant is \(1\).

For a cover \(w=w_0a\lessdot v=w_0b\), let \(y_q\) be the negative
root coordinate in \(V_w\) selected by deletion of the \(q\)-th
letter of \(\mathbf a\), and write the other normal coordinates as
\(\widehat{\mathbf y}\).  After restricting the straightened charts
to \(\mathcal W_{v,w}\), put
\[
 \mathscr O_w=\mathcal W_{v,w}\cap\mathcal O_w,
 \qquad
 \mathscr O_v=\mathcal W_{v,w}\cap\mathcal O_v.
\]
After shrinking these opens around their intersections with
\(\overline Y_{v,w}\), one has
scheme-theoretically
\begin{equation}\label{eq:straightened-pair}
 \overline Y_{v,w}\cap\mathscr O_w
   =\{\widehat{\mathbf y}=0\},\qquad
 \overline Y_{v,w}\cap\mathscr O_v
   =\{\mathbf y'=0\},
\end{equation}
where \(\mathbf y'\) denotes every target normal coordinate.  Thus
\(z=y_q\) trivializes \(L\), and the surviving source normal
coordinates and the target normal coordinates are frames of the
actual bundle \(E\) in \Cref{lem:two-stratum}.  We abbreviate
\(\widetilde\chi_a=\widetilde\chi_{w_a}\) and
\(\widetilde\chi_b=\widetilde\chi_{w_b}\).
\end{lemma}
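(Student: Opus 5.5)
The proof splits into two parts. The first is the straightening statement for a single \(r\in W\). The second is the scheme-theoretic description \eqref{eq:straightened-pair} of the two-stratum scheme in the straightened charts.

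\emph{Straightening.} Put \(U_r'=n_rU^-n_r^{-1}\). By the proof of \Cref{prop:X-cellular}, multiplication gives \(V_r\times U_{r^{-1}}\cong U_r'\), and both \(V_r\) and \(U_{r^{-1}}\) are subgroups of \(U_r'\). So \(uv\in U_r'\), and we define \(\rho_r(v,u)\) as the unique pair \((v',u')\) with \(uv=v'u'\). This is a polynomial automorphism, since its inverse is obtained by reordering the product the other way. Then \eqref{eq:straightened-chart} holds by construction. On the zero section \(v=1\) we get \(uv=u\), so \(\rho_r\) fixes \(\{1\}\times U_{r^{-1}}\times T\).

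For the derivative, note that the root set \(r\Phi^-\) of \(U_r'\) is closed and contains no opposite pair. Moreover, \(h_r(\delta)=\operatorname{ht}(-r^{-1}\delta)\) is a positive additive height on it. \Cref{lem:unipotent-triangular} then shows that the reordering is triangular for \(h_r\), with unit diagonal on root-labelled axes. Here the labels are preserved, so no permutation occurs.

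To see the action on the normal quotient, linearize in \(v\) at \(v=1\). Along the normal directions we get \(v'=uvu^{-1}\) modulo \(U_{r^{-1}}\)-factors, that is, \(\operatorname{Ad}(u)\) projected to \(\mathfrak v_r\). Since \(\operatorname{Ad}(u)\) for unipotent \(u\) only adds higher-height root components, the induced map is unipotent for the \(h_r\)-filtration. Hence it is the identity on the associated graded and has determinant \(1\).

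\emph{The cover pair.} Work in \(\mathscr O_w\) using the straightened chart \(\widetilde\chi_w\). After left translation by \(n_{w_0}^{-1}\), \eqref{eq:opposition-bott-samelson} identifies the normal coordinates with the positive Bott--Samelson parameters of \(\mathbf a\). The point \((\mathbf y,u,t)\) with \(\widehat{\mathbf y}=0\) and \(y_q=z\) becomes \(u\,x_{\delta_q}(z)\,n_w tU\). Putting \(u\) on the left is precisely why the straightened chart is used: it keeps \(B\)-equivariance visible, so this point lies in \(BX_{\text{rank-one}}\). For \(z\neq0\), the rank-one relation \(x_{-\alpha}(z)=x_\alpha(z^{-1})\alpha^\vee(\cdot)x_\alpha(\cdot)n_\alpha^{\pm1}\), transported by the tail, moves the point into \(X_v\). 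For \(z=0\) it lies in \(X_w\). This gives \(\{\widehat{\mathbf y}=0\}\subset\overline Y_{v,w}\).

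Both sides are smooth of the same dimension, since \(\dim\overline Y_{v,w}=\dim X-(p-1)\). The subscheme \(\{\widehat{\mathbf y}=0\}\) is irreducible and closed in \(\mathscr O_w\), and it contains \(X_w\cap\mathscr O_w\). After shrinking \(\mathscr O_w\) to a neighbourhood of \(X_w\) on which \(\overline Y_{v,w}\) is irreducible (it is smooth there, so its components are disjoint), the two subschemes agree scheme-theoretically. The reason is that an inclusion of reduced irreducible schemes of equal dimension, with the larger one integral, is an equality.

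On \(\mathscr O_v\), the chart has \(\overline Y_{v,w}\cap\mathscr O_v\supset X_v\cap\mathscr O_v=\{\mathbf y'=0\}\). Shrinking \(\mathscr O_v\) to avoid the closed set \(X_w\) (it is already disjoint from \(X_w\), since \(X_w\not\subset\mathcal O_v\)) gives equality, because \(\overline Y_{v,w}-X_w=X_v\).

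\emph{Framing.} The last claims follow from these descriptions. The function \(z=y_q\) restricted to \(\{\widehat{\mathbf y}=0\}\) cuts out \(X_w\), and it has multiplicity one by \Cref{lem:two-stratum}, so \(dz\) trivializes the conormal of \(L\). The coordinates \(\widehat{\mathbf y}\), respectively \(\mathbf y'\), are regular sequences defining \(\overline Y_{v,w}\), so their differentials frame \(E\).

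The main obstacle is the first inclusion \(\{\widehat{\mathbf y}=0\}\subset\overline Y_{v,w}\). This requires the rank-one clutching computation with the tail conjugation \(t_{a,q}\), and a careful check that the transported root \(\delta_q\) really corresponds to the deleted letter. I would first verify it in the positive Bott--Samelson coordinates, where deleting a letter is transparent, and then transport it by \(n_{w_0}\).
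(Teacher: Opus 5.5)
Your proposal is correct and follows essentially the same route as the paper. It reorders inside \(n_rU^-n_r^{-1}\) and applies \Cref{lem:unipotent-triangular} to the additive height \(h_r\). It then places the coordinate locus \(\{\widehat{\mathbf y}=0\}=\{u\,x_{\delta_q}(z)n_wtU\}\) inside \(\overline Y_{v,w}\) using the rank-one relation and left \(B\)-invariance. Equality then follows from reducedness, irreducibility and the dimension count \(\ell(w)+1+\operatorname{rk}T\).

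There is one slip in the parenthetical reason for \(\mathcal O_v\cap X_w=\varnothing\). It does not follow from \(X_w\not\subset\mathcal O_v\). It does hold, because \(X_v\) is closed in \(\mathcal O_v\) while \(X_w\subset\overline{X_v}\). Likewise, no shrinking is needed to get irreducibility: \(X_v\) is irreducible and dense in \(\overline Y_{v,w}\), so every nonempty open subset of \(\overline Y_{v,w}\) is irreducible.
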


\begin{proof}
Put \(\Psi_r=r\Phi^-\).  It is closed under root addition and contains
no opposite pair; its negative and positive parts are the root sets of
\(V_r\) and \(U_{r^{-1}}\), respectively.  The multiplication maps for
both block orders are therefore isomorphisms onto the same unipotent
group \(n_rU^-n_r^{-1}\).  Reorder \(uv\) into \(v'u'\).  Successive
Chevalley commutations give mutually inverse polynomial changes
\((v,u)\leftrightarrow(v',u')\), proving
\eqref{eq:straightened-chart}.  When \(v=1\), no commutator is created,
so the zero section is fixed.  For \(\delta\in\Psi_r\), the function
\[
 h_r(\delta)=\operatorname{ht}(-r^{-1}\delta)>0
\]
is additive on every root sum.  A commutator root
\(i\delta+j\epsilon\) has height
\(ih_r(\delta)+jh_r(\epsilon)\), strictly greater than either input.
Thus \Cref{lem:unipotent-triangular} applies even though
\(\Psi_r\) has mixed signs: every original normal coordinate retains
coefficient \(1\), while all new terms have strictly higher
\(h_r\)-degree.  This proves the filtration and determinant assertions.

Let \(\gamma_q\) be the actual negative root of \(V_w\) represented by
\(y_q\).  Strong exchange and the pinned rank-one chart show that
\[
 z\longmapsto x_{\gamma_q}(z)n_wB
\]
has value in \(C_w\) at \(z=0\) and in \(C_v\) for \(z\ne0\), with
multiplicity one.  Since every \(u\in U_{w^{-1}}\) lies in \(B\),
placing the tangent factor on the left keeps
\(u x_{\gamma_q}(z)n_wB\) in \(C_w\cup C_v\) for every \(u\) and
\(z\).  Hence the closed coordinate subscheme
\[
 S_w=\{u x_{\gamma_q}(z)n_wtU:
       u\in U_{w^{-1}},\ z\in\mathbb A^1,\ t\in T\}
     =\{\widehat{\mathbf y}=0\}
\]
lies in \(\overline Y_{v,w}\cap\mathscr O_w\).  It is smooth,
reduced, and irreducible, and
\[
\dim S_w=\ell(w)+1+\operatorname{rk}T
 =\ell(v)+\operatorname{rk}T
 =\dim\overline Y_{v,w}.
\]
The dense open stratum
\(X_v\cong\mathbb A^{\ell(v)}\times T\) is irreducible, hence
\(\overline Y_{v,w}\) and every nonempty open intersection used here
are irreducible.
The target \(\overline Y_{v,w}\cap\mathscr O_w\) of the displayed
inclusion is a nonempty open subscheme of the smooth irreducible scheme
\(\overline Y_{v,w}\); hence it is reduced and irreducible of the same
dimension.  The displayed inclusion is closed;
a proper closed subset of an irreducible finite-type scheme has smaller
dimension.  The two reduced schemes are therefore equal.  This proves
the first equality in \eqref{eq:straightened-pair} without identifying
Bott--Samelson parameters with tubular coordinates.

In the target chart, \(\{\mathbf y'=0\}=X_v\) is closed and dense in
\(\overline Y_{v,w}\cap\mathscr O_v\).  The same reducedness and
irreducibility argument proves the second equality.  Finally, the first
equality identifies \(z=0\) with
\(X_w\subset\overline Y_{v,w}\), with multiplicity one by the
rank-one calculation.  The remaining coordinates are consequently
the conormal parameters of the regular immersion and give the two
asserted frames of \(E\), compatibly with
\eqref{eq:normal-exact-sequence}.
\end{proof}

\begin{lemma}\label{lem:clutching}
Let \(V\) be a trivialized oriented vector bundle of rank \(p-1\) over
\(\mathbb A^d\times T\).  Suppose that, over the overlap
\(z\in\Gm\) of two purity charts for an additional oriented normal line,
the induced transition on \(V\times T\) is
\[
  (v,t)\longmapsto(A(z,\mathbf y)v,\lambda(z)t),
\]
where \(A\) is triangular in the affine coordinates,
\(\det A(z,\mathbf y)=z^m\), and \(A(1,\mathbf y)\) has determinant \(1\).
Use \(z=1\) to identify the two trivializations at the pointed
section.  The component of the cellular connecting morphism obtained
from the quotient triple and these clutching data is
\[
  \Dop_{\lambda,m}^{(p)}:
  \KMW1\otimes\KMW{p-1}\otimes\HT
  \longrightarrow\KMW{p-1}\otimes\HT.
\]
\end{lemma}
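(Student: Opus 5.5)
We reduce the statement to the model situation of \Cref{constr:D}, where both sides are morphisms of strictly \(\Aone\)-invariant sheaves. This lets us compare them on field-valued generators as in \Cref{lem:thom-determinant}.

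\emph{Step 1: the connecting morphism as a difference of clutching maps.} The quotient-triple cofiber sequence for the pair (base with the extra line, complement \(z\ne0\)) is glued from two purity charts. On the source chart the line coordinate \(z\) ranges over \(\Aone\). On the overlap \(z\in\Gm\) we identify with the target chart through the transition \((v,t)\mapsto(A(z,\mathbf y)v,\lambda(z)t)\).

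\begin{itemize}
\item By Nisnevich excision and \Cref{lem:purity-naturality}, the connecting morphism \(\Th(\text{line})\wedge\Th(V)_+\to\Sigma\Th(V|_{z\ne0})\), followed by the target chart identification, factors through the reduced part of \(\Sigma(\Gm)_+\).
\item In homology, \(\widetilde{\mathbf H}^{\Aone}_*\) of \(\Aone/\Gm\simeq\Sigma\Gm\) is the reduced summand \(\KMW1\) of \(\ZA[\Gm]\).
\item The connecting map is therefore the map induced by the clutching \(\Gm\times V\times T\to V\times T\), restricted to that reduced summand.
\item Concretely, this is the transition map minus its value at the base point \(z=1\). Subtracting the base point is exactly the splitting \(\ZA[\Gm]\cong\Z\oplus\KMW1\) used in \Cref{constr:D}.
\end{itemize}

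\emph{Step 2: replace \(A\) by \(A_m(z)\).} By \Cref{lem:thom-determinant}, the action of a bundle automorphism on Thom homology depends only on its determinant composite. This holds sectionwise on field points, which suffices since both sides are sheaf morphisms determined on fields.

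\begin{itemize}
\item Since \(\det A(z,\mathbf y)=z^m\), the map induced by \(A\) agrees with the one induced by \(A_m(z)=\operatorname{diag}(z^m,1,\ldots,1)\).
\item The dependence on \(\mathbf y\in\mathbb A^d\) disappears because \(\ZA[\mathbb A^d\times T]\cong\HT\).
\item The condition \(\det A(1,\mathbf y)=1\) ensures that the two trivializations identified at \(z=1\) differ by determinant \(1\). Hence the basepoint term is the identity \(x\otimes[t]\), matching \(\Th(g_0)\).
\end{itemize}

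So the component equals \(H(\Th(g_{\lambda,m}))-H(\Th(g_0))\) restricted to \(\KMW1\otimes\KMW{p-1}\otimes\HT\). By construction this is \(\Dop^{(p)}_{\lambda,m}\), and by \Cref{lem:D-symbol} it is given on symbols by \eqref{eq:D-symbol}.

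\emph{Step 3: ordering and the main difficulty.} The Thom orientation puts the extra line \(L\) first, as in \Cref{lem:incidence-localization}. This matches the convention in \Cref{constr:D} of moving the \(\KMW1\) factor to the front via \Cref{lem:MW-kunneth}, so no extra \(\eps\) appears here; face signs are handled separately by \Cref{lem:face-sign}.

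The main obstacle is Step 2's passage from a family \(A(z,\mathbf y)\) to its determinant. Triangularity in the affine coordinates is where I would try to gain control:

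\begin{itemize}
\item The unipotent off-diagonal part is \(\Aone\)-homotopic to the identity through the scaling \(s\cdot(\text{off-diagonal})\), globally over \(\Gm\times\mathbb A^d\times T\).
\item This reduces \(A\) to a diagonal matrix \(\operatorname{diag}(z^{m_1},\ldots)\) with \(\sum m_i=m\).
\item The diagonal matrix is then reduced to \(A_m(z)\) using \Cref{lem:thom-determinant} on field points.
\end{itemize}

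If the diagonal entries are only units times powers of \(z\), I would use \(A(1,\mathbf y)\) having determinant \(1\), together with the argument that units on \(\mathbb A^d\) are constant, to absorb the constants.
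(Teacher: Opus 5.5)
Your proposal is correct and follows essentially the same route as the paper. You identify the quotient-triple connecting map with the reduced clutching class \((\Th h)_*-(\Th h_1)_*\), which vanishes at \(z=1\) and so factors through the reduced summand \(\KMW1\) of \(\ZA[\Gm]\); after contracting the affine base, you then evaluate it via \Cref{lem:thom-determinant} using \(\det A=z^m\) and \(\det A(1,\mathbf y)=1\). Your extra homotopy argument for the triangular part is harmless but unnecessary, since \Cref{lem:thom-determinant} already handles the whole family on field points, and that is how the paper concludes.
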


\begin{proof}
We first identify the connecting morphism before computing it.
Apply the quotient-triple construction
\eqref{eq:quotient-triple} to the two purity charts and excise their
complements.  Contracting their affine base coordinates reduces the
triple to the standard divisor pair
\[
 \{0\}\times T\subset\mathbb A^1\times T,
 \qquad
 (\mathbb A^1-\{0\})\times T=\Gm\times T,
\]
externally smashed with the Thom space of \(V\).  Use the first chart
over the divisor and the second chart over its complement.  In the
mapping-cone model of this quotient triple, the attaching map is the
transition map on the overlap, based by its value at \(z=1\).  More
explicitly, after applying reduced \(\Aone\)-homology, its boundary is
the composite
\begin{equation}\label{eq:clutching-reduced-map}
\begin{tikzcd}[column sep=large]
 \widetilde{\mathbf H}^{\Aone}_*
 \bigl(\Th(V)\wedge(\Gm\times T)_+\bigr)
 \arrow[r,"(\Th h)_*-(\Th h_1)_*"]
 &
 \widetilde{\mathbf H}^{\Aone}_*
 \bigl(\Th(V)\wedge T_+\bigr),
\end{tikzcd}
\end{equation}
where
\[
 h(z,v,t)=(A(z,\mathbf y)v,\lambda(z)t),
 \qquad
 h_1(z,v,t)=(A(1,\mathbf y)v,t).
\]
Indeed, the ordinary mapping cone of a map of pointed simplicial
presheaves has boundary equal to its attaching map.  Changing from
the first trivialization to the second replaces that attaching map by
\(\Th h\), while the constant identification at the pointed section
is \(\Th h_1\); hence their difference after applying the additive
homology functor.  At \(z=1\) the two maps agree, so
\eqref{eq:clutching-reduced-map} vanishes on
\(\{1\}\times T\) and factors uniquely through
\[
 \ZA[\Gm]\longrightarrow\ZA(\Gm)=\KMW1.
\]
This proves, at the level of the actual quotient triple, that the
connecting morphism is the reduced clutching class; it does not infer
the boundary from a comparison of its realizations.

It remains to compute that reduced class.  On symbols,
\eqref{eq:clutching-reduced-map} is represented by
\[
  z\longmapsto
  \bigl(\Th(A(z,\mathbf y))\wedge[\lambda(z)t]\bigr)
  -\bigl(\Th(A(1,\mathbf y))\wedge[t]\bigr).
\]
An automorphism of an oriented vector bundle acts on its Thom homology
by the class of its determinant.  The strictly triangular part has
determinant \(1\), and the normalized variable part
\(A(z,\mathbf y)A(1,\mathbf y)^{-1}\) contributes
\(\ang{z^m}\).  Thus the reduced class acts as
\[
  \ang{z^m}x\otimes[\lambda(z)t]-x\otimes[t].
\]
Applying reduced
\(\Aone\)-homology and
\(\widetilde{\mathbf H}^{\Aone}_0(\Gm)=\KMW1\) gives exactly
\Cref{eq:D-symbol}.  The two Thom maps and the quotient triple were
formed in the pointed \(\Aone\)-homotopy category; subtraction occurs
only after applying reduced \(\Aone\)-homology.  Hence the formula is
a morphism of strictly \(\Aone\)-invariant sheaves, not merely a rule
on field-valued symbols.
\end{proof}

\begin{lemma}
\label{lem:opposition-bookkeeping}
In the coordinates of \eqref{eq:opposition-bott-samelson}, let
\(z\) be the \(q\)-th coordinate and put
\[
 \beta_r^{\mathrm{tail}}=
 s_{i_{q+1}}\cdots s_{i_{r-1}}(\alpha_{i_r})
 \qquad(q<r\leq p).
\]
Then
\(\{\beta_r^{\mathrm{tail}}:q<r\leq p\}=\Pi_{a,q}\).
On \(z\ne0\), eliminating the \(q\)-th simple reflection translates
the right torus coordinate by
\(\lambda_{a,q}(z)=t_{a,q}^{-1}(\alpha_{i_q}^\vee)(z)\).
The prefix survivor axes have diagonal weight \(1\), while the tail
axis labelled by \(\beta\in\Pi_{a,q}\) has diagonal weight
\(z^{\langle\beta,\alpha_{i_q}^\vee\rangle}\).  Every other
root-coordinate term created in this elimination is a unipotent
triangular shear for a transported root-height filtration.
\end{lemma}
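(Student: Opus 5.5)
The plan is to compute directly in the positive Bott--Samelson product of \eqref{eq:opposition-bott-samelson},
\[
 x_{\alpha_{i_1}}(z_1)n_{i_1}\cdots x_{\alpha_{i_q}}(z)n_{i_q}\cdots x_{\alpha_{i_p}}(z_p)n_{i_p},
\]
and to eliminate the \(q\)-th factor by the rank-one identity in \(SL_2\):
\[
 x_\alpha(z)n_\alpha=x_{-\alpha}(z^{-1})\,\alpha^\vee(z)\,x_{\alpha}(-z^{-1})\ \ \text{(up to the pinned sign conventions)},\qquad z\neq0.
\]
This replaces \(x_{\alpha_{i_q}}(z)n_{i_q}\) by a product of a torus element \(\alpha_{i_q}^\vee(z)\) and two unipotent factors. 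The factor on the left is absorbed into the prefix as a shear. The remaining factors are pushed rightward through the tail \(x_{\alpha_{i_{q+1}}}(z_{q+1})n_{i_{q+1}}\cdots\).

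First, for the root-set identity, I note that \(\beta_r^{\mathrm{tail}}\) are exactly the inversion roots of the reduced word \((i_{q+1},\ldots,i_p)\) for \(t_{a,q}\). They therefore enumerate \(\{\beta\in\Phi^+:t_{a,q}^{-1}\beta\in\Phi^-\}=\Phi^+\cap t_{a,q}\Phi^-=\Pi_{a,q}\). This is the standard inversion-set description.

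Second, I move the torus element \(\alpha_{i_q}^\vee(z)\) to the right end. Passing \(n_{i_r}\) transforms the cocharacter by \(s_{i_r}^{-1}\), so at the end it becomes \(t_{a,q}^{-1}(\alpha_{i_q}^\vee)(z)=\lambda_{a,q}(z)\). This multiplies the right \(T\)-coordinate, since \(n_{w}\)-conjugation of \(T\) acts through \(W\). Passing \(\alpha_{i_q}^\vee(z)\) past \(x_{\alpha_{i_r}}(z_r)\) after the earlier reflections rescales that coordinate. Tracking the conjugation, the \(r\)-th tail coordinate is multiplied by \(z^{\langle \beta_r^{\mathrm{tail}},\alpha_{i_q}^\vee\rangle}\), using \(\operatorname{Ad}(\mu(z))x_\gamma(y)=x_\gamma(z^{\langle\gamma,\mu\rangle}y)\) with the pairing transported along the partial tail word. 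Prefix coordinates lie to the left, are untouched by the torus factor, and so have diagonal weight \(1\).

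Third, the leftover unipotent factor \(x_{\alpha_{i_q}}(-z^{-1})\), together with the commutators created when it and the tail factors are reordered, must be shown to contribute only triangular unipotent shears. I would conjugate it through the tail. Its transport becomes a root element for a root of \(t^{-1}\)-type that either lands in \(U\), where it is absorbed by the quotient \(G/U\), or lands among the tail normal roots. In the latter case \Cref{lem:unipotent-triangular} applies with the transported height filtration \(h_r\) of \Cref{lem:left-straightening}. New commutator terms then sit in strictly higher filtration with unit diagonal. The main obstacle is this step: verifying that the residual factor never feeds back into a diagonal entry. I would handle it by induction on \(p-q\), peeling off one tail letter at a time. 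At each stage I would check, via the reducedness of the deleted word, that the conjugated residual root is positive of larger transported height than the axis it meets. The \(G_2\) constants \(2,3\) appear only in off-diagonal monomials.
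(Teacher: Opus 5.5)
Your proposal follows the paper's proof essentially step for step: the same rank-one identity (your form $x_{-\alpha}(z^{-1})\alpha^\vee(z)x_{\alpha}(-z^{-1})$ equals the paper's $x_{-\alpha}(z^{-1})x_{\alpha}(-z)\alpha^\vee(z)$ exactly, so no sign caveat is needed), the same transport of the cocharacter through the tail, the inversion-set description of $\Pi_{a,q}$, and \Cref{lem:unipotent-triangular} for the remaining terms; your handling of the residual factor, pushing it through the tail until it becomes an element of $U_{t_{a,q}^{-1}\alpha_{i_q}}\subset U$ absorbed in $G/U$, is more explicit than the paper's wording. Two small corrections: $x_{-\alpha_{i_q}}(z^{-1})$ is not a prefix shear but supplies the new tangent coordinate $z^{-1}$ of $Y_b$ (its transported root lies in $U_{w_b^{-1}}$), and at the $r$-th tail step the check should not be a height comparison (at $r=q+1$ both roots are simple) but rather that $(s_{i_{q+1}}\cdots s_{i_{r-1}})^{-1}\alpha_{i_q}$ is a positive root different from $\alpha_{i_r}$, which follows from reducedness of $\mathbf a$ rather than of the deleted word, while commutator terms that later merge into a tail axis shift it only by functions of $z$ and earlier tail coordinates, so the tail block remains unipotent triangular in word order.
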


\begin{proof}
The pinned rank-one identity appropriate to the opposition product is
\begin{equation}\label{eq:positive-rank-one}
 x_{\alpha_i}(z)n_i
 =x_{-\alpha_i}(z^{-1})x_{\alpha_i}(-z)
  \alpha_i^\vee(z).
\end{equation}
For the standard pinned matrices, both sides are
\(\left(\begin{smallmatrix}z&-1\\1&0\end{smallmatrix}\right)\).
Thus \eqref{eq:positive-rank-one} removes the \(q\)-th Tits factor,
supplies the additional target tangent coordinate \(z^{-1}\), and
places \(\alpha_{i_q}^\vee(z)\) immediately to the left of the tail.

Move a current cocharacter \(\mu(z)\) through one tail factor by
\[
 \mu(z)x_{\alpha_j}(y)
 =x_{\alpha_j}(z^{\langle\alpha_j,\mu\rangle}y)\mu(z),
 \qquad
 \mu(z)n_j=n_j(s_j^{-1}\mu)(z).
\]
Before the \(r\)-th tail factor the current cocharacter is
\((s_{i_{q+1}}\cdots s_{i_{r-1}})^{-1}\alpha_{i_q}^\vee\).
Its exponent on that factor is therefore
\[
 \left\langle\alpha_{i_r},
 (s_{i_{q+1}}\cdots s_{i_{r-1}})^{-1}
 \alpha_{i_q}^\vee\right\rangle
 =\langle\beta_r^{\mathrm{tail}},\alpha_{i_q}^\vee\rangle.
\]
After the last factor the cocharacter is
\(t_{a,q}^{-1}\alpha_{i_q}^\vee=\lambda_{a,q}\).  The standard
inversion-set formula for the reduced tail gives the asserted equality
with \(\Pi_{a,q}\).  No prefix factor is crossed.

We make the role of the factor \(x_{\alpha_{i_q}}(-z)\) in
\eqref{eq:positive-rank-one} explicit, since it is the one term of the
rank-one identity that does not fit the diagonal pattern.  It is a
root-subgroup element of the \emph{deleted} simple root, evaluated at
\(-z\): under the subsequent reorderings it either merges with a
surviving coordinate of the same root line, shifting that coordinate by
a function of \(z\) alone, or produces commutator terms in strictly
higher roots.  In both cases it changes coordinates only by terms that
are constant in the surviving normal variables or strictly triangular
for the transported height; it therefore shifts tangent or deleted
coordinates without contributing to the derivative on the surviving
normal coordinates, and in particular it affects neither the diagonal
weights nor the determinant computed below.  The factor
\(x_{-\alpha_{i_q}}(z^{-1})\) likewise supplies only the new target
tangent coordinate displayed in
\eqref{eq:full-tubular-overlap}.

Finally, reorder the
two unipotent factors in \eqref{eq:positive-rank-one} and the survivor
factors into the target order.  By
\Cref{lem:unipotent-triangular}, each such reordering has unit diagonal
for the transported height and creates no torus term.  This proves all
claims.
\end{proof}

\begin{lemma}
\label{lem:weighted-normal}
Let a product of pinned root-subgroup coordinates be transformed by a
finite sequence of the following operations:
\begin{enumerate}
\item adjacent Chevalley reorderings inside a closed root set with no
opposite pair;
\item conjugations by root-subgroup elements belonging to the same
unipotent group;
\item a diagonal rescaling
\(y_\rho\mapsto z^{m_\rho}y_\rho\), with \(z\in\Gm\).
\end{enumerate}
Suppose that the coordinate locus \(\mathbf y=0\) is preserved, and
regard the remaining coordinates \(\mathbf x\) as tangent parameters.
After the source and target normal axes are identified by their root
labels, the derivative on the normal quotient has the form
\[
 D_{\mathbf y}F(z,0,\mathbf x)
 =U_1(z,\mathbf x)
   \operatorname{diag}(z^{m_\rho})_\rho
   U_2(z,\mathbf x)P,
\]
where \(P\) is the constant invertible root-axis frame comparison and
\(U_1,U_2\) are unipotent triangular matrices.  In particular,
\[
 \det D_{\mathbf y}F(z,0,\mathbf x)
 =\det(P)\,z^{\sum_\rho m_\rho}.
\]
No coefficient occurring only in a higher-root commutator changes this
determinant.
\end{lemma}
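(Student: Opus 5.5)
The plan is to induct on the number of operations.  Each operation is a polynomial automorphism that preserves \(\mathbf y=0\), so by the chain rule the normal derivative of a composite along \(\mathbf y=0\) is the product of the individual normal derivatives, each evaluated along the image of the zero locus.  It therefore suffices to show two things.  First, each single operation has normal derivative of the shape \(U\,D\,U'\,P\), with \(U,U'\) unipotent triangular, \(D\) diagonal in powers of \(z\), and \(P\) a constant signed root-axis permutation.  Second, products of such matrices can be rearranged back into one matrix of that shape.

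For operations (1) and (2) we use \Cref{lem:unipotent-triangular}.  Its normal derivative is a constant signed permutation \(P_k\) composed with a unit-diagonal triangular matrix, triangular for a height \(h\) on the ambient closed root set.  For (2) it is purely unipotent, since conjugation by a root-subgroup element permutes no axes.  On the normal quotient along \(\mathbf y=0\), all commutator monomials of degree \(\ge2\) in the \(\mathbf y\)-variables vanish.  Monomials of degree one in \(\mathbf y\), with coefficients in \(z,\mathbf x\), land in strictly higher \(h\)-degree.  They therefore only contribute to the strictly triangular part, which is the last claim about higher-root coefficients.  For operation (3) the normal derivative is simply \(\operatorname{diag}(z^{m_\rho})\) on the normal axes; the tangent rescalings do not enter \(D_{\mathbf y}\).

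Next we normalize the product.  We push every constant permutation to the right: \(PU=(PUP^{-1})P\).  The conjugate \(PUP^{-1}\) is again unipotent triangular, but for the permuted height \(h\circ P^{-1}\).  We also push diagonal factors through unipotents: \(D\,U=(DUD^{-1})D\), and \(DUD^{-1}\) is still unit-diagonal triangular for the same filtration.  Repeating these moves, all unipotents can be collected on the left of one diagonal and all permutations on the right.  This gives the displayed \(U_1\operatorname{diag}(z^{m_\rho})U_2P\), with \(P\) the composite frame comparison.  Because the diagonal factors commute and are only relabelled by conjugation by permutations, the exponent attached to each root label is the accumulated sum of its rescalings, matched through the final root-label identification of source and target axes.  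The determinant formula then follows at once: unipotent triangular factors have determinant \(1\) in any ordering, and the determinant of a diagonal matrix is basis independent.

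The main obstacle is the bookkeeping of triangularity.  Different operations are triangular for different filtrations, namely the transported heights \(h_r\) of \Cref{lem:left-straightening}, so a "unipotent triangular" product of such matrices need not be triangular for any single order.  The fix I would try first is to weaken the claim for the intermediate products to "unipotent, hence determinant \(1\)".  One then notes that each individual factor is triangular for its own filtration, so its determinant is \(1\).  The product of such factors lies in \(SL\), and the determinant formula only needs that.  For the literal shape \(U_1DU_2P\), I would use a single refined order throughout.  Since all roots involved lie in a single closed set without opposite pairs, its roots can be ordered by one transported height compatible with every operation, and that order can be fixed once at the start.
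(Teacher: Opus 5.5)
Your reduction is the paper's own argument. You read the hypothesis as preservation of \(\mathbf y=0\) at every step, as the paper's proof also does, so the chain rule makes the normal derivative of the composite the product of the stepwise normal derivatives. \Cref{lem:unipotent-triangular} makes each reordering or conjugation unit-triangular for its height: monomials of degree \(\geq2\) in \(\mathbf y\) have zero derivative, and degree-one terms land strictly higher. A rescaling contributes \(\operatorname{diag}(z^{m_\rho})\). Multiplicativity of \(\det\) then gives \(\det(P)\,z^{\sum_\rho m_\rho}\) and the remark on higher-root coefficients. That part is complete and needs no rearranging. One correction: a product of unit-triangular matrices for different orders is not unipotent in general, since \(\left(\begin{smallmatrix}1&1\\0&1\end{smallmatrix}\right)\left(\begin{smallmatrix}1&0\\1&1\end{smallmatrix}\right)\) has trace \(3\). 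It merely lies in \(SL\), which is all the determinant formula uses.

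The gap is the literal shape \(U_1\operatorname{diag}(z^{m_\rho})U_2P\). Your fix asserts that all roots involved lie in one closed set without opposite pairs. That is false in exactly the situation you invoke to motivate the obstacle. Suppose some factors are triangular for \(h_{w_a}\) and others for \(h_{w_b}\), with \(w_a\ne w_b\), and pick \(\gamma\in w_a\Phi^-\setminus w_b\Phi^-\). Then \(-\gamma\in w_b\Phi^-\), so the union contains an opposite pair and admits no positive additive height.

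Your moves \(DU=(DUD^{-1})D\) and \(PU=(PUP^{-1})P\) then only produce a product of unit-triangular factors for several orders, and such a product need not collapse to two factors. In size two, take \(\left(\begin{smallmatrix}1&a\\0&1\end{smallmatrix}\right)\left(\begin{smallmatrix}1&0\\b&1\end{smallmatrix}\right)\left(\begin{smallmatrix}1&c\\0&1\end{smallmatrix}\right)\) with independent \(a,b,c\). It is not triangular, and its diagonal entries are \(1+ab\) and \(1+bc\). By contrast, an upper-times-lower (resp.\ lower-times-upper) unit-triangular product has \((2,2)\)-entry (resp.\ \((1,1)\)-entry) equal to \(1\).

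There are two honest repairs.
\begin{enumerate}
\item Suppose operations (1)--(2) all take place in one fixed closed set \(\Psi\). This is the reading behind the paper's one-line proof, which uses \emph{the} height of \Cref{lem:unipotent-triangular}. Then \(\Psi\subset v\Phi^+\) for some \(v\in W\), and the height \(\operatorname{ht}(v^{-1}\,\cdot)\) serves every step. Your pushing argument then yields \(U_1DP\) with \(U_2=I\), so there was no obstacle.
\item Suppose two heights genuinely occur. Then you need the additional hypothesis that the steps split into a block triangular for one height, then the rescalings, then a block triangular for the other. Interleaved rescalings can be moved to the middle by \(DUD^{-1}\). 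This is how the two straightenings are absorbed into the two unipotent blocks in \Cref{lem:cover-neighborhood}.
\end{enumerate}
Without such a grouping, your argument proves only the product form. That is still enough for the determinant and for the factorwise contraction \(I+sN\) used later.
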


\begin{proof}
Use the positive additive height of
\Cref{lem:unipotent-triangular}.  For one adjacent reordering, the two
original coordinates retain coefficient \(1\), while every commutator
coordinate has strictly larger height.  Differentiate after imposing
\(\mathbf y=0\).  Terms independent of \(\mathbf y\) affect only the
tangent map, and terms linear in a normal variable can enter only a
strictly higher normal root coordinate.  Thus the induced normal
 matrix is a constant root-axis frame comparison followed by a
 unipotent triangular matrix, even when its off-diagonal entries depend on
\(\mathbf x\).  The same argument applies to a conjugation.  Composing
the elementary matrices and collecting the diagonal rescaling gives
the displayed factorization.  The possible Chevalley constants occur
only in the strict triangular entries, so both unipotent factors have
determinant \(1\) over the ground ring; no division by those constants
is used.
\end{proof}

\begin{lemma}\label{lem:cover-neighborhood}
After replacing \(\mathcal W_{w_b,w_a}\) by its intersection with
\(\mathcal O_{w_a}\cup\mathcal O_{w_b}\), the restrictions of the two
straightened tubular charts
\(\widetilde\chi_a,\widetilde\chi_b\) cover a Zariski neighbourhood of
\(Y_a\cup Y_b\).  Let \(z\) be the \(q\)-th normal coordinate in
\(\widetilde\chi_a\).  On the part of their overlap over
\(\overline Y_{w_b,w_a}\), where \(z\ne0\), write the remaining normal
coordinates as \(\widehat{\mathbf y}\) and the affine coordinates along
\(Y_a\) as \(\mathbf x\).  On an ambient neighbourhood of that part,
\begin{equation}\label{eq:full-tubular-overlap}
 \widetilde\chi_b^{-1}\widetilde\chi_a
 (z,\widehat{\mathbf y},\mathbf x,t)
 =\bigl(
   \mathbf y'=F(z,\widehat{\mathbf y},\mathbf x),
   \mathbf x'=(z^{-1},P(z,\widehat{\mathbf y},\mathbf x)),
   \lambda_{a,q}(z)t
  \bigr),
\end{equation}
where \(F\) and \(P\) are root-reordering morphisms whose coordinate
functions lie in
\[
 k[z,z^{-1},\widehat{\mathbf y},\mathbf x].
\]
Moreover, \(F(z,0,\mathbf x)=0\) by
\eqref{eq:straightened-pair}.  The displayed \(z^{-1}\) is the
additional tangent coordinate of \(Y_b\).  On the linearization of the
actual bundle \(E\), up to transformations strictly triangular in root
height, the surviving normal coordinates transform as
\begin{equation}\label{eq:local-overlap}
  (\widehat{\mathbf y},t)\longmapsto
  \left(
  (z^{\langle\beta,\alpha_{i_q}^{\vee}\rangle}y_\beta)
  _{\beta\in\Pi_{a,q}},
  \lambda_{a,q}(z)t\right).
\end{equation}
Its determinant on the tail affine coordinates is
\(z^{\sigma_{a,q}}\).  The remaining constant change of frame has degree
\(\vartheta_{a,q}\).
More intrinsically, the transition matrix of \(E\) is the derivative
along its survivor-zero locus, and it factors as
\begin{equation}\label{eq:normal-derivative-factorization}
 A(z,\mathbf x):=
 D_{\widehat{\mathbf y}}F(z,0,\mathbf x)
 =U_1(z,\mathbf x)\,
 \operatorname{diag}\!\left(
 I_{q-1},
 \bigl(z^{\langle\beta,\alpha_{i_q}^{\vee}\rangle}\bigr)
 _{\beta\in\Pi_{a,q}}
 \right)
 U_2(z,\mathbf x)\,A_0,
\end{equation}
where \(U_1,U_2\) are unipotent for the appropriate transported root
height filtrations, and \(A_0\) is the constant deleted-word frame
comparison.  Thus, in the deleted-word target frame,
\begin{equation}\label{eq:reduced-E-transition}
 A^{\mathrm{red}}(z,\mathbf x)=A(z,\mathbf x)A_0^{-1},
 \qquad
 \det A^{\mathrm{red}}(z,\mathbf x)=z^{\sigma_{a,q}},
 \qquad
 \det A^{\mathrm{red}}(1,\mathbf x)=1.
\end{equation}
After contracting the affine base
coordinates, the induced base map is exactly
\((z,t)\mapsto\lambda_{a,q}(z)t\).
\end{lemma}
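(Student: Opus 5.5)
The argument has two parts. First I show that the two straightened charts cover a neighbourhood of \(Y_a\cup Y_b\). Then I compute their overlap by transporting the rank-one identity through the opposition Bott--Samelson coordinates.

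For the covering claim, \(\mathcal O_{w_a}\supset X_{w_a}=Y_a\) and \(\mathcal O_{w_b}\supset Y_b\) by \Cref{prop:X-cellular}. \Cref{lem:left-straightening} identifies \(\widetilde\chi_r\) with \(\chi_r\) up to the automorphism \(\rho_r\), so the straightened charts have the same images. Intersecting \(\mathcal W_{w_b,w_a}\) with \(\mathcal O_{w_a}\cup\mathcal O_{w_b}\) keeps an open set containing \(Y_a\cup Y_b\), and it does not change the two-stratum hypotheses of \Cref{lem:two-stratum}, since those are local.

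For the overlap formula, I use \eqref{eq:opposition-bott-samelson} to rewrite \(n_{w_0}^{-1}\widetilde\chi_a\) as a positive Bott--Samelson product in which the \(q\)-th factor is \(x_{\alpha_{i_q}}(z)n_{i_q}\). On \(z\neq0\) I substitute \eqref{eq:positive-rank-one}. Then I carry out the elimination bookkeeping of \Cref{lem:opposition-bookkeeping}:
\begin{itemize}
\item the factor \(x_{-\alpha_{i_q}}(z^{-1})\) becomes the extra tangent coordinate \(z^{-1}\) of \(Y_b\);
\item \(\alpha_{i_q}^\vee(z)\) is pushed through the tail, rescaling each tail coordinate \(y_\beta\), \(\beta\in\Pi_{a,q}\), by \(z^{\langle\beta,\alpha_{i_q}^\vee\rangle}\), and it exits on the right as \(\lambda_{a,q}(z)t\);
\item \(x_{\alpha_{i_q}}(-z)\) produces only shifts and strictly higher commutators.
\end{itemize}
Reordering the result into the deleted-word chart \(\Phi_{\widehat{\mathbf a}_q}\), and then into the chosen frame \(\Phi_{\mathbf b}\), uses only Chevalley reorderings. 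This gives \eqref{eq:full-tubular-overlap}, with coordinate functions that are polynomial in \(\widehat{\mathbf y},\mathbf x\) and Laurent in \(z\), since the only inverse that appears is \(z^{-1}\). The identity \(F(z,0,\mathbf x)=0\) follows from \eqref{eq:straightened-pair}, because the survivor-zero locus on the source side lies in \(\overline Y_{v,w}\), which is the target zero locus.

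For the derivative, I apply \Cref{lem:weighted-normal} to this sequence of operations, namely reorderings, conjugations and the diagonal rescaling by \(\alpha_{i_q}^\vee(z)\). This gives the factorization \eqref{eq:normal-derivative-factorization}. The prefix survivors are never crossed by the cocharacter, so their weight is \(1\). The constant comparison \(A_0=\Phi_{\mathbf b}^{-1}\Phi_{\widehat{\mathbf a}_q}\) at the origin has degree \(\vartheta_{a,q}\) by \eqref{eq:theta-word} and \Cref{lem:frame-gauge}. Taking determinants gives \(\det A^{\mathrm{red}}=z^{\sigma_{a,q}}\), which is \(1\) at \(z=1\). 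Contracting the affine coordinates \(\mathbf x\) leaves the base map \((z,t)\mapsto\lambda_{a,q}(z)t\).

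The main obstacle is making sure the triangular shears are genuinely unipotent for a single filtration even though the root sets mix signs. The factor \(x_{\alpha_{i_q}}(-z)\) and the new tangent coordinate both interact with the survivors, and the heights must be transported via \(h_r\) of \Cref{lem:left-straightening}. I would handle this by splitting the derivative into a prefix block and a tail block. On each block I would use the transported height \(\operatorname{ht}(-r^{-1}\delta)\), and I would check that the terms coming from \(x_{\alpha_{i_q}}(-z)\) are independent of \(\widehat{\mathbf y}\), so they drop out of \(D_{\widehat{\mathbf y}}F\) at \(\widehat{\mathbf y}=0\).
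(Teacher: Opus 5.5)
Your proposal takes essentially the same route as the paper's proof. It translates by \(n_{w_0}^{-1}\) to the positive Bott--Samelson product, eliminates the \(q\)-th factor with \eqref{eq:positive-rank-one} and the bookkeeping of \Cref{lem:opposition-bookkeeping}, obtains the factorization from \Cref{lem:weighted-normal}, gets \(F(z,0,\mathbf x)=0\) from \eqref{eq:straightened-pair}, and reads off \(\vartheta_{a,q}\) from the deleted-word frame comparison. The main line is sound, but one step of your fallback plan would fail as stated.

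\textbf{The contributions of \(x_{\alpha_{i_q}}(-z)\) are not independent of \(\widehat{\mathbf y}\).} Take \(SL_3\), \(\mathbf a=(1,2,1)\), \(q=1\). Commuting \(x_{\alpha_1}(-z)\) past \(x_{\alpha_2}(z^{-1}y_2)\) creates a factor \(x_{\alpha_1+\alpha_2}(\pm y_2)\). After conjugation by \(n_2\) this merges with the last surviving coordinate. A direct matrix computation gives
\(y_2'=z^{-1}y_2\) and \(y_3'=zy_3-y_2\),
with tangent coordinate \(z^{-1}\) and torus coordinate \(\alpha_2^\vee(z)t\). So \(D_{\widehat{\mathbf y}}F\) has a nonzero off-diagonal entry, and it comes from exactly that factor.

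\textbf{The fix.} What makes the argument work is triangularity, not independence. These entries are strictly triangular for the transported height: here the height-one coordinate \(y_2\) is added into the height-two coordinate \(y_3\). That is precisely what \Cref{lem:unipotent-triangular,lem:weighted-normal} supply. Drop the independence check and use triangularity instead.

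\textbf{Make the lift step explicit.} Say, as the paper does, that the surviving Tits factors multiply to the fixed \(n_b\) by the braid relations. Hence \(n_{w_0}n_b=\dot w_b\), and no constant torus translation appears besides \(\lambda_{a,q}(z)\). Your claim that the torus coordinate comes out as exactly \(\lambda_{a,q}(z)t\) depends on this.
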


\begin{proof}
By \Cref{lem:two-stratum}, the cover is represented by a unique
multiplicity-one Cartier divisor, and
\Cref{lem:left-straightening} makes that actual two-stratum scheme the
coordinate locus \(\widehat{\mathbf y}=0\).  The deleted coordinate
\(z=y_q\) is therefore a uniformizer for \(L\); its possible constant
rescaling is part of the frame comparison \(A_0\).  Use this coordinate
in the full multiplication formula for \(\widetilde\chi_a\).  Apply
left translation by \(n_{w_0}^{-1}\).  This is an automorphism of
\(X\), commutes with the right \(T\)-action, and carries the actual
normal factors to the positive Bott--Samelson product
\eqref{eq:opposition-bott-samelson}.  In particular the \(q\)-th
factor is literally \(x_{\alpha_{i_q}}(z)n_{i_q}\); there is no
identification of the actual root \(w_0\beta_q\) with the generally
different root \(-\beta_q\).

Now apply \Cref{lem:opposition-bookkeeping}.  The identity
\eqref{eq:positive-rank-one} supplies the new target tangent coordinate,
removes the \(q\)-th reflection, and moves the cocharacter through the
tail.  It gives exactly the right translation
\(\lambda_{a,q}(z)\), the diagonal tail weights in
\eqref{eq:local-overlap}, and only unit-diagonal unipotent reordering
terms.  Because the deleted Tits word is a reduced word for \(b\), its
product is the fixed \(n_b\); multiplying again by \(n_{w_0}\) returns
exactly the target lift \(\dot w_b\) in the opposition gauge.

The tangent factor in a straightened chart remains on the left.
Reorder all remaining source normal factors, and then put the result
uniquely in the target straightened order
\(U_{w_b^{-1}}V_{w_b}n_{w_b}T\).  These Chevalley identities define
the polynomials \(F,P\) in \eqref{eq:full-tubular-overlap}; their only
denominators are powers of \(z\), so they are regular on \(z\ne0\).
These Chevalley reordering formulas are identities over all affine and
torus coordinates.  Moreover,
\eqref{eq:straightened-pair} says scheme-theoretically that both charts
send the actual two-stratum scheme to their indicated coordinate
loci.  Hence \(F(z,0,\mathbf x)=0\).

The tail roots are precisely
\(\Pi_{a,q}=\Phi^+\cap t_{a,q}\Phi^-\), and the diagonal entry on the
\(\beta\)-root line is
\(z^{\langle\beta,\alpha_{i_q}^{\vee}\rangle}\).  Their product is
\[
  z^{\sum_{\beta\in\Pi_{a,q}}
       \langle\beta,\alpha_{i_q}^{\vee}\rangle}
  =z^{\sigma_{a,q}}.
\]
The prefix block is likewise a unipotent triangular shear; it need not
be literally the identity, but its determinant is \(1\).
Apply \Cref{lem:weighted-normal} to the normal quotient.  It shows
directly that all root-subgroup reorderings assemble into the two
unipotent blocks in
\eqref{eq:normal-derivative-factorization}, while the only
\(z\)-dependent diagonal block is the one displayed in
\eqref{eq:local-overlap}.  The constant signed permutation is part of
the deleted-word frame comparison
\(\vartheta_{a,q}\), and there is no further \(z\)-dependent
determinant, including in types \(B_2\) and \(G_2\).
This is precisely
\eqref{eq:normal-derivative-factorization}: the two unipotent blocks
have determinant one, the middle torus block has the displayed weights,
and every remaining factor is constant.

The preceding rank-one calculation is also the transition of the
global bundle \(E\).  Indeed, the source and target straightening maps
of \Cref{lem:left-straightening} induce unipotent changes for the
transported heights \(h_{w_a}\) and \(h_{w_b}\)
on the corresponding normal quotients, both of determinant \(1\).
Conjugating by these changes only modifies \(U_1\) and \(U_2\) in
\eqref{eq:normal-derivative-factorization}; it changes neither the
diagonal powers of \(z\) nor their product
\(z^{\sigma_{a,q}}\).  Thus the factorization concerns the actual
normal bundle of the global purity model, not an auxiliary transverse
slice.

Finally compare the deleted-word chart with the fixed chart for \(b\);
its constant Jacobian is, by definition, the class
\(\vartheta_{a,q}\).  The braid-compatible Tits representatives fixed
above make the two horizontal lifts equal.  With arbitrary lifts one
would obtain, in addition, a constant right translation on \(T\);
\Cref{prop:choices} explains why this only conjugates the complex.
The maps \(P\) and the non-normal part of \(F\) take values in affine
root groups.  More explicitly, multiplying every affine target
coordinate, including the displayed \(z^{-1}\), by a homotopy
parameter \(s\) gives a regular \(\Aone\)-homotopy over
\(\Gm\times T\) from the affine base map to its zero section.
Similarly, a strictly triangular matrix \(I+N\) is joined to the
identity by \(I+sN\), which stays invertible with determinant \(1\).
We verify that these are homotopies of the pointed pairs actually
used.  The target chart \(\widetilde\chi_b\) identifies its purity
quotient with the Thom space of the trivialized bundle \(E\) over the
affine-times-torus base
\(\mathbb A^{\ell(b)}\times T\); in these coordinates the two
homotopies above are algebraic families, over
\(\mathbb A^1\times\Gm\times T\), of bundle maps of \(E\).  A bundle
map sends fibers to fibers and the complement of the zero section to
the complement of the zero section (the matrices \(I+sN\) are
invertible for every \(s\)), so each stage of the homotopy descends to
the pointed Thom quotient
\(\Th(E)=E/(E-0)\); pointedness is preserved because the zero section
maps to the zero section at every \(s\).  Since the coordinate
functions of the homotopy lie in
\(k[s,z,z^{-1},\widehat{\mathbf y},\mathbf x]\), the family stays
inside the fixed coordinate opens: no point leaves the chart domain,
because the chart is an isomorphism onto its image and the formulas
above are compositions of chart maps.  The torus coordinate in
\eqref{eq:full-tubular-overlap} is not contracted.  Thus the induced
map on the base of the target Thom space is
\((z,t)\mapsto\lambda_{a,q}(z)t\), as asserted.
\end{proof}

\begin{lemma}\label{lem:sigma-height}
Put \(\gamma_{a,q}=t_{a,q}^{-1}(\alpha_{i_q})\).  For a reduced
deletion this is a positive root and
\[
  \sigma_{a,q}=1-\operatorname{ht}(\gamma_{a,q}^{\vee}).
\]
In particular,
\(\sigma_{a,q}\equiv
\operatorname{ht}(\gamma_{a,q}^{\vee})+1\pmod2\).
\end{lemma}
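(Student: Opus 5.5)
The plan is to reduce $\sigma_{a,q}$ to a pairing with the Weyl vector $\rho=\tfrac12\sum_{\beta\in\Phi^+}\beta$, using the classical identity
\[
 \rho-w\rho=\sum_{\beta\in\Phi^+\cap w\Phi^-}\beta\qquad(w\in W).
\]
Write $t=t_{a,q}$ and $\alpha=\alpha_{i_q}$.

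The first step is positivity of $\gamma_{a,q}=t^{-1}(\alpha)$. The word $(i_q,i_{q+1},\ldots,i_p)$ is a suffix of the reduced word $\mathbf a$, so it is reduced. Hence $\ell(s_\alpha t)=\ell(t)+1$, or equivalently $\ell(t^{-1}s_\alpha)=\ell(t^{-1})+1$. By the standard exchange criterion this means $t^{-1}(\alpha)\in\Phi^+$. This uses only that $\mathbf a$ is reduced; the hypothesis that deletion at $q$ is reduced is not needed for this part.

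The second step proves the displayed identity by the usual induction on $\ell(w)$. The case $w=1$ is trivial. For the step, $s_i\rho=\rho-\alpha_i$, and passing from $w$ to $ws_i$ with $\ell(ws_i)=\ell(w)+1$ enlarges $\Phi^+\cap w\Phi^-$ by the single root $w\alpha_i$, since
\[
 \rho-ws_i\rho=(\rho-w\rho)+w\alpha_i .
\]
One could instead cite Bourbaki for this identity. Applying it with $w=t$ gives $\sum_{\beta\in\Pi_{a,q}}\beta=\rho-t\rho$.

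The third step pairs with $\alpha^\vee$ and uses $W$-invariance of the pairing:
\[
 \sigma_{a,q}=\langle\rho,\alpha^\vee\rangle-\langle t\rho,\alpha^\vee\rangle
 =1-\langle\rho,t^{-1}\alpha^\vee\rangle .
\]
Since $W$ acts compatibly on roots and coroots, $t^{-1}(\alpha^\vee)=(t^{-1}\alpha)^\vee=\gamma_{a,q}^\vee$. This is a positive coroot, so write it as $\sum_i c_i\alpha_i^\vee$ with $c_i\ge0$. Because $\langle\rho,\alpha_i^\vee\rangle=1$ for every simple coroot, $\langle\rho,\gamma^\vee_{a,q}\rangle=\sum_i c_i=\operatorname{ht}(\gamma_{a,q}^\vee)$. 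This gives the formula, and the parity statement follows by reducing mod $2$.

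The only point needing care is the sign convention in the $\rho$-identity, namely matching $\Phi^+\cap t\Phi^-$ with $w=t$ rather than $w=t^{-1}$. I will confirm it on the rank-one case $t=s_j$, where $\Phi^+\cap s_j\Phi^-=\{\alpha_j\}$ and $\rho-s_j\rho=\alpha_j$. As a cross-check of orientation, when $p=q$ the tail $t$ is the identity, so $\sigma_{a,q}=0$, $\gamma_{a,q}=\alpha$, and $\operatorname{ht}(\alpha^\vee)=1$, consistent with the formula.
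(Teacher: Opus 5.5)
Your proof is correct and follows the paper's argument: apply the inversion-set identity $\rho-t\rho=\sum_{\beta\in\Pi_{a,q}}\beta$, pair with $\alpha_{i_q}^\vee$, and use $\langle\rho,\alpha_{i_q}^\vee\rangle=1$ together with $W$-invariance. Beyond the paper, you also prove positivity of $\gamma_{a,q}$ from the reduced suffix $(i_q,\ldots,i_p)$ and derive the $\rho$-identity by induction; the paper states the former without proof and cites the latter as standard.
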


\begin{proof}
Let \(\rho\) be the half-sum of the positive roots.  The inversion-set
identity gives
\[
  \rho-t_{a,q}\rho
  =\sum_{\beta\in\Phi^+\cap t_{a,q}\Phi^-}\beta.
\]
Pairing with \(\alpha_{i_q}^{\vee}\), and using
\(\langle\rho,\alpha_{i_q}^{\vee}\rangle=1\), gives
\[
  \sigma_{a,q}
  =1-\langle\rho,t_{a,q}^{-1}\alpha_{i_q}^{\vee}\rangle
  =1-\operatorname{ht}(\gamma_{a,q}^{\vee}).
\]
\end{proof}

\begin{theorem}\label{thm:all-degree}
Under \eqref{eq:X-chains-a}, a component of \(\partial_p\) is zero unless
it corresponds to a reduced deletion.  If several deletion positions
in \(\mathbf a\) produce the same target \(b\), the matrix entry from
the \(a\)-summand to the \(b\)-summand is the sum of the following
contributions over those positions.  For deletion at position \(q\),
\begin{equation}\label{eq:main-boundary}
  \partial_{a,b}=(-1)^{q-1}
  \vartheta_{a,q}\,
  \Dop_{\lambda_{a,q},\sigma_{a,q}}^{(p)}
  \circ(\operatorname{cut}_{a,q}\otimes\operatorname{id}_{\HT}).
\end{equation}
Here \(\vartheta_{a,q}\) acts by scalar multiplication on the target
\(\KMW{p-1}\)-factor.
In scalar Thom bases this reads
\begin{equation}\label{eq:main-boundary-symbol}
  (u)\otimes x\otimes[t]\longmapsto
  (-1)^{q-1}\vartheta_{a,q}
  \left(
    \ang{u^{\sigma_{a,q}}}x\otimes
    [\lambda_{a,q}(u)t]-x\otimes[t]
  \right).
\end{equation}
The sum of these components is the cellular differential of \(G/U\)
and, through \Cref{thm:G-X}, the Morel--Sawant differential of \(G\).
\end{theorem}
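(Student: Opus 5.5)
The proof assembles the local lemmas of \Cref{sec:boundary} cover by cover. The first step is the vanishing statement. A component \(\operatorname{pr}_R\partial_p\iota_S\) of the cellular differential of \(X\) between a codimension-\(p\) stratum \(X_{w_0a}\) and a codimension-\((p-1)\) stratum \(X_{w_0b}\) can only be nonzero when \(X_{w_0a}\) lies in the closure of \(X_{w_0b}\) and has codimension one there. By the closure relation of the Bruhat decomposition \eqref{eq:X-bruhat}, this means \(w_0a\lessdot w_0b\), equivalently \(b\lessdot a\). When the strata are not incident, excising the complement of \(X_{w_0b}\cup X_{w_0a}\) as in the proof of \Cref{lem:incidence-localization} makes the quotient triple split, so the connecting map vanishes. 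For a cover \(b\lessdot a\), the strong exchange condition writes \(b\) as a reduced deletion of \(\mathbf a\) at some position \(q\). This gives the first assertion.

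Next fix such a cover. By \Cref{lem:two-stratum}, the \((a,b)\)-component equals the connecting morphism of the two-stratum purity sequence \eqref{eq:two-stratum-thom-cofiber}, with \(L\) ordered first. If several positions \(q\) delete to the same \(b\), I would treat each position as a separate branch of the divisor. The component is then the sum of the corresponding branch contributions, because the clutching class is additive over the deleted-coordinate charts. \Cref{lem:two-stratum} asserts multiplicity one for the chosen deletion, so each branch is handled identically. For a single branch I would use \Cref{lem:cover-neighborhood}: the straightened charts \(\widetilde\chi_a,\widetilde\chi_b\) cover a neighbourhood of the two strata. The transition of \(E\) is \(A(z,\mathbf x)\), with reduced part \(A^{\mathrm{red}}\) of determinant \(z^{\sigma_{a,q}}\) equal to \(1\) at \(z=1\), and the base map is \((z,t)\mapsto\lambda_{a,q}(z)t\) after contracting the affine coordinates. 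These are exactly the hypotheses of \Cref{lem:clutching} with \(m=\sigma_{a,q}\) and \(\lambda=\lambda_{a,q}\). That lemma identifies the connecting morphism, in the deleted-word frame with \(L\) first, as \(\Dop^{(p)}_{\lambda_{a,q},\sigma_{a,q}}\).

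The signs and frames are then handled in three steps. \Cref{lem:face-sign} converts the ``\(L_q\) first'' presentation into the cellular frame \(L_1,\dots,L_p\). This contributes the ordinary factor \((-1)^{q-1}\) together with \(\operatorname{cut}_{a,q}\), which carries the \(\eps^{q-1}\). Replacing the deleted-word target frame by the chosen frame \(\Phi_{\mathbf b}\) multiplies by the constant class \(\vartheta_{a,q}\), by \eqref{eq:theta-word} and \Cref{lem:thom-determinant}. Since \(A_0\) is a constant matrix, this class commutes past \(\Dop\). The opposition gauge \eqref{eq:opposition-gauge} ensures that the target lift is exactly \(\dot w_b\), so no extra \(R_\tau\) conjugation appears. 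Composing these identifications gives \eqref{eq:main-boundary}. Evaluating on symbols with \Cref{lem:D-symbol} and \eqref{eq:cut-symbol} gives \eqref{eq:main-boundary-symbol}. The final sentence of the theorem follows from \Cref{thm:G-X}, since \(q_*\) is an isomorphism of chain complexes.

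The main obstacle is making sure that the orientation conventions compose consistently. Three conventions have to agree: the determinant orientation \(L\otimes\det E|_S\) of \Cref{lem:incidence-localization}, the Koszul sign of \Cref{lem:face-sign}, and the trivialization at \(z=1\) used in \Cref{lem:clutching}. I would verify this by first treating the model case \(p=1\), \(G=SL_2\), where \(X=\mathbb A^2-\{0\}\) and the formula should reduce to \(\ol{\alpha^\vee}\). I would then check degree two against Morel--Sawant. This would confirm that no stray \(\ang{-1}\) arises from the orientation of \(z\) versus \(z^{-1}\).
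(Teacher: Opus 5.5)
Your proposal follows the paper's proof essentially step for step. You use the exchange condition for vanishing, \Cref{lem:two-stratum} for the two-stratum purity model, and \Cref{lem:cover-neighborhood} to supply the hypotheses of \Cref{lem:clutching}, then the frame factor \(\vartheta_{a,q}\), \(\operatorname{cut}_{a,q}\), and the sign \((-1)^{q-1}\) from \Cref{lem:face-sign}.

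The one thing to fix is your ``separate branches'' treatment of several positions giving the same \(b\). That case is vacuous: distinct positions of a reduced word give distinct reflections and hence distinct targets, which is the uniqueness in strong exchange already used in \Cref{lem:two-stratum}. So no branch-additivity claim is needed, and none would be available anyway, since \(X_w\) is a single irreducible divisor.
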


\begin{proof}
The exchange condition lists exactly the codimension-one Bruhat strata
in the boundary.  \Cref{lem:two-stratum} isolates the chosen cover as a
global smooth two-stratum purity model and identifies the cellular
component with the connecting map of
\eqref{eq:two-stratum-thom-cofiber}.  It also gives multiplicity one.
On the overlap of the left-\(B\)-adapted source and target Bruhat
tubular charts, \Cref{lem:cover-neighborhood} identifies the
clutching automorphism of the actual bundle \(E\):
the torus is translated by \(\lambda_{a,q}(u)\), its determinant on the
remaining normal directions is \(u^{\sigma_{a,q}}\), and the constant
frame comparison is \(\vartheta_{a,q}\).  Because
\eqref{eq:full-tubular-overlap} holds over every affine and torus base
coordinate, this is a calculation of the Thom map over the whole target
stratum, rather than at one generic point.

After replacing the transition by \(A^{\mathrm{red}}\) in
\eqref{eq:reduced-E-transition}, \Cref{lem:clutching} identifies the
connecting morphism with the universal operator in the deleted-word
target frame.  Postcomposing with
the change to the fixed target Thom frame multiplies both endpoints by
\(\vartheta_{a,q}\).  Separating the \(q\)-th Thom factor gives
\(\operatorname{cut}_{a,q}\).  The ordered cofiber calculation of
\Cref{lem:face-sign} gives the ordinary chain sign
\((-1)^{q-1}\), independently of the Milnor--Witt symmetry already
contained in \(\operatorname{cut}_{a,q}\).  This proves
\eqref{eq:main-boundary-symbol}.  The commutative localization square
in the proof of \Cref{lem:incidence-localization} identifies this local
map with the selected component of the global cellular connecting
morphism.
\end{proof}

\begin{corollary}\label{cor:square-zero}
The operators in \eqref{eq:main-boundary} satisfy
\(\partial_{p-1}\partial_p=0\) as sheaf morphisms.
\end{corollary}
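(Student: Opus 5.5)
The corollary follows formally from \Cref{thm:all-degree}, so the plan is to prove \(\partial_{p-1}\partial_p=0\) from the geometry rather than by expanding the explicit operators. By \Cref{thm:all-degree}, the sum over reduced deletions of the contributions \eqref{eq:main-boundary} equals the connecting morphism \(\partial_p\) of the exact couple of the cellular filtration \(\Omega^X_\bullet\) of \Cref{prop:X-cellular}. In any exact couple the composite of two consecutive connecting morphisms vanishes. The reason is that \(\partial_{p-1}\partial_p\) factors through two consecutive maps of the long exact sequence of the pair \((\Omega_{p-1},\Omega_{p-2})\): the projection \(\widetilde{\mathbf H}^{\Aone}_*(\Omega_{p-1})\to\widetilde{\mathbf H}^{\Aone}_*(\Omega_{p-1}/\Omega_{p-2})\) followed by the boundary to \(\widetilde{\mathbf H}^{\Aone}_*(\Omega_{p-2})\), and this composite is zero. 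This is the remark in \Cref{sec:prelim} that \(\partial^2=0\) follows from the exact couple. All maps involved are morphisms of strictly \(\Aone\)-invariant sheaves, so the vanishing holds as sheaf morphisms and not just on field-valued sections.

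The steps, in order, are as follows.
\begin{enumerate}
\item Record that the chain groups \eqref{eq:X-chains-a} are identified with \(\widetilde{\mathbf H}^{\Aone}_*\) of the purity cofibers via fixed orientations. These identifications are isomorphisms of sheaves, so composing with them preserves vanishing.
\item Observe that the matrix whose entries are given by \eqref{eq:main-boundary}, including the summation over multiple deletion positions with the same target, is exactly the conjugate of the exact-couple differential under these identifications. For components that are not reduced deletions, \Cref{thm:all-degree} asserts they are zero, matching the geometric differential.
\item Conclude \(\partial_{p-1}\partial_p=0\) by transporting the exact-couple identity.
\end{enumerate}

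The only real obstacle is bookkeeping. The identification in \Cref{thm:all-degree} must be uniform in \(p\), with the same orientation of each chain group used both as a target in degree \(p\) and as a source in degree \(p-1\). Otherwise one would only obtain \(\partial_{p-1}'\partial_p=0\) for a differently normalized \(\partial_{p-1}'\). I would check this directly. The target frame for \(b\) in \eqref{eq:main-boundary} is the fixed chart \(\Phi_{\mathbf b}\), with the frame change absorbed into \(\vartheta_{a,q}\). The torus lift is the opposition gauge \(\dot w_b\) in both roles. So a single orientation per cell is used throughout, and the composite is literally the square of the exact-couple differential.

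As an optional sanity check, not needed for the proof, I would verify the identity after real realization. By \Cref{prop:D-real}, realization turns the operators into the Patr\~ao--Sandoval coefficients, whose boundary squares to zero. One could also check the case of length-two intervals by pairing the two saturated chains through each diamond of Bruhat order.
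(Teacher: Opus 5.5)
Your proposal is correct and is essentially the paper's argument. Once \Cref{thm:all-degree} identifies the displayed operators, in the fixed Thom bases of \eqref{eq:X-chains-a}, with the connecting morphisms of the cellular exact couple, \(\partial_{p-1}\partial_p=0\) follows because the composite factors through two consecutive maps of the long exact sequence of \(\Omega_{p-2}\to\Omega_{p-1}\to\Omega_{p-1}/\Omega_{p-2}\), so it vanishes as sheaf morphisms before any realization or augmentation; your check that each cell carries one orientation in both roles is bookkeeping the paper leaves implicit.
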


\begin{proof}
They are the connecting morphisms of the cellular exact couple.  Hence
\(\partial^2=0\) before any realization or augmentation.
\end{proof}

\begin{remark}
\Cref{cor:square-zero} is a formal consequence of the exact couple and
is available only \emph{after} \Cref{thm:all-degree} has identified
the displayed operators with the genuine cellular connecting
morphisms.  It is therefore not an independent verification of the
closed formula, and it is not used as one anywhere in this paper.
The independent checks are of a different kind: the direct symbolic
\(A_2\) computation of \Cref{prop:SL3-square-zero}, the direct purity
computation of \Cref{prop:SL3-top-check}, the realized comparisons
with \(S^1\), \(SO(3)\), and \(SO(4)\)
(\Cref{sec:examples,sec:G2}), and the realized rank-two diamond
checks, which confirm square zero for the displayed matrices by direct
computation in every rank-two type (\Cref{rem:rank-two-diamonds}).
\end{remark}

\begin{proposition}\label{prop:choices}
Changing any of the choices used in this paper---normal words,
root-product orderings, pinned root-parameter scalings, or horizontal
Tits lifts---conjugates the displayed complex by a diagonal chain
isomorphism.  On each summand the diagonal factor is a
Grothendieck--Witt unit times a right translation in \(\HT\).  This
statement concerns these root-product choices; it does not assert the
same form for an arbitrary non-equivariant bundle trivialization.
\end{proposition}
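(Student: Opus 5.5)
The plan is to treat each kind of choice separately and show that it changes only the identification of each chain group, never the underlying cellular exact couple. The filtration \(\Omega_\bullet^X\) is intrinsic, since it is defined by Bruhat strata. So the exact couple, and hence its connecting morphisms, do not depend on any of the choices. What does depend on them is the isomorphism \eqref{eq:X-chains-a} between each purity cofiber and \(\KMW p\otimes\HT\,e_a\). If two sets of choices give summand isomorphisms \(\phi_a,\phi'_a\), then the two displayed complexes are conjugate by the diagonal automorphism \(\phi'_a\phi_a^{-1}\). It therefore suffices to show that each such automorphism of \(\KMW p\otimes\HT\) has the form ``Grothendieck--Witt unit times right translation''.

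The choices fall into two types.

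\emph{Normal-frame choices.} These are the normal words \(\mathbf a\), the root-product orderings, and the scalings of the pinned root parameters. Each one changes only the ordered root chart \(\Phi_{\mathbf a}\) of \eqref{eq:root-chart} for the fixed lift. Two such charts differ by a polynomial automorphism of the normal factor that commutes with the right \(T\)-action.
\begin{itemize}
\item For a change of reduced word or of convex order, \Cref{lem:unipotent-triangular} and \Cref{lem:frame-gauge} show that this automorphism is a signed root-axis permutation followed by a unipotent triangular shear. Its Jacobian is therefore a constant \(\pm1\).
\item For a scaling \(x_\gamma(z)\mapsto x_\gamma(cz)\), the Jacobian is the constant \(c\).
\end{itemize}
In both cases the Jacobian is constant and independent of the base point, so \Cref{lem:thom-determinant} shows the induced automorphism is multiplication by \(\ang{J}\in\GW(k)^\times\) with trivial \(\HT\)-component. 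Because the Jacobian is constant, no base-dependent \(\ang{\det g(y)}\) can arise. This is exactly where the proof uses that the choices are root-product choices rather than an arbitrary trivialization, which is the caveat in the statement.

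\emph{Horizontal-lift choices.} Replacing \(\dot w_a\) by \(\dot w_a\tau_a^{-1}\) with \(\tau_a\in T(k)\) reparametrizes the tubular chart \eqref{eq:tubular-chart} as \((v,u,t)\mapsto vu\dot w_a\tau_a^{-1}tU\).
\begin{itemize}
\item On the \(T\)-factor this is left translation by \(\tau_a^{-1}\). Since \(T\) is commutative, it equals the right translation \(R_{\tau_a}^{-1}\) on \(\ZA[T]\).
\item On the normal factor \(V_w\), the chart changes only through conjugation by \(\tau_a\). This rescales each root coordinate by a character value \(\gamma(\tau_a)\in k^\times\), which again gives a constant Grothendieck--Witt unit.
\end{itemize}
Combining these gives the conjugation rule \(\partial_{a,b}\mapsto R_{\tau_b}\partial_{a,b}R_{\tau_a}^{-1}\) stated after \eqref{eq:opposition-gauge}, up to \(\GW\) units. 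The same reasoning applies if the Tits lift \(n_{w_0}\) or the reduced word for \(w_0\) is changed. Composing these changes shows the diagonal factors form the asserted group of automorphisms.

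The main obstacle is to check that these automorphisms are compatible with the purity identifications. Concretely, one must show that changing a chart of the stratum neighbourhood induces on \(\Th(\nu_a)\) the Thom map of its normal derivative along the zero section, and that the \(T\)-translation commutes with homotopy purity. I would obtain this from \Cref{lem:purity-naturality}, applied to the chart automorphism as a smooth (indeed étale) self-map of \(\mathcal O_w\) that preserves \(X_w\). Its normal derivative has constant determinant by the preceding computations, and \Cref{lem:thom-determinant} then finishes the argument. Finally, I would verify that all four kinds of change act only on the summand indexed by \(a\), which makes the conjugating isomorphism diagonal.
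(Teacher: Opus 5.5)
Your proposal is correct and is essentially the paper's argument. The paper likewise compares, on each summand, the two actual normal-bundle trivializations, which differ by a constant determinant \(d_a\), and the two horizontal lifts, which differ by a right translation \(R_{\tau_a}\). It then uses functoriality of homotopy purity to get \(S_b\,\partial_{a,b}=\partial'_{a,b}\,S_a\) with \(S_a=\ang{d_a}R_{\tau_a}\); this is your observation that the exact couple is intrinsic and only the summand identifications move, and your version supplies more detail. One small correction: a rescaling of a simple root parameter also replaces \(n_i\) by \(\alpha_i^\vee(c)n_i\), so it is a combination of your two types of change rather than a pure frame change, which your final composition step already covers.
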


\begin{proof}
Let \(\mathfrak c\) and \(\mathfrak c'\) be two complete systems of
choices.  On the summand indexed by \(a\), compare the two \emph{actual}
normal-bundle trivializations and the two actual horizontal lifts.  The
first comparison has a constant determinant
\(d_a\in k^\times\), and the second is right translation by a constant
\(\tau_a\in T(k)\).  They define the diagonal isomorphism
\[
 S_a=\ang{d_a}R_{\tau_a}:
 \KMW{\ell(a)}\otimes\HT\,e_a
 \xrightarrow{\sim}
 \KMW{\ell(a)}\otimes\HT\,e_a.
\]
Functoriality of homotopy purity for a change of Thom trivialization and
for right translation gives, for every cover \(b\lessdot a\),
\[
 S_b\,\partial^{\mathfrak c}_{a,b}
 =\partial^{\mathfrak c'}_{a,b}\,S_a.
\]
Taking the direct sum of the \(S_a\) proves the proposition.  In
particular, coherence around a loop of braid moves is automatic: the
product is the Jacobian of the resulting actual coordinate
automorphism, not a sign assigned separately to a braid path.
\end{proof}

\subsection{Low-degree checks}

\begin{proposition}\label{prop:MS-check}
Under the ordered Thom bases of \cite[Section~4]{MorelSawant2023},
\eqref{eq:main-boundary} agrees term by term with
\cite[Proposition~4.28]{MorelSawant2023} in degree two.  After passage
to their cycle bases and torus augmentation, it gives
\cite[Theorems~4.30 and~4.39]{MorelSawant2023}, for both orders of the
deleted positions in the chosen expression of \(w_0\).
\end{proposition}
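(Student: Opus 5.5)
We specialize \Cref{thm:all-degree} to \(p=1\) and \(p=2\) and compare the result with the explicit formulas of \cite[Section~4]{MorelSawant2023} entry by entry.

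\emph{Step 1: the data in degrees one and two.} In degree one we have \(a=s_i\) and \(q=1\). The tail \(t_{a,1}\) is trivial, so \(\lambda=\alpha_i^\vee\), \(\sigma=0\) and \(\vartheta=\ang1\). The operator is then \(\ol{\alpha_i^\vee}\), which is the rank-one differential \((u)\mapsto[\alpha_i^\vee(u)]-[1]\) of Morel--Sawant. In degree two we have \(a=s_is_j\) with \(q\in\{1,2\}\).
\begin{itemize}
\item For \(q=2\), the tail is trivial, so \(\lambda=\alpha_j^\vee\), \(\sigma=0\), the sign is \(-1\), and \(\operatorname{cut}\) contributes \(\eps\).
\item For \(q=1\), the tail is \(s_j\), so \(\lambda=s_j(\alpha_i^\vee)=\alpha_i^\vee-\langle\alpha_j,\alpha_i^\vee\rangle\alpha_j^\vee\) and \(\Pi_{a,1}=\{\alpha_j\}\). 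This gives \(\sigma=\langle\alpha_j,\alpha_i^\vee\rangle\), the Cartan integer, consistent with \Cref{lem:sigma-height}.
\end{itemize}
The frame factors \(\vartheta\) are \(\ang1\), because the deleted words are single letters and coincide with the normal words.

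\emph{Step 2: matching Proposition 4.28.} Using \eqref{eq:main-boundary-symbol} and \eqref{eq:cut-symbol}, we write both components on symbols \((u)(v)\otimes[t]\):
\begin{itemize}
\item the \(q=1\) component is \(\ang{u^{a_{ij}}}(v)\otimes[\lambda(u)t]-(v)\otimes[t]\);
\item the \(q=2\) component is \(-\eps\bigl((u)\otimes[\alpha_j^\vee(v)t]-(u)\otimes[t]\bigr)\).
\end{itemize}
Here \(a_{ij}\) denotes the Cartan integer. We then translate Morel--Sawant's ordered Thom basis into ours. This amounts to identifying their orientation of the rank-two normal bundle with the chart \(\Phi_{\mathbf a}\) in the opposition gauge, up to the conjugation of \Cref{prop:choices}. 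After that identification the comparison is a term-by-term equality. \textbf{This is the main obstacle}: their convention for which factor is cut first, and their sign and \(\eps\) bookkeeping, must be matched precisely. To handle it, we would first test both conventions on \(SL_3\), where \(a_{ij}=-1\), and on \(SL_2\times SL_2\), and fix the gauge \(\tau_a\) and the unit \(d_a\) so that both agree. Since \Cref{prop:choices} allows only a Grothendieck--Witt unit times a translation on each summand, the gauge is rigid enough that agreement on these test cases determines it in general.

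\emph{Step 3: cycle bases and augmentation.} We apply \(\epsilon_T\) and use \Cref{prop:D-augmentation}. Each component becomes \(\pm\) a unit times \((a_{ij})_\eps\eta\) or \(0\). Passing to the cycle bases of their Theorems~4.30 and~4.39 is a linear change of basis within the degree-two and degree-one summands. Its compatibility with our formula is therefore checked by composing matrices. We carry this out for both orders of deletion in the chosen expression of \(w_0\), that is, with \(\mathbf a=(i,j)\) and \(\mathbf a=(j,i)\) both appearing as prefixes. The two orders differ only by exchanging the roles of \(q=1\) and \(q=2\), together with the factor \(\eps\) from \Cref{lem:MW-kunneth}.
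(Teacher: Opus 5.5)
Your Step~1 data are correct. The proposal nevertheless has a genuine gap in Step~3, and that step carries most of the statement.

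\textbf{Step~3.} You augment the chain complex first, via \Cref{prop:D-augmentation}, and then try to reach the Morel--Sawant cycle bases by a linear change of basis. That order destroys exactly what their Theorems~4.30 and~4.39 record. The cycles
\(\phi_j(v,r)=(v)\otimes\widetilde w_j(r)_j-\eta(v)(r)\otimes[\widetilde w_j]\) and
\(\delta_{ji}(v,u)=(v)\otimes\widetilde w_j(u)_i-(u)\otimes\widetilde w_i(v)_j\), with \((r)_i=[\alpha_i^\vee(r)]-[1]\), are not a basis of \(\CellC_1\). They are generators of \(\ker\partial_1\) that mix summands and torus supports. Under chain-level augmentation:
\begin{itemize}
\item both terms of \(\delta_{ji}\) die, since they lie in the augmentation ideal;
\item the whole \(q=2\) component dies, since \(\sigma=0\);
\item \(h\ol\phi_j(v,u)\) becomes \(-h\eta(v)(u)=0\).
\end{itemize}
So the flag-level matrix yields only \(0=0\) in the even case and can never recover \(\ol\delta_{ji}\).

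The missing idea is to write the un-augmented \(\partial_2(u,v)\), both deletions together, in terms of these cycles with right \(\HT\)-coefficients, and only then augment the coefficients. For \(n=-\langle\alpha_j,\alpha_i^\vee\rangle=2\), expand \(s_j(u)_i:=[s_j\alpha_i^\vee(u)]-[1]=(u)_i+(u^2)_j+(u)_i(u^2)_j\) in \(\HT\). Then use \((u^2)=h(u)\), \(\eta h=0\) and \((u)_i(u^2)_j=(-1)_i(u^2)_j\) to get
\[
 \partial_2(u,v)=\delta_{ji}(v,u)+\phi_j(v,u^2)\bigl(1+(-1)_i\bigr).
\]
Augmenting \(1+(-1)_i\mapsto1\) gives \(\ol\delta_{ji}(v,u)+h\ol\phi_j(v,u)\). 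For odd \(n\), the identity \(\ang{u^{-n}}=\ang u\) gives the \((n_{ji})_\eps\ol\phi_j(v,u)\) correction.

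Similarly, ``both orders'' means \(\lambda_i^{\mathrm{MS}}<\lambda_j^{\mathrm{MS}}\) versus the reverse in the expression of \(w_0\). It does not mean swapping \(q=1\) and \(q=2\) up to an \(\eps\). Their change-of-expression map \((x)\otimes[\widetilde w_rt]\mapsto\ang{\tau_r}(x)\otimes[\breve w_r\varsigma_rt]\) is the diagonal map \(\ang{\tau_r}R_{\varsigma_r}\) of \Cref{prop:choices}. After augmentation it fixes \(\delta_{ji}\) and multiplies \(\phi_j\) by \(\ang{\tau_j}\). That factor survives in the odd clause of Theorem~4.39 and disappears in the even one because \(h\ang{\tau_j}=h\).

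\textbf{Step~2.} Calibrating the gauge on \(SL_3\) and \(SL_2\times SL_2\) and then invoking rigidity proves nothing for other groups or for \(n=2,3\). The choice-changes of \Cref{prop:choices} are independent on each summand of each group. No test case is needed. Choose Morel--Sawant compatible frames, so that both degree-two frame factors are \(\ang1\). Then split the \(q=1\) component, for general \(n\), as
\[
 \ang{u^{-n}}(v)\otimes\widetilde w_j s_j(u)_i
 +\bigl(\ang{u^{-n}}-1\bigr)(v)\otimes[\widetilde w_j].
\]
These are the first two terms of their Proposition~4.28, once you note \(\ang{\rho_{ij}}=\ang{u^{-n}}\) and \(\eta(\rho_{ij})=\ang{u^{-n}}-1\). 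The \(q=2\) deletion is their last term \(-(u)\otimes\widetilde w_i(v)_j\). The \(\eps\) you carry on that term has to be reconciled with this form directly, not fixed by an example.
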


\begin{proof}
Write
\[
 (r)_i=[\alpha_i^\vee(r)]-[1]\in\HT,
 \qquad n=n_{ji}=-\langle\alpha_j,\alpha_i^\vee\rangle.
\]
Morel--Sawant's integer \(\lambda_i^{\mathrm{MS}}\), which records a
position in a reduced expression of \(w_0\), is unrelated to our
cocharacter \(\lambda_{a,q}\).  For \(a=s_is_j\), deletion of the first
letter has
\[
 \lambda_{a,1}=s_j(\alpha_i^\vee)
 =\alpha_i^\vee+n\alpha_j^\vee,\qquad \sigma_{a,1}=-n,
\]
whereas deletion of the second has
\(\lambda_{a,2}=\alpha_j^\vee\) and \(\sigma_{a,2}=0\).
Choose the compatible Morel--Sawant frames, so both constant frame
comparisons are \(\ang1\).  Retaining their notation for the
right-translated reduced torus classes, the component to the \(j\)-summand is
\begin{align}
 &\ang{u^{-n}}(v)\otimes
    [\widetilde w_j s_j(\alpha_i^\vee(u))]
       -(v)\otimes[\widetilde w_j]\notag\\
 &\quad=\ang{u^{-n}}(v)\otimes\widetilde w_j s_j(u)_i
   +\bigl(\ang{u^{-n}}-1\bigr)(v)
       \otimes[\widetilde w_j].\label{eq:MS-termwise}
\end{align}
Their unit \(\rho_{ij}\) satisfies
\[
 \ang{\rho_{ij}}=\ang{u^{-n}},\qquad
 \eta(\rho_{ij})=\ang{u^{-n}}-1.
\]
Thus the two terms in \eqref{eq:MS-termwise} are exactly the first two
terms of \cite[Proposition~4.28]{MorelSawant2023}.  The other deletion
is
\[
 -(u)\otimes\widetilde w_i(v)_j,
\]
its last term.  Taking a one-letter word gives their rank-one coroot
differential.

For the augmented comparison, recall their cycles
\[
 \begin{aligned}
 \phi_j(v,r)&=(v)\otimes\widetilde w_j(r)_j
       -\eta(v)(r)\otimes[\widetilde w_j],\\
 \delta_{ji}(v,u)&=(v)\otimes\widetilde w_j(u)_i
       -(u)\otimes\widetilde w_i(v)_j.
 \end{aligned}
\]
If \(n=2\), then \(\ang{u^{-2}}=1\) and
\[
 s_j(u)_i=(u)_i+(u^2)_j+(u)_i(u^2)_j.
\]
Put \(h=1+\ang{-1}\) and
\((-1)_i=[\alpha_i^\vee(-1)]-[1]\).  The identities
\((u^2)=h(u)\), \(\eta h=0\), and
\((u)_i(u^2)_j=(-1)_i(u^2)_j\) give, before augmentation,
\[
 \partial_2(u,v)=\delta_{ji}(v,u)
   +\phi_j(v,u^2)(1+(-1)_i).
\]
After \(-\otimes_{\HT}\Z\), this becomes
\[
 \ol\partial_{ij}((u)(v))
 =\ol\delta_{ji}(v,u)+h\,\ol\phi_j(v,u),
\]
which is \cite[Theorem~4.30(b)]{MorelSawant2023}.  For odd \(n\),
\(\ang{u^{-n}}=\ang u\), and the same expansion gives their
\((n_{ji})_\eps\ol\phi_j(v,u)\) correction.

If \(\lambda_i^{\mathrm{MS}}<\lambda_j^{\mathrm{MS}}\), change the
reduced expression as in Morel--Sawant.  With
\(\varsigma_r=(\breve w_r)^{-1}\widetilde w_r\), their comparison map is
\[
 (x)\otimes[\widetilde w_rt]
 \longmapsto
 \ang{\tau_r}(x)\otimes[\breve w_r\varsigma_rt].
\]
This is exactly the diagonal choice-change map
\(\ang{\tau_r}R_{\varsigma_r}\) of \Cref{prop:choices}.  After
augmentation it fixes \(\delta_{ji}\) and multiplies \(\phi_j\) by
\(\ang{\tau_j}\).  Hence the odd correction acquires that factor,
while the even one does not because \(h\ang{\tau_j}=h\).  These are
the clauses of \cite[Theorem~4.39]{MorelSawant2023}.
\end{proof}

\begin{example}
\label{ex:sp4}
Let \(\alpha_1\) be short and \(\alpha_2\) long, so
\[
 -\langle\alpha_2,\alpha_1^\vee\rangle=2,
 \qquad
 -\langle\alpha_1,\alpha_2^\vee\rangle=1.
\]
Use the normal words \(s_1,s_2,s_1s_2,s_2s_1\).  Every degree-two
deletion is already the chosen target word, so all four frame factors
are \(\ang1\).  The cover data are
\[
\begin{array}{ccccc}
\toprule
a&q&b&\lambda_{a,q}&\sigma_{a,q}\\
\midrule
s_1s_2&1&s_2&\alpha_1^\vee+2\alpha_2^\vee&-2\\
s_1s_2&2&s_1&\alpha_2^\vee&0\\
s_2s_1&1&s_1&\alpha_1^\vee+\alpha_2^\vee&-1\\
s_2s_1&2&s_2&\alpha_1^\vee&0\\
\bottomrule
\end{array}
\]
Consequently, with target rows \((s_1,s_2)\), source columns
\((s_1s_2,s_2s_1)\), and the indicated cuts understood,
\[
 \partial_2=
 \begin{pmatrix}
 -\Dop_{\alpha_2^\vee,0}^{(2)}\operatorname{cut}_{12,2}
 &\Dop_{\alpha_1^\vee+\alpha_2^\vee,-1}^{(2)}
       \operatorname{cut}_{21,1}\\[2pt]
 \Dop_{\alpha_1^\vee+2\alpha_2^\vee,-2}^{(2)}
       \operatorname{cut}_{12,1}
 &-\Dop_{\alpha_1^\vee,0}^{(2)}\operatorname{cut}_{21,2}
 \end{pmatrix}.
\]
The two nontrivial normal determinants are \(u^{-2}\) and \(u^{-1}\).
Thus the even Cartan edge has trivial variable Grothendieck--Witt class
but retains the doubled coroot in its torus translation, whereas the
odd edge has class \(\ang u\).  This is the \(n_{21}=2\) term checked
in \Cref{prop:MS-check}.

The normalized long-braid transition
\(1212\leftrightarrow2121\) has linear part
\[
 (y_1,y_2,y_3,y_4)\longmapsto(y_4,-y_3,y_2,y_1).
\]
Its determinant is \(-1\), so its Milnor--Witt degree is
\(\ang{-1}\), in agreement with
\cite[Lemma~4.2]{LiuLiu2026}.  The Chevalley coefficient \(2\) occurs
only in higher-order off-diagonal terms and changes neither this
Jacobian nor the diagonal product \(u^{-2}\).
\end{example}

\begin{corollary}\label{cor:type-A}
For \(SL_n\) with the standard pinning, every
\(\vartheta_{a,q}\) is the product of the degrees of braid moves from
\(\widehat{\mathbf a}_q\) to \(\mathbf b\).  Commutations contribute
\(\ang{-1}\), and three-term \(A_2\) braids contribute \(\ang1\).
Thus \eqref{eq:main-boundary} is an effective all-degree formula from
reduced words and the Cartan matrix.
\end{corollary}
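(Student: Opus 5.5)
The corollary follows from \Cref{prop:frame-algorithm}, \Cref{prop:rank-two-degrees} and \Cref{thm:all-degree}, specialized to type \(A\).

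First, every irreducible rank-two subsystem of \(A_{n-1}\) is of type \(A_2\), generated by adjacent simple roots \(\alpha_i,\alpha_{i+1}\). Any other pair of simple roots spans an \(A_1\times A_1\). So, by Matsumoto's theorem, any braid path from the deleted word \(\widehat{\mathbf a}_q\) to the chosen normal word \(\mathbf b\) consists only of commutation moves \(s_is_j=s_js_i\) with \(|i-j|\geq2\) and three-term braids \(s_is_{i+1}s_i=s_{i+1}s_is_{i+1}\).

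Second, \Cref{prop:frame-algorithm} writes \(\vartheta_{a,q}\) as \(\ang{\prod_\nu J_\nu}\). Here each \(J_\nu\) is the constant Jacobian of the corresponding elementary rank-two transition, and the product is independent of the chosen path. One point needs checking: an elementary move inside a longer word acts only on the coordinates of its rank-two block. The ambient conjugation by \(\operatorname{Ad}(n_{w_0})\operatorname{Ad}(n_{<\nu})\) multiplies the block's root vectors by the same signs on both sides, so it cancels in the ratio of \Cref{lem:frame-gauge}. Hence \(J_\nu\) equals the rank-two Jacobian from the table of \Cref{prop:rank-two-degrees}: \(\ang{-1}\) for \(A_1\times A_1\) and \(\ang1\) for \(A_2\). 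I expect this localization to be the only delicate step. It is essentially the gauge-invariance clause of \Cref{lem:frame-gauge} applied to the prefix transport. The standard pinning of \(SL_n\) (elementary matrices \(I+zE_{i,i+1}\)) agrees with the pinned Chevalley basis up to signs, so the table applies, and \Cref{rem:no-exotic-units} says such sign changes are harmless.

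Finally, effectiveness: \(\lambda_{a,q}\) and \(\sigma_{a,q}\) in \eqref{eq:tail-data} and \eqref{eq:sigma-data} are computed from the word \(\mathbf a\) and the Cartan matrix. Alternatively, \(\sigma_{a,q}=1-\operatorname{ht}(\gamma_{a,q}^\vee)\) by \Cref{lem:sigma-height}. The sign \((-1)^{q-1}\) and \(\operatorname{cut}_{a,q}\) depend only on \(q\). So we get \(\vartheta_{a,q}=\ang{(-1)^{c}}\), where \(c\) is the number of commutation moves in any braid path. Substituting this into \eqref{eq:main-boundary-symbol} gives the stated formula in all degrees. Path independence of the parity of \(c\) is not proved separately; it is inherited from \Cref{prop:frame-algorithm}.
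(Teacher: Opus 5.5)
Your proposal is correct and follows essentially the same route as the paper: a Matsumoto path consisting of commutations and $A_2$ braids, multiplicativity and path independence of the Jacobian via \Cref{prop:frame-algorithm}, and the two move degrees $\ang{-1}$ and $\ang{1}$. The only difference is how those degrees are obtained. The paper computes them directly in root-labelled coordinates: a commutation is an axis swap, and an $A_2$ braid is an axis reversal together with the sign change of the middle commutator vector. It then notes agreement with \Cref{prop:rank-two-degrees}. You instead import the values from that table and justify, via \Cref{lem:frame-gauge}, that a move inside a longer word has the same Jacobian as the isolated rank-two move.
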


\begin{proof}
Matsumoto's theorem connects the deleted word to the chosen target word
by commuting moves and three-term \(A_2\) braids.  In root-labelled
coordinates a commuting move interchanges two axes and has determinant
\(-1\).  For an \(A_2\) braid the three root axes are reversed, which
has determinant \(-1\), while the middle root vector changes from
\([e_{\alpha},e_{\beta}]\) to
\([e_{\beta},e_{\alpha}]\), contributing a second factor \(-1\).
Thus its determinant is \(+1\).  This agrees with
\Cref{prop:rank-two-degrees} and with the rank-two calculation
of \cite[Lemmas~4.2 and~4.5]{LiuLiu2026}.  Degrees multiply under
composition.  Their product is path independent because it is the
Jacobian determinant of the single polynomial automorphism
\(\Phi_{\mathbf b}^{-1}\Phi_{\widehat{\mathbf a}_q}\).  The reduced
word, deletion position, Cartan matrix, and a Matsumoto sequence
therefore give a finite algorithm for every entry.
\end{proof}

\begin{example}
\label{ex:SL4-frame}
In \(SL_4\), take
\(\mathbf a=(3,2,1)\) and delete \(q=2\).  The deleted word is
\((3,1)\), while choose \(\mathbf b=(1,3)\) for the same target.  The
single commuting move gives
\(\vartheta_{a,2}=\ang{-1}\).  Moreover
\[
 t_{a,2}=s_1,\qquad
 \lambda_{a,2}=s_1(\alpha_2^\vee)
   =\alpha_1^\vee+\alpha_2^\vee,
 \qquad \sigma_{a,2}=-1.
\]
Hence, in scalar cut bases, this cover sends
\[
 (u)\otimes x\otimes[t]\longmapsto
 -\ang{-1}\left(
   \ang{u^{-1}}x\otimes
   [ (\alpha_1^\vee+\alpha_2^\vee)(u)t]
   -x\otimes[t]
 \right).
\]
This is the first frame factor not visible in the normal-word choices of
the \(SL_3\) example below.
\end{example}

\section{Real realization and the maximal compact complex}
\label{sec:real}

Assume in this section that \(k=\R\).

\begin{remark}\label{rem:GR-connected}
For semisimple \emph{simply connected} \(G/\R\), the Lie group
\(G(\R)\) is automatically connected
\cite[Proposition~7.6]{PlatonovRapinchuk1994}; for split \(G\) this
also follows from Steinberg's generation of \(G(\R)\) by the connected
root subgroups \(x_\alpha(\R)\cong(\R,+)\)
\cite[Lemma~32 ff.]{Steinberg1968}.  The connectedness
hypothesis of \cite{PatraoSandoval2026} is therefore satisfied and is
not an additional assumption here.
\end{remark}

Let \(\theta\) be the Cartan
involution determined by the fixed pinning: on every pinned rank-one
subgroup it is the standard involution
\(g\mapsto(g^{\mathsf T})^{-1}\), and it restricts to inversion on
\(T\).  We take
\(K=G(\R)^\theta\), the resulting pinning-compatible maximal compact
subgroup, and let \(A=T(\R)^0\).  All identifications in this section
refer to this compatible choice: the identification
\(M=T[2](\R)\) below, the Iwasawa projection \(r\), and the pinned
representatives all depend on the fixed pinning-compatible
Cartan--Iwasawa data \((\theta,K,A)\).  Conjugating the Iwasawa data
transports
the comparison to any other maximal compact subgroup, but the pinned
representatives below then have to be conjugated as well.  Put
\[
  M:=K\cap T(\R)=Z_K(A)=T[2](\R).
\]
Inclusion of the sign representatives induces a canonical isomorphism
\[
 M\xrightarrow{\sim}T(\R)/T(\R)^0=\pi_0(T(\R));
\]
we use this isomorphism, rather than a literal equality of a subgroup
and a quotient, throughout.  For a simply connected split group,
\(M\) is generated by
\(c_i=\alpha_i^\vee(-1)\).  The pinned Tits representatives lie in
\(N_K(A)\), and their squares are the \(c_i\).
Patr\~ao--Sandoval's normalized root vector \(E_{\alpha_i}\) is
determined only up to sign.  We make the choice for which their
midpoint representative
\[
 s_i^{\mathrm{PS}}=\exp(F_i/2),\qquad
 F_i=E_{\alpha_i}+\theta E_{\alpha_i},
\]
is our \(n_i\).  Equivalently, if \(E_{\pm\alpha_i}\) now denote the
pinned Chevalley tangent vectors in the rank-one subgroup, this
convention reads
\(F_i=\pi(E_{-\alpha_i}-E_{\alpha_i})\).
Reversing one choice replaces \(n_i\) by
\(n_i^{-1}=n_ic_i\).  It therefore reverses the corresponding
cellular coordinate and right-translates the lifted cell label by
\(c_i\); on both complexes this is the same diagonal
sign-and-\(M\)-translation basis change.

Let
\[
  \operatorname{Re}_{\R}:\mathcal H_\bullet(\R)
  \longrightarrow\mathcal H_\bullet^{\mathrm{top}}
\]
denote real Betti realization of pointed motivic spaces
\cite[Section~3.3]{MorelVoevodsky1999}, cf.\
\cite{DuggerIsaksen2004}; on the filtered spaces used
here it commutes with the formation of quotients and Thom spaces of
real vector bundles, since these are realized to their topological
counterparts, and in particular it carries the purity cofiber
sequences of \Cref{sec:prelim} to the topological cofiber sequences of
the realized filtration.  In this
section it is always applied to the filtered spaces
\(\Omega_\bullet^X\) and their purity cofibers, followed by integral
singular chains.  Thus
\(\ReR\CellC_*(X)\) denotes the chain complex of the realized filtered
exact couple.  It does \emph{not} mean that a realization functor has
been applied termwise to strictly \(\Aone\)-invariant sheaves; the
statement ``the real realization of \(\HT\) is \(\Z[M]\)'' is
shorthand for the isomorphism \eqref{eq:real-HT} of realized chain
groups, not for a functor on sheaves.

Since \(U\) is split unipotent and \(T\) is split,
\(H^1(\R,U)=H^1(\R,T)=1\), and the exact sequence
\(1\to U\to B\to T\to1\) also gives \(H^1(\R,B)=1\).
Consequently the real points of the algebraic quotients used below are
the naive homogeneous spaces
\[
 (G/U)(\R)=G(\R)/U(\R),\qquad
 (G/B)(\R)=G(\R)/B(\R).
\]
Iwasawa decomposition induces a diffeomorphism
\begin{equation}\label{eq:iwasawa-X}
  (G/U)(\R)\xrightarrow{\sim}K\times A.
\end{equation}
The factor \(A\) is contractible.  The real points of a torus stratum
have \(|M|\) contractible components, so
\begin{equation}\label{eq:real-HT}
  H_0(T(\R),\Z)\cong\Z[M].
\end{equation}

The algebraic filtration is indexed by codimension, whereas the usual
Bruhat CW decomposition of \(K\) is indexed by dimension.  The reversal
\(w=w_0a\) identifies the normal cell to \(X_w\) with the opposite
Bruhat cell of \(K\) indexed by \(a\).  We use this convention below.

We first record the two pinned rank-one Iwasawa identities on which
all comparisons in this section rest; they are used in both
\Cref{lem:filtered-iwasawa} and \Cref{lem:real-orientation}.
Write \(N_u=U(\R)\) and let
\[
 r:G(\R)/N_u\longrightarrow G(\R)/AN_u\cong K
\]
be the natural quotient.

\begin{lemma}\label{lem:rank-one-iwasawa}
Let \(\alpha\) be a simple root with pinned homomorphism
\(\varphi_\alpha:SL_2(\R)\to G(\R)\).  Modulo \(AN_u\),
\begin{equation}\label{eq:rank-one-iwasawa}
 r\bigl(x_{-\alpha}(z)N_u\bigr)
 =\varphi_\alpha\!\left(
   \frac1{\sqrt{1+z^2}}
   \begin{pmatrix}1&-z\\ z&1\end{pmatrix}
  \right)AN_u,
\end{equation}
and
\begin{equation}\label{eq:positive-rank-one-iwasawa}
 r\bigl(x_{\alpha}(z)n_\alpha N_u\bigr)=
 \varphi_\alpha\!\left(
  \frac1{\sqrt{1+z^2}}
  \begin{pmatrix}z&-1\\1&z\end{pmatrix}
 \right)AN_u.
\end{equation}
\end{lemma}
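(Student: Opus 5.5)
The plan is to reduce both identities to explicit computations in \(SL_2(\R)\) and then transport them by \(\varphi_\alpha\).

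\emph{Reduction to rank one.} By construction of the pinning, \(\varphi_\alpha\) carries the standard diagonal torus, upper unipotent group and rotation group of \(SL_2(\R)\) into \(A\), \(N_u\) and \(K\) respectively. Indeed, \(\theta\) restricts on the pinned rank-one subgroup to \(g\mapsto(g^{\mathsf T})^{-1}\), so \(\varphi_\alpha(SO(2))\subset K\). Moreover \(\varphi_\alpha\) maps positive diagonal matrices into \(\alpha^\vee(\R_{>0})\subset A\), and upper unipotents onto \(x_\alpha(\R)\subset N_u\). It therefore suffices to prove, in \(SL_2(\R)\), the Iwasawa factorizations
\[
 g=k\,a\,n,\qquad k\in SO(2),\ a\ \text{positive diagonal},\ n\ \text{upper unipotent}.
\]
Applying \(\varphi_\alpha\) then gives \(r(\varphi_\alpha(g)N_u)=\varphi_\alpha(k)AN_u\), because \(\varphi_\alpha(a)\varphi_\alpha(n)\in AN_u\).

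\emph{The two matrix factorizations.} With the standard pinning, \(x_{-\alpha}(z)\) is \(\left(\begin{smallmatrix}1&0\\z&1\end{smallmatrix}\right)\). Its first column is \((1,z)^{\mathsf T}\), so Gram--Schmidt gives the rotation \(k=(1+z^2)^{-1/2}\left(\begin{smallmatrix}1&-z\\z&1\end{smallmatrix}\right)\) and the factor \(a=\operatorname{diag}(\sqrt{1+z^2},(1+z^2)^{-1/2})\). One then checks directly that \(a^{-1}k^{-1}x_{-\alpha}(z)\) is upper unipotent; this is a one-line multiplication. This proves \eqref{eq:rank-one-iwasawa}.

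For \eqref{eq:positive-rank-one-iwasawa}, we use the matrix identity already recorded in the proof of \Cref{lem:opposition-bookkeeping}: \(x_\alpha(z)n_\alpha=\left(\begin{smallmatrix}z&-1\\1&z\end{smallmatrix}\right)\)-type, namely \(\left(\begin{smallmatrix}z&-1\\1&0\end{smallmatrix}\right)\). Its first column is \((z,1)^{\mathsf T}\). The \(QR\) decomposition gives \(k=(1+z^2)^{-1/2}\left(\begin{smallmatrix}z&-1\\1&z\end{smallmatrix}\right)\), which lies in \(SO(2)\) since its determinant is \(1\), together with the same positive diagonal \(a\). It remains to verify that \(a^{-1}k^{-1}\left(\begin{smallmatrix}z&-1\\1&0\end{smallmatrix}\right)\) is upper unipotent, which is again a short computation.

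\emph{Expected obstacle.} The only delicate point is the sign conventions: we must confirm that the pinned \(x_{-\alpha}\) and \(n_\alpha\) of \eqref{eq:tits-representative} really are the displayed standard matrices, consistently with the Patr\~ao--Sandoval midpoint convention fixed above. Any discrepancy would change \(k\) by right multiplication by an element of \(M\) and so would not be absorbed into \(AN_u\). I would check this directly from \(n_i=x_{\alpha_i}(-1)x_{-\alpha_i}(1)x_{\alpha_i}(-1)=\left(\begin{smallmatrix}0&-1\\1&0\end{smallmatrix}\right)\), which is a routine multiplication, and then observe that the \(z=0\) specialization of \eqref{eq:positive-rank-one-iwasawa} recovers \(n_\alpha\) itself, as a consistency check.
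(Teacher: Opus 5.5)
Your proposal is correct and follows the paper's route: both reduce to an explicit Iwasawa factorization in \(SL_2(\R)\) of \(\left(\begin{smallmatrix}1&0\\z&1\end{smallmatrix}\right)\) and of \(x_\alpha(z)n_\alpha=\left(\begin{smallmatrix}z&-1\\1&0\end{smallmatrix}\right)\). The paper writes down the upper-triangular positive-diagonal factor and checks the product, whereas you derive \(k\) and \(a\) by Gram--Schmidt and check that \(a^{-1}k^{-1}g\) is upper unipotent; your explicit observation that \(\varphi_\alpha\) carries that triangular group into \(AN_u\) spells out a step the paper leaves implicit.
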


\begin{proof}
For \eqref{eq:rank-one-iwasawa}, multiplying the displayed rotation by
\[
 \begin{pmatrix}
  \sqrt{1+z^2}&z/\sqrt{1+z^2}\\
  0&1/\sqrt{1+z^2}
 \end{pmatrix}\in AN_u
\]
gives \(x_{-\alpha}(z)\).  For
\eqref{eq:positive-rank-one-iwasawa}, one checks likewise that
\[
 \frac1{\sqrt{1+z^2}}
 \begin{pmatrix}z&-1\\1&z\end{pmatrix}
 \begin{pmatrix}
  \sqrt{1+z^2}&-z/\sqrt{1+z^2}\\
  0&1/\sqrt{1+z^2}
 \end{pmatrix}
 =\begin{pmatrix}z&-1\\1&0\end{pmatrix},
\]
where the middle matrix is upper triangular with positive diagonal,
hence lies in \(AN_u\), and
\(\left(\begin{smallmatrix}z&-1\\1&0\end{smallmatrix}\right)
=x_\alpha(z)n_\alpha\) by \eqref{eq:positive-rank-one}.
\end{proof}

\begin{lemma}\label{lem:filtered-iwasawa}
Let
\[
 \Phi:K\times A\xrightarrow{\ \sim\ }(G/U)(\R),
 \qquad (k,a)\longmapsto kaU,
\]
be the Iwasawa diffeomorphism, and let \(K_p^\downarrow\) be the union
of the lifted Bruhat cells with Weyl length at least \(N-p\).  For
every \(p\), \(\Phi\) restricts to a diffeomorphism of pairs
\begin{equation}\label{eq:filtered-pair-diffeomorphism}
 \bigl(K_p^\downarrow\times A,
       K_{p-1}^\downarrow\times A\bigr)
 \xrightarrow{\ \sim\ }
 \bigl(\Omega_p^X(\R),\Omega_{p-1}^X(\R)\bigr),
\end{equation}
and these diffeomorphisms commute with the inclusions in \(p\).
Projection to \(K\) is consequently a filtered strong deformation
retraction.

After the length reversal, the exact couple of
\(K_\bullet^\downarrow\) is the cellular chain complex of the lifted
opposite-Bruhat CW decomposition.  On the \(a\)-summand, the induced
comparison is
\[
 H_p\!\left(\operatorname{Re}_{\R}
   \Th(\nu_{X_{w_0a}/\Omega_p^X})\right)
 \cong H_0(T(\R),\Z)e_a
 \cong\Z[M]e_a.
\]
It is equivariant for right multiplication by \(M\).
\end{lemma}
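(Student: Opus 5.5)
The plan is to reduce everything to Bruhat decompositions on the real points and then to the Iwasawa decomposition. We first check that \(\Phi\) is a diffeomorphism. By the standard Iwasawa decomposition \(G(\R)=KAN_u\) with uniqueness of factors, the map \((k,a)\mapsto kaU\) is a bijective smooth map \(K\times A\to G(\R)/N_u\). Its inverse \(gN_u\mapsto(\kappa(g),a(g))\) is smooth, so \(\Phi\) is a diffeomorphism. We use \((G/U)(\R)=G(\R)/N_u\), which follows from \(H^1(\R,U)=1\).

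Next we prove the filtered statement. Since \(AN_u\subset B(\R)\) is right-multiplied inside each stratum, the real Bruhat stratum
\[
X_w(\R)=B(\R)n_wU(\R)/U(\R)
\]
is right \(A\)-stable. Its image under \(r\) is the set \(K\cap B(\R)n_wB(\R)\), which is the union of the lifted Bruhat cells of \(K\) over \(w\), in the sense of Patr\~ao--Sandoval. Hence
\[
\Phi^{-1}\bigl(X_w(\R)\bigr)=\bigl(K\cap B(\R)n_wB(\R)\bigr)\times A.
\]
Taking unions over \(\ell(w)\geq N-p\) gives \eqref{eq:filtered-pair-diffeomorphism}, compatibly with the inclusions in \(p\). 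The straight-line contraction of \(A\cong\R^r\) to \(1\), transported by \(\Phi\), preserves every \(\Omega_p^X(\R)\). It is therefore a filtered strong deformation retraction onto \(K\times\{1\}\).

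It remains to identify the exact couples and the \(a\)-summand, and we expect this step to be the main obstacle. Realization carries the purity cofiber \(\Omega_p^X/\Omega_{p-1}^X\) to \(\Omega_p^X(\R)/\Omega_{p-1}^X(\R)\), which by the above is homotopy equivalent to \(K_p^\downarrow/K_{p-1}^\downarrow\). This quotient is a wedge, indexed by \(a\) with \(\ell(a)=p\) and by \(m\in M\), of \(p\)-spheres. To see this, we use the tubular chart \eqref{eq:tubular-chart} on the real points. The real normal slice \(V_{w_0a}(\R)\cong\R^p\) at each component \(n_{w}mA\) of \(X_w(\R)\cong\R^{N-p}\times T(\R)\) realizes the open cell of \(K\) labelled by \((a,m)\). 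To check this, we follow the slice through \(r\) using \Cref{lem:rank-one-iwasawa} factor by factor in the ordered product \eqref{eq:opposition-bott-samelson}. Each factor \(x_{\alpha}(z)n_\alpha\) maps under \(r\) to the rotation parametrization of Patr\~ao--Sandoval's one-dimensional cell, with the midpoint normalization \(n_i=s_i^{\mathrm{PS}}\) fixed above. Consequently the real Thom class of \(\nu_{X_{w_0a}}\) over the component indexed by \(m\) corresponds to the characteristic cell of \((a,m)\), and
\[
H_p\bigl(\operatorname{Re}_\R\Th(\nu)\bigr)\cong H_0(T(\R))\,e_a\cong\Z[M]\,e_a
\]
by \eqref{eq:real-HT}. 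The delicate point is that \(r\) of the affine slice is a genuine parametrization of the open cell and not only a homotopy equivalence of pairs. Precise orientation signs are not needed here, since they are deferred to \Cref{lem:real-orientation}. The exact couple of the filtration \(K_\bullet^\downarrow\) is then the cellular chain complex of the lifted opposite-Bruhat CW structure, with dimension \(p=\ell(a)\) after the reversal \(w=w_0a\).

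Finally, \(M\)-equivariance follows because \(\Phi(km,a)=kmaU=k\,a\,mU\), using that \(M\) centralizes \(A\). Right multiplication by \(M\) on \(K\) therefore corresponds to the right \(T\)-translation on \(X\) restricted to \(M\subset T(\R)\). On \(H_0(T(\R))\) this right translation is right multiplication on \(\Z[M]\). All identifications above are natural for these maps, so the isomorphism is equivariant for right multiplication by \(M\).
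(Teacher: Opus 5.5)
Your proposal is sound, and from the normal-slice step on it follows the paper. Both arguments push the opposition Bott--Samelson slice through the positive rank-one Iwasawa identity, read \(H_p\) of the realized Thom space off \(H_0(T(\R))\cong\Z[M]\), defer signs to \Cref{lem:real-orientation}, and obtain \(M\)-equivariance from \(\Phi(km,a)=\Phi(k,a)\cdot m\).

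The genuine difference is in the filtered identification. The paper proves \(r(X_w(\R))=\mathcal B_w^K\) by a coordinate recursion: it applies the rank-one identity factor by factor, substitutes \(z=\tan\phi\), and uses that the \(AN_u\)-factors act triangularly with positive diagonal. This produces a square matching \(\xi_{w,m}\) with the Patr\~ao--Sandoval map \(\Psi_{w,m}\) label by label. You instead use right \(A\)-stability of \(X_w(\R)\) together with \(B(\R)n_wU(\R)=B(\R)n_wB(\R)\), which gives \(\Phi^{-1}(X_w(\R))=(K\cap B(\R)n_wB(\R))\times A\) at once. Your route is shorter and coordinate-free. However, it only sees the union over \(M\)-labels and relies on the lifted cells being the sheets of \(K\to K/M\) over the Bruhat cells. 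The paper's cellwise square is what \Cref{lem:real-orientation} and \Cref{thm:comparison-square} later reuse.

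The same coordinate-free idea settles the point you flag as delicate. Compose \(r\) on the slice \(V_w(\R)n_wm\) with \(K\to K/M\cong G(\R)/B(\R)\). The result is \(v\mapsto vn_wB(\R)\), the algebraic isomorphism of \(V_w\) onto the opposite cell \(U^-n_wB/B\). Since the covering is trivial over that cell, \(r\) maps the connected slice homeomorphically onto the unique sheet through \(n_wm\). The iterated rank-one calculation is then needed only for orientations.

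Finally, your sentence that the exact couple of \(K_\bullet^\downarrow\) ``is then'' the opposite cellular complex needs an argument. First, the opposite \(p\)-skeleton lies in \(K_p^\downarrow\), because an opposite cell over \(w\) meets only Bruhat cells over \(v\geq w\). Second, the opposite cell \((a,m)\) meets the codimension-\(p\) stratum transversally in the single point \(n_{w_0a}m\). Hence the inclusion of filtrations is an isomorphism on \(E^1\), and therefore it identifies the differentials. The paper compresses this into its excision-to-disks remark.
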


\begin{proof}
Recall \(N_u=U(\R)\); the subscript separates this
real unipotent group from the integer \(N=\ell(w_0)\) and from the
extended Weyl group denoted by \(U\) in parts of
\cite{PatraoSandoval2026}.  The quotient
\(r:G(\R)/N_u\to G(\R)/AN_u\cong K\)
is the projection associated with \(\Phi\).  On a Bruhat stratum and
its tubular chart the algebraic coordinates are, respectively,
\[
 X_w(\R)\cong U_{w^{-1}}(\R)\times M\times A,
 \qquad
 \mathcal O_w(\R)\cong
 V_w(\R)\times U_{w^{-1}}(\R)\times M\times A.
\]
Let
\(\mathcal B_w^K=\coprod_{m\in M}\mathcal B(w,m)\) be the union of the
lifted Bruhat cells in \(K\) with Weyl index \(w\), based at the pinned
representative and carrying all \(M\)-labels.

We first prove the assertion stratum by stratum.  Order the root
coordinates according to a reduced word for \(w\), and write
\[
 \xi_{w,m}:\R^{\ell(w)}\times A\xrightarrow{\ \sim\ }(X_w(\R))_m,
 \qquad(\mathbf z,a)\longmapsto u(\mathbf z)n_wmaU,
\]
for the resulting algebraic parameterization with \(M\)-label \(m\).
Apply
\eqref{eq:rank-one-iwasawa} successively from left to right.  At one
step, the change \(z=\tan\phi\) is a diffeomorphism from the root line
to the interior of the corresponding compact characteristic
interval.  The \(AN_u\)-factor produced at that step rescales every
later root coordinate by a positive number and adds only root
coordinates of strictly greater transported height.  By
\Cref{lem:unipotent-triangular}, the resulting coordinate change is
triangular with positive diagonal.  More explicitly, in the chosen
order its \(j\)-th compact coordinate satisfies
\[
 \tan\phi_j
 =c_j(z_1,\ldots,z_{j-1})z_j+
   P_j(z_1,\ldots,z_{j-1}),\qquad c_j>0.
\]
Thus \(z_j\) is recovered uniquely and smoothly after
\(z_1,\ldots,z_{j-1}\); since
\(\tan:(-\pi/2,\pi/2)\to\R\) is onto, the recursion is global rather
than merely local.  We obtain a commutative diagram
\begin{equation}\label{eq:iwasawa-cell-diagram}
\begin{tikzcd}[column sep=large]
 \R^{\ell(w)}\times A
   \arrow[r,"\xi_{w,m}"]
   \arrow[d,"\Theta_{w,m}"']
 &(X_w(\R))_m\arrow[d,"\Phi^{-1}"]\\
 \mathring I^{\,\ell(w)}\times A
   \arrow[r,"\Psi_{w,m}\times\operatorname{id}_A"']
 &\mathcal B(w,m)\times A ,
\end{tikzcd}
\end{equation}
where \(\Psi_{w,m}\) is the Patr\~ao--Sandoval characteristic
parameterization \cite[Section~4]{PatraoSandoval2026}.  The positive
\(A\)-factor occurring in the recursive calculation is absorbed in
the second factor of the bottom row; this is a diffeomorphism because
multiplication in \(A\) is.  At the point with all root coordinates
zero, the pinned representative \(n_wm\) maps to the chosen base point
of \(\mathcal B(w,m)\).  Thus the \(M\)-label also agrees.

Taking the disjoint union of \eqref{eq:iwasawa-cell-diagram} over
\(m\in M\) proves
\begin{equation}\label{eq:filtered-iwasawa-strata}
 r\bigl(X_w(\R)\bigr)=\mathcal B_w^K,\qquad
 r^{-1}(\mathcal B_w^K)=X_w(\R)
   \cong\mathcal B_w^K\times A.
\end{equation}
Because \(\Phi\) is one global diffeomorphism, these cellwise
identities glue automatically: no independently chosen product
trivializations are being compared on overlaps.  Taking their union
over \(\ell(w)\geq N-p\) gives
\[
 \Omega_p^X(\R)=\Phi(K_p^\downarrow\times A)
\]
for every \(p\), and proves
\eqref{eq:filtered-pair-diffeomorphism} simultaneously for all \(p\).
The homotopy
\[
 H_s(k,a)=\Phi\bigl(k,\exp((1-s)\log a)\bigr),
 \qquad 0\leq s\leq1,
\]
is a strong deformation retraction through filtered maps.  Hence it
induces a morphism of the entire exact couples and commutes with every
connecting morphism, not only with the resulting homology groups.

It remains to identify the exact couple of
\(K_\bullet^\downarrow\).  In the tubular version of
\eqref{eq:iwasawa-cell-diagram}, the normal root-coordinate block is
treated by \eqref{eq:positive-rank-one-iwasawa}; the substitution
\(z=\cot(\pi t)\) identifies its one-point compactification with the
corresponding opposite characteristic interval.  Iterating in the
fixed root order identifies, for \(w=w_0a\), the normal disk to
\(\mathcal B_w^K\) with the lifted opposite Bruhat \(p\)-cell indexed
by \(a\).  Excision in a union of these tubular disks gives
\[
 H_j\!\left(
   \operatorname{hocofib}
   (K_{p-1}^\downarrow\to K_p^\downarrow);\Z
 \right)
 \cong
 \begin{cases}
   \displaystyle\bigoplus_{\ell(a)=p}\Z[M]e_a,&j=p,\\
   0,&j\ne p.
 \end{cases}
\]
Under this identification, the connecting map for the quotient triple
\[
 K_{p-2}^\downarrow\subset
 K_{p-1}^\downarrow\subset K_p^\downarrow
\]
is the cellular attaching boundary of those opposite disks: both maps
are the boundary of this same triple after excision to the disk
neighbourhoods.  This proves, rather than assumes, that the exact
couple is the lifted opposite-Bruhat cellular complex.

The right \(M\)-action preserves the filtration and permutes the
labelled disks, so the identification is right-\(\Z[M]\)-linear.
The endpoint and determinant orientations are computed in
\Cref{lem:real-orientation}.  Finally \(T(\R)=M\times A\); contraction
of \(A\) identifies the realized torus summand with \(\Z[M]\), giving
the displayed comparison on each \(a\)-summand.
\end{proof}

We recall the group-ring form of the Patr\~ao--Sandoval boundary
\cite[Theorems~1.2 and~6.4]{PatraoSandoval2026}.\footnote{In the first
display for \(\Psi_u(I_i^{d-1})\) in
\cite[Lemma~4.6]{PatraoSandoval2026}, we interpret the symbol between
\(\mathcal B(u_i^0)\) and \(\mathcal B(u_i^1)\) as union, consistently
with the two cells being disjoint and with the later skeleton formula.
The boundary formula used here is unaffected.}  Let
\[
  a=s_{i_1}\cdots s_{i_p}
\]
and delete position \(q\).  Move \(c_{i_q}\) through the tail to the
right.  The resulting element is
\[
  c_{a,q}=\lambda_{a,q}(-1)\in M.
\]
If \(\epsPS(a,q)\in\{\pm1\}\) is the ordinary degree comparing the
deleted-word target orientation with the chosen target orientation,
the two endpoint faces have the individual incidence numbers
\begin{equation}\label{eq:PS-two-faces}
 \rho(a,a_q^0)=\epsPS(a,q)(-1)^q,
 \qquad
 \rho(a,a_q^1)=\epsPS(a,q)(-1)^{q+1+\sigma_{a,q}}.
\end{equation}
Here the second target lift is the right translate of the first,
\(e_{a_q^1}=e_{a_q^0}c_{a,q}\).  Consequently the two faces combine,
relative to the first target lift, to
\begin{equation}\label{eq:PS-group-ring}
  \epsPS(a,q)(-1)^q
  \left(1-(-1)^{\sigma_{a,q}}c_{a,q}\right)\in\Z[M].
\end{equation}

\begin{lemma}\label{lem:real-orientation}
Under the filtered Iwasawa comparison, the normal-word factor and the
ordinary lifted-cell orientation factor satisfy
\[
 \operatorname{sign}\bigl(\vartheta_{a,q}\bigr)=\epsPS(a,q).
\]
Here \(\operatorname{sign}:\GW(\R)^\times\to\{\pm1\}\) is the
signature on one-dimensional forms.
\end{lemma}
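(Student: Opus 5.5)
Both \(\vartheta_{a,q}\) and \(\epsPS(a,q)\) are determinants of constant changes of frame on the same target cell, so I will identify the two frames under realization and compare. By \eqref{eq:theta-word} and \Cref{lem:frame-gauge}, \(\vartheta_{a,q}=\ang{J}\) with \(J=J(\widehat{\mathbf a}_q,\mathbf b)\in\{\pm1\}\), the constant Jacobian of \(\Phi_{\mathbf b}^{-1}\Phi_{\widehat{\mathbf a}_q}\). Its signature is therefore just \(J\). It remains to show that \(\epsPS(a,q)\), the ordinary degree comparing the deleted-word orientation of the target lifted cell with its chosen orientation, is also \(J\).

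First I will transport the algebraic frames to \(K\). By \Cref{lem:filtered-iwasawa}, the normal disk of \(X_{w_0b}(\R)\) is identified, through the iterated substitutions \(z=\cot(\pi t)\) from \eqref{eq:positive-rank-one-iwasawa}, with the opposite lifted Bruhat cell indexed by \(b\). I choose the Patr\~ao--Sandoval parameterization built from a word \(\mathbf c\) for \(b\). Under our sign choice \(s_i^{\mathrm{PS}}=n_i\), the \(j\)-th factor of \eqref{eq:opposition-bott-samelson} corresponds, by \Cref{lem:rank-one-iwasawa}, to the \(j\)-th characteristic interval coordinate. The recursive triangular structure from the proof of \Cref{lem:filtered-iwasawa} has positive diagonal coefficients \(c_j>0\), and \(\cot\) is a monotone, orientation-reversing map. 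This reversal occurs once in each coordinate, independently of the word, so it contributes the same global sign \((-1)^{\ell(b)}\) for both words. Hence, for each reduced word \(\mathbf c\) of \(b\), the realized chart \(\Phi_{\mathbf c}\) and the PS characteristic map \(\Psi_{\mathbf c}\) induce orientations of the same cell that differ by a sign \(s(b)\) depending on \(b\) only.

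Next I apply this to \(\mathbf c=\widehat{\mathbf a}_q\) and \(\mathbf c=\mathbf b\). The sign \(s(b)\) cancels in the ratio, so \(\epsPS(a,q)\) equals the degree of the real realization of \(\Phi_{\mathbf b}^{-1}\Phi_{\widehat{\mathbf a}_q}\) on the one-point compactified normal disk. This map is a polynomial automorphism of \(\R^{\ell(b)}\) with constant Jacobian \(J\). A diffeomorphism of \(\R^n\) with constant Jacobian of sign \(J\) has degree \(J\) on \(S^n\), which gives \(\epsPS(a,q)=J=\operatorname{sign}(\vartheta_{a,q})\). The \(M\)-labels play no role, because both frames are based at the same Tits lift \(n_b\); the braid relations for the \(n_i\) guarantee this.

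The main obstacle is the second step. I must check that the PS characteristic parameterization for a non-normal word \(\widehat{\mathbf a}_q\) really is the realized product chart, with no extra word-dependent sign. In particular, the \(AN_u\)-factors produced at intermediate stages must only rescale later coordinates positively, never by a negative constant such as a \(c_i\)-twist. I would try to verify this first by induction on the word length using \eqref{eq:positive-rank-one-iwasawa}, checking that the sign convention \(F_i=\pi(E_{-\alpha_i}-E_{\alpha_i})\) is exactly what makes each rank-one step orientation-compatible. As a sanity check, I would confirm the result against the rank-two values of \Cref{prop:rank-two-degrees}, for instance the commutation sign \(-1\), and the PS incidence data \eqref{eq:PS-two-faces} for \(SO(3)\).
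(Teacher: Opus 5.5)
Your proposal is correct and is essentially the paper's argument. Both proofs identify \(\epsPS(a,q)\) with the local degree of \(J_{\mathbf b}\circ g\circ J_{\widehat{\mathbf a}_q}^{-1}\), where \(g=\Phi_{\mathbf b}^{-1}\Phi_{\widehat{\mathbf a}_q}\), and both note that the Iwasawa-to-cube coordinate changes \(J_{\mathbf c}\) contribute the same sign \((-1)^{\ell(b)}\) for the two words, so only \(\sgn\det Dg\) survives.

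The one difference is the step you flag as the obstacle. The paper obtains \(\sgn\det DJ_{\mathbf c}=(-1)^{d}\) from a tangent computation at the midpoint \(n_cAN_u\). There the \(E_{-\alpha}\)-part of \(F_i\) lies in the isotropy algebra, because \(\beta_\nu\) is an inversion root, so the cube derivative is \(-\pi\) times the algebraic one.

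Your global induction also goes through. The rank-one factor in \eqref{eq:positive-rank-one-iwasawa} lies in \(AN_u\) with positive diagonal. Pushing it past \(x_{\alpha_j}(z)n_j\) therefore only rescales \(z\) by some \(\alpha_j(h)>0\) and shifts it by a function of earlier coordinates. What is left stays in \(AN_u\) after conjugation by \(n_j\), with no \(M\)-twist.
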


\begin{proof}
Put \(d=p-1\).  For a reduced word
\(\mathbf c=(j_1,\ldots,j_d)\), left translation by
\(n_{w_0}^{-1}\) carries the actual opposition chart to
\begin{equation}\label{eq:positive-real-chart}
 P_{\mathbf c}(\mathbf z)=
 x_{\alpha_{j_1}}(z_1)n_{j_1}\cdots
 x_{\alpha_{j_d}}(z_d)n_{j_d}
\end{equation}
by \eqref{eq:opposition-bott-samelson}.  This left translation commutes
with every right \(M\)-translation, so it preserves the labels of the
two lifted faces.

Write \(\psi_i(t)=\exp(tF_i)\) and
\[
 \Psi_{\mathbf c}(t_1,\ldots,t_d)
   =\psi_{j_1}(t_1)\cdots\psi_{j_d}(t_d)AN_u
   \quad(0\leq t_\nu\leq1)
\]
for the Patr\~ao--Sandoval characteristic cube attached to
\(\mathbf c\); below we use its interior near the common midpoint.  We
first record the orientation of the Iwasawa
coordinate change.  In rank one, by
\eqref{eq:positive-rank-one-iwasawa} of \Cref{lem:rank-one-iwasawa},
with the preceding choice \(s_i^{\mathrm{PS}}=n_i\), the right side is the
Patr\~ao--Sandoval path \(\psi_i(t)\) with
\(z=\cot(\pi t)\), up to a positive rescaling coming from their
normalization of \(E_{\alpha_i}\).  At the midpoint
\(t=1/2\), therefore, \(dt/dz<0\).

Iterating \eqref{eq:positive-rank-one-iwasawa} along a reduced word
defines a local coordinate change
\[
 J_{\mathbf c}:\mathbf z\longmapsto\mathbf t,
 \qquad
 r(P_{\mathbf c}(\mathbf z)N_u)
   =\Psi_{\mathbf c}(J_{\mathbf c}(\mathbf z)).
\]
This is not an independently chosen comparison of parameterizations:
it is the restriction of the global filtered diffeomorphism
\(\Phi^{-1}\) in \eqref{eq:filtered-pair-diffeomorphism} to the
normal disk at the indicated stratum.  Its derivative is therefore
the normal-bundle map inducing the Thom-space comparison in
\Cref{lem:filtered-iwasawa}.
The sign can be read directly in the tangent quotient.  Put
\[
 \beta_\nu=s_{j_1}\cdots s_{j_{\nu-1}}(\alpha_{j_\nu}).
\]
At the common midpoint \(n_cAN_u\), the derivative
of the \(\nu\)-th algebraic coordinate is represented by
\[
 \operatorname{Ad}(n_{<\nu})E_{\alpha_{j_\nu}},
\]
whereas the corresponding compact cube derivative is represented by
\[
 \pi\operatorname{Ad}(n_{<\nu})
 (E_{-\alpha_{j_\nu}}-E_{\alpha_{j_\nu}}).
\]
Because \(\beta_\nu\) is an inversion root of the reduced word
\(\mathbf c\), one has \(c^{-1}\beta_\nu\in\Phi^-\), and hence
\(c^{-1}(-\beta_\nu)\in\Phi^+\).  The transported negative-root term
therefore lies in the isotropy algebra
\(\operatorname{Ad}(n_c)(\mathfrak a+\mathfrak n_u)\) and vanishes in
\(T_{n_cAN_u}(G/AN_u)\).  Thus the compact derivative is \(-\pi\) times
the algebraic derivative on every ordered coordinate.  Equivalently,
\(DJ_{\mathbf c}\) has negative diagonal and positive determinant
factor apart from its signs.  Hence
\begin{equation}\label{eq:J-word-sign}
 \sgn\det(DJ_{\mathbf c})=(-1)^d
\end{equation}
for every reduced word of length \(d\).  The sign in
\eqref{eq:J-word-sign} results from retaining the actual root vector
\(\operatorname{Ad}(n_{w_0})E_\alpha\) rather than replacing it with a
fixed negative-root vector.

There is a second, absolute sign, coming from the Thom generator.  We
use the realization convention in which the one-coordinate motivic
Thom class \(\tau_z\) has connecting class
\begin{equation}\label{eq:real-thom-convention}
 \partial\tau_z=[z<0]-[z>0]
 \quad\text{in}\quad
 \widetilde H_0(\R^\times,\Z).
\end{equation}
This is the convention of \Cref{prop:D-real}: the negative component is
the symbol \((-1)\), and the positive component is the unit
basepoint.  Relative to the usual \(dz\)-oriented topological Thom
class, \(\tau_z\) has sign \(-1\).  The \(p\)-fold motivic Thom
generator therefore contributes \((-1)^p\), while the Iwasawa
coordinate change for a word of length \(p\) contributes the same
\((-1)^p\) by \eqref{eq:J-word-sign}.  These signs cancel.  Thus the
realized motivic basis maps to the standard oriented
Patr\~ao--Sandoval cube basis without a residual degree-dependent sign.
In rank one this normalization says
\[
 \partial\tau_z=[\lambda(-1)]-[1]=c_i-1,
\]
which is also the boundary of their arc oriented from \(t=0\) to
\(t=1\).  This \(SL_2\) calculation fixes the absolute sign of the
descending-open exact couple in every degree: the ordered \(p\)-fold
Thom and cube models are tensor products of this one-line model, and
their total-complex signs are the same signs computed in
\Cref{lem:face-sign}.  Thus no degree-\(p\) sign is inferred merely
from the rank-one numerical boundary.

Now let
\(g=\Phi_{\mathbf b}^{-1}
       \Phi_{\widehat{\mathbf a}_q}\).
Its Jacobian determinant is the constant whose quadratic class is
\(\vartheta_{a,q}\).  Choose a common regular point of the two target
cell charts.  In local cube coordinates the transition used in the
definition of \(\epsPS(a,q)\) is, by the chain rule,
\[
 J_{\mathbf b}\circ g\circ
 J_{\widehat{\mathbf a}_q}^{-1}.
\]
Both words have length \(d\), so the two signs
\((-1)^d\) in \eqref{eq:J-word-sign} cancel.  Its local degree is
therefore \(\sgn(\det Dg)\), independently of the chosen regular
point because \(\det Dg\) is constant.  Right translation by the
second face label has the same coordinate comparison.  We unwind the
cited definition as follows.  In
\cite[Section~5]{PatraoSandoval2026}, the sign \(\epsPS(a,q)\) is the
degree of the attaching comparison between the face of the
characteristic cube of \(\mathbf a\) obtained by setting
\(t_q\in\{0,1\}\) and the characteristic cube of the chosen word for
\(b\): both parametrize the same lifted cell, and the degree is the
sign of the Jacobian of the transition between the two cube
parametrizations at any interior point.  The face restriction of
\(\Psi_{\mathbf a}\) is exactly \(\Psi_{\widehat{\mathbf a}_q}\)
translated by the face label, so this transition is the displayed
composite \(J_{\mathbf b}\circ g\circ
J_{\widehat{\mathbf a}_q}^{-1}\) up to that translation, which
preserves orientation.  Its degree
is therefore \(\sgn(\det Dg)\), and by
\cite[Theorem~6.4]{PatraoSandoval2026} this degree is the
coefficient sign \(\epsPS(a,q)\).  Finally
\(\operatorname{sign}\ang{\det Dg}=\sgn(\det Dg)\), which proves the
claim.
\end{proof}

\begin{remark}\label{rem:sign-dictionary}
For reference, the complete dictionary between the motivic and
Patr\~ao--Sandoval sign data is:
\[
\begin{array}{p{0.42\textwidth}p{0.5\textwidth}}
\toprule
\text{motivic datum} & \text{Patr\~ao--Sandoval datum}\\
\midrule
frame degree \(\vartheta_{a,q}\in\GW(\R)^\times\)
 & orientation sign \(\epsPS(a,q)=\operatorname{sign}\vartheta_{a,q}\)\\
ordinary face sign \((-1)^{q-1}\)
 & face sign \((-1)^{q}\) (opposite convention, absorbed by the
   endpoint order below)\\
determinant weight \((-1)^{\sigma_{a,q}}\)
 & relative sign of the two faces in \eqref{eq:PS-two-faces}\\
torus translation \([\lambda_{a,q}(u)t]\)
 & second face label \(e_{a_q^1}=e_{a_q^0}c_{a,q}\)\\
Thom convention \eqref{eq:real-thom-convention}
 & cube orientation from \(t=0\) to \(t=1\)\\
\bottomrule
\end{array}
\]
All five rows are established inside the proofs of
\Cref{lem:filtered-iwasawa,lem:real-orientation}; no sign is imported
from \cite{PatraoSandoval2026} without an explicit coordinate
identification.
\end{remark}

\begin{theorem}\label{thm:real-comparison}
There is an isomorphism of right-\(\Z[M]\) chain complexes
\begin{equation}\label{eq:real-chain-iso}
  \ReR\CellC_*(G/U)\xrightarrow{\sim}
  C_*^{\mathrm{CW}}(K;\Z),
\end{equation}
where the right side is the extended-Weyl Bruhat complex of
\cite{PatraoSandoval2026}, written with opposite cells and the
\(w_0\)-reindexing.  Under this isomorphism, every motivic cover
component in \eqref{eq:main-boundary} becomes
\eqref{eq:PS-group-ring}.
\end{theorem}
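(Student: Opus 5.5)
The isomorphism \eqref{eq:real-chain-iso} will come from \Cref{lem:filtered-iwasawa}, and the boundary comparison from realizing \Cref{thm:all-degree} term by term.

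First, by \Cref{lem:filtered-iwasawa} the Iwasawa diffeomorphism \(\Phi\) gives diffeomorphisms of filtered pairs \eqref{eq:filtered-pair-diffeomorphism}, compatible in \(p\). Projection to \(K\) is a filtered strong deformation retraction. Real realization carries the purity cofiber sequences of \(\Omega^X_\bullet\) to the topological cofiber sequences of \(\Omega^X_\bullet(\R)\). Hence the realized exact couple defining \(\ReR\CellC_*(G/U)\) maps isomorphically to the exact couple of \(K^\downarrow_\bullet\), and all connecting morphisms are preserved. That lemma identifies the latter exact couple with the lifted opposite-Bruhat cellular complex. On each \(a\)-summand it also gives the right-\(M\)-equivariant isomorphism \(\Z[M]e_a\), via \eqref{eq:real-HT}. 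This already yields an isomorphism of right-\(\Z[M]\) chain complexes. What remains is to fix the bases so that it matches the Patr\~ao--Sandoval normalization, and to read off the boundary entries.

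Second, for the bases: by \Cref{lem:real-orientation} and its proof, the realized motivic Thom generator, with convention \eqref{eq:real-thom-convention}, maps to the standard oriented Patr\~ao--Sandoval cube basis with no residual sign. Also, \(\operatorname{sign}\vartheta_{a,q}=\epsPS(a,q)\). So the isomorphism sends \(e_a\mapsto e_a\) with no extra sign in every degree.

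Third, for the entries: take a cover component \eqref{eq:main-boundary-symbol}. Its real realization is computed summand by summand. The cut map, evaluated on the component \((-1)\) of \(\R^\times\), contributes only through the symbol shuffle, and the shuffle's \(\eps^{q-1}\) realizes to \((-1)^{q-1}\cdot(-1)^{q-1}\)? Here I must be careful: \(\eps=-\ang{-1}\) has signature \(0\) as a form, but on degree-\(p\) realized Thom classes the symmetry of smash factors of \(\R^\times\)-cofibers acts by the ordinary transposition sign. I would check this against the one-line tensor model of \Cref{lem:face-sign}. By \Cref{prop:D-real}, \(\Dop_{\lambda_{a,q},\sigma_{a,q}}\) realizes to right multiplication by \((-1)^{\sigma_{a,q}}c_{a,q}-1\). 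Multiplying by \((-1)^{q-1}\operatorname{sign}\vartheta_{a,q}=(-1)^{q-1}\epsPS(a,q)\) gives
\[
(-1)^{q-1}\epsPS(a,q)\bigl((-1)^{\sigma_{a,q}}c_{a,q}-1\bigr)=\epsPS(a,q)(-1)^q\bigl(1-(-1)^{\sigma_{a,q}}c_{a,q}\bigr),
\]
which is \eqref{eq:PS-group-ring}. Summing over deletion positions with the same target matches the Patr\~ao--Sandoval matrix entry, since their boundary is likewise the sum over faces.

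The main obstacle is the sign bookkeeping in the third step. I must check that the realized cut map, including the Milnor--Witt symmetry \(\eps^{q-1}\), contributes no sign beyond what is already absorbed in the cube orientation. I would do this in the ordered product model \(\prod_\nu(\R,\R^\times)\), where both the motivic and the cube exact couples are the same tensor product of one-line complexes. There the deleted factor moved to the front produces the same transposition signs on both sides, and these are exactly the factors reconciled in \Cref{rem:sign-dictionary}. A consistency check is \(SL_2\), where the entry is \(c_1-1\), the boundary of the oriented arc.
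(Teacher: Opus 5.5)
Your proposal is correct and follows the paper's own proof essentially step for step. It uses \Cref{lem:filtered-iwasawa} for the isomorphism of exact couples, \Cref{lem:real-orientation} for the bases and for $\operatorname{sign}\vartheta_{a,q}=\epsPS(a,q)$, and \Cref{prop:D-real} together with the face sign for the entries.

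The one slip is in your hedged aside about the cut map. $\eps=-\ang{-1}$ has signature $+1$, not $0$; it is $h=1+\ang{-1}$ that has signature $0$. Hence $\eps^{q-1}$ realizes to $1$, and the only ordinary transposition sign is the separately displayed $(-1)^{q-1}$ of \Cref{lem:face-sign}. Attributing that sign to $\eps$ as well would count it twice. Your final formula correctly counts it once, so your ``main obstacle'' is already resolved.
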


\begin{proof}
\Cref{lem:filtered-iwasawa} gives an isomorphism of the filtered exact
couples, so it already commutes with the connecting morphisms.
\Cref{lem:real-orientation} identifies the Thom bases under this
isomorphism with the oriented lifted-cell bases.  It remains only to
express the common connecting morphism in those bases.  By
\Cref{prop:D-real}, the universal operator realizes to
\[
  (-1)^{\sigma_{a,q}}c_{a,q}-1.
\]
Including the face sign gives
\[
  \epsPS(a,q)(-1)^{q-1}
  \left((-1)^{\sigma_{a,q}}c_{a,q}-1\right)
  =
  \epsPS(a,q)(-1)^q
  \left(1-(-1)^{\sigma_{a,q}}c_{a,q}\right),
\]
which is \eqref{eq:PS-group-ring}.  Hence the comparison commutes with
every differential.
\end{proof}

\begin{corollary}
\label{cor:compact-fundamental-cycle}
Let \(N=\ell(w_0)\); under the standing split-real hypothesis of this
section one has \(N=|\Phi^+|=\dim K\).  Put
\[
 \mathcal N_M=\sum_{m\in M}m\in\Z[M].
\]
In the extended-Weyl complex, the chain
\(\mathcal N_M e_{w_0}\in C_N^{\mathrm{CW}}(K;\Z)\) is a cycle and,
up to the global orientation sign, represents the fundamental class
\([K]\).  Under the quotient \(K\to K/M\), it maps to
\[
 |M|e_{w_0},
\]
so the comparison square records the degree \(|M|\) of the compact
covering at chain level.
\end{corollary}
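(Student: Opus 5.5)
We prove the three claims in order: that \(N=\dim K\), that \(\mathcal N_M e_{w_0}\) is a cycle representing \([K]\), and that its image in \(K/M\) is \(|M|e_{w_0}\).

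\emph{Dimension.} The Iwasawa diffeomorphism \eqref{eq:iwasawa-X} gives
\[
\dim K+\dim A=\dim G-\dim U=N+\operatorname{rk}T .
\]
Since \(\dim A=\operatorname{rk}T\), this yields \(\dim K=N\).

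\emph{Cycle condition.} By \Cref{thm:real-comparison}, the boundary of \(e_{w_0}\) is a sum over the deletions \(q\) of the reduced word for \(w_0\), each term of the form \eqref{eq:PS-group-ring}, placed in right \(\Z[M]\)-modules. Since \(\mathcal N_M\) is central in \(\Z[M]\), we have
\[
\partial(\mathcal N_M e_{w_0})=\sum_q \epsPS(w_0,q)(-1)^q\,\mathcal N_M\bigl(1-(-1)^{\sigma_{w_0,q}}c_{w_0,q}\bigr)e_{b_q}.
\]
The identity \(\mathcal N_M c=\mathcal N_M\) for \(c\in M\) turns each coefficient into \(\bigl(1-(-1)^{\sigma}\bigr)\mathcal N_M\). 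So we need \(\sigma_{w_0,q}\) even for every deletion. By \Cref{lem:sigma-height}, \(\sigma_{w_0,q}=1-\operatorname{ht}(\gamma^\vee)\), which is not always even. The direct cancellation therefore fails, and we argue differently.

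We use top-degree homology. The complex of \(K\) vanishes above degree \(N\), so
\[
H_N(K;\Z)=\ker\bigl(\partial_N:\Z[M]e_{w_0}\to C_{N-1}\bigr).
\]
Because \(K\) is a compact connected (\Cref{rem:GR-connected}) orientable manifold of dimension \(N\), this kernel is \(\Z\). The kernel is stable under the right \(M\)-action, and \(M\) acts on \(H_N(K)\) through orientation characters of right translations. Right translation by an element of the connected group \(K\) is isotopic to the identity, so this action is trivial. A rank-one \(M\)-fixed sublattice of \(\Z[M]e_{w_0}\) consists of \(M\)-invariant elements, and these are multiples of \(\mathcal N_M\), possibly after twisting by the sign character forced by the \(\sigma\) parities. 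To rule out the twisted case, we compare with the local degree. Each top cell \(\mathcal B(w_0,m)\) is an open embedded \(N\)-disk in \(K\), so the fundamental class has coefficient \(\pm1\) on every top cell. Since right translation by \(m\) preserves orientation (as \(K\) is connected), these coefficients are all equal once the cell bases are related by right translation by \(M\). That is exactly the labelling in \Cref{lem:filtered-iwasawa}. Hence the generator is \(\pm\mathcal N_M e_{w_0}\), which is therefore a cycle representing \(\pm[K]\).

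\emph{Image in \(K/M\).} The quotient map is \(-\otimes_{\Z[M]}\Z\) from diagram \eqref{eq:intro-unifying-square}, realized by \Cref{thm:flag-augmentation}. Augmentation sends \(\mathcal N_M\) to \(|M|\), so the image is \(|M|e_{w_0}\). This is consistent with \(K\to K/M\) being an \(|M|\)-sheeted covering, on which \(e_{w_0}\) is the top cell.

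The main obstacle is the orientation-equality step. One must confirm that the chosen labelled cell bases, which are right translates of the base cell by \(M\), carry the translated orientation, and not an orientation twisted by the sign conventions \eqref{eq:real-thom-convention}. We would check this via \Cref{lem:real-orientation}: right \(M\)-translation acts diagonally on \(\Z[M]e_a\) with no sign, as \Cref{lem:filtered-iwasawa} asserts.
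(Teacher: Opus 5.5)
Your argument is correct, but the reason you give for abandoning the direct computation is false, and that direct computation is exactly the paper's proof. For \(a=w_0\), deleting position \(q\) gives \(w_0s_\gamma\) with \(\gamma=\gamma_{w_0,q}=t_{w_0,q}^{-1}(\alpha_{i_q})\). Since \(\ell(w_0s_\gamma)=N-\ell(s_\gamma)\), the deletion is reduced exactly when \(\gamma\) is simple. Hence every top cover has \(\operatorname{ht}(\gamma^\vee)=1\), and \Cref{lem:sigma-height} gives \(\sigma_{w_0,q}=0\). Each entry of \(\partial_N\) on \(e_{w_0}\) is then \(\pm(1-c_{w_0,q})\), and \(\mathcal N_M(1-c)=0\) kills the boundary at once.

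Your remark was also internally inconsistent. Distinct simple roots give distinct targets \(w_0s_\gamma\), so an odd top weight would produce a nonzero component \(\pm2\mathcal N_M e_b\), and \(\mathcal N_Me_{w_0}\) would not be a cycle. Your own topological argument therefore forces all top weights to be even.

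Your replacement route is nevertheless valid and genuinely different.

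\emph{The paper's route.} It obtains the cycle property algebraically from the explicit cover formula (\Cref{thm:real-comparison} together with \Cref{lem:sigma-height}). It uses connectedness of \(K\) only to identify the cycle with \([K]\).

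\emph{Your route.} You argue from topology: \(H_N(K;\Z)=\ker\partial_N\cong\Z\), right translations are homotopic to the identity so \(M\) acts trivially on this group, and the \(M\)-fixed part of \(\Z[M]e_{w_0}\) is \(\Z\mathcal N_Me_{w_0}\). This needs only that the realized complex is the cellular complex of a closed connected oriented manifold, with the geometric free right \(M\)-action on top cells (\Cref{lem:filtered-iwasawa}). It is therefore independent of the \(\sigma\)-computation. The paper's route is shorter and doubles as a check on the closed boundary formula.

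Three small simplifications and corrections:
\begin{enumerate}
\item Your hedge ``possibly after twisting by the sign character'' is unnecessary. The chain-level action is plain right multiplication in \(\Z[M]\), as you note at the end, so the fixed vectors are exactly the multiples of \(\mathcal N_M\).
\item The local-degree step is not needed. Once the rank-one group \(\ker\partial_N\) lies in \(\Z\mathcal N_Me_{w_0}\), saturation of a kernel inside a free module gives \(\ker\partial_N=\Z\mathcal N_Me_{w_0}\), hence \(\mathcal N_Me_{w_0}=\pm[K]\).
\item The compact-side quotient \(-\otimes_{\Z[M]}\Z\) is supplied by \Cref{thm:comparison-square}. \Cref{thm:flag-augmentation} is only its motivic counterpart.
\end{enumerate}
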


\begin{proof}
For a top cover, the corresponding ordinary Bruhat root is simple.
Hence \Cref{lem:sigma-height} gives \(\sigma=0\), and every realized
cover entry is, up to its orientation sign, right multiplication by
\(c-1\) for some \(c\in M\).  Since
\(\mathcal N_Mc=\mathcal N_M\), every component of the boundary of
\(\mathcal N_Me_{w_0}\) vanishes.  The right translations permuting the
top cells preserve their compatible orientations: \(K\) is connected
under the standing hypothesis, and every right translation is
therefore isotopic to the identity.  Thus this is the
cellular fundamental cycle of the connected compact manifold \(K\).
Group-ring augmentation sends \(\mathcal N_M\) to \(|M|\), proving the
last statement.
\end{proof}

\section{Torus augmentation and the flag complex}
\label{sec:augmentation}

The right quotient of \(X=G/U\) by \(T\) is \(G/B\).  Morel--Sawant
proved that the cellular quotient is computed by torus augmentation
\cite[Section~5]{MorelSawant2023}; we use the orientations pulled back
from the same fixed pinning and normal words.

\begin{theorem}\label{thm:flag-augmentation}
There is a natural chain isomorphism
\begin{equation}\label{eq:flag-base-change}
  \CellC_*(G/U)\otimes_{\HT}\Z
  \xrightarrow{\sim}\CellC_*(G/B).
\end{equation}
Under this isomorphism, the component of a cover \(b\lessdot a\) is
\begin{equation}\label{eq:flag-cover}
  (-1)^{q-1}\vartheta_{a,q}
  \big((u)\otimes x\longmapsto
       (\ang{u^{\sigma_{a,q}}}-1)x\big).
\end{equation}
Equivalently, it is the Milnor--Witt degree of the
\(\sigma_{a,q}\)-power map followed by \(\eta\).
\end{theorem}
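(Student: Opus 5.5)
The plan has two parts: build a chain map from the torus quotient \(\pi:X=G/U\to G/B\), then evaluate it on the boundary formula of \Cref{thm:all-degree}. The map \(\pi\) is a smooth right \(T\)-torsor, and the Bruhat filtration of \(X\) is \(\pi^{-1}\) of the Schubert-cell filtration of \(G/B\) by codimension. In particular \(X_w=\pi^{-1}(C_w)\), and the normal bundle of \(X_w\) is the pullback of the normal bundle of \(C_w\), with the same pinned root frames \(V_w\).

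\Cref{lem:purity-naturality}, applied to the transverse smooth squares \(X_w\subset\Omega^X_p\to C_w\subset\Omega^{G/B}_p\), gives a morphism of purity cofiber sequences for every \(p\), hence a morphism of exact couples. This yields a chain map \(\pi_*:\CellC_*(G/U)\to\CellC_*(G/B)\). On the \(a\)-summand it is \(\KMW p\otimes\HT\to\KMW p\otimes\ZA[\mathrm{pt}]=\KMW p\), the map induced by \(T\to\mathrm{pt}\), which is \(\mathrm{id}\otimes\epsilon_T\). I will check that it is right \(\HT\)-balanced: \(\pi\circ R_t=\pi\) gives \(\pi_*(c\cdot h)=\pi_*(c)\,\epsilon_T(h)\). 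So \(\pi_*\) factors through \(-\otimes_{\HT}\Z\).

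Since each chain group is a free right \(\HT\)-module of rank one per cell, \(\KMW p\otimes\HT\otimes_{\HT}\Z\cong\KMW p\), and the induced map is an isomorphism summandwise. This gives \eqref{eq:flag-base-change}. Naturality holds because everything comes from the geometric quotient map. This is the step where I will cite Morel--Sawant \cite[Section~5]{MorelSawant2023}, which proves the same quotient statement. The only extra check is that the orientations on \(G/B\) are the pulled-back pinned frames, so no additional units appear.

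For the cover formula, I apply \(-\otimes_{\HT}\Z\) to \eqref{eq:main-boundary}. The factors \((-1)^{q-1}\), \(\vartheta_{a,q}\) and \(\operatorname{cut}_{a,q}\) act only on the \(\KMW{}\)-factors, so they pass through unchanged. By \Cref{prop:D-augmentation}, \(\Dop^{(p)}_{\lambda_{a,q},\sigma_{a,q}}\otimes_{\HT}\Z\) is \((u)\otimes x\mapsto(\ang{u^{\sigma_{a,q}}}-1)x\), which is \((\sigma_{a,q})_\eps\eta\) in power-degree notation. Composing these gives \eqref{eq:flag-cover}. The equivalent description as the Milnor--Witt degree of the \(\sigma_{a,q}\)-power map followed by \(\eta\) is the last clause of \Cref{prop:D-augmentation}.

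I expect the main obstacle to be the identification of orientations. Concretely, I must show that the Thom trivializations of \(G/B\) used in \eqref{eq:flag-base-change} agree with those induced from \(X\) through the tubular charts \eqref{eq:tubular-chart} modulo \(T\). Otherwise the two sides differ by the diagonal choice-change of \Cref{prop:choices}. I would handle this by using the charts \(V_w\times U_{w^{-1}}\to G/B\) obtained from \eqref{eq:tubular-chart} by dropping \(T\), so that the frames agree literally.
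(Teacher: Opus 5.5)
Your proposal is correct and follows essentially the same route as the paper: purity naturality (\Cref{lem:purity-naturality}) for the smooth $T$-torsor $\pi$ with pulled-back pinned frames gives a morphism of exact couples that is $\mathrm{id}\otimes\epsilon_T$ on each summand, this factors through an isomorphism from $\CellC_*(G/U)\otimes_{\HT}\Z$, and \Cref{prop:D-augmentation} then yields \eqref{eq:flag-cover}. The only point the paper states explicitly and you leave implicit is that $\CellC_*(G/U)\otimes_{\HT}\Z$ is a complex because the differentials are right $\HT$-linear (\Cref{thm:all-degree}); you use this tacitly when you apply $-\otimes_{\HT}\Z$ to \eqref{eq:main-boundary}.
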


\begin{proof}
We argue as in \Cref{thm:G-X}, in the opposite direction.  The
projection \(\pi:G/U\to G/B\) is a Zariski locally trivial
\(T\)-torsor carrying the filtration \(\Omega_\bullet^X\) to the
Bruhat filtration of \(G/B\), stratum by stratum:
\(\pi\) restricts on \(X_w\cong\mathbb A^{\ell(w)}\times T\) to the
projection onto the affine Bruhat cell.  On tubular charts, \(\pi\) is
the map \((v,u,t)\mapsto(v,u)\), which identifies the source normal
frame with the pullback of the target normal frame; the pinned
orientations correspond.  Naturality of purity for these transverse
smooth squares, in the precise form of
\Cref{lem:purity-naturality}, gives a morphism of exact couples, hence a chain map
which on the \(w\)-summand is
\[
 \KMW p\otimes\HT
 =\KMW p\otimes\ZA[T]
 \longrightarrow\KMW p\otimes\Z=\KMW p,
\]
induced by \(T\to\operatorname{pt}\), i.e.\ the torus augmentation
\(\epsilon_T\) tensored with the identity.  Since \(\epsilon_T\) is
the cokernel of the inclusion of the augmentation ideal, and the
differentials are right \(\HT\)-linear (\Cref{thm:all-degree}), the
chain map factors through an isomorphism from
\(\CellC_*(G/U)\otimes_{\HT}\Z\), on every chain group and every
connecting morphism.  This proves \eqref{eq:flag-base-change}, and
\Cref{prop:D-augmentation} gives \eqref{eq:flag-cover}.
\end{proof}

\begin{remark}
\label{rem:flag-unconditional}
Formula \eqref{eq:flag-cover} computes the flag differential in
\emph{every} cellular degree and every root type, with no
boundary-data hypothesis: the inputs \(\sigma_{a,q}\) and
\(\vartheta_{a,q}\) are computed by
\Cref{lem:sigma-height,prop:frame-algorithm,prop:rank-two-degrees}.
The hypothesis of \cite[Definition~2.1]{LiuLiu2026} enters only when
one wishes to \emph{rewrite} \eqref{eq:flag-cover} in terms of the
half-boundary matrices \(E_i\) constructed there, as in
\Cref{cor:eta}: the matrices \(E_i\) are defined from signed
boundary data available in that range.  In this precise sense the
present paper removes the range restriction from the flag
differential itself, while statements phrased in terms of \(E_i\)
retain it.
\end{remark}

\begin{proposition}
\label{prop:flag-orientation-dictionary}
Assume the boundary-data hypothesis of \cite[Definition~2.1]{LiuLiu2026}
and choose its normalized Chevalley coordinates and fixed normal-form
words.  For every reduced deletion, the frame factor in this paper is
the local coordinate-degree factor used there:
\[
 \vartheta_{a,q}
 =\deg^{\Aone}\!\left(
    \Phi_{\mathbf b}^{-1}\Phi_{\widehat{\mathbf a}_q}
   \right).
\]
In particular, this equality identifies the full
Grothendieck--Witt unit, not only its real signature or parity.
\end{proposition}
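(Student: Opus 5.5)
The equality is essentially an unwinding of definitions on both sides. Both quantities are determinant classes of one constant coordinate automorphism. My plan is to show that, once the normalized Chevalley coordinates and normal-form words of \cite{LiuLiu2026} are chosen, the two sides refer to the same polynomial map. First, the left side is fixed by \eqref{eq:theta-word}. By \Cref{lem:frame-gauge} it equals \(\ang{J(\widehat{\mathbf a}_q,\mathbf b)}\), where \(J\) is the ratio of the two ordered frame determinants of transported root vectors. I will note explicitly that \(\Phi_{\mathbf c}\) from \eqref{eq:root-chart} is intrinsic once the pinning and the word \(\mathbf c\) are fixed. Hence choosing the normal words to be the fixed normal-form words of \cite{LiuLiu2026} makes \(\Phi_{\mathbf b}\) literally the target chart used there.

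Second, I will identify the local coordinate-degree factor of \cite{LiuLiu2026}. Under its boundary-data hypothesis \cite[Definition~2.1]{LiuLiu2026}, it is defined as the \(\Aone\)-degree of the transition from the deleted-word Bott--Samelson chart to the normal-form chart of the same Schubert cell in \(G/B\). The flag charts are the images under \(\pi:G/U\to G/B\) of our opposition charts. Indeed, \eqref{eq:opposition-bott-samelson} writes \(n_{w_0}^{-1}\Phi_{\mathbf c}(\mathbf z)\dot w_c\) as the positive Bott--Samelson product. Left translation by \(n_{w_0}^{-1}\) is an automorphism commuting with \(\pi\), and the transition is constant along the torus direction. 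Together these show that the flag transition map equals \(\Phi_{\mathbf b}^{-1}\Phi_{\widehat{\mathbf a}_q}\) on the normal coordinates. The torus translation that appears on the \(G/U\) side is trivial here, because the braid-compatible Tits lifts make the two horizontal representatives equal, as in the proof of \Cref{lem:cover-neighborhood}.

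Third, the two degrees are the same Grothendieck--Witt unit and not merely equal after signature. The map is a polynomial automorphism of affine space with constant Jacobian \(a\in k^\times\), so its \(\Aone\)-degree is \(\ang a\) by \Cref{lem:thom-determinant}, applied to the linearization. An \(\Aone\)-homotopy \(F_s(\mathbf z)=s^{-1}F(s\mathbf z)\) carries \(F\) to its linear part while fixing the origin. Both papers compute this same class. For a consistency check, I will compare through \Cref{prop:frame-algorithm} with the rank-two values of \Cref{prop:rank-two-degrees}: \(\ang{-1}\) for commutations, \(\ang1\) for \(A_2\), and \(\ang{-1}\) for \(B_2\) and \(G_2\). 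These agree with \cite[Lemmas~4.2 and~4.5]{LiuLiu2026}, as already noted in \Cref{ex:sp4} and \Cref{cor:type-A}.

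The main obstacle is the second step, namely matching conventions. The cited paper may use negative-root Bott--Samelson coordinates or a different sign normalization of the root vectors, which would differ from our actual transported roots \(w_0\beta_\nu\). I would handle this with the gauge invariance in \Cref{lem:frame-gauge}: rescaling each root vector by \(\zeta_\gamma=\pm1\) multiplies source and target frame determinants by the same factor. I would also use \Cref{rem:no-exotic-units}: rescaling a root parameter by an arbitrary scalar changes the frame degree only by a square. Finally, if the cited normalization uses the opposite order of normal lines, the source and target frames undergo the same reordering. Its determinant therefore cancels in the ratio, so the full unit is preserved.
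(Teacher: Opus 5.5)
Your proposal is correct and follows essentially the paper's route. Both arguments recognize each side as the Milnor--Witt degree of the same transition \(\Phi_{\mathbf b}^{-1}\Phi_{\widehat{\mathbf a}_q}\) between the deleted-word and normal-form root-product charts, and both check that the rank-two values of \Cref{prop:rank-two-degrees} match \cite[Lemmas~4.2 and~4.5]{LiuLiu2026}. The paper uses that rank-two match as the load-bearing step rather than a consistency check, and it fixes the orientation convention through the complementary determinant-line isomorphism of \cite[Lemma~2.4]{LiuLiu2026}; you instead spell out the chart identification via \(\pi\) and left translation by \(n_{w_0}^{-1}\), and you absorb convention mismatches using the gauge and reordering invariance of \Cref{lem:frame-gauge}.
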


\begin{proof}
Both sides are the Milnor--Witt degree of the same transition from the
deleted-word root-product chart to the fixed target normal-form chart.
The determinant-line convention is the same complementary orientation
\[
 \det T(C_w)\otimes\det\nu_{C_w}
 \cong\det T(G/B)|_{C_w};
\]
compare \cite[Lemma~2.4]{LiuLiu2026}.  In the normalized rank-two
coordinates, commutations, short \(A_2\) braids, and long \(B_2\)
braids have degrees \(\ang{-1}\), \(\ang1\), and \(\ang{-1}\),
respectively, and their products give the intrinsic coordinate degree
\cite[Lemmas~4.2 and~4.5]{LiuLiu2026}.  These are precisely the
elementary transitions whose composite defines
\(\vartheta_{a,q}\) in \eqref{eq:theta-word}.  Hence the two
Grothendieck--Witt classes agree.
\end{proof}

The root-theoretic parity identity used in real flag boundaries says
\begin{equation}\label{eq:parity}
  \sigma_{a,q}\equiv
  \operatorname{ht}(\gamma_{a,q}^{\vee})+1\pmod2,
\end{equation}
where \(\gamma_{a,q}\) is the positive root of the associated ordinary
Bruhat cover.  It follows either by comparing the tail inversion set
with the coroot expansion, or from the usual root-height boundary
formula.

\begin{corollary}\label{cor:eta}
In the boundary-data range of \cite{LiuLiu2026}, and after its stated
normal-word reindexing and diagonal orientation choices,
\eqref{eq:flag-cover} is
\[
  \partial_i^{\Aone}=\eta E_i\qquad(i\geq2),
  \qquad \partial_1^{\Aone}=0.
\]
Thus the half-boundary matrix of the flag variety is obtained by
augmenting the torus-enriched group complex.
\end{corollary}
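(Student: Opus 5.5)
The plan is to start from the augmented cover formula \eqref{eq:flag-cover} of \Cref{thm:flag-augmentation} and rewrite each entry as a multiple of \(\eta\) applied to a degree-lowering cut.

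\emph{Step 1: reduce \eqref{eq:flag-cover} to \(\eta\) times an integer.} On symbols, \(\ang{u^\sigma}-1=\eta(u^\sigma)=\sigma_\eps\,\eta(u)\). By the parity rule for \(n_\eps\eta\) recorded in \Cref{sec:prelim}, this is \(\eta(u)\) when \(\sigma_{a,q}\) is odd and \(0\) when it is even. So the augmented cover component is
\[
 (-1)^{q-1}\vartheta_{a,q}\,[\sigma_{a,q}\ \text{odd}]\;\eta\circ\operatorname{cut}_{a,q}.
\]
Using \(\ang{-1}\eta=-\eta\), the unit \(\vartheta_{a,q}\in\{\ang1,\ang{-1}\}\) (\Cref{prop:rank-two-degrees}) acts on the \(\eta\)-multiple through its signature \(\operatorname{sign}\vartheta_{a,q}\in\{\pm1\}\). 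The factor \(\eps^{q-1}\) in \(\operatorname{cut}_{a,q}\) likewise collapses, since \(\eps\eta=-\ang{-1}\eta=\eta\). Hence every entry of \(\partial_p^{\Aone}\) equals \(\eta\) times an integer \(e_{a,b}\), summed over deletion positions. For \(p=1\), \(\sigma=0\) by \Cref{lem:sigma-height}, since a top-type cover in degree one has a simple root, and this gives \(\partial_1^{\Aone}=0\).

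\emph{Step 2: identify the integer with half the topological boundary.} The real flag boundary coefficient for the same cover is the torus augmentation of \eqref{eq:PS-group-ring}. This follows by combining \Cref{thm:real-comparison} with the lower row of \eqref{eq:intro-unifying-square}, that is, \(-\otimes_{\Z[M]}\Z\). The result is
\[
 \epsPS(a,q)(-1)^q\bigl(1-(-1)^{\sigma_{a,q}}\bigr),
\]
which is \(0\) or \(\pm2\) according to the parity of \(\sigma_{a,q}\). By \Cref{lem:real-orientation}, \(\epsPS=\operatorname{sign}\vartheta_{a,q}\), so the topological entry equals \(-2e_{a,b}\) with the convention of Step 1. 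The global sign is absorbed by the face-sign dictionary of \Cref{rem:sign-dictionary}, which converts \((-1)^q\) into \((-1)^{q-1}\) and shifts the endpoint order. Thus \(e_{a,b}=\tfrac12\partial^{\mathrm{top}}_{a,b}\), up to diagonal orientation signs.

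\emph{Step 3: match with \(E_i\).} Within the boundary-data range, \Cref{prop:flag-orientation-dictionary} identifies \(\vartheta_{a,q}\) with the coordinate degree used in \cite{LiuLiu2026}. It also identifies the normal words, after reindexing, with theirs. The matrix \(E_i\) there is defined as the length-reindexed half of \(\partial^{\mathrm{top}}\) in these same bases, and its entries are computed from the same signed deletion data. So \(E_i\) agrees with \((e_{a,b})\), up to the diagonal choice changes of \Cref{prop:choices}, which act by \(\pm1\) on \(\eta\)-multiples.

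I expect the main obstacle to be the global sign and index bookkeeping in Steps 2 and 3. This means reconciling the length reindexing \(w=w_0a\) against the indexing in \cite{LiuLiu2026}, and checking that the diagonal orientation choices there match \(\epsPS\) and the \((-1)^{q-1}\) convention. I would first settle it on the \(SL_3\) and \(Sp_4\) degree-two examples, where both sides are explicit. I would then argue that any residual discrepancy is a diagonal \(\pm1\) conjugation, which is exactly what ``after its stated diagonal orientation choices'' allows.
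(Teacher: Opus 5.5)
Your proof is correct in substance, but its middle step differs from the paper's.

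\textbf{What is shared.} Your Step~1 is exactly the paper's reduction of the power factor: $\ang{u^{\sigma}}-1$ is $0$ or $\eta(u)$ by parity, also for negative $\sigma$. Your Step~3 is the paper's use of \Cref{prop:flag-orientation-dictionary}.

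\textbf{What differs.} The difference is how the resulting integer matrix is matched with $E_i$. The paper does not pass through real realization at all. It compares directly with the motivic formula of \cite[Theorem~4.11]{LiuLiu2026}, and uses the root-height parity identity \eqref{eq:parity} to identify the nonzero entries and their signs. That route is purely algebraic and works verbatim over every perfect $k$ of characteristic zero. It also matches the normal-form units of \cite{LiuLiu2026} exactly rather than up to signs.

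Your route instead recomputes $\partial^{\mathrm{top}}$ from the paper's own machinery (\Cref{thm:real-comparison}, \Cref{thm:comparison-square}, \Cref{lem:real-orientation}). This avoids the classical root-height rule and explains the ``half'' as the real realization of $\eta$. The cost is that it requires $k=\R$ and pins $E_i$ down only up to diagonal signs.

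\textbf{Two details to repair.}

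First, the global sign is not absorbed by \Cref{rem:sign-dictionary}. The trade of $(-1)^{q-1}$ for $(-1)^q$ is already spent in the proof of \Cref{thm:real-comparison}, where it reverses the endpoint order to give \eqref{eq:PS-group-ring}. By \Cref{prop:D-real}, the augmented odd-weight operator $(u)\otimes x\mapsto\eta(u)x$ realizes to $(-1)^{\sigma}-1=-2$. Hence in the realized bases $\partial^{\mathrm{top}}=-2e$ on the nose. The $SL_3$ example shows this: the motivic entries are $+\eta$ and the realized flag entries are $-2$. You need an honest diagonal change, for example negating all odd-degree generators, which multiplies every differential by $-1$. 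The statement's ``diagonal orientation choices'' allow this.

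Second, over a general $k$ the signature of $\vartheta_{a,q}$ is not available, for instance when $-1$ is a square. Use instead the Jacobian sign $J\in\{\pm1\}$ of \Cref{lem:frame-gauge}, which is computed from the Chevalley basis over $\Z$. Since $J$ and $\sigma_{a,q}$ depend only on the pinned root datum, your integer matrix $e$ is the same over $k$ and over $\R$. That is what justifies transporting the real computation back to $k$.
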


\begin{proof}
The conclusion is motivic, not an inference from real degree.
\Cref{prop:flag-orientation-dictionary} identifies the complete
normal-form unit with that of \cite[Theorem~4.11]{LiuLiu2026}.  It
therefore remains only to identify the universal power factor.  In
Milnor--Witt \(K\)-theory,
\[
 \ang{u^m}-1=\eta(u^m)=
 \begin{cases}
   0,&m\ \text{even},\\
   \eta(u),&m\ \text{odd},
 \end{cases}
\]
also for negative \(m\), because \(\ang{u^{-1}}=\ang u\).
Equivalently this is the identity
\(m_\eps\eta=0\) for even \(m\) and
\(m_\eps\eta=\eta\) for odd \(m\), as recorded in \Cref{sec:prelim}.
Thus an even \(\sigma_{a,q}\) gives zero, while an odd one gives a unit
multiple of \(\eta\).  The unit is exactly the
normal-word degree \(\vartheta_{a,q}\), and
\eqref{eq:parity} identifies the nonzero cases and signs with the
reindexed half-boundary entries.
\end{proof}

\begin{theorem}\label{thm:comparison-square}
For a Bruhat-filtered real variety \(Y\), write
\(C_*^{\mathrm{fil}}(\operatorname{Re}_{\R}Y;\Z)\) for the integral
chain complex of the exact couple of its realized filtration.  If
\(k=\R\), there is a commutative square in \(K(\mathbf{Ab})\):
\[
\begin{tikzcd}[column sep=large,row sep=large]
 C_*^{\mathrm{fil}}(\operatorname{Re}_{\R}(G/U);\Z)
   \arrow[r,"\pi_*"]
   \arrow[d,"\simeq"']
 &
 C_*^{\mathrm{fil}}(\operatorname{Re}_{\R}(G/B);\Z)
   \arrow[d,"\simeq"]
 \\
 C_*^{\mathrm{CW}}(K;\Z)
   \arrow[r,"-\otimes_{\Z[M]}\Z"']
 &
 C_*^{\mathrm{CW}}(K/M;\Z).
\end{tikzcd}
\]
Here the top arrow is induced by the filtered quotient
\(\pi:G/U\to G/B\) before taking singular chains.  Under the stratum
identifications it is extension of scalars
\(\Z[M]\to\Z\); the top row is not a termwise realization of a complex
of strictly \(\Aone\)-invariant sheaves.
The left vertical map retains the two \(M\)-labelled faces; either
horizontal map identifies them.
\end{theorem}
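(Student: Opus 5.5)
The plan is to build the square from a single commuting diagram of filtered topological spaces, and only then pass to exact couples.

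On the left, the vertical map is the filtered Iwasawa equivalence of \Cref{lem:filtered-iwasawa}. It is realized as the projection \(r\colon (G/U)(\R)\to K\), which is a filtered strong deformation retraction, or equivalently as \(\Phi^{-1}\) followed by projection to \(K\).

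On the right we need the analogous filtered identification \((G/B)(\R)=G(\R)/B(\R)\cong K/M\). Iwasawa gives \(G(\R)=KAN_u\) and \(B(\R)=MAN_u\), so the map \(K/M\to G(\R)/B(\R)\) is a diffeomorphism. Taking the \(M\)-quotient of \eqref{eq:filtered-iwasawa-strata} shows that it carries the lifted cells \(\mathcal B_w^K\) onto the real Bruhat cells \(C_w(\R)\). Hence it is a filtered diffeomorphism onto the opposite-Bruhat CW structure of \(K/M\). Its exact couple is therefore \(C_*^{\mathrm{CW}}(K/M;\Z)\), by the same excision argument on tubular disks as in \Cref{lem:filtered-iwasawa}.

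The key geometric point is that the resulting square of filtered spaces
\[
 (G/U)(\R)\xrightarrow{\pi}(G/B)(\R),\qquad (G/U)(\R)\xrightarrow{r}K\to K/M,
\]
commutes on the nose. Indeed, \(kaN_u\mapsto kaB(\R)=kB(\R)\) because \(a\in A\subset B(\R)\), and this matches the image of \(k\) in \(K/M\). All four maps are filtered, so they induce morphisms of exact couples, and hence a strictly commuting square of cellular chain complexes. The square in \(K(\mathbf{Ab})\) follows.

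It remains to identify the two horizontal arrows on chain groups.
\begin{itemize}
\item \emph{Lower arrow.} The quotient \(K\to K/M\) sends each \(M\)-labelled lifted cell \(\mathcal B(w,m)\) homeomorphically onto the cell \(C_w(\R)\). By \Cref{lem:real-orientation}, the pinned orientations are compatible with right \(M\)-translation, so \(e_a m\mapsto e_a\). This is exactly \(-\otimes_{\Z[M]}\Z\).
\item \emph{Upper arrow.} By \Cref{thm:flag-augmentation}, after realization the map \(\pi\) on each summand is induced by \(T(\R)\to\mathrm{pt}\). Under \eqref{eq:real-HT} this is the augmentation \(\Z[M]\to\Z\).
\end{itemize}

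The main obstacle is orientation bookkeeping on the right-hand vertical map. One must check that realizing the motivic Thom generators of \(G/B\), with orientations pulled back along \(\pi\) as in \Cref{thm:flag-augmentation}, agrees with the chosen CW orientations of \(K/M\), without introducing a stray degree-dependent sign. I would handle this by reusing the computation of \Cref{lem:real-orientation}. The tubular charts of \(G/B\) are the \(T\)-quotients of those of \(G/U\), so both the Iwasawa coordinate change \(J_{\mathbf c}\) and the Thom convention \eqref{eq:real-thom-convention} descend unchanged. The two factors \((-1)^p\) again cancel.

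On the realized chain groups, the face identification is visible directly: \(\epsilon(1-(-1)^{\sigma}c)=1-(-1)^\sigma\in\{0,2\}\). Both horizontal maps send the two labelled faces to the same cell, which is the final sentence of the statement.
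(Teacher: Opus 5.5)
Your proposal is correct and follows essentially the paper's argument. The paper also uses a strictly commuting square of filtered real spaces from the Iwasawa decomposition, written as the square of pairs \((K_p^\downarrow\times A,K_{p-1}^\downarrow\times A)\to(K_p^\downarrow/M,K_{p-1}^\downarrow/M)\) with \(\pi(\R)(k,a)=kM\), which is your observation \(kaN_u\mapsto kB(\R)\); it then passes to exact couples and reads off both horizontal maps on labelled generators as \(me_a\mapsto e_a\). The one difference is that the paper avoids the orientation check you single out as the main obstacle by orienting each flag disk via the homeomorphic image of its \((a,m)\)-labelled opposite disk, so compatibility of the right vertical map follows from strict commutativity and right \(M\)-equivariance, with no need to descend the Iwasawa coordinate change \(J_{\mathbf c}\) again.
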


\begin{proof}
Iwasawa decomposition gives simultaneous diffeomorphisms
\[
 (G/U)(\R)\cong K\times A,\qquad
 (G/B)(\R)\cong K/M.
\]
Because \(T(\R)=M\times A\) and \(M\) centralizes \(A\), the real
quotient map is, in these coordinates,
\[
 \pi(\R):(k,a)\longmapsto kM.
\]
The stratum diagram \eqref{eq:iwasawa-cell-diagram} and its quotient
by \(M\) identify the filtrations, for every \(p\), as
\[
 \Omega_p^{G/U}(\R)=K_p^\downarrow\times A,
 \qquad
 \Omega_p^{G/B}(\R)=K_p^\downarrow/M.
\]
Consequently there is a strictly commutative square of filtered pairs
\begin{equation}\label{eq:filtered-comparison-square}
\begin{tikzcd}[column sep=large,row sep=large]
 (K_p^\downarrow\times A,K_{p-1}^\downarrow\times A)
   \arrow[r,"q_p"]
   \arrow[d,"r_p"']
 &
 (K_p^\downarrow/M,K_{p-1}^\downarrow/M)
   \arrow[d,"\mathrm{id}"']
 \\
 (K_p^\downarrow,K_{p-1}^\downarrow)
   \arrow[r]
 &
 (K_p^\downarrow/M,K_{p-1}^\downarrow/M).
\end{tikzcd}
\end{equation}
These squares commute with the inclusions in \(p\).

Take homotopy cofibers of the inclusions of pairs in
\eqref{eq:filtered-comparison-square}, then integral singular chains
and the associated exact couples.  Functoriality of each of these
operations gives a commutative square of chain complexes.  On the
left, the vertical map is the filtered equivalence of
\Cref{lem:filtered-iwasawa}.  On the right, the opposite normal disk
indexed by \((a,m)\) maps homeomorphically to the flag disk indexed by
\(a\); we orient the latter by this quotient map.  Hence the right
vertical identification is compatible with the left one and with all
connecting morphisms.

On relative chain groups, the top horizontal map sends every
\(M\)-labelled generator \(me_a\) to the single generator \(e_a\).
It is therefore the augmentation
\(\Z[M]\to\Z\).  The bottom horizontal map is the cellular quotient by
the same free right \(M\)-action, hence is
\(-\otimes_{\Z[M]}\Z\).  This proves the displayed square, in fact
strictly after the stated basis choices and therefore also in
\(K(\mathbf{Ab})\).  For a cover, the equality between
\eqref{eq:D-real} after augmentation and the realization of
\eqref{eq:D-aug} is a componentwise consequence of this filtered
square, rather than a substitute for its geometric construction.
\end{proof}

\section{Examples}
\label{sec:examples}

\subsection{Rank one}

Let \(G=SL_2\), \(T\cong\Gm\), and \(X=G/U\cong\mathbb A^2-\{0\}\).
There are two Weyl elements and
\[
  \CellC_1(X)=\KMW1\otimes\HT,\qquad
  \CellC_0(X)=\HT.
\]
The unique differential is
\[
  \Dop_{\alpha^\vee,0}^{(1)}:
  (u)\otimes[t]\longmapsto
  [\alpha^\vee(u)t]-[t].
\]
Using \(\HT=\KMW1\oplus\Z\), this is
\begin{equation}\label{eq:SL2-matrix}
  \KMW2\oplus\KMW1
  \xrightarrow{
  \left(\begin{smallmatrix}\eta&\operatorname{id}\\
                            0&0
         \end{smallmatrix}\right)}
  \KMW1\oplus\Z.
\end{equation}
Its kernel is the image of
\[
  \KMW2\longrightarrow\KMW2\oplus\KMW1,\qquad
  x\longmapsto(x,-\eta x).
\]
Thus \(\CellH_1(SL_2)\cong\KMW2\), in agreement with
\cite{MorelSawant2023}.  Real realization is the
\(\Z[\Z/2]\)-map \(c-1\), the boundary of the two-vertex, two-edge
circle.  Augmentation is zero, the cellular differential of
\(\mathbb P^1\).

\subsection{\texorpdfstring{\(SL_3\)}{SL3}}

Let the simple roots be \(\alpha_1,\alpha_2\), write
\(\alpha_{12}^\vee=\alpha_1^\vee+\alpha_2^\vee\), and choose
\[
 e,\quad s_1,\quad s_2,\quad s_1s_2,\quad s_2s_1,\quad
 w_0=s_1s_2s_1
\]
as normal words.  Order the summands by
\[
 C_1:(s_1,s_2),\qquad
 C_2:(s_1s_2,s_2s_1).
\]
For compact notation, \(\Dop_{\lambda,m}^{[p]}\) denotes the operator in
cellular degree \(p\), including the evident cut of the chosen symbol
factor.  The full complex is
\begin{equation}\label{eq:SL3-complex}
\begin{split}
 \KMW3\otimes\HT
 &\xrightarrow{\partial_3}
 (\KMW2\otimes\HT)^{\oplus2}
 \xrightarrow{\partial_2}
 (\KMW1\otimes\HT)^{\oplus2}
 \xrightarrow{\partial_1}\HT,\\[2mm]
 \partial_1&=
 \begin{pmatrix}
   \Dop_{\alpha_1^\vee,0}^{[1]}&
   \Dop_{\alpha_2^\vee,0}^{[1]}
 \end{pmatrix},\\[2mm]
 \partial_2&=
 \begin{pmatrix}
  -\Dop_{\alpha_2^\vee,0}^{[2]}&
   \Dop_{\alpha_{12}^\vee,-1}^{[2]}\\
   \Dop_{\alpha_{12}^\vee,-1}^{[2]}&
  -\Dop_{\alpha_1^\vee,0}^{[2]}
 \end{pmatrix},\\[2mm]
 \partial_3&=
 \begin{pmatrix}
   \Dop_{\alpha_1^\vee,0}^{[3]}\\
   \Dop_{\alpha_2^\vee,0}^{[3]}
 \end{pmatrix}.
\end{split}
\end{equation}
For these normal words, every reduced deletion yields the chosen normal
word for its target.  Hence every frame factor \(\vartheta_{a,q}\) occurring
in this complex is \(\ang1\).
The middle off-diagonal operator has weight \(-1\) because moving the
deleted simple coroot through the other simple root has determinant
exponent \(-1\).
To display all cuts without ambiguity, put
\[
 x_i(r)=[\alpha_i^\vee(r)]-[1],\qquad
 g_i(r)=1+x_i(r)=[\alpha_i^\vee(r)]
\]
and extend \(x_i:\KMW1\to\HT\) linearly.  Set
\[
 R(r,s)=\ang r(s)\otimes g_1(r)g_2(r)-(s)\otimes1.
\]
Thus \(R=\Dop_{\alpha_{12}^\vee,-1}^{[2]}\).  With a final right
factor \([t]\) understood, the four degree-two cuts are
\begin{equation}\label{eq:SL3-cut-table}
\begin{array}{ccc}
\toprule
 &\text{target }s_1&\text{target }s_2\\
\midrule
\text{source }s_1s_2&-(u)\otimes x_2(v)&R(u,v)\\
\text{source }s_2s_1&R(u,v)&-(u)\otimes x_1(v)\\
\bottomrule
\end{array}
\end{equation}
The two top cuts are
\begin{align}
 \bigl(\partial_3\bigr)_{s_1s_2}
   ((u)(v)(z)\otimes[t])
   &=(u)(v)\otimes x_1(z)[t],\label{eq:SL3-top-cut-1}\\
 \bigl(\partial_3\bigr)_{s_2s_1}
   ((u)(v)(z)\otimes[t])
   &=(v)(z)\otimes x_2(u)[t].\label{eq:SL3-top-cut-2}
\end{align}

\begin{proposition}
\label{prop:SL3-square-zero}
The maps in \eqref{eq:SL3-complex} satisfy
\(\partial_1\partial_2=0\) and \(\partial_2\partial_3=0\) directly in
the category of strictly \(\Aone\)-invariant sheaves.
\end{proposition}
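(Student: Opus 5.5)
The plan is to verify both identities by direct symbolic computation rather than appealing to the exact couple, which \Cref{cor:square-zero} already does abstractly. All maps in \eqref{eq:SL3-complex} are morphisms of strictly \(\Aone\)-invariant sheaves. As in the proof of \Cref{lem:thom-determinant}, such morphisms are determined by their values on finitely generated separable field extensions \(F/k\), and there on generators. It therefore suffices to check that \(\partial_1\partial_2\) vanishes on every \((u)(v)\otimes[t]\) in each degree-two summand, and that \(\partial_2\partial_3\) vanishes on \((u)(v)(z)\otimes[t]\). Here \(u,v,z\in F^\times\) and \(t\in T(F)\). By right \(\HT\)-linearity (\Cref{lem:D-symbol}) we may take \(t=1\) throughout. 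Before computing, I would fix once and for all how each map acts on the Milnor--Witt factor. The entries of \eqref{eq:SL3-cut-table} and \eqref{eq:SL3-top-cut-1}--\eqref{eq:SL3-top-cut-2} already contain the \(\eps\)-shuffles of \Cref{constr:cut}, so the composite is computed purely from these explicit formulas together with \(\partial_1=(x_1,x_2)\) applied to the surviving \(\KMW1\)-factor.

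For \(\partial_1\partial_2\), I would treat the source \(s_1s_2\) first; the source \(s_2s_1\) then follows by exchanging the roles of \(\alpha_1\) and \(\alpha_2\). Two paths contribute. The first is the path through target \(s_1\), which gives \(-x_1(u)\,x_2(v)\). The second is the path through target \(s_2\), which gives \(x_2\) applied to \(R(u,v)\), namely
\[
 x_2\bigl(\ang u(v)\bigr)\,g_1(u)g_2(u)-x_2(v).
\]
To evaluate the term \(x_2(\ang u(v))\), I would rewrite \(\ang u(v)=(uv)-(u)\). The map \(x_2=\ol{\alpha_2^\vee}\) is additive on \(\KMW1=\ZA(\Gm)\), so this gives \([\alpha_2^\vee(uv)]-[\alpha_2^\vee(u)]\). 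The resulting expression then lives entirely in the group-like classes \([\tau]\), \(\tau\in T(F)\), and cancellation becomes a finite identity among these classes. This is the kind of identity verified in \Cref{prop:MS-check}, where \(s_j(u)_i\) was expanded as \((u)_i+(u^2)_j+(u)_i(u^2)_j\).

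For \(\partial_2\partial_3\), I would compose the two top cuts with the \(2\times2\) matrix. Each of the four resulting terms carries a torus coefficient from \(\partial_3\) (\(x_1(z)\) or \(x_2(u)\)), followed by a \(\partial_2\)-entry. The plan is to pair the four terms into two cancelling pairs. The two diagonal \(\Dop_{\alpha_i^\vee,0}\)-terms are plain products \(x_1x_2\) or \(x_2x_1\), and should cancel against each other by commutativity of \(\HT\) together with the \(\eps\)-sign produced by the different cuts. The two \(R\)-terms should cancel against each other because the braid relation \(s_1s_2s_1=s_2s_1s_2\) sends both deleted coroots to \(\alpha_{12}^\vee\).

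The main obstacle will be the bookkeeping of Milnor--Witt scalars acting through the reduced coroot maps. The maps \(x_i\) are not \(\GW\)-linear in a naive group-ring sense. Multiplication in \(\HT\) within one torus coordinate goes through \(\KMW1\otimes\KMW1\to\KMW2\xrightarrow{\eta}\KMW1\), as in \eqref{eq:HT-decomp}. Consequently, identities such as \(x_i(a)x_i(b)=x_i(ab)-x_i(a)-x_i(b)\), and the \(\eps\)-signs from reordering \((u)(v)\), have to be tracked exactly. If the naive cancellation fails, I would first recheck which symbol the cut places in front (the \(\eps^{q-1}\) factor). Failing that, I would work in the decomposition \(\HT\cong\Z\oplus\KMW1^{(1)}\oplus\KMW1^{(2)}\oplus\KMW2\) and compare the four graded components separately, using \((uv)=(u)+(v)+\eta(u)(v)\).
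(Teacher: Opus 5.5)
Your reduction to symbols over fields with \(t=1\) matches the paper, and so does your treatment of \(\partial_1\partial_2\). Both paths land in the single target \(e\). Your rewriting \(\ang u(v)=(uv)-(u)\) is exactly a proof of the identity \(g_2(u)x_2(v)=x_2(\ang u(v))\) that the paper uses.

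One caution on that part. After expanding you are left with \(g_2(u)^2x_2(v)g_1(u)-x_2(v)\). The step \(g_2(u)^2x_2(v)=x_2(v)\), i.e. \([\alpha_2^\vee(u^2v)]-[\alpha_2^\vee(u^2)]=[\alpha_2^\vee(v)]-[1]\), is not a formal identity among group-like classes. It holds because \(\ang{u^2}=1\) (equivalently \(\eta h=0\)), which you obtain by applying your own rewriting with \(u^2\) in place of \(u\).

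The genuine gap is in \(\partial_2\partial_3\): the pairing you propose cannot work. The two diagonal terms \(-(u)\otimes x_2(v)x_1(z)\) and \(-(v)\otimes x_1(z)x_2(u)\) lie in different target summands, namely \(s_1\) and \(s_2\). The same is true of the two \(R\)-terms: \(R(v,z)x_2(u)\) lands in \(s_1\) and \(R(u,v)x_1(z)\) lands in \(s_2\). Components in different direct summands of \(\CellC_1\) cannot cancel. So \(\partial_2\partial_3=0\) must be checked row by row, and each row contains exactly one diagonal term and one \(R\)-term. The braid relation is not what disposes of the \(R\)-terms.

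The missing idea is to reduce each \(R\)-term to a plain product. Use the Hopf identities \(g_i(r)x_i(s)=x_i(\ang r(s))\) and \(g_i(r)^2x_i(s)=x_i(s)\), together with the fact that \(\GW\)-scalars pass between the tensor factors. The twist \(\ang u\) coming from \(\sigma=-1\) is then absorbed by the translation factor \(g_1(u)\) acting on \(x_1(z)\), and one gets
\[ R(u,v)x_1(z)=(v)\otimes x_1(z)x_2(u),\qquad R(v,z)x_2(u)=(z)\otimes x_1(v)x_2(u). \]
The \(s_2\)-row then cancels literally.

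In the \(s_1\)-row two maps remain, with opposite signs:
\[ (u)(v)(z)\mapsto(u)\otimes x_1(z)x_2(v),\qquad (u)(v)(z)\mapsto(z)\otimes x_1(v)x_2(u). \]
Under \(\KMW1\otimes{\KMW1}^{(1)}\otimes{\KMW1}^{(2)}\cong\KMW3\), these correspond to the odd permutations \((23)\) and \((13)\) of the symbol factors. By \Cref{lem:MW-kunneth}, both are therefore \(\eps\) times the identity-order map, so the row cancels. Your fallback of comparing graded components would eventually expose this mechanism, but only after you abandon the pairing you proposed.
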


\begin{proof}
The identities
\begin{equation}\label{eq:SL3-elementary-identities}
 g_i(r)x_i(s)=x_i(\ang r(s)),\qquad
 g_i(r)^2x_i(s)=x_i(s)
\end{equation}
follow from the Hopf multiplication on \(\ZA[\Gm]\) and
\(\ang{r^2}=1\).  On the first column of \(\partial_2\), the diagonal
contribution to \(\partial_1\partial_2\) is
\(-x_1(u)x_2(v)\), while the other contribution is
\begin{align*}
 x_2(\ang u(v))g_1(u)g_2(u)-x_2(v)
 &=g_2(u)x_2(v)g_1(u)g_2(u)-x_2(v)\\
 &=x_1(u)x_2(v).
\end{align*}
They cancel.  Interchanging \(1\) and \(2\) proves the assertion for
the second column.

Right \(\HT\)-linearity reduces the two rows of
\(\partial_2\partial_3\), on \((u)(v)(z)\), to
\begin{align*}
 &-(u)\otimes x_2(v)x_1(z)+R(v,z)x_2(u),\\
 &R(u,v)x_1(z)-(v)\otimes x_1(z)x_2(u).
\end{align*}
Using \eqref{eq:SL3-elementary-identities} twice gives
\[
 R(v,z)x_2(u)=(z)\otimes x_1(v)x_2(u),\qquad
 R(u,v)x_1(z)=(v)\otimes x_1(z)x_2(u).
\]
The second row therefore cancels literally.  In the first row, the two
remaining maps send \((u)(v)(z)\) to
\[
 (u)\otimes x_1(z)x_2(v),\qquad
 (z)\otimes x_1(v)x_2(u).
\]
We track how these arise from the two orders of deletion, making the
cut convention of \Cref{constr:cut} explicit.  The top word is
\(\mathbf w_0=(1,2,1)\) with ordered symbol factors
\((u)(v)(z)\), and the two paths through the diamond
\(121\to\{12,21\}\to\{1\}\) consume the three factors in different
orders: on the first path the surviving \(\KMW1\)-factor is \((u)\)
and the consumed factors produce, through
\Cref{constr:cut}, the ordered torus labels \(x_1(z)x_2(v)\); on the
second path the survivor is \((z)\) with labels \(x_1(v)x_2(u)\).
Relative to the identity order, the two composites realize the
factor permutations \((23)\) and \((13)\) of
\((u),(v),(z)\), respectively.  Both are odd, so by the
\(\eps\)-graded commutativity of \Cref{lem:MW-kunneth} both displayed
maps equal one and the same morphism, namely the identity-order
map composed with a single factor
\(\eps=-\ang{-1}\); in particular they are \emph{equal} as morphisms
\(\KMW3\to\KMW1\otimes\HT\), and the torus labels agree because the
two paths transport the same two coroots.  Meanwhile the composite
ordinary face signs of the two paths are opposite, so the two equal
maps enter
\(\partial_2\partial_3\) with opposite matrix signs and cancel
exactly.  This proves both composites
are zero as sheaf
morphisms, before augmentation or realization.
\end{proof}

\begin{proposition}
\label{prop:SL3-top-check}
The two components of \(\partial_3\) in
\eqref{eq:SL3-complex} also follow directly from purity for
\[
  B\subset P_i=B\sqcup Bs_iB\subset SL_3.
\]
\end{proposition}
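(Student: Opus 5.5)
The plan is to compute each component of \(\partial_3\) in \eqref{eq:SL3-complex} with its own local purity model, attached to a minimal parabolic, and not to invoke \Cref{thm:all-degree}. Take \(a=w_0=s_1s_2s_1\) and a degree-two target \(b\). In the opposite-cell indexing, \(b\) corresponds to the stratum \(X_{w_0b}=X_{s_i}\) for \(i\) determined by the deletion: for the first row, \(b=s_1s_2\) and \(w_0b=s_1\); for the second row, \(b=s_2s_1\) and \(w_0b=s_2\). The cover \(X_e=BU/U\subset X_e\cup X_{s_i}=P_i/U\) is exactly the two-stratum scheme needed in \Cref{lem:two-stratum}.

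\textbf{Step 1: the two-stratum model.} The closure \(P_i/U\) is already smooth and closed in \(X\), so no removal \(Z_{v,w}\) is needed. Set \(\mathcal W=P_i\cdot U^-_{\widehat{i}}B/U\), where \(U^-_{\widehat i}\) is the product of the negative root groups other than \(-\alpha_i\). Because \(P_i=L_i\ltimes U_{P_i}\) with \(L_i\supset\varphi_i(SL_2)\), the local model for the Thom cofiber sequence \eqref{eq:two-stratum-thom-cofiber} splits as a product. The first factor is the rank-one situation \(B\subset P_i\), modeled on \(SL_2/U\cong\mathbb A^2-\{0\}\) times the complementary torus. The second factor is the bundle \(E\) of rank two, spanned by the two negative roots not equal to \(-\alpha_i\).

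\textbf{Step 2: rank-one clutching.} Apply \Cref{lem:clutching} with the rank-one identity \eqref{eq:positive-rank-one} inside \(\varphi_i(SL_2)\). Removing \(n_i\) translates the torus coordinate by \(\alpha_i^\vee(z)\). Conjugating \(E\) by \(\alpha_i^\vee(z)\) scales its two root lines by \(z^{\langle\gamma,\alpha_i^\vee\rangle}\). The transition also involves the unipotent factor \(x_{-\alpha_i}(z^{-1})\), which normalizes \(U_{P_i}\) and acts on \(E\) unipotently. In \(SL_3\) the two roots of \(E\) are \(-\alpha_j\) and \(-(\alpha_1+\alpha_2)\), with pairings \(+1\) and \(-1\) against \(\alpha_i^\vee\). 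Hence \(\det=z^0\) and \(m=0\). The resulting operator is plain multiplication by \(x_i(z)\), matching \(\sigma=0\) from \Cref{lem:sigma-height} for a simple root.

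\textbf{Step 3: identify which symbol factor is separated.} The deleted factor is the normal line \(L\) of \(B/U\) in \(P_i/U\), namely the \(-\alpha_i\) direction transported by \(n_{w_0}\). In the frame \(\Phi_{(1,2,1)}\) this line is position \(3\) when \(i=1\) and position \(1\) when \(i=2\). Then apply \Cref{lem:face-sign} to obtain the ordinary sign \((-1)^{q-1}\) and the internal factor \(\eps^{q-1}\) from \(\operatorname{cut}\). For \(q=3\) the two \(\eps\) factors cancel and the sign is \(+\). This reproduces \eqref{eq:SL3-top-cut-1}: \((u)(v)(z)\mapsto(u)(v)\otimes x_1(z)\). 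For \(q=1\), no reordering occurs, and this reproduces \eqref{eq:SL3-top-cut-2}.

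\textbf{Step 4: frame check.} Compare the surviving two-line frame of \(E\) with the chosen Thom frames for \(s_1s_2\) and \(s_2s_1\). \Cref{lem:frame-gauge} computes this comparison from signed root vectors. The expected result is \(\ang1\), in agreement with the statement that all normal-word deletions here are literal.

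The main obstacle is Step 3 combined with Step 4. The symmetric deletion \(q=3\) versus \(q=1\) asymmetrically places the \(\eps\) factors. In addition, the opposition transport \(w_0\beta\ne-\beta\) must be tracked, so that the divisor-normal line of \(B/U\subset P_i/U\) is matched with the correct position in \(\Phi_{(1,2,1)}\). I would settle this by computing \(\operatorname{Ad}(n_{w_0})\operatorname{Ad}(n_{<\nu})e_{\alpha_{j_\nu}}\) explicitly in \(\mathfrak{sl}_3\) for \(\nu=1,2,3\).
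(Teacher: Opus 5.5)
Your proposal is correct and shares the paper's core argument. Purity for \(B\subset P_i\subset SL_3\) (in your version \(X_e\subset P_i/U\subset X\)) splits off two pieces. The rank-two normal factor of \(P_i\) contributes determinant \(z^0\), and the rank-one line \(N_{B/P_i}\) gives the coroot operator \(\Dop_{\alpha_i^\vee,0}\).

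Two remarks on your Steps 1 and 2. The product splitting really comes from the big-cell open immersion \(U_{P_i^-}\times P_i/U\to X\), i.e.\ the left-translation trivialization the paper invokes, rather than from \(P_i=L_i\ltimes U_{P_i}\). In that chart the rank-two factor is not touched at all. The weights \(z^{\pm1}\) of your Step~2 appear only in charts where the coroot is pushed through the survivors, so only their product is meaningful.

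The two routes differ in the orientation bookkeeping.

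\emph{The paper's route.} It computes the \(P_i\)-component in the reduced word of \(w_0\) ending in \(s_i\): \(121\) for \(P_1\), \(212\) for \(P_2\). There the deleted line is last, and the surviving prefix is literally the chart of the target normal word. All sign content is then concentrated in one \(A_2\) comparison: the \(121\)- and \(212\)-frames have the same determinant. That comparison transports the \(P_2\) computation into the fixed \(w_0\)-basis.

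\emph{Your route.} You stay in the fixed \(121\)-frame and use \Cref{constr:cut} and \Cref{lem:face-sign}. For \(P_1\) this is the same computation. For \(P_2\) the deleted line sits in position \(1\), so the survivors are not a prefix. Relating them to the \(21\)-chart requires moving the rank-one factor past them. The constant you leave as ``expected \(\ang1\)'' is therefore not a literal identity.

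In fact your Step~4 is the paper's braid check in disguise. The deleted root vector is the same in both frames, and moving it from first to last place is an even permutation. Hence that constant has the same determinant as the \(121\leftrightarrow212\) comparison. It does give \(\ang1\) by \Cref{prop:rank-two-degrees}, but it has to be computed, not expected.

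In that computation, also track the opposition-gauge base points \(\dot w_{s_1s_2}=n_1^{-1}\) and \(\dot w_{s_2s_1}=n_2^{-1}\). They compensate the sign of the transported deleted root vector, which makes the translation \(\alpha_i^\vee(z)\) rather than \(\alpha_i^\vee(-z)\).

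As for what each buys: your version exercises the cut and face-sign conventions directly in the fixed basis. However, it runs through \Cref{lem:two-stratum,lem:clutching,lem:face-sign}, so it is a specialization of the proof of \Cref{thm:all-degree} rather than an independent check. The paper's version uses only transitivity of purity, the rank-one \(SL_2\) boundary, and a single braid determinant.
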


\begin{proof}
The normal bundle of \(P_i\) in \(SL_3\) is trivialized by left
translation.  For the flag of regular immersions above, transitivity of
purity splits the Thom space into the rank-two normal factor of
\(P_i\subset SL_3\) and the rank-one normal factor of
\(B\subset P_i\).  The connecting morphism is the identity on the first
factor and the rank-one coroot operator on the second.  This gives the
two top cuts for the words \(121\) and \(212\) in
\eqref{eq:SL3-top-cut-1} and \eqref{eq:SL3-top-cut-2}.

It remains to compare the determinant orientations.  For the standard
matrix pinning, the normal root frames associated with
\(s_1s_2s_1\) and \(s_2s_1s_2\), respectively, are
\[
  (E_{21},-E_{31},E_{32})
  \quad\text{and}\quad
  (E_{32},E_{31},E_{21}).
\]
Both have determinant \(-1\) relative to
\((E_{21},E_{31},E_{32})\).  Thus the \(A_2\) braid introduces no
additional Milnor--Witt degree, and the direct purity calculation agrees
with \eqref{eq:SL3-complex}.
\end{proof}

Applying \(\HT\to\Z\) kills every weight-zero entry.  In the
normal-word bases of \eqref{eq:SL3-complex}, the two surviving
off-diagonal entries are \(+\eta\).  Negating both degree-two
generators gives the standard flag generators used in
\cite{LiuLiu2026}, in which the complex becomes
\[
  \partial_2^{\Aone}=
  \begin{pmatrix}0&-\eta\\-\eta&0\end{pmatrix},
  \qquad \partial_1^{\Aone}=\partial_3^{\Aone}=0,
\]
which is the \(SL_3/B\) complex in \cite{LiuLiu2026}.

For real realization, put
\(c_i=\alpha_i^\vee(-1)\) and
\(c_{12}=c_1c_2\).  The operators in
\eqref{eq:SL3-complex} give
\begin{align*}
 \delta_1(s_1)&=c_1-1,&
 \delta_1(s_2)&=c_2-1,\\
 \delta_2(s_1s_2)&=s_1(1-c_2)-s_2(1+c_1c_2),\\
 \delta_2(s_2s_1)&=s_2(1-c_1)-s_1(1+c_1c_2),\\
 \delta_3(s_1s_2s_1)&=
 s_1s_2(c_1-1)+s_2s_1(c_2-1).
\end{align*}
These are exactly the Patr\~ao--Sandoval matrices for
\(K=SO(3)\).  Their homology is
\[
  H_0(K;\Z)=\Z,\qquad H_1(K;\Z)=\Z/2,\qquad
  H_2(K;\Z)=0,\qquad H_3(K;\Z)=\Z.
\]

\subsection{\texorpdfstring{\(SL_4\)}{SL4}}
\label{subsec:SL4}

\begin{proposition}
\label{prop:SL4-degree-three}
Write \(1,2,3\) for the simple reflections and
\(\alpha_I^\vee=\sum_{i\in I}\alpha_i^\vee\).  Choose the
lexicographically first reduced word for each Weyl element.  In degrees
three and two these words, in the order used below, are
\[
 (121,123,132,213,232,321),
 \qquad (12,13,21,23,32).
\]
All reduced deletions, and hence all nonzero components of
\(\partial_3\) for \(SL_4/U\), are listed in the following table.
An entry in the row \((a,q,b,\lambda,\sigma,\vartheta)\) denotes
\[
 (-1)^{q-1}\vartheta\,
 \Dop_{\lambda,\sigma}^{(3)}\operatorname{cut}_{a,q}:
 \KMW3\otimes\HT\longrightarrow\KMW2\otimes\HT.
\]
\[
\begin{array}{c c c c r c}
\toprule
a&q&b&\lambda&\sigma&\vartheta\\
\midrule
121&1&21&\alpha_2^\vee&0&\ang1\\
121&3&12&\alpha_1^\vee&0&\ang1\\
123&1&23&\alpha_{123}^\vee&-2&\ang1\\
123&2&13&\alpha_{23}^\vee&-1&\ang1\\
123&3&12&\alpha_3^\vee&0&\ang1\\
132&1&32&\alpha_{12}^\vee&-1&\ang1\\
132&2&12&\alpha_{23}^\vee&-1&\ang1\\
132&3&13&\alpha_2^\vee&0&\ang1\\
213&1&13&\alpha_{123}^\vee&-2&\ang1\\
213&2&23&\alpha_1^\vee&0&\ang1\\
213&3&21&\alpha_3^\vee&0&\ang1\\
232&1&32&\alpha_3^\vee&0&\ang1\\
232&3&23&\alpha_2^\vee&0&\ang1\\
321&1&21&\alpha_{123}^\vee&-2&\ang1\\
321&2&13&\alpha_{12}^\vee&-1&\ang{-1}\\
321&3&32&\alpha_1^\vee&0&\ang1\\
\bottomrule
\end{array}
\]
After torus augmentation, the complete degree-three flag differential is
\begin{equation}\label{eq:SL4-flag-d3}
 \partial_3^{\Aone}(SL_4/B)
 =\eta
 \begin{pmatrix}
  0&0&-1&0&0&0\\
  0&-1&0&0&0&1\\
  0&0&0&0&0&0\\
  0&0&0&0&0&0\\
  0&0&1&0&0&0
 \end{pmatrix}:
 (\KMW3)^{\oplus6}\longrightarrow(\KMW2)^{\oplus5}.
\end{equation}
In particular,
\[
 \operatorname{im}\partial_3^{\Aone}(SL_4/B)
 =
 (\eta\KMW3)(e_{32}-e_{12})
 \oplus(\eta\KMW3)e_{13}.
\]
\end{proposition}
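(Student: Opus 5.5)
This is a direct application of \Cref{thm:all-degree} together with \Cref{thm:flag-augmentation}, so the plan is a finite bookkeeping computation. The main obstacle is making sure nothing is missed or mis-signed in the sixteen rows of the table.

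First, enumerate reduced deletions. For each of the six degree-three normal words \(\mathbf a\) and each position \(q\in\{1,2,3\}\), form \(\widehat{\mathbf a}_q\) and test whether it is reduced in \(S_4\). Only \(121\) and \(232\) have a nonreduced deletion (\(q=2\) gives \(11\), resp.\ \(33\)), which leaves \(16\) rows. Then identify the target \(b\) and its lexicographically first normal word. Only \(321,q=2\) yields \(31\ne\mathbf b=13\); \(132,q=3\) gives \(13\) directly, and \(213,q=1\) gives \(13\) as well.

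Second, compute the tail data. Set \(t_{a,q}\) to be the suffix after position \(q\), and take \(\lambda_{a,q}=t_{a,q}^{-1}(\alpha_{i_q}^\vee)\) by applying the suffix reflections in reverse order to the coroot. For \(\sigma_{a,q}\), I would use \Cref{lem:sigma-height} rather than summing over \(\Pi_{a,q}\): with \(\gamma=t_{a,q}^{-1}\alpha_{i_q}\), type \(A\) is simply laced, so \(\operatorname{ht}(\gamma^\vee)=\operatorname{ht}(\lambda_{a,q})\) and \(\sigma_{a,q}=1-\operatorname{ht}\lambda_{a,q}\). This gives \(0,-1,-2\) for coroot heights \(1,2,3\), and it serves as an immediate consistency check on each \(\lambda\)-column entry. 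Next compute the frame factors via \Cref{cor:type-A}. All entries are \(\ang1\) except the single commuting move \(31\to13\), which gives \(\ang{-1}\), exactly as in \Cref{ex:SL4-frame}. The six rows where \(\widehat{\mathbf a}_q\) equals \(\mathbf b\) need no braid move.

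Third, augment. By \Cref{thm:flag-augmentation} and the parity discussion in the proof of \Cref{cor:eta}, an entry survives augmentation only when \(\sigma\) is odd, i.e.\ \(\sigma=-1\). In that case \(\ang{u^{-1}}-1=\eta(u)\), so the entry is \((-1)^{q-1}\,\mathrm{sign}(\vartheta)\,\eta\), where \(\ang{-1}\eta=-\eta\) converts the frame factor into a sign. The cut contributes an \(\eps\)-power. Since \(\eps\eta=\eta\) (as \(-\ang{-1}\eta=\eta\)), this power does not affect \(\eta\)-multiples and can be ignored. The surviving rows are then:
\begin{itemize}
\item \(123,q=2\to13\) with coefficient \(-\eta\);
\item \(132,q=1\to32\) with coefficient \(+\eta\);
\item \(132,q=2\to12\) with coefficient \(-\eta\);
\item \(321,q=2\to13\) with coefficient \(-\ang{-1}\eta=+\eta\).
\end{itemize}
Placing these in rows \((12,13,21,23,32)\) and columns \((121,\dots,321)\) gives \eqref{eq:SL4-flag-d3}.

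Finally, read off the image. Column \(132\) gives \(\eta x(e_{32}-e_{12})\), and columns \(123\) and \(321\) give \(\mp\eta x\,e_{13}\). Both are free rank-one images of \(\eta\KMW3\) in different coordinates, so the sum is direct.
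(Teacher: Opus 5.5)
Your proposal is correct and follows the paper's own proof essentially step for step. The exchange condition leaves sixteen reduced deletions, the tail data come from \eqref{eq:tail-data} with \Cref{lem:sigma-height} checking $\sigma=1-\operatorname{ht}(\lambda)$, the single commutation $31\to13$ gives $\vartheta=\ang{-1}$, and augmentation keeps only the four odd-weight rows with coefficients $(-1)^{q-1}\operatorname{sign}(\vartheta)\,\eta$. One harmless slip: fifteen of the sixteen rows, not six, have $\widehat{\mathbf a}_q=\mathbf b$ and need no braid move.
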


\begin{proof}
The exchange condition shows that deleting the middle letter in \(121\)
or \(232\) is nonreduced and that every other deletion in the six source
words is a cover.  Applying
\eqref{eq:tail-data}--\eqref{eq:sigma-data} gives the displayed
cocharacters and weights.  Concretely, one repeatedly uses
\(s_i(\alpha_j^\vee)=
\alpha_j^\vee-\langle\alpha_i,\alpha_j^\vee\rangle\alpha_i^\vee\);
since type \(A_3\) is simply laced, \Cref{lem:sigma-height} then checks
every weight by \(\sigma=1-\operatorname{ht}(\lambda)\).
Every deleted word is already the chosen
target word except \(321\) with \(q=2\): it gives \(31\), while the
chosen target word is \(13\).  The single commutation has degree
\(\ang{-1}\), proving the last column.

Under augmentation, an even \(\sigma\) gives zero and an odd \(\sigma\)
gives multiplication by \(\eta\).  There are four odd-weight entries.
Including the deletion signs and using
\(\ang{-1}\eta=-\eta\) gives \eqref{eq:SL4-flag-d3}.  Its two nonzero
row directions give the asserted image sheaf.
\end{proof}

\begin{proposition}
\label{prop:A3-diamond}
The real signature of the frame factor in \Cref{ex:SL4-frame} is
detected by square zero in a genuine degree-three Bruhat diamond.  More
precisely, use the normal words
\[
 a=s_3s_2s_1,\qquad b_1=s_2s_1,\qquad
 b_2=s_1s_3,\qquad c=s_1.
\]
Here the deleted word \(31\) for \(b_2\) is rewritten to the chosen
normal word \(13\).  Put
\(c_I=\prod_{i\in I}\alpha_i^\vee(-1)\in M\).  In the realized
right-\(\Z[M]\) complex, the four relevant cover entries are
\[
\begin{array}{c@{\qquad}c}
 \delta_{a,b_1}=c_{123}-1,
   &\delta_{b_1,c}=-c_{12}-1,\\[1mm]
 \delta_{a,b_2}=-c_{12}-1,
   &\delta_{b_2,c}=1-c_3.
\end{array}
\]
Consequently the coefficient of \(c\) in
\(\delta_2\delta_3(a)\) vanishes:
\[
 (-c_{12}-1)(c_{123}-1)
 +(1-c_3)(-c_{12}-1)=0.
\]
If the commuting-word degree
\(\vartheta_{a,2}=\ang{-1}\) is omitted, the two middle signs do not
give this cancellation.
\end{proposition}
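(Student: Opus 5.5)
The plan is to compute the four entries directly from \Cref{thm:real-comparison}. In the realized right-\(\Z[M]\) complex, a cover from \(a\) to \(b\) at deletion position \(q\) contributes
\[
 \operatorname{sign}(\vartheta_{a,q})\,(-1)^{q-1}\bigl((-1)^{\sigma_{a,q}}c_{a,q}-1\bigr),
 \qquad c_{a,q}=\lambda_{a,q}(-1).
\]
This combines \Cref{prop:D-real} with \Cref{lem:real-orientation}. The data \((\lambda_{a,q},\sigma_{a,q},\vartheta_{a,q})\) will be read off from \eqref{eq:tail-data}, \eqref{eq:sigma-data} and \Cref{ex:SL4-frame}, or equivalently from the table of \Cref{prop:SL4-degree-three}. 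A cross-check of each weight is available from \(\sigma=1-\operatorname{ht}(\lambda)\) (\Cref{lem:sigma-height}).

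Concretely, the four entries come out as follows.
\begin{itemize}
\item For \(a=321\), \(q=1\): the target is \(21\), with \(\lambda=\alpha_{123}^\vee\), \(\sigma=-2\) and \(\vartheta=\ang1\). The entry is \(c_{123}-1\).
\item For \(a=321\), \(q=2\): the deleted word is \(31\), rewritten to \(13\) by one commutation. Here \(\lambda=\alpha_{12}^\vee\), \(\sigma=-1\) and \(\vartheta=\ang{-1}\). The face sign \(-1\) and the signature \(-1\) cancel, so the entry is \((-1)^{-1}c_{12}-1=-c_{12}-1\).
\item For \(b_1=21\), \(q=1\): the tail is \(s_1\), so \(\lambda=s_1(\alpha_2^\vee)=\alpha_{12}^\vee\) and \(\sigma=-1\). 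The entry is \(-c_{12}-1\).
\item For \(b_2=13\), \(q=2\): the tail is empty, so \(\lambda=\alpha_3^\vee\) and \(\sigma=0\). With the face sign \(-1\), the entry is \(1-c_3\).
\end{itemize}

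Next I will check that these two paths are the only contributions to the \(c\)-coefficient of \(\delta_2\delta_3(a)\). The covers of \(321\) are \(21\), \(13\) and \(32\). Among these, \(32\) does not lie above \(s_1\), so only the paths through \(b_1\) and \(b_2\) reach \(c\).

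Right \(\Z[M]\)-linearity gives \(\partial(e_b x)=e_c\delta_{b,c}x\), and \(M\) is abelian, so the coefficient is
\[
\delta_{b_1,c}\delta_{a,b_1}+\delta_{b_2,c}\delta_{a,b_2}.
\]
To expand it I use \(c_{12}^2=1\) and \(c_{12}c_{123}=c_3\). The first product is \(1+c_{12}-c_{123}-c_3\), and the second is \(-1-c_{12}+c_{123}+c_3\). Their sum is zero.

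For the last assertion, omitting \(\vartheta_{a,2}\) flips \(\delta_{a,b_2}\) to \(c_{12}+1\). The sum then becomes \(2(1+c_{12}-c_{123}-c_3)\), which is nonzero in \(\Z[M]\) because the four group elements are distinct.

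The only delicate point is the sign bookkeeping in the \(q=2\) entry from \(a\). There the ordinary face sign, the signature of \(\vartheta\), and the factor \((-1)^\sigma\) all interact, so I will track these three signs separately and in the stated order.
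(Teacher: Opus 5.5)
Your proposal is correct and follows the paper's proof. You read off $(\lambda,\sigma,\vartheta)$ for the four covers and apply the realized operator of \Cref{prop:D-real} together with the face sign and the frame signature. You then verify the cancellation using $c_{12}^2=1$ and $c_{123}=c_{12}c_3$; the paper factors the sum as $(-c_{12}-1)c_3(c_{12}-1)$ instead of expanding it. Your explicit checks that $s_3s_2$ does not lie above $s_1$, and that omitting $\vartheta_{a,2}$ leaves $2(1+c_{12}-c_{123}-c_3)\neq0$, fill in two points the paper leaves implicit.
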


\begin{proof}
For \(a\to b_1\), the deleted root has transported coroot
\(\alpha_1^\vee+\alpha_2^\vee+\alpha_3^\vee\) and
\(\sigma=-2\).  For \(b_1\to c\), the data are
\(\lambda=\alpha_1^\vee+\alpha_2^\vee\) and \(\sigma=-1\).
For \(a\to b_2\), deletion at \(q=2\) gives the word \(31\), whereas
the chosen target word is \(13\); the ordinary deletion sign and the
frame degree \(\ang{-1}\) cancel, while
\(\lambda=\alpha_1^\vee+\alpha_2^\vee\) and \(\sigma=-1\).
Finally \(b_2\to c\) deletes the last letter and has
\(\lambda=\alpha_3^\vee\), \(\sigma=0\), and ordinary sign \(-1\).
\Cref{prop:D-real} gives the four displayed entries.  Since
\(c_{123}=c_{12}c_3\) and \(c_{12}^2=1\), their sum is
\[
 (-c_{12}-1)c_3(c_{12}-1)=0.
\]
\end{proof}

\begin{remark}
The entries of \eqref{eq:SL3-complex} are sheaf morphisms, not elements
of a PID\@.  Consequently, applying Smith normal form to the displayed
real group-ring matrix does not compute the motivic homology sheaves.
\end{remark}

\section{The complete complex for \texorpdfstring{\(G_2\)}{G2}}
\label{sec:G2}

This section carries out the exceptional rank-two case in full.
Type \(G_2\) simultaneously exhibits the longest root strings and the
six-term braid, and it furnishes a complete example beyond types
\(A\) and \(B_2\).  Let \(G\) be split simply connected of type
\(G_2\), with \(\alpha_1\) short and
\(-\langle\alpha_2,\alpha_1^\vee\rangle=3\),
\(-\langle\alpha_1,\alpha_2^\vee\rangle=1\).  The Weyl group is
dihedral of order twelve.  Every element other than \(w_0\) has a
unique reduced word, necessarily alternating; we choose these as
normal words, together with \(\mathbf w_0=(1,2,1,2,1,2)\).

Deleting an interior letter of an alternating word leaves two equal
adjacent letters, which is not reduced.  Hence every reduced deletion
removes the first or the last letter, the deleted word is again
alternating and equal to the chosen normal word of its target, and
consequently
\[
 \vartheta_{a,q}=\ang1
 \qquad\text{for every cover in this complex.}
\]
The six-term braid degree \(\ang{-1}\) of
\Cref{prop:rank-two-degrees} does not appear on any single cover here;
it is the degree of the comparison between the two \(w_0\)-charts and
enters \Cref{prop:choices} when the opposite normal word
\((2,1,2,1,2,1)\) is chosen instead.

\begin{proposition}\label{prop:G2-covers}
With the conventions above, the nonzero components of the boundary of
\(\CellC_*(G/U)\) are exactly the following, where each row lists
\((a,q,b,\lambda_{a,q},\sigma_{a,q})\) and the component equals
\((-1)^{q-1}\Dop_{\lambda,\sigma}^{(p)}\operatorname{cut}_{a,q}\):
\[
\begin{array}{c c c c r}
\toprule
a&q&b&\lambda&\sigma\\
\midrule
1&1&e&\alpha_1^\vee&0\\
2&1&e&\alpha_2^\vee&0\\
12&1&2&\alpha_1^\vee+3\alpha_2^\vee&-3\\
12&2&1&\alpha_2^\vee&0\\
21&1&1&\alpha_1^\vee+\alpha_2^\vee&-1\\
21&2&2&\alpha_1^\vee&0\\
121&1&21&2\alpha_1^\vee+3\alpha_2^\vee&-4\\
121&3&12&\alpha_1^\vee&0\\
212&1&12&\alpha_1^\vee+2\alpha_2^\vee&-2\\
212&3&21&\alpha_2^\vee&0\\
1212&1&212&2\alpha_1^\vee+3\alpha_2^\vee&-4\\
1212&4&121&\alpha_2^\vee&0\\
2121&1&121&\alpha_1^\vee+2\alpha_2^\vee&-2\\
2121&4&212&\alpha_1^\vee&0\\
12121&1&2121&\alpha_1^\vee+3\alpha_2^\vee&-3\\
12121&5&1212&\alpha_1^\vee&0\\
21212&1&1212&\alpha_1^\vee+\alpha_2^\vee&-1\\
21212&5&2121&\alpha_2^\vee&0\\
121212&1&21212&\alpha_1^\vee&0\\
121212&6&12121&\alpha_2^\vee&0\\
\bottomrule
\end{array}
\]
Every weight satisfies
\(\sigma=1-\operatorname{ht}(\lambda)\), in accordance with
\Cref{lem:sigma-height}.
\end{proposition}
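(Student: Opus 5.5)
The proof applies \Cref{thm:all-degree} directly. The work is to enumerate the reduced deletions in the dihedral Weyl group of type \(G_2\) and then compute the tail data \eqref{eq:tail-data}--\eqref{eq:sigma-data} for each one.

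\emph{Enumerating covers.} The cellular components are nonzero only for reduced deletions, by \Cref{thm:all-degree}. As noted before the statement, deleting an interior letter of an alternating word creates a repeated adjacent letter, so the deletion is not reduced. Deleting the first or last letter leaves an alternating word of length \(p-1\), which is reduced because every alternating word of length at most six is reduced in \(G_2\). For \(p\ge2\) the two end deletions lead to distinct targets: they are the two alternating words of length \(p-1\), except at \(p=6\), where they are \(21212\) and \(12121\). In every case each target arises from exactly one position, so no matrix entry is a sum. Together with \(\vartheta_{a,q}=\ang1\), this leaves exactly the twenty rows listed, each of the form \((-1)^{q-1}\Dop_{\lambda,\sigma}^{(p)}\operatorname{cut}_{a,q}\).

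\emph{Computing \(\lambda\).} When \(q=p\) the tail is empty, so \(\lambda=\alpha_{i_p}^\vee\) and \(\sigma=0\); this accounts for every row with \(q=p\). When \(q=1\) the tail is \(t=s_{i_2}\cdots s_{i_p}\) and \(\lambda=t^{-1}\alpha_{i_1}^\vee=s_{i_p}\cdots s_{i_2}(\alpha_{i_1}^\vee)\). I would evaluate this with the coroot reflection rules \(s_1(\alpha_2^\vee)=\alpha_1^\vee+\alpha_2^\vee\) and \(s_2(\alpha_1^\vee)=\alpha_1^\vee+3\alpha_2^\vee\), which follow from \(\langle\alpha_1,\alpha_2^\vee\rangle=-1\) and \(\langle\alpha_2,\alpha_1^\vee\rangle=-3\). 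For example, for \(a=121\) one gets \(s_1s_2(\alpha_1^\vee)=s_1(\alpha_1^\vee+3\alpha_2^\vee)=-\alpha_1^\vee+3(\alpha_1^\vee+\alpha_2^\vee)=2\alpha_1^\vee+3\alpha_2^\vee\). The remaining \(q=1\) rows are checked the same way, and the results can be cross-checked against the list of positive coroots \(\alpha_1^\vee,\alpha_2^\vee,\alpha_1^\vee+\alpha_2^\vee,\alpha_1^\vee+2\alpha_2^\vee,\alpha_1^\vee+3\alpha_2^\vee,2\alpha_1^\vee+3\alpha_2^\vee\).

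\emph{Computing \(\sigma\).} I would avoid summing over the inversion set \(\Pi_{a,q}\) and use \Cref{lem:sigma-height} instead: \(\sigma=1-\operatorname{ht}(\gamma^\vee)\), where \(\gamma^\vee=\lambda_{a,q}\). Here \(\operatorname{ht}\) is the height of the coroot in the basis of simple coroots. This gives all the weights at once, for instance \(\operatorname{ht}(2\alpha_1^\vee+3\alpha_2^\vee)=5\), hence \(\sigma=-4\). It also proves the final sentence of the statement.

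The main obstacle is bookkeeping. One must keep straight that \(\lambda=t^{-1}(\alpha^\vee)\), so the tail reflections are applied in reversed order, and must not confuse the root and coroot Cartan integers, since \(\alpha_1\) is short. To guard against errors, I would check one case such as \(a=12\), \(q=1\) directly from \eqref{eq:sigma-data}: there \(\Pi=\{\alpha_2\}\), so \(\sigma=\langle\alpha_2,\alpha_1^\vee\rangle=-3\), which agrees with the table.
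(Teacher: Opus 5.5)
Your proposal is correct and follows the paper's proof. Both enumerate covers by noting that only the first and last deletions of an alternating word are reduced, compute $\lambda$ by iterating the $G_2$ coroot reflections in reversed tail order, and read off $\sigma=1-\operatorname{ht}(\lambda)$ via \Cref{lem:sigma-height}; your remark that each target arises from a single position, and your direct check of one case against \eqref{eq:sigma-data}, only make explicit what the paper leaves implicit.
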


\begin{proof}
Interior deletions are nonreduced as noted, so the listed rows are all
covers.  The cocharacters are computed from
\eqref{eq:tail-data} by iterating
\(s_i(\mu)=\mu-\langle\alpha_i,\mu\rangle\alpha_i^\vee\) with the
\(G_2\) Cartan pairings, and the weights from
\eqref{eq:sigma-data}; each row is a two-line calculation, and
\Cref{lem:sigma-height} confirms every weight.  For instance, for
\(a=121\), \(q=1\): the tail is \(t=s_2s_1\), so
\(\lambda=s_1s_2(\alpha_1^\vee)
=s_1(\alpha_1^\vee+3\alpha_2^\vee)
=-\alpha_1^\vee+3(\alpha_1^\vee+\alpha_2^\vee)
=2\alpha_1^\vee+3\alpha_2^\vee\), of height \(5\), giving
\(\sigma=-4\).
\end{proof}

\begin{proposition}
\label{prop:G2-realized}
Let \(k=\R\), so \(K\) is the maximal compact subgroup \(SO(4)\) of
split \(G_2(\R)\) and \(M\cong(\Z/2)^2\).  The realized extended-Weyl
complex determined by \Cref{prop:G2-covers,thm:real-comparison} has
\(48\) generators, satisfies \(\partial^2=0\), and its integral
homology is
\[
 H_*\cong
 \bigl(\Z,\ \Z/2,\ 0,\ \Z^2,\ \Z/2,\ 0,\ \Z\bigr)
 \qquad(*=0,\ldots,6),
\]
which agrees with
\(H_*(SO(4);\Z)\cong H_*\bigl(S^3\times\R\mathrm P^3;\Z\bigr)\).
After flag augmentation the same data give the integral homology
\(\bigl(\Z,\ (\Z/2)^2,\ 0,\ \Z^2,\ (\Z/2)^2,\ 0,\ \Z\bigr)\) of the
full real flag manifold of \(G_2\), consistent with the classical
computations of \cite{Kocherlakota1995,RabeloSanMartin2019}.
\end{proposition}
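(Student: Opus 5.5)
The argument is a finite computation built on \Cref{prop:G2-covers} and \Cref{thm:real-comparison}. It has four steps.

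First I would write down the realized complex. The Weyl group of \(G_2\) has one element of length \(0\), two of each length \(1,\dots,5\), and one of length \(6\). With \(M=\langle c_1,c_2\rangle\cong(\Z/2)^2\), each Weyl element contributes a free \(\Z[M]\)-summand of rank one, hence four \(\Z\)-generators. This gives \(12\cdot4=48\) generators, and the ranks per degree are \(4,8,8,8,8,8,4\). By \Cref{prop:D-real}, each row \((a,q,b,\lambda,\sigma)\) of the table realizes to right multiplication by
\[
(-1)^{q-1}\bigl((-1)^\sigma\lambda(-1)-1\bigr).
\]
To evaluate \(\lambda(-1)\), I reduce the coroot coefficients modulo \(2\). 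For example, \(\alpha_1^\vee+3\alpha_2^\vee\) gives \(c_1c_2\), while \(2\alpha_1^\vee+3\alpha_2^\vee\) gives \(c_2\). In particular the entries for \(12\to2\), \(121\to21\), \(12121\to2121\) and \(21\to1\), \(21212\to1212\) become \(-c_1c_2-1\) or \(-c_2-1\), up to the face signs.

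Next I would check \(\partial^2=0\). In each degree there are exactly two composite paths between consecutive alternating words, so this is a direct group-ring identity. It is also guaranteed abstractly by \Cref{cor:square-zero} together with the chain isomorphism of \Cref{thm:real-comparison}.

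For the homology, the plan is to diagonalize over the character group of \(M\) after inverting \(2\), and then to handle the \(2\)-torsion directly. Concretely, I would compute the \(\Z\)-Smith normal forms of the \(8\times8\) and \(4\times8\) integer matrices obtained by expanding each \(\Z[M]\)-entry in the basis \(M\). The boundary is small and periodic in the word length, so the ranks and elementary divisors can be found by hand. A useful consistency check is the Euler characteristic \(4-8+8-8+8-8+4=0\), which matches \(\chi(SO(4))=0\). Finally, the known diffeomorphism \(SO(4)\cong S^3\times SO(3)\) and the Künneth formula give the asserted groups, and I expect the elementary divisors to reproduce them.

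For the flag complex, I apply \(-\otimes_{\Z[M]}\Z\) using \Cref{thm:comparison-square}. Each entry becomes \((-1)^{q-1}((-1)^\sigma-1)\), that is, \(0\) or \(\pm2\) according to the parity of \(\sigma\). The resulting complex has one generator each in degrees \(0\) and \(6\) and two in degrees \(1\) through \(5\). Its homology follows by reading off which odd-\(\sigma\) entries occur in each degree.

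The main obstacle is the torsion bookkeeping in the realized \(\Z[M]\)-complex. Getting the signs right in the degree-\(3\) cycles, to produce \(\Z^2\) rather than extra torsion, requires care with the face signs \((-1)^{q-1}\) at positions \(q=4,5,6\).
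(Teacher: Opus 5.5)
Your strategy is viable, but it differs from the paper's. The paper does no matrix computation at all. It uses \Cref{thm:real-comparison} to identify the realized complex with $C_*^{\mathrm{CW}}(K;\Z)$, then uses $K\cong SO(4)\approx S^3\times\R\mathrm P^3$ and K\"unneth. For the flag quotient, it identifies the augmented complex with the cellular complex of $K/M$ and cites the classical real-flag computations. You already invoke that chain isomorphism to get $\partial^2=0$, so the same isomorphism gives the homology at once; your closing sentence about $SO(4)$ can serve as the proof rather than as an expectation. What your explicit computation adds is an independent audit of the tabulated $(\lambda,\sigma)$ data and of the face signs, which the paper's argument inherits from the comparison theorem. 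For the flag part, your direct reading is also more self-contained than the citation.

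As written, though, one input is wrong.
\begin{itemize}
\item For $121\to21$ one has $\lambda=2\alpha_1^\vee+3\alpha_2^\vee$ and $\sigma=-4$, which is even. The entry is therefore $c_2-1$, not $-c_2-1$, and the same holds for $1212\to212$. No entry equals $-c_2-1$.
\item The odd weights occur only on $12\to2$, $21\to1$, $12121\to2121$ and $21212\to1212$. All four have $q=1$ and realized entry $-c_1c_2-1$.
\item With your list, the $e_1$-coefficient of $\partial_2\partial_3(e_{121})$ would be $2c_1+2c_2\neq0$, and the augmented $\partial_3$ would be nonzero. Both homology computations would then come out wrong.
\end{itemize}

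With the correct entries the calculation is short, and degree $3$ is not where care is needed.
\begin{itemize}
\item Degree $3$: $\partial_3(e_{121})=\partial_3(e_{212})=w:=(c_1-1)e_{12}+(c_2-1)e_{21}$. A chain $re_{121}+se_{212}$ is a cycle iff $r+s\in\Z\mathcal N_M$. Also $\operatorname{im}\partial_4=\{r(e_{121}-e_{212}):r\in I\}$, where $I$ is the augmentation ideal. Hence $H_3\cong\Z[M]/I\oplus\Z\cong\Z^2$.
\item Degree $2$: a coefficient check gives $\ker\partial_2=\Z[M]w$, so $H_2=0$.
\item Degree $5$: the map $re_{12121}+se_{21212}\mapsto -se_{12}+re_{21}$ carries $\ker\partial_5$ onto $\ker\partial_2$ and $\operatorname{im}\partial_6$ onto $\Z[M]w$, so $H_5=0$.
\item Degrees $1$ and $4$: this is the only place torsion appears. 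Here $\operatorname{im}\partial_2\subset\ker\partial_1$ and $\operatorname{im}\partial_5\subset\ker\partial_4$ are rank-$5$ sublattices of index $2$.
\item Degrees $0$ and $6$: $H_0=H_6=\Z$ is immediate.
\item Flag complex: only $\partial_2$ and $\partial_5$ survive, each equal to $-2$ times the swap of the two generators. This gives $(\Z,(\Z/2)^2,0,\Z^2,(\Z/2)^2,0,\Z)$.
\end{itemize}
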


\begin{proof}
By \Cref{prop:D-real} and \Cref{thm:real-comparison}, the cover with
 data \((\lambda,\sigma)\) realizes to right multiplication by
 \((-1)^{q-1}\bigl((-1)^\sigma\lambda(-1)-1\bigr)\) on \(\Z[M]\), with
 all frame signatures \(+1\) here.  Since \(|W|=12\) and
 \(|M|=4\), the resulting complex has \(48\) generators, and every
 matrix entry is explicitly determined by the table in
 \Cref{prop:G2-covers}.  The identity \(\partial^2=0\) is not an
 additional computational assumption: it follows from the cellular
 exact-couple construction and \Cref{thm:all-degree}.

 By \Cref{thm:real-comparison} this realized complex is the cellular
 chain complex of \(K\).  For split simply connected \(G_2\), one has
 \(K\cong SO(4)\), and the standard identification
 \(SO(4)\approx S^3\times\R\mathrm P^3\), followed by the K\"unneth
 formula, gives the displayed integral homology.  By
 \Cref{thm:flag-augmentation}, augmentation gives the cellular complex
 of \(K/M\); its displayed homology is the classical real-flag
 calculation of \cite{Kocherlakota1995,RabeloSanMartin2019}.
\end{proof}

\begin{remark}\label{rem:rank-two-diamonds}
The chain condition is intrinsic to the cellular exact couple.  In
rank two, the alternating-word description and
\Cref{prop:rank-two-degrees} give all frame data, while
\Cref{thm:all-degree} makes every Bruhat-diamond cancellation a
specialization of the exact-couple identity.  The exceptional cover
data are displayed completely in \Cref{prop:G2-covers}; the first
genuinely three-dimensional sign configuration is checked separately
in \Cref{prop:A3-diamond}.  These results give a self-contained finite
audit of the sign conventions used in the displayed examples.
\end{remark}

\section{The torus-support filtration}
\label{sec:filtration}

Choose the simple-coroot decomposition \(T\cong\Gm^r\).  In
\eqref{eq:HT-decomp}, write
\[
 H_S=
 \begin{cases}
  \Z,&S=\varnothing,\\
  \KMW{|S|},&S\ne\varnothing,
 \end{cases}
\]
and define
\[
  F^q\HT=
  \bigoplus_{|S|\geq q}H_S.
\]
Multiplication takes supports to their union, so each \(F^q\HT\) is an
ideal and
\[
  \HT=F^0\HT\supset F^1\HT\supset\cdots\supset
  F^{r+1}\HT=0.
\]

\begin{proposition}
\label{prop:support-ss}
The filtration
\[
  F^q\CellC_p(X)=
  \bigoplus_{\ell(a)=p}\KMW p\otimes F^q\HT
\]
is a finite filtration by subcomplexes.  It gives a strongly convergent
spectral sequence
\[
  E^1_{q,p-q}=
  H_p\bigl(\gr_F^q\CellC_*(X)\bigr)
  \Longrightarrow\CellH_p(X).
\]

Every associated graded differential is explicit.  For
\(S\subseteq\{1,\ldots,r\}\), let
\[
 T_S=\prod_{i\in S}\Gm,\qquad
 \lambda_S=\operatorname{pr}_S\lambda:\Gm\longrightarrow T_S.
\]
The summand \(H_S\) is naturally a module over \(\ZA[T_S]\).  Define
\[
\begin{split}
 \Dop_{\lambda,\sigma;S}^{(p)}:
 \KMW1\otimes\KMW{p-1}\otimes H_S
 &\longrightarrow\KMW{p-1}\otimes H_S,\\
 (u)\otimes x\otimes h
 &\longmapsto
 \ang{u^\sigma}x\otimes[\lambda_S(u)]h-x\otimes h.
\end{split}
\]
Then
\[
 \gr_F^q\CellC_*(X)
 \cong\bigoplus_{|S|=q}\CellC_{*,S}
\]
as chain complexes, where \(\CellC_{p,S}\) has the summands
\(\KMW p\otimes H_S\) indexed by \(\ell(a)=p\), and its cover component
is
\[
 (-1)^{h-1}\vartheta_{a,h}\,
 \Dop_{\lambda_{a,h},\sigma_{a,h};S}^{(p)}
 \operatorname{cut}_{a,h}.
\]
In particular \(\CellC_{*,\varnothing}=\CellC_*(G/B)\), and
\[
 E^1_{q,p-q}\cong
 \bigoplus_{|S|=q}H_p\bigl(\CellC_{*,S}\bigr).
\]
\end{proposition}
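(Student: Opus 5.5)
The proof reduces everything to the explicit cover formula of \Cref{thm:all-degree}. Each differential component is $\pm\vartheta_{a,q}\,\Dop_{\lambda,\sigma}^{(p)}\circ\operatorname{cut}_{a,q}$. The first step is to show that such an operator carries $\KMW1\otimes\KMW{p-1}\otimes F^q\HT$ into $\KMW{p-1}\otimes F^q\HT$. The maps $\operatorname{cut}_{a,q}$ and $\vartheta_{a,q}$ act only on the Milnor--Witt factors, so only $\Dop$ needs checking. For $h\in H_S$, the symbol formula \eqref{eq:D-symbol} together with right $\HT$-linearity (\Cref{lem:D-symbol}) gives
\[
 \Dop\bigl((u)\otimes x\otimes h\bigr)=\ang{u^\sigma}x\otimes[\lambda(u)]\cdot h-x\otimes h .
\]
In coordinates $[\lambda(u)]=\prod_i[\alpha_i^\vee(u^{m_i})]$, and each factor equals $1$ plus a class supported on $\{i\}$. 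Because multiplication sends supports to their union, $[\lambda(u)]h$ lies in $F^{|S|}\HT$. Hence each $F^q\CellC_*$ is a subcomplex. Since $F^{r+1}=0$, the filtration is finite, and the spectral sequence of a finite filtration of a complex converges strongly to $\CellH_*(X)$.

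The second step computes the associated graded. Modulo $F^{q+1}$, $[\lambda(u)]h$ keeps only the support-$S$ part. I would expand $[\alpha_i^\vee(u^{m_i})]=1+\bigl([\alpha_i^\vee(u^{m_i})]-[1]\bigr)$ and note that every factor with $i\notin S$ strictly enlarges the support, so it dies in $\gr^q$. What remains is the product over $i\in S$, which is exactly the action of $[\lambda_S(u)]\in\ZA[T_S]$ on $H_S$.

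The step I expect to be the main obstacle is checking this claim against the Hopf multiplication \eqref{eq:HT-decomp}. For $i\in S$, multiplying a support-$\{i\}$ class onto $H_S$ keeps the support, but it acts through $\KMW1\otimes\KMW1\to\KMW2\xrightarrow{\eta}\KMW1$. The claim is therefore that this action agrees with the $\ZA[T_S]$-module structure on $H_S$, which is precisely how that structure is defined. I would verify it on symbols over fields, as in \Cref{lem:thom-determinant}: in one coordinate, $[v][w]=[vw]$ and $(vw)=(v)+(w)+\eta(v)(w)$. Since the differential preserves each support summand $H_S$ in $\gr$ (supports cannot shrink), $\gr^q$ splits as the direct sum over $|S|=q$ of the complexes $\CellC_{*,S}$, with the stated components.

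Finally, for $S=\varnothing$ the module $H_\varnothing=\Z$ is the augmentation quotient. Here $\Dop_{\lambda,\sigma;\varnothing}$ reduces to $(\ang{u^\sigma}-1)x$, and \Cref{thm:flag-augmentation} identifies $\CellC_{*,\varnothing}$ with $\CellC_*(G/B)$. The description of $E^1$ then follows by taking homology summand by summand.
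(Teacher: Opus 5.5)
Your proposal is correct and follows essentially the same route as the paper. You check preservation of the ideals $F^q\HT$ directly on the explicit symbol formula, whereas the paper cites right $\HT$-linearity of the differential; both amount to the same ideal argument. The rest matches the paper step for step: you expand $[\lambda(u)]=\prod_i\bigl(1+([\alpha_i^\vee(u^{n_i})]-[1])\bigr)$, discard the factors outside $S$ in $\gr_F^{|S|}$, identify the factors inside $S$ with the $\eta$-twisted action of $[\lambda_S(u)]$, split $\gr_F^q$ because no term mixes distinct supports of the same size, and use \Cref{thm:flag-augmentation} for $S=\varnothing$.
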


\begin{proof}
The differential is right \(\HT\)-linear, so it preserves every ideal
\(F^q\HT\).  Finiteness gives strong convergence.  The quotient by
\(F^1\HT\) is torus augmentation, hence the flag complex by
\Cref{thm:flag-augmentation}.

It remains to identify the full associated graded, including repeated
support coordinates.  Write
\(\lambda=\sum_i n_i\alpha_i^\vee\).  On the exact-support summand
\(H_S\), expand
\[
 [\lambda(u)]=
 \prod_i\bigl(1+
 ([\alpha_i^\vee(u^{n_i})]-[1])\bigr).
\]
Every term involving an index outside \(S\) raises support and vanishes
in \(\gr_F^{|S|}\).  Terms involving indices in \(S\) act inside
\(H_S\); repeated use of an existing coordinate is precisely the
multiplication
\(\KMW2\xrightarrow{\eta}\KMW1\) from
\eqref{eq:HT-decomp}.  Thus the action induced on \(H_S\) is the full
\(T_S\)-action by \([\lambda_S(u)]\), not merely its linear part.
\Cref{thm:all-degree} now gives the displayed cover component.  No term
mixes two subsets of the same cardinality, proving the direct-sum
decomposition and the formula for \(E^1\).
\end{proof}

\begin{corollary}
\label{cor:real-support-graded}
Over \(\R\), put \(c_i=\alpha_i^\vee(-1)\),
\(x_i=c_i-1\), and \(x_S=\prod_{i\in S}x_i\).  The realization of
\(\gr_F^q\CellC_*(X)\) is the direct sum, over \(|S|=q\), of integral
Bruhat complexes with one generator \(x_Se_a\) for each \(a\in W\).
If a cover is obtained by deleting position \(h\), and
\[
 \lambda_{a,h}=\sum_i n_i\alpha_i^\vee,
\]
its scalar coefficient on the \(S\)-summand is
\begin{equation}\label{eq:real-support-coefficient}
 (-1)^{h-1}\sgn(\vartheta_{a,h})
 \left((-1)^{
 \sigma_{a,h}+\sum_{i\in S}n_i}-1\right).
\end{equation}
Hence every entry is \(0\) or \(\pm2\).  The \(S=\varnothing\) summand
is the ordinary real flag complex; the other summands are obtained by
twisting its parity rule by the coroot coordinates supported on \(S\).
\end{corollary}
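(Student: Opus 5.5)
The plan is to realize the graded pieces of \Cref{prop:support-ss} directly, using the real-realization identification of \(\HT\) with \(\Z[M]\) from \eqref{eq:real-HT}. The first step is to identify the realized support filtration inside \(\Z[M]\). Since \(M\) is generated by the involutions \(c_i\), with \(M\cong(\Z/2)^r\) via the simple coroots, the elements \(x_S=\prod_{i\in S}x_i\) for \(S\subseteq\{1,\ldots,r\}\) form a \(\Z\)-basis of \(\Z[M]\). The realized summand \(H_S\) is then spanned by \(x_S\). Here we use that the realization of \(\ZA(\Gm)=\KMW1\) on the \(i\)-th coordinate is the reduced class \([c_i]-[1]=x_i\), by the convention \eqref{eq:real-thom-convention}. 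The realized \(F^q\) is therefore \(\bigoplus_{|S|\ge q}\Z x_S\), an ideal because \(x_i^2=-2x_i\) preserves support. Consequently each graded summand is free of rank one on \(x_Se_a\) for each \(a\). I would carry out this identification through the filtered exact couple of \Cref{lem:filtered-iwasawa}, so that the grading is compatible with the realized chain isomorphism of \Cref{thm:real-comparison}.

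The second step computes the coefficient. By \Cref{thm:real-comparison} and \Cref{prop:D-real}, a cover realizes to right multiplication by
\[
(-1)^{h-1}\sgn(\vartheta_{a,h})\bigl((-1)^{\sigma_{a,h}}c_{a,h}-1\bigr),
\qquad c_{a,h}=\prod_i c_i^{n_i}.
\]
Multiplying \(x_S\) by \(c_{a,h}\), each \(c_i\) acts as follows. For \(i\in S\), \(c_ix_i=-x_i\), since \(c_i(c_i-1)=1-c_i\). For \(i\notin S\), write \(c_i=1+x_i\); the term \(x_i\) raises the support and so dies in \(\gr^{|S|}\). Hence \(x_Sc_{a,h}\equiv(-1)^{\sum_{i\in S}n_i}x_S\) modulo \(F^{|S|+1}\), which gives \eqref{eq:real-support-coefficient}. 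The same conclusion follows by realizing the operator \(\Dop_{\lambda,\sigma;S}\) of \Cref{prop:support-ss} directly, since \([\lambda_S(-1)]\) acts on \(x_S\) by that sign. Because the bracket is the difference of two signs, every entry is \(0\) or \(\pm2\).

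The final step treats \(S=\varnothing\). Here the coefficient reduces to \((-1)^{h-1}\sgn\vartheta\,((-1)^\sigma-1)\), which is the augmented complex. By \Cref{thm:comparison-square} and \Cref{thm:flag-augmentation}, this is the real flag complex \(C^{\mathrm{CW}}(K/M)\). The other summands are described as twists of this complex by the parity of \(\sum_{i\in S}n_i\), which is immediate from the formula.

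The main obstacle is the first step: checking that the motivic summand \(H_S\cong\KMW{|S|}\) realizes to the line \(\Z x_S\), with the sign convention of \eqref{eq:real-thom-convention}. This must come with the realized filtration agreeing with the group-ring one, not merely having isomorphic graded pieces. I would argue this by factoring through \(T\cong\Gm^r\): the Künneth decomposition \eqref{eq:HT-decomp} realizes to \(\Z[\Z/2]^{\otimes r}\), with each factor \(\Z\oplus\KMW1\) going to \(\Z\cdot1\oplus\Z x_i\).
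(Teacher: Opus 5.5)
Your proposal is correct and follows the paper's own proof. The paper's argument is exactly your second step: \(c_i^{n_i}\) acts on \(x_i\) by \((-1)^{n_i}\) for \(i\in S\) and trivially modulo higher support for \(i\notin S\), and this is combined with \((-1)^{\sigma_{a,h}}\), the deletion sign, and \(\sgn(\vartheta_{a,h})\). Your first step, identifying the realized support filtration with the \(x_S\)-basis of \(\Z[M]\), only makes explicit an identification that the paper takes for granted.
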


\begin{proof}
Modulo terms of support larger than \(S\), a component
\(c_i^{n_i}\) acts on \(x_i\) by
\[
 c_i^{n_i}x_i=(-1)^{n_i}x_i
\]
and acts as the identity when \(i\notin S\).  Thus
\(\lambda_{a,h}(-1)\) acts on \(x_S\) by
\((-1)^{\sum_{i\in S}n_i}\).  Combining this with the determinant
factor \((-1)^{\sigma_{a,h}}\), the deletion sign, and the signature of
the frame degree gives \eqref{eq:real-support-coefficient}.
\end{proof}

\begin{example}
\label{ex:SL4-support-correction}
Take the \(SL_4\) cover \(123\to23\) obtained by deleting the first
letter.  By \Cref{prop:SL4-degree-three},
\[
 \lambda=\alpha_1^\vee+\alpha_2^\vee+\alpha_3^\vee,
 \qquad \sigma=-2,\qquad\vartheta=\ang1.
\]
Formula \eqref{eq:real-support-coefficient} gives zero on the
\(S=\varnothing\) summand, as does flag augmentation.  On an exact
support summand it gives
\[
 (-1)^{-2+|S|}-1
 =
 \begin{cases}
  -2,&|S|\text{ is odd},\\
  0,&|S|\text{ is even}.
 \end{cases}
\]
Thus this cover first appears in support one, with coefficient \(-2\)
on each of \(x_1e_{123},x_2e_{123},x_3e_{123}\).  It is a concrete
torus-labelled boundary that cannot be recovered from the flag complex.
\end{example}

Thus the support spectral sequence interpolates between the flag
boundary and the extended-Weyl boundary by finitely many explicitly
computable integer complexes, rather than only by a first-order torus
correction.

We conclude with a structural consequence of the all-degree formula:
after inverting \(2\), the realized filtration splits along the
characters of \(M\) and the support spectral sequence degenerates.
For a character \(\chi:M\to\{\pm1\}\), write
\(S_\chi=\{i:\chi(c_i)=-1\}\) and let \(C_*^{\chi}\) denote the free
\(\Z\)-complex with one generator \(e_a\) for each \(a\in W\) and
cover entries the integers
\begin{equation}\label{eq:chi-entries}
 \partial^{\chi}_{a,b}
 =\sum_{q}\,
 \epsPS(a,q)(-1)^{q}
 \bigl(1-(-1)^{\sigma_{a,q}}\chi(c_{a,q})\bigr)
 \in\{0,\pm2\},
\end{equation}
the sum being over reduced deletion positions producing \(b\).  For
the trivial character this is the integral boundary of the real flag
manifold \(K/M\); in general it is the boundary of \(K/M\) with local
coefficients in \(\Z_\chi\), the sign representation of \(M\)
composed with \(\chi\).

\begin{theorem}\label{thm:half-splitting}
Let \(k=\R\).  After inverting \(2\), the realized filtered complex of
\Cref{thm:real-comparison} splits canonically:
\begin{equation}\label{eq:half-splitting}
 C_*^{\mathrm{CW}}(K;\Z[\tfrac12])
 \;\cong\;
 \bigoplus_{\chi\in\widehat M}
 C_*^{\chi}\otimes\Z[\tfrac12],
\end{equation}
compatibly with the support filtration of \Cref{prop:support-ss}: the
\(\chi\)-summand lies in filtration \(|S_\chi|\) and projects
isomorphically onto the summand \(x_{S_\chi}\) of the associated
graded.  Consequently the realized support spectral sequence with
\(\Z[\tfrac12]\)-coefficients degenerates at \(E^1\), and
\[
 H_*\bigl(K;\Z[\tfrac12]\bigr)
 \cong
 \bigoplus_{\chi\in\widehat M}
 H_*\bigl(K/M;\Z_\chi[\tfrac12]\bigr).
\]
In particular, every nonzero differential of the integral realized
support spectral sequence is \(2\)-power torsion.
\end{theorem}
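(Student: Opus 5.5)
The plan is to split by the character idempotents of \(\Z[\tfrac12][M]\).  Since \(M\cong(\Z/2)^r\) is an elementary abelian \(2\)-group, \(\Z[\tfrac12][M]\) is a product of copies of \(\Z[\tfrac12]\), one for each \(\chi\in\widehat M\).  The orthogonal idempotents are
\[
 e_\chi=\frac{1}{|M|}\sum_{m\in M}\chi(m)\,m
 =\prod_{i=1}^r\frac{1+\chi(c_i)c_i}{2}.
\]
By \Cref{thm:real-comparison}, \(C_*^{\mathrm{CW}}(K;\Z[\tfrac12])\) is a complex of free right \(\Z[\tfrac12][M]\)-modules with \(\Z[M]\)-linear differentials, so it decomposes as \(\bigoplus_\chi C_*e_\chi\).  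Each summand has one generator \(e_ae_\chi\) per \(a\in W\).  On it, right multiplication by \(m\) acts by \(\chi(m)\), so the cover entry \eqref{eq:PS-group-ring} becomes the integer \(\epsPS(a,q)(-1)^q(1-(-1)^{\sigma_{a,q}}\chi(c_{a,q}))\).  This is exactly \eqref{eq:chi-entries}, which identifies \(C_*e_\chi\cong C_*^\chi\otimes\Z[\tfrac12]\) and gives \eqref{eq:half-splitting}.

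Next comes compatibility with the support filtration.  Write \(x_i=c_i-1\).  Then
\[
 \frac{1+c_i}{2}=1+\frac{x_i}{2},\qquad \frac{1-c_i}{2}=-\frac{x_i}{2},
\]
so
\[
 e_\chi=\pm2^{-|S_\chi|}\,x_{S_\chi}\prod_{i\notin S_\chi}\Bigl(1+\frac{x_i}{2}\Bigr).
\]
Hence \(e_\chi\) lies in the ideal \(F^{|S_\chi|}\) (the realized image of \(F^q\HT\) is the span of the \(x_S\) with \(|S|\ge q\)).  Its leading term is \(\pm2^{-|S_\chi|}x_{S_\chi}\), a unit multiple of \(x_{S_\chi}\), so \(e_\chi\) projects isomorphically onto the \(x_{S_\chi}\)-summand of \(\gr^{|S_\chi|}\).

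I would check that these projections assemble into an isomorphism of filtered modules.  The plan is to compare the two filtrations degreewise, using \(F^q\otimes\Z[\tfrac12]=\bigoplus_{|S_\chi|\ge q}e_\chi\Z[\tfrac12][M]\).  The inclusion \(\supseteq\) holds by the computation above.  Equality then follows by counting ranks on each graded piece: there are \(\binom rq\) characters with \(|S_\chi|=q\), matching the number of \(x_S\) with \(|S|=q\), and the associated graded map is triangular with unit diagonal.  I would also note that \(\chi\mapsto S_\chi\) is a bijection, since \(M\) is free on the \(c_i\) for simply connected \(G\).  Under this matching the graded complex \(x_{S}\)-summand of \Cref{cor:real-support-graded} coincides with \(C^\chi\): its coefficient \((-1)^{\sigma+\sum_{i\in S}n_i}\) equals \((-1)^\sigma\chi(c_{a,q})\), because \(\chi(\lambda(-1))=\prod\chi(c_i)^{n_i}\).

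Degeneration follows formally.  With \(\Z[\tfrac12]\)-coefficients the filtered complex is isomorphic to the direct sum of its associated graded pieces, each \(C^\chi\) sitting purely in filtration \(|S_\chi|\).  A filtered complex split in this way has a spectral sequence concentrated on \(E^1\) with all \(d^r=0\) for \(r\ge1\).  The homology statement comes from identifying \(H_*(C^\chi)\) with \(H_*(K/M;\Z_\chi)\) via the quotient-cell description, since the cells of \(K\) are the \(M\)-translates of the lifts of cells of \(K/M\).

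Finally, the integral spectral sequence is finite.  Since \(\Z\to\Z[\tfrac12]\) is flat, the realized spectral sequence with \(\Z[\tfrac12]\)-coefficients is obtained from the integral one by \(-\otimes\Z[\tfrac12]\).  After this tensor each \(d^r\) becomes zero, so its image is killed by a power of \(2\) (finitely generated groups).

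The main obstacle I expect is the filtered-splitting step.  One must verify that the realized support filtration is exactly the \(x_S\)-ideal filtration of \(\Z[M]\), and that the triangular change of basis is invertible over \(\Z[\tfrac12]\) but not over \(\Z\).  I would first try to handle this by induction on \(r\) using the tensor decomposition \(\Z[M]=\bigotimes_i\Z[c_i]\).
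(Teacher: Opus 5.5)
Your proposal is correct and follows the paper's proof essentially step for step. The shared steps are: the character idempotents \(e_\chi=\prod_i\frac{1+\chi(c_i)c_i}{2}\); their expansion in the \(x_i\) with leading term \((-2)^{-|S_\chi|}x_{S_\chi}\); the fact that the leading supports are pairwise distinct and exhaust all subsets, so the splitting refines the support filtration; formal \(E^1\)-degeneration of a filtered complex isomorphic to its associated graded; the character decomposition of the free \(M\)-cover \(K\to K/M\); and flat base change to \(\Z[\tfrac12]\) for the \(2\)-power torsion claim.

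The step you flag as the main obstacle is already settled by your own triangularity remark, not by the rank count, which alone would not suffice over \(\Z[\tfrac12]\). Indeed \(x_S=(-2)^{|S|}\sum_{S_\chi\supseteq S}e_\chi\), so the change of basis between \(\{x_S\}\) and \(\{e_\chi\}\) is triangular for inclusion of supports, with diagonal entries powers of \(-2\), hence units in \(\Z[\tfrac12]\). This gives \(F^q\otimes\Z[\tfrac12]=\bigoplus_{|S_\chi|\geq q}\Z[\tfrac12]e_\chi\) directly, with no induction on \(r\).
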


\begin{proof}
Since \(M\cong(\Z/2)^r\) is elementary abelian, the idempotents
\[
 e_\chi=\prod_{i=1}^r\frac{1+\chi(c_i)\,c_i}2
 \in\Z[\tfrac12][M]
\]
are orthogonal, sum to \(1\), and split
\(\Z[\tfrac12][M]\cong\prod_\chi\Z[\tfrac12]\).  Every differential of
the realized complex is right multiplication by an element of
\(\Z[M]\) (\Cref{thm:real-comparison}), hence commutes with right
multiplication by \(e_\chi\); this gives the direct-sum decomposition
\eqref{eq:half-splitting}, with the entry of the \(\chi\)-summand
obtained by applying \(\chi\) to \eqref{eq:PS-group-ring}, which is
\eqref{eq:chi-entries}.  Writing \(c_i=1+x_i\) gives
\[
 e_\chi=2^{-r}
 \prod_{i\notin S_\chi}(2+x_i)
 \prod_{i\in S_\chi}(-x_i)
 =(-2)^{-|S_\chi|}\Bigl(x_{S_\chi}
 +\text{terms of support}>|S_\chi|\Bigr),
\]
so \(e_\chi\in F^{|S_\chi|}\otimes\Z[\tfrac12]\) and its class in the
associated graded is the unit multiple
\((-2)^{-|S_\chi|}x_{S_\chi}\).  Since the \(2^r\) idempotents have
pairwise distinct leading supports exhausting all subsets, the
splitting refines the filtration as claimed, and the induced map from
\eqref{eq:half-splitting} to the associated graded of
\(F^\bullet\otimes\Z[\tfrac12]\) is an isomorphism of complexes.  A
finitely filtered complex isomorphic to its associated graded has
degenerate spectral sequence, giving the displayed homology
decomposition; the identification of the \(\chi\)-summand homology
with the \(\Z_\chi\)-local-coefficient homology of \(K/M\) is the
standard character decomposition of the chain complex of the free
\(M\)-cover \(K\to K/M\) after inverting \(|M|\).  The final claim
follows because the integral spectral sequence becomes degenerate
after inverting \(2\).
\end{proof}

\begin{remark}
The trivial-character summand recovers
\(H_*(K/M;\Z[\tfrac12])\), and the full decomposition is a cellular
refinement, uniform in all split types, of the transfer isomorphism
for the finite cover \(K\to K/M\).  Its content here is not the
transfer itself but its compatibility with the motivic support
filtration: the theorem identifies the character splitting with the
associated graded of a filtration that is defined before realization,
on the level of \(\HT\)-coefficients.  Away from the prime \(2\), the
torus-enriched boundary is therefore completely computed by the
parity-twisted flag matrices \eqref{eq:chi-entries}; all integral
subtlety detected by this filtration is \(2\)-primary.  In the
even-flag and type-\(A\) ranges this is compatible with the sharper
order-\(2\) torsion results of \cite{Matszangosz2021,
HudsonMatszangoszWendt2024}; no all-type assertion excluding higher
\(2\)-power torsion is used here.
\end{remark}

\appendix

\section{Dictionary between the three boundary conventions}
\label{app:dictionary}

For convenience, we collect the indexing and coefficient dictionary.
Let \(a=s_{i_1}\cdots s_{i_p}\), delete position \(q\), and put
\[
  \lambda=t_q^{-1}\alpha_{i_q}^\vee,\qquad
  \sigma=\sum_{\beta\in\Phi^+\cap t_q\Phi^-}
  \langle\beta,\alpha_{i_q}^\vee\rangle.
\]
Then:
\[
\begin{array}{p{0.25\textwidth}p{0.67\textwidth}}
\toprule
setting & cover component, up to the common orientation factor\\
\midrule
basic affine / group &
\((u)\otimes x\otimes[t]\mapsto
 \ang{u^\sigma}x\otimes[\lambda(u)t]-x\otimes[t]\)\\[1mm]
maximal compact &
\((-1)^\sigma[\lambda(-1)]-[1]\in\Z[M]\)\\[1mm]
flag quotient &
\((u)\otimes x\mapsto(\ang{u^\sigma}-1)x\), a power-map
multiple of \(\eta\)\\
\bottomrule
\end{array}
\]
The first row retains both endpoints.  The second records their real
component labels.  The third identifies those labels and therefore
combines the endpoints.

\bibliographystyle{alpha}
\bibliography{references}

\end{document}